\numberwithin{equation}{section}
\numberwithin{figure}{section}
\newtheorem{thmabc}{Theorem}
\newtheorem{conabc}[thmabc]{Conjecture}
\newtheorem{thm}{Theorem}[section]
\newtheorem*{thm*}{Theorem}
\newtheorem*{con*}{Conjecture}
\newtheorem{lem}[thm]{Lemma}
\newtheorem{prop}[thm]{Proposition}
\newtheorem{cor}[thm]{Corollary}
\newtheorem{lemma}[thm]{Lemma}
\theoremstyle{definition}
\newtheorem{defn}[thm]{Definition}
\newtheorem{remark}[thm]{Remark}
\newtheorem{ex}[thm]{Example}
\newtheorem{quest}[thm]{Question}
\newtheorem*{acknowledgements}{Acknowledgements}
\DeclareMathOperator{\Trel}{T}
\DeclareMathOperator{\Hilb}{Hilb}
\DeclareMathOperator{\sd}{sd}
\DeclareMathOperator{\des}{des}
\DeclareMathOperator{\rk}{rk}
\DeclareMathOperator{\Aut}{Aut}
\DeclareMathOperator{\rank}{rk}
\DeclareMathOperator{\codim}{rk}
\newcommand{\nofield}{K}
\newcommand{\field}{\mathbb{K}}
\newcommand{\fieldSR}{\mathbb{F}}
\newcommand{\bfr}{\bm{r}}
\newcommand{\bfs}{\bm{s}}
\newcommand{\bff}{\mathbf{f}}
\newcommand{\bfT}{\bm{T}}
\newcommand{\bfX}{\bm{X}}
\newcommand{\bfZ}{\bm{Z}}
\newcommand{\N}{\mathbb{N}}
\newcommand{\Q}{\mathbb{Q}}
\newcommand{\F}{\mathbb{F}}
\newcommand{\Z}{\mathbb{Z}}
\renewcommand{\phi}{\varphi}
\newcommand{\pbinom}[3][]{
    \genfrac{[}{]}{0pt}{}{#2}{#3}_{\ifthenelse{\isempty{#1}}{p}{#1}}
}
\renewcommand{\leq}{\leqslant}
\renewcommand{\geq}{\geqslant}
\renewcommand{\epsilon}{\varepsilon}
\newcommand{\gp}[1]{\frac{#1}{1-#1}}
\newcommand{\pb}[2][]{(\hspace{-0.9mm}( #2 )\hspace{-0.9mm})_{#1}}
\newcommand{\m}{\mathrm{Cgp}}
\newcommand{\1}{\mathbbm{1}}
\newcommand{\comp}{\mathrm{c}}
\newcommand{\norm}[1]{\left\| #1 \right\|}
\newcommand{\lri}{\mathfrak{o}}
\newcommand{\qvar}{Q}
\def \mcL {\mathcal{L}}
\def \Gri {\mathcal{O}}
\def \Ftwo {\mathbb{F}_2}
\def \Fq {\mathbb{F}_q}
\def \Z {\mathbb{Z}}
\def \Q {\mathbb{Q}}
\def \N {\mathbb{N}}
\def \mfp {\mathfrak{p}}
\newcommand{\mcP}{\mathcal{P}}
\newcommand{\mcN}{\mathcal{N}}
\newcommand{\fHP}{\mathsf{fHP}}
\newcommand{\cfHP}{\mathsf{cfHP}}
\newcommand{\mcF}{\mathcal{F}}
\newcommand{\mcU}{\mathcal{U}}
\newcommand{\A}{\mathcal{A}}
\newcommand{\intpos}{\mathcal{L}}
\newcommand{\toplat}{\widetilde{\intpos}}
\newcommand{\proplat}{\overline{\intpos}}
\newcommand{\GP}[1]{\mathsf{gp}\!\left(#1\right)}
\newcommand{\GPZ}[1]{\mathsf{gp}_0\hspace{-0.85mm}\left(#1\right)}
\newcommand{\atom}{\mathrm{at}}
\newcommand{\Graft}[2]{\mathcal{G}_{#1,#2}}
\newcommand{\TP}[1]{\mathrm{TP}_{#1}}
\newcommand{\TPnew}[2]{\mathrm{TP}_{\mathsf{#1},#2}}
\newcommand{\PT}[2]{\mathrm{PT}_{#1,#2}}
\newcommand{\LPTzero}[3]{\mathrm{LPT}^0_{\mathsf{#3},#1,#2}}
\newcommand{\LPT}[3]{\mathrm{LPT}_{\mathsf{#3},#1,#2}}
\newcommand{\LRTtwo}[2]{\mathrm{LRT}_{\mathsf{#1},#2}}
\newcommand{\LRT}[3]{\mathrm{LRT}_{\mathsf{#3},#1,#2}}
\newcommand{\Ome}[3]{\Omega_{\mathsf{#3},#1,#2}}
\newcommand{\Lam}[3]{\Lambda_{\mathsf{#3},#1,#2}}
\newcommand{\Ph}[3]{\Phi_{\mathsf{#3},#1,#2}}
\newcommand{\pitwo}[2]{\pi_{\mathsf{#1},#2}}
\newcommand{\Pitwo}[2]{\Pi_{\mathsf{#1},#2}}
\newcommand{\rhorep}[2]{\rho_{\mathsf{#1},#2}}
\newcommand{\UTP}[1]{\mathrm{UTP}_{#1}}
\newcommand{\picirc}{\pi^{\circ}}
\newcommand{\pibar}{\overline{\pi}^{\circ}}
\newcommand{\msfA}{\mathsf{A}} \newcommand{\msfX}{\mathsf{X}}
\newcommand{\msfI}{\mathsf{I}} \newcommand{\kind}{\mathsf{c}}
\newcommand{\nb}{\mathsf{dl}} \newcommand{\kb}{\mathsf{u}}
\newcommand{\treeCgp}{\mathsf{Cgp}}
\title[Flag Hilbert--Poincar\'e series of
  hyperplane arrangements]{Flag Hilbert--Poincar\'e series and Igusa
  zeta functions of hyperplane arrangements}
\author{Joshua Maglione} \address{Fakult\"at f\"ur Mathematik,
  Universit\"at Bielefeld, D-33501 Bielefeld, Germany}
  \curraddr{Fakult\"at f\"ur Mathematik,
  Otto von Guericke Universit\"at Magdebug, 39106 Magdeburg, Germany}
\email{joshua.maglione@ovgu.de}
\author{Christopher Voll} \address{Fakult\"at f\"ur Mathematik,
 Universit\"at Bielefeld, D-33501 Bielefeld, Germany}
\email{C.Voll.98@cantab.net}
\thanks{Supported by DFG-grants FR~1639/4-1 and VO~1248/4-1
  (project number~373111162).}
\keywords{Hyperplane arrangements, Igusa's local zeta function,
  Eulerian polynomials, Stirling numbers of the second kind, Hilbert
  series, Hadamard products, topological zeta functions, total
  partitions, representable matroids, Coxeter arrangements, braid
  arrangements}
\subjclass[2010]{11M41, 52C35, 05A15}
\begin{document}

\begin{abstract}
  We introduce and study a class of multivariate rational functions
  associated with hyperplane arrangements, called flag
  Hilbert--Poincar\'e series.  These series are intimately connected
  with Igusa local zeta functions of products of linear polynomials,
  and their motivic and topological relatives. Our main results
  include a self-reciprocity result for central arrangements defined
  over fields of characteristic zero. We also prove combinatorial
  formulae for a specialization of the flag Hilbert--Poincar\'e series
  for irreducible Coxeter arrangements of types $\mathsf{A}$,
  $\mathsf{B}$, and $\mathsf{D}$ in terms of total partitions of the
  respective types. We show that a different specialization of the
  flag Hilbert--Poincar\'e series, which we call the coarse flag
  Hilbert--Poincar\'e series, exhibits intriguing nonnegativity
  features and---in the case of Coxeter arrangements---connections
  with Eulerian polynomials. For numerous classes and examples of
  hyperplane arrangements, we determine their (coarse) flag
  Hilbert--Poincar\'e series. Some computations were aided by a
  SageMath package we developed.
\end{abstract}

\date{\today} \maketitle

\thispagestyle{empty}

\tableofcontents{}

\section{Introduction}
\label{sec:intro}

A hyperplane arrangement over a field $\field$ is a finite set $\A$ of
affine hyperplanes in $\field^d$ for some integer~$d=:\dim(\A)$. In
this paper we introduce and study a multivariate rational function
$\fHP_{\A}(Y,\bfT)\in\Q(\bfT)[Y]$, called the \emph{flag
  Hilbert--Poincar\'e series} of $\A$, encompassing much of the
topology and combinatorics of~$\A$.

In order to define $\fHP_{\A}$, we introduce some further
notation. Let $\intpos(\A)$ be the intersection poset of $\A$, ordered
by reverse-inclusion.  Two hyperplane arrangements are
\emph{equivalent} if their intersection posets are isomorphic. We
denote by $\hat{0}$ (resp.\ $\hat{1}$) the bottom (resp.\ top) element
of a poset (provided $\hat{1}$ exists). Observe that $\hat{1}\in
\intpos(\A)$ if and only if $\A$ is central,
i.e.~$\bigcap_{H\in\A}H\neq\varnothing$. Define $\toplat(\A) =
\intpos(\A)\setminus\{\hat{0}\}$ and $\proplat(\A) =
\intpos(\A)\setminus \{\hat{0},\hat{1}\}$. For $x\in\intpos(\A)$, we
write $\rank(x):=\rank_{\intpos(\A)}(x)$ for the \emph{rank} of~$x$,
viz.\ the supremum over the lengths of all chains from $\hat{0}$ to
$x$.  For a poset~$P$, the \emph{order complex} $\Delta(P)$ associated
with $P$ is the simplicial complex with vertex set $P$, whose
simplices are the flags of~$P$. For $x\in\intpos(\A)$, define
hyperplane arrangements
\begin{alignat*}{2}
    \A_x &= \left\{H\in\A ~\middle|~ x\subseteq H\right\},
    &&\textup{(subarrangement)}\\ \A^x &= \left\{x\cap H ~\middle|~
    H\in\A\setminus \A_x, \; x\cap H \neq \emptyset \right\} \quad
    &&\textup{(restriction)}.
\end{alignat*}
Set $\A_{\emptyset} := \A$. Interlacing these constructions we obtain, for
$x,y\in \intpos(\A)$, the arrangement $\A^x_{y} := (\A^x)_{y} =
(\A_{y})^x$. Recall further the \emph{Poincar\'e polynomial}
\begin{equation} \label{eqn:Poincare-poly}
    \pi_{\A}(Y) =\sum_{x\in \intpos(\A)} \mu(x)(-Y)^{\rk(x)} \in \Z[Y]
\end{equation}
associated with~$\A$, where $\mu$ is the M\"obius function
on~$\intpos(\A)$; cf.~\cite[Def.~2.48]{OrlikTerao/92}.  The Poincar\'e
polynomial is closely related to the characteristic polynomial
$\chi_{\A}(Y)$ of $\A$ via the identity (see
\cite[Def.~2.52]{OrlikTerao/92})
  \begin{equation}\label{equ:chi=cha}
    \chi_{\A}(Y) = Y^d\pi_{\A}(-Y^{-1}).
  \end{equation} We require the following
flag generalization: for $F = (x_1 < \cdots < x_{\ell})
\in\Delta(\intpos(\A))$ (possibly empty), set $x_0=\hat{0}$ and
$x_{\ell+1} = \emptyset$, and define
\begin{align}\label{eqn:Poincare-face}
   \pi_F (Y) = \prod_{k=0}^{\ell} \pi_{\A_{x_{k+1}}^{x_k}}(Y) \in
   \Z[Y].
\end{align} 

The following function is the main protagonist of the current paper.

\begin{defn}\label{def:algebraic}
    Let $\bfT := (T_x)_{x\in \toplat{(\A)}}$ be indeterminates. The \emph{flag
    Hilbert--Poincar\'e series} associated with $\A$ is
    \begin{align*} 
        \fHP_{\A}(Y,\bfT) &= \sum_{F\in\Delta(\toplat(\A))} \pi_F(Y)
        \prod_{x\in F}\dfrac{T_x}{1 - T_x} \in\Q(\bfT)[Y].
    \end{align*}
\end{defn} 
If $\A$ is central, then
$$ \fHP_{\A}(Y,\bfT) = \frac{1}{1-T_{\hat{1}}}
\sum_{F\in\Delta(\proplat(\A))}\pi_F(Y) \prod_{x\in F}\dfrac{T_x}{1 -
  T_x}.
$$
We remark that $\fHP_{\A}(0,\bfT)$ is the
(fine) Hilbert series of the Stanley--Reisner ring of the order
complex of $\toplat(\A)$; see Proposition~\ref{prop:coarse.Y=0}. 

The following self-reciprocity result for central arrangements over
fields of characteristic zero is our first main theorem. The
\emph{rank} of $\A$, denoted by $\rank(\A)$, is the rank of a maximal
element of $\intpos(\A)$.

\begin{thmabc}[Self-reciprocity]\label{thm:reciprocity.new}
  Let $\A$ be a central hyperplane arrangement over a field of
  characteristic zero. Then
    \begin{align} \label{eqn:reciprocity.new}
        \fHP_{\A}\left(Y^{-1}, (T_x^{-1})_{x\in\toplat(\A)}\right) &=
        (-Y)^{-\rank(\A)}T_{\hat{1}}\cdot \fHP_{\A}(Y,\bfT).
    \end{align}
\end{thmabc}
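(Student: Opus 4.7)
Set $P_{\A}(Y, \bfT) := \sum_{F \in \Delta(\proplat(\A))} \pi_F(Y) \prod_{x \in F}\frac{T_x}{1-T_x}$, so that $\fHP_{\A} = (1-T_{\hat{1}})^{-1} P_{\A}$. Using $(1-T_{\hat{1}}^{-1})^{-1} = -T_{\hat{1}}/(1-T_{\hat{1}})$, the theorem is equivalent to
\[ P_{\A}(Y^{-1}, \bfT^{-1}) = (-1)^{r-1}\, Y^{-r}\, P_{\A}(Y, \bfT), \qquad r := \rank(\A). \]
I would attack this by writing $\frac{T_x^{-1}}{1-T_x^{-1}} = -1 - \frac{T_x}{1-T_x}$, expanding $\prod_{x \in F}\bigl(-1-\frac{T_x}{1-T_x}\bigr) = (-1)^{|F|}\sum_{G \subseteq F}\prod_{x \in G}\frac{T_x}{1-T_x}$, and swapping the order of summation to obtain
\[ P_{\A}(Y^{-1}, \bfT^{-1}) = \sum_G \prod_{x \in G}\frac{T_x}{1-T_x}\cdot\Sigma_G(Y^{-1}), \quad \Sigma_G(Z) := \sum_{F \supseteq G}(-1)^{|F|}\pi_F(Z). \]

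Next I exploit the multiplicativity of $\pi_F$ over flag gaps. For $G = (y_1 < \dots < y_m)$, setting $y_0 := \hat{0}$ and $y_{m+1} := \hat{1}$, any $F \supseteq G$ is determined by sub-flags $F_i \subseteq (y_i, y_{i+1})$, and the definition~\eqref{eqn:Poincare-face} of $\pi_F$ factors accordingly as $\pi_F(Y) = \prod_{i=0}^m \pi_{F_i,\,[y_i, y_{i+1}]}(Y)$, where $\pi_{F_i,\,[y_i, y_{i+1}]}$ is the analogous quantity formed inside the central sub-arrangement $\A_{y_{i+1}}^{y_i}$ (whose intersection lattice is the interval $[y_i, y_{i+1}]$). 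Consequently $\Sigma_G$ factors as a product over the gaps of $G$, and the theorem reduces to the key reciprocity
\[ (\star)\qquad \sum_{F \in \Delta(\proplat(\mathcal{B}))}(-1)^{|F|}\pi_F(Y^{-1}) = (-1)^{s-1} Y^{-s} \pi_{\mathcal{B}}(Y) \]
for every central arrangement $\mathcal{B}$ of rank $s$ over a characteristic-zero field. A short rank-and-sign bookkeeping---$\sum_i \rank[y_i, y_{i+1}]=r$ and $m + \sum_i(\rank[y_i,y_{i+1}]-1)=r-1$---then recombines the interval contributions into $\Sigma_G(Y^{-1}) = (-1)^{r-1}Y^{-r}\pi_G(Y)$, finishing the reduction.

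\textbf{Main obstacle.} The crux of the argument is $(\star)$. The base case $s=1$ is immediate since $\pi_{\mathcal{B}}(Y)=1+Y$ for any rank-one central arrangement. For the inductive step my first attempt would be purely combinatorial, combining the deletion--restriction recurrence $\pi_{\A} = \pi_{\A\setminus H} + Y\,\pi_{\A^H}$ with the characteristic-zero fact that $1+Y$ divides every central-arrangement Poincar\'e polynomial (equivalent to $\chi_{\mathcal{B}}(1)=0$), in order to peel off an atom and reduce to a smaller rank. A more conceptual route, in line with the paper's theme, is to realize $\fHP_{\A}$ as a rational-function relative of the Igusa zeta function of $\prod_{H\in\A}\ell_H$: the involution $x\mapsto x^{-1}$ on the $p$-adic integration variable then implements both $Y\mapsto Y^{-1}$ and $T_x\mapsto T_x^{-1}$ and produces~\eqref{eqn:reciprocity.new} directly, with the characteristic-zero hypothesis inherited from the smooth model of central arrangements underpinning the functional equation.
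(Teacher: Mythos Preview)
Your reduction of the theorem to $(\star)$ is correct: the factorization of $\Sigma_G$ over the gaps of $G$ works as described, and the rank/sign bookkeeping is right. This reverses the paper's logic---the paper derives $(\star)$ (the $\mathcal{S}=\varnothing$ case of Corollary~\ref{cor:crunch-pt}) \emph{from} the theorem, whereas you show $(\star)$ implies the theorem---so you have correctly identified that the two statements are equivalent.

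The gap is that you do not prove $(\star)$. Your combinatorial sketch via deletion--restriction is undeveloped, and in fact the paper explicitly flags a direct combinatorial proof of Corollary~\ref{cor:crunch-pt} as desirable but unknown (see the paragraph following it). Note also that $(1+Y)\mid\pi_{\mathcal{B}}(Y)$ for nonempty central $\mathcal{B}$ holds over \emph{any} field, so this cannot be where characteristic zero enters; if your sketch worked it would prove the theorem in all characteristics, which is not known. Your analytic route is essentially the paper's proof, but imprecisely described: the paper does not use an involution $x\mapsto x^{-1}$ on the integration variable. It invokes the Denef--Meuser functional equation (Theorem~\ref{thm:funeq.mult}), whose proof rests on an explicit resolution of singularities---here Hu's iterated blow-up along the worst-intersection locus---together with the Weil conjectures for the exceptional divisors, and this is where characteristic zero is genuinely used. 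Once one has that functional equation (Corollary~\ref{cor:functional-eqn}), Theorem~\ref{thm:equivalence} yields the theorem directly (Section~\ref{sec:reci}), making the detour through $(\star)$ unnecessary.
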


\begin{remark}
The restriction to fields of characteristic zero reflects our method
of proof (see Sections~\ref{sec:ilzf} and~\ref{sec:reci}) rather than any known
counterexamples in positive characteristic. Indeed, the Fano
arrangement, comprising the seven points in the projective plane
over~$\mathbb{F}_2$, has no equivalent arrangement over characteristic zero,
yet satisfies~\eqref{eqn:reciprocity.new}; see
Section~\ref{subsec:Fano}.

Simple examples show that the kind of \emph{self}-reciprocity expressed in
Theorem~\ref{thm:reciprocity.new} is not to be expected for noncentral
arrangements; see Section~\ref{subsec:m-para}. It remains of interest to
investigate the possibilities of reciprocity results linking
$\fHP_{\A}(Y^{-1},\bfT^{-1})$ to the flag Hilbert--Poincar\'e series of other
(``reciprocal'') hyperplane arrangements. One may also want to explore
self-reciprocity phenomena for (central) arrangements over fields of positive
characteristic and potential connections to reciprocity phenomena for other
rational generating functions such as the ones introduced
in~\cite[Sec.~5]{BMS/21}.
\end{remark}

Various substitutions of the variables of the flag Hilbert--Poincar\'e series
yield connections to seemingly different enumeration problems: first, we
explain in Section~\ref{subsec:Igusa.intro} that flag Hilbert--Poincar\'e
series encode the same information as certain $\mfp$-adic integrals associated
with hyperplane arrangements (see Theorem~\ref{thm:equivalence}). In
Section~\ref{subsec:Igusa-topological} we explicate the specific connections
to the well-studied class of (both uni- and multivariate) \emph{Igusa local
  zeta functions} associated with products of linear polynomials, and their
cousins, the \emph{topological zeta functions} (see
Corollary~\ref{cor:topological}).

Second, we discuss in Section~\ref{subsec:atom.braid} an alternative
combinatorial formula (see Theorem~\ref{thm:atom.braid}) for specific
multivariate substitutions, viz.\ \emph{atom zeta functions}, associated with
\emph{classical} Coxeter arrangements---viz.\ irreducible Coxeter arrangements
of types $\mathsf{A}$, $\mathsf{B}$, or~$\mathsf{D}$---in terms of total
partitions and rooted trees.

Third, we focus in Section~\ref{subsec:coarse.intro} on \emph{coarse flag
  Hilbert--Poincar\'e series}, viz.\ the bivariate ``coarsening'' of the flag
Hilbert--Poincar\'e series $\fHP_{\A}(Y,\bfT)$ obtained by setting $T_x=T$ for
all~$x$. Our Theorem~\ref{thm:coarse.Y=1} presents coarse flag
Hilbert--Poincar\'e series associated with Coxeter arrangements as
``$Y$-analogs'' of Hilbert series of the Stanley--Reisner rings of the first
barycentric subdivisions of standard simplices. On the level of rational
generating functions, this is reflected by an intriguing connection with
Eulerian polynomials.

\begin{remark}
  Another bivariate substitution relates the flag Hilbert--Poincar\'e series
  of a hyperplane arrangement $\A$ with the motivic zeta function
  $Z_{M(\A)}(Y,T)$ introduced in~\cite[Def.~1.1]{JKU/21} associated with the
  (representable) matroid $M(\A)$ determined by~$\A$. Indeed, we have
  \begin{align*} 
    \mathsf{fHP}_\A\left( -Y^{-1}, \left(Y^{-\mathrm{rk}(x)}T^{|\A_x|}\right)_{x\in\widetilde{\mathcal{L}}(\A)} \right) &= Z_{M(\A)}(Y, T).
\end{align*}
Our Theorem~\ref{thm:reciprocity.new} implies \cite[Thm.~1.6]{JKU/21} in this
case. See also Remark~\ref{rem:jku.cor1.8}.
\end{remark}

\subsection{Flag Hilbert--Poincar\'e series and $\mathfrak{p}$-adic integrals} 
\label{subsec:Igusa.intro}

For general arrangements $\A$ over fields of characteristic zero, the functions 
$\fHP_{\A}$ are universal objects from
which various $\mfp$-adic integrals associated with $\A$ may be
obtained via specializations. To discuss this connection, we first
recall some representability properties of hyperplane arrangements.

If $\nofield$ is a field and $\A_{\nofield}$ is a hyperplane
arrangement defined over $\nofield$ such that
$\intpos(\A)\cong\intpos(\A_{\nofield})$ as posets, then we say that
$\A$ is \emph{$\nofield$-representable} and call $\A_{\nofield}$ a
\emph{$\nofield$-representation} of $\A$.  If, as we now assume, $\A$
is a hyperplane arrangement defined over a field $\field$ of
characteristic zero, there exists a finite extension $\nofield$ of
$\mathbb{Q}$ such that $\A$ is $\nofield$-representable;
cf.~\cite[Prop.~6.8.11]{Oxley}. Having fixed such a representation
of~$\A$, we may further assume, without loss of generality, that each
$H\in \A$ is of the form $H=V(L)$, where $L(\bfX) =
c_L+\sum_{j=1}^d\alpha_{L,j}X_j\in\Gri_{\nofield}[\bfX]$ is an affine
linear polynomial over $\Gri_{\nofield}$, the ring of integers of the
number field~$\nofield$.  These choices allow us, in fact, to identify
the arrangement $\A$ with the collection of polynomials $L$ arising in
this way. We will use this freedom frequently.

In the sequel we denote by $\lri$ a compact discrete valuation ring (cDVR)
with an $\Gri_{\nofield}$-module structure. This could be a finite extension
of the completion $\Gri_{\nofield,\mfp}$ of $\Gri_{\nofield}$ at a nonzero
prime ideal $\mfp$ (in characteristic zero) or a power series ring of the form
$\Fq\llbracket X \rrbracket$, where $\Fq$ is the residue field of such a ring
(in positive characteristic).

Denoting by $\mfp$ the unique maximal ideal of $\lri$, we write
$\A(\lri/\mfp)$ for the reduction of $\A$ modulo~$\mfp$. If
$\mcL(\A)\cong \mcL(\A(\lri/\mfp))$, then $\A$ is said to have \emph{good
  reduction over~$\Fq$}, provided $\lri/\mfp$ has cardinality~$q$. It is
well-known that $\A$ has good reduction over $\Fq$ for all such $q$ not
divisible by finitely many (``bad'') primes; cf.\
\cite[Chap.~5.1]{Stanley/07}.

We now explain the connection between the flag Hilbert--Poincar\'e
series $\fHP_{\A}$ and various (multi- and univariate) $\mfp$-adic
integrals associated with the hyperplane arrangement~$\A$.

\begin{defn}\label{def:analytic.zf} 
    The \emph{analytic zeta function} of $\A$ over $\lri$ is
    \begin{align*}
        \zeta_{\A(\lri)}(\bfs) &= \int_{\lri^{\dim(\A)}}
        \prod_{x\in\toplat(\A)} \norm{\A_x}^{s_x} \, |\textup{d}\bfX|,
    \end{align*} 
    where $s_x$ is a complex variable for each $x\in\toplat(\A)$,
    further $\norm{\mathcal{X}} := \max\{|f| ~|~ f\in \mathcal{X}\}$
    for a finite set $\mathcal{X}\subset\lri$, and $|\textup{d}\bfX|$
    is the additive Haar measure on $\lri^{\dim(\A)}$, normalized so
    that~$\lri^{\dim(\A)}$ has measure~$1$.
\end{defn}

Our next main result establishes that the functions
$\zeta_{\A(\lri)}(\bfs)$ and $\fHP_{\A}(Y,\bfT)$ determine each other,
in the following precise sense.

\begin{thmabc}\label{thm:equivalence} 
    Let $\A$ be a hyperplane arrangement over a number
    field~$\nofield$. For indeterminates
    $\bfs:=(s_x)_{x\in\toplat(\A)}$ and
    $\bm{r}:=(r_x)_{x\in\toplat(\A)}$ and $x\in\toplat(\A)$, let
    \begin{align}\label{def:gxs} 
        g_x(\bfs) &= \codim(x) + \sum_{y\in \toplat(\A_x)}s_y,
       \\ h_x(\bm{r}) &= \sum_{y\in \toplat(\A_x)} (r_y -
        \codim(y))\mu(y, x).\label{def:hxr}
    \end{align} 
    If $\lri$ is a cDVR and an $\Gri_{\nofield}$-module with residue
    field cardinality $q$ such that $\A$ has good reduction
    over~$\Fq$, then
    \begin{align} 
        \zeta_{\A(\lri)}\left(\bfs\right) &=
        \fHP_{\A}\left(-q^{-1},\left(q^{-g_x(\bfs)}\right)_{x\in
          \toplat(\A)}\right), \label{eqn:equiv1}\\ \fHP_{\A}\left(-q^{-1},
        (q^{-r_x})_{x\in\toplat(\A)}\right) &=
        \zeta_{\A(\lri)}\left(\left(h_x(\bm{r})\right)_{x\in
          \toplat(\A)}\right).\label{eqn:equiv2}
    \end{align}
\end{thmabc}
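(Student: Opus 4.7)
The plan is to prove \eqref{eqn:equiv1} by decomposing $\lri^{\dim(\A)}$ into cells indexed by flags and ``depths'', and then to derive \eqref{eqn:equiv2} by M\"obius inversion. The second step is purely formal: since $\toplat(\A_x) = \{y \in \intpos(\A) : \hat{0} < y \leq x\}$, the relation $g_x(\bfs) - \codim(x) = \sum_{y \in \toplat(\A_x)} s_y$ is the zeta transform of $(s_y)_y$ on the interval $(\hat{0}, x]$ of $\intpos(\A)$; M\"obius inversion on these intervals (setting $s_{\hat{0}} := 0$) then yields $s_x = h_x((g_y(\bfs))_y)$, showing that the substitutions $\bfs \mapsto (g_x(\bfs))_x$ and $\bm{r} \mapsto (h_x(\bm{r}))_x$ are mutually inverse, so that \eqref{eqn:equiv2} follows from \eqref{eqn:equiv1} under $r_x = g_x(\bfs)$.

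To prove \eqref{eqn:equiv1}, I would associate to each $\bm{X} \in \lri^{\dim(\A)}$ a pair $(F(\bm{X}), \bm{n}(\bm{X}))$ as follows. At each level $k \geq 1$ with $\A^{(k)}(\bm{X}) := \{L \in \A : v(L(\bm{X})) \geq k\} \neq \emptyset$, good reduction ensures $y_k := \bigcap_{L \in \A^{(k)}(\bm{X})} V(L) \in \intpos(\A)$; the sequence $y_1 \geq y_2 \geq \cdots$ eventually terminates. Its distinct values, read in increasing order, form a flag $F(\bm{X}) = (x_1 < \cdots < x_\ell) \in \Delta(\toplat(\A))$, with $x_\ell = y_1$ deepest, and the dwell times $n_i := |\{k : y_k = x_i\}|$ produce a depth vector $\bm{n} \in \mathbb{Z}_{\geq 1}^\ell$. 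This yields a disjoint decomposition $\lri^{\dim(\A)} = \bigsqcup_{F, \bm{n}} C_F^{(\bm{n})}$.

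The core computation is to evaluate each cell integral. Unwinding the definitions, $v(L(\bm{X})) = \sum_{k \geq i_0(L)} n_k$ for $L \in \A_{x_{i_0(L)}} \setminus \A_{x_{i_0(L) - 1}}$; hence for $y \in \toplat(\A)$ with $y \leq x_\ell$, $\|\A_y\| = q^{-\sum_{k \geq k(y)} n_k}$ where $k(y) := \min\{i : y \leq x_i\}$ (and $\|\A_y\| = 1$ otherwise), giving integrand value $q^{-\sum_{i=1}^\ell n_i \sum_{y \in \toplat(\A_{x_i})} s_y}$. Choosing local coordinates $(Y_1, \ldots, Y_d)$ adapted to the flag (so that $x_i = \{Y_1 = \cdots = Y_{\rk(x_i)} = 0\}$), the cell $C_F^{(\bm{n})}$ corresponds to prescribed exact valuations on the normal directions at each level, together with genericity conditions modulo $\mfp$ ensuring the dwell times are exactly $\bm{n}$; these genericity conditions are precisely complement conditions for the central arrangements $\A^{x_{i-1}}_{x_i}$ and the restriction $\A^{x_\ell}$. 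Applying the Crapo--Rota identity $q^{-r} \chi_{\A'}(q) = \pi_{\A'}(-q^{-1})$ to each of these central arrangements yields $|C_F^{(\bm{n})}| = q^{-\sum_i n_i \rk(x_i)} \prod_{k=0}^\ell \pi_{\A^{x_k}_{x_{k+1}}}(-q^{-1}) = q^{-\sum_i n_i \rk(x_i)}\, \pi_F(-q^{-1})$. Multiplying by the integrand value and summing the geometric series over $\bm{n} \in \mathbb{Z}_{\geq 1}^\ell$, then over $F$, yields the right-hand side of \eqref{eqn:equiv1}.

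The main obstacle will be the measure computation: rigorously identifying the exact-valuation strata at each level of the flag and recognizing the resulting complement conditions as Crapo--Rota instances for $\A^{x_{k-1}}_{x_k}$ and $\A^{x_\ell}$, so that the factors multiply to $\pi_F(-q^{-1})$. Secondary care is needed to verify that the parametrization $(F, \bm{n})$ is well-defined---that $\{k : y_k = x_i\}$ is always an interval of $\mathbb{Z}$, which follows from the monotonicity of $\A^{(k)}(\bm{X})$ in $k$ and of $\A_{x_i}$ in $i$.
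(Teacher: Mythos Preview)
Your approach is correct and lands on the same formula; the difference from the paper is organizational rather than conceptual. The paper proves \eqref{eqn:equiv1} via a one-step recursion (Lemma~\ref{lem:gen-strat.new}): stratify $\lri^{\dim(\A)} = \bigsqcup_{x\in\intpos(\A)} U_x$ with $U_x = \{z : L(z)\in\mfp \iff L\in\A_x\}$, use the finite-field count $\chi_{\A^x}(q)$ for the number of residue classes landing in $U_x$, and note that the change of variables $\bm{X}\mapsto v_x + \pi\widetilde{\bm{X}}$ on $U_x$ reduces the integral to $q^{-\rk(x)-\sum_{y\in\toplat(\A_x)}s_y}\pi_{\A^x}(-q^{-1})\,\zeta_{\A_x(\lri)}$; iterating then produces the flag sum (Theorem~\ref{thm:explicit-formulae.new}). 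Your direct cell decomposition $\bigsqcup_{F,\bm{n}}C_F^{(\bm{n})}$ is exactly this recursion unrolled, the dwell times $n_i$ recording how many iterations linger at $x_i$ before the next jump. The recursive phrasing has one practical advantage over yours: it invokes good reduction only at level one (for the current subarrangement), whereas your integrand computation implicitly needs $\A^{(k)}(\bm{X}) = \A_{y_k}$ at every level~$k$---not just $y_k\in\intpos(\A)$---and your write-up would have to justify this, which the recursive change of variables handles automatically. Your M\"obius-inversion derivation of \eqref{eqn:equiv2} from \eqref{eqn:equiv1} is verbatim what the paper does.
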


A consequence of Theorem~\ref{thm:equivalence} is that
$\fHP_\A(Y,\bfT)$ provides an \emph{explicit}, combinatorial formula
for the multivariate rational function
$\zeta_{\A(\lri)}(\bfs)$. General formulae for $\mfp$-adic integrals
associated with polynomial mappings---not necessarily defined by
linear forms---are typically obtained via resolutions of
singularities (or log-principalizations) of the varieties these
mappings define; see, for instance, \cite{VeysZunigaGalindo/08} and
compare Section~\ref{subsec:funeq}. Our combinatorial approach
obliterates the need for these, in general quite unwieldy,
algebro-geometric tools and the choices they require.

We will prove Theorem~\ref{thm:equivalence} in
Section~\ref{sec:ilzf}. The interpretation of $\fHP_{\A}$ in terms of
the $\mfp$-adic integrals $\zeta_{\A(\lri)}$ expressed by
Theorem~\ref{thm:equivalence} is key to our proof of
Theorem~\ref{thm:reciprocity.new} in Section~\ref{sec:reci}.

We record numerous examples of the rational functions $\fHP_{\A}$ in
Section~\ref{sec:exa}. The case of Boolean arrangements $\A =
\msfA_1^n$ is of particular interest: specific substitutions of the
functions $\fHP_{\msfA_1^n}$ arise in the study \cite{RV:CICO} of the
average sizes of kernels of generic matrices with support constraints
over finite quotients of cDVRs; see Section~\ref{subsec:ask}.

\subsection{Igusa and topological zeta functions}
\label{subsec:Igusa-topological}

Assume now, as in Section~\ref{subsec:Igusa.intro}, that $\A$ is a
$\nofield$-representation and $\lri$ is a cDVR and an $\Gri_{\nofield}$-module. An important specialization of the multivariate function
$\zeta_{\A(\lri)}\left(\bfs\right)$ yields the (univariate)
\emph{Igusa local zeta function} (over~$\lri$) associated with the
product $f_{\A}(\bfX) := \prod_{L \in \A}L(\bfX)$ of linear polynomials
$L\in\Gri_{\nofield}[\bfX]$ (see \cite{Denef:Report}):
\begin{equation}\label{eqn:Igusa}
    \mathsf{Z}_{f_{\A},\lri}(s) := \zeta_{\A(\lri)}\left(
    \left(s\cdot\delta_{|\A_x|=1}\right)_{x\in\toplat(\A)}\right) =
    \int_{\lri^{\dim(\A)}}|f_{\A}|^s |\textup{d}\bfX|;
\end{equation}
here $s$ is a complex variable. Motivic zeta functions related with
such integrals have been studied and can be used to understand the
topological zeta function $\mathsf{Z}_{f_{\A}}^{\mathrm{top}}(s)$
associated with~$f_{\A}(\bfX)$. This is executed, for example,
in~\cite{budur2020contact} for generic central arrangements, among
others; cf.\ Section~\ref{subsec:gen.central.arr}.

As a consequence of Theorem~\ref{thm:equivalence}, we derive the
(multivariate and univariate) topological zeta function of a
hyperplane arrangement over a field of characteristic zero. From
Theorem~\ref{thm:equivalence}, $\fHP_{\A}(Y, \bfT)$ defines a system
of local zeta functions of Denef type in the sense of
Rossmann~\cite[Sec.~5]{Rossmann:top-zf}. Associated with $\A$ is thus
a unique rational function $\zeta^{\mathrm{top}}_{\A}(\bfs)\in
\Q(\bfs)$, interpretable as the limit ``$q\rightarrow 1$'' and called
the \emph{multivariate topological zeta function} of~$\A$. Analogous
to~\eqref{eqn:Igusa}, the topological zeta function
$\mathsf{Z}_{f_{\A}}^{\mathrm{top}}(s)$ is a specialization of
$\zeta^{\mathrm{top}}_{\A}(\bfs)$. For further details, see
Section~\ref{subsec:topological}.

For $F\in\Delta(\toplat(\A))$, set
\begin{align}\label{eqn:pi-circ}
    \picirc_{F}(Y) &:= \dfrac{\pi_F(Y)}{(1 +
      Y)^{|F|}}, &
    \pibar_{F}(Y) &:= \dfrac{\pi_F(Y)}{(1 +
    Y)^{|F|+1}}.
\end{align}
In fact, $\picirc_F(Y)$ is a polynomial in $Y$, and if $\A$ is central
and $F\in\Delta(\proplat(\A))$, then $\pibar_F(Y)$ is also a
polynomial in~$Y$; see Lemma~\ref{lem:pi-circ}.

\begin{cor}\label{cor:topological}
  Let $\A$ be a hyperplane arrangement over a field of characteristic zero,
  and let $\bfs = \left((s_x)_{x\in\toplat(\A)}\right)$ be indeterminates.
  Then the multivariate topological zeta function of $\A$ is
    \begin{align}\label{eqn:topo.multi} 
      \zeta_{\A}^{\mathrm{top}}(\bfs)
      &= \sum_{F\in\Delta(\widetilde{\mathcal{L}}(\mathcal{A}))}
        \picirc_{F}(-1) \prod_{x\in F}
        \dfrac{1}{\codim(x)+\sum_{y\in\toplat(\A_x)}s_y}.
    \end{align}
    If $\A$ is also central, then
    \begin{equation}\label{eqn:topo.multi.central}
      \zeta^{\mathrm{top}}_{\A}(\bfs) = \dfrac{1}{\rk(\A) + \sum_{y\in\toplat(\A)}s_y} \sum_{F\in\Delta(\proplat(\A))} \pibar_F(-1) \prod_{x\in F} \dfrac{1}{\rk(x) + \sum_{y\in\toplat(\A_x)}s_y}.
    \end{equation}
\end{cor}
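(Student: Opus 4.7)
The plan is to derive \eqref{eqn:topo.multi} and \eqref{eqn:topo.multi.central} by computing the ``$q \to 1$'' limit of the specialization of $\fHP_\A$ guaranteed by Theorem~\ref{thm:equivalence}. By that theorem, $\fHP_\A(-q^{-1}, (q^{-g_x(\bfs)})_{x \in \toplat(\A)})$ equals the analytic zeta function $\zeta_{\A(\lri)}(\bfs)$ whenever $\A$ has good reduction over the residue field $\Fq$; the defining property of the multivariate topological zeta function in Rossmann's framework is that it is recovered as the limit of such a family as $q \to 1$. The task therefore reduces to evaluating this limit explicitly from the combinatorial expression in Definition~\ref{def:algebraic}.

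To do so, I would first invoke Lemma~\ref{lem:pi-circ} to rewrite
\[
\fHP_\A(Y, \bfT) = \sum_{F \in \Delta(\toplat(\A))} \picirc_F(Y)\, (1 + Y)^{|F|} \prod_{x \in F} \dfrac{T_x}{1 - T_x},
\]
where each $\picirc_F(Y) \in \Q[Y]$. Substituting $Y = -q^{-1}$ and $T_x = q^{-g_x(\bfs)}$ transforms the summand into
\[
\picirc_F(-q^{-1}) \prod_{x \in F} \dfrac{(1 - q^{-1})\, q^{-g_x(\bfs)}}{1 - q^{-g_x(\bfs)}}.
\]
Using the elementary identity $\lim_{q \to 1} (1 - q^{-a})/(1 - q^{-1}) = a$, together with $\picirc_F(-q^{-1}) \to \picirc_F(-1)$ and $q^{-g_x(\bfs)} \to 1$, the summand indexed by $F$ tends to $\picirc_F(-1) \prod_{x \in F} 1/g_x(\bfs)$. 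Summing over $F \in \Delta(\toplat(\A))$ produces exactly \eqref{eqn:topo.multi}, after recalling $g_x(\bfs) = \codim(x) + \sum_{y \in \toplat(\A_x)} s_y$.

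For the central case, I would start from the alternative form of $\fHP_\A$ recorded just after Definition~\ref{def:algebraic}, namely
\[
\fHP_\A(Y, \bfT) = \dfrac{1}{1 - T_{\hat{1}}} \sum_{F \in \Delta(\proplat(\A))} \pi_F(Y) \prod_{x \in F} \dfrac{T_x}{1 - T_x}.
\]
Lemma~\ref{lem:pi-circ} guarantees that when $\A$ is central and $F \in \Delta(\proplat(\A))$, the quotient $\pibar_F(Y) = \pi_F(Y)/(1+Y)^{|F|+1}$ is a polynomial; the extra factor of $(1+Y)$ (compared to the noncentral analysis above) is precisely what is needed to absorb the additional pole at $T_{\hat{1}} = 1$ upon the substitution. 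Applying the same limit procedure, the prefactor becomes $1/g_{\hat{1}}(\bfs) = 1/(\rk(\A) + \sum_{y \in \toplat(\A)} s_y)$---using $\A_{\hat{1}} = \A$ and $\codim(\hat{1}) = \rk(\A)$---and each remaining summand tends to $\pibar_F(-1) \prod_{x \in F} 1/g_x(\bfs)$, yielding \eqref{eqn:topo.multi.central}.

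The main technical point, rather than a genuine obstacle, is to verify that the ``$q \to 1$'' limit commutes with the finite sum over flags and coincides with Rossmann's topological specialization. Once Lemma~\ref{lem:pi-circ} secures polynomiality of $\picirc_F$ (and of $\pibar_F$ in the central setting), no spurious poles survive, and the remainder of the argument is a bookkeeping of cancellations between the factors $(1 + Y)^{|F|}$ coming from the Poincar\'e polynomials and the simple poles $1 - q^{-g_x(\bfs)}$ contributed by the substitution.
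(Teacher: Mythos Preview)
Your proposal is correct and, for the general formula \eqref{eqn:topo.multi}, follows essentially the same approach as the paper: rewrite $\pi_F(Y) = (1+Y)^{|F|}\,\picirc_F(Y)$ via Lemma~\ref{lem:pi-circ}, substitute, and extract the constant term of the $(q-1)$-expansion.

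For the central formula \eqref{eqn:topo.multi.central} there is a minor difference in route. The paper does not restart from the central form of $\fHP_\A$; instead it deduces \eqref{eqn:topo.multi.central} directly from \eqref{eqn:topo.multi} by observing that $\picirc_F(-1)=0$ for every $F\in\Delta(\proplat(\A))$ (since $\pibar_F(Y)\in\Z[Y]$ by Lemma~\ref{lem:pi-circ}), so that only flags containing $\hat{1}$ survive in the sum, and then peeling off the $\hat{1}$-factor. Your approach---working from the expression $\fHP_\A(Y,\bfT)=(1-T_{\hat{1}})^{-1}\sum_{F\in\Delta(\proplat(\A))}\pi_F(Y)\prod_{x\in F}T_x/(1-T_x)$ and absorbing the extra $(1+Y)$ into $\pibar_F$---is equally valid and rests on the same polynomiality statement; it just organizes the cancellation differently. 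The paper's route has the small advantage of making the central case an immediate corollary of the general one rather than a parallel computation.
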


We see in particular that if
$f\in\mathbb{C}[\bfX]$ is the product of linear polynomials, then the
topological zeta function $\mathsf{Z}_{f}^{\mathrm{top}}(s)$ depends
only on the combinatorics of the associated (multi-) arrangement; see
also \cite[Prop.~2.2]{BSY:zeta-b-functions}. This is also along the
lines and consistent with work of van der Veer, who proves that
topological zeta functions for general matroids are independent of the
underlying building set; see~\cite{vanderVeer}.

The zeta functions $\mathsf{Z}_f(s)$ and
$\mathsf{Z}_f^{\mathrm{top}}(s)$ lie at the heart of the Monodromy and
Topological Monodromy Conjectures~\cite{Igusa:Monodromy}, connecting
the singularities of $\{f=0\}$ with the local monodromy action on the
Milnor fibers of $f$ and relating the roots of the Bernstein--Sato
polynomial $b_f(s)$ with the (real parts of the) poles of
$\mathsf{Z}_f(s)$; see~\cite{Denef:Report} for a general
introduction. Budur, Musta\c{t}\u{a}, and Teitler~\cite{BMT:Monodromy}
proved that arbitrary complex hyperplane arrangements satisfy part of
the Topological Monodromy Conjecture and further reduced the remaining
part to the so-called Strong Monodromy
Conjecture~\cite[Conjecture~1.2]{BMT:Monodromy}, which is concerned
with the existence of a specific, combinatorially defined root
of~$b_f(s)$. The Strong Monodromy Conjecture is known to hold for some
classes of hyperplane
arrangements~\cite[Theorem~1.4]{BSY:zeta-b-functions}---notably for
Weyl arrangements~\cite[Theorem~1.1]{BW:Weyl-monodromy}---but it is,
in general, open. Wu recently proved a multivariate version of the
Topological Monodromy Conjecture for hyperplane arrangements;
cf.\ \cite[Thm.~1.7]{Wu/20}.

\subsection{Atom zeta functions}\label{subsec:atom.braid}

The specialization $\mathsf{Z}_{f_{\A},\lri}(s)$ defined
in~\eqref{eqn:Igusa} loses sight of all variables not corresponding to
atoms (i.e.\ minimal elements in $\widetilde{\mcL}(\A)$) and cannot
distinguish atoms. Just as the multivariate topological zeta
function~\eqref{eqn:topo.multi} refines the univariate topological
zeta function, we consider the following,
slightly more distinguishing $\mfp$-adic specialization
of~$\zeta_{\A(\lri)}(\bfs)$.

\begin{defn}\label{def:atom} 
    The \emph{atom zeta function} of $\A$ is
    \begin{equation*}
        \zeta_{\A(\lri)}^{\atom}\left((s_L)_{L\in\A}\right) =
        \zeta_{\A(\lri)}\left((s_x\cdot
        \delta_{|\A_x|=1})_{x\in\toplat(\A)}\right) =
        \int_{\lri^{\dim(\A)}} \prod_{L\in\A} |L|^{s_L}\,
        |\textup{d}\bfX|.
      \end{equation*}
\end{defn}
Here we identified atoms with elements $L\in\A$. We note that the
independent variables $(s_L)_{L\in\A}$ allow for the treatment of
multi-arrangements in the sense of~\cite{budur2020contact}.

We remark that the atom zeta function is the finest coarsening of the
multivariate zeta function $\zeta_{\A(\lri)}(\bfs)$ that is, in
general, multiplicative with respect to direct products of hyperplane
arrangements. Namely, if $\A$ and $\A'$ are arrangements of
hyperplanes in (disjoint vector spaces) $K^d$ and $K^{d'}$ and $\lri$
is as above, then, by Fubini's theorem,
$$\zeta^{\atom}_{(\A\times\A')(\lri)}((\bfs,\bfs')) =
\zeta^{\atom}_{\A(\lri)}(\bfs)\zeta^{\atom}_{\A'(\lri)}(\bfs').$$

Our next result paraphrases an explicit combinatorial formula for atom zeta
functions associated with classical Coxeter arrangements; see
Section~\ref{subsec:braid.total} for definitions and the precise version
(Theorem~\ref{thm:total-partition}). There we define, in particular, for
$n\in\N$, the sets $\TPnew{\mathsf{X}}{n}$ of total partitions of
type~$\mathsf{X}_n$; for type $\mathsf{X}=\mathsf{A}$, these are also defined
in~\cite[Example~5.2.5]{Stanley:Vol2} and related to Schr\"oder's fourth
problem.

\begin{thmabc}\label{thm:atom.braid}
  Let $\mathsf{X} \in \{\mathsf{A},\mathsf{B},\mathsf{D}\}$ and
  $n\in\N$, with $n\geq 2$ if~$\mathsf{X}=\mathsf{D}$. Then there
  exist, for all $\tau \in \TPnew{\mathsf{X}}{n}$, explicitly
  determined polynomials $\pitwo{X}{\tau}(Y)\in\Z[Y]$ and products of
  geometric progressions $\treeCgp_{\mathsf{X},\tau}\left( Z,
  (T_L)_{L\in\mathsf{X}_n}\right)$ such that the following holds: for
  all cDVR $\lri$ with residue field cardinality~$q$, assumed to be
  odd unless $\mathsf{X} = \mathsf{A}$,
    \begin{align*}
        \zeta_{\mathsf{X}_n(\lri)}^{\atom}\left((s_L)_{L\in\mathsf{X}_n}\right)
        &= \frac{1}{1-q^{-n-\sum_{L\in\mathsf{X}_n}s_L}}
        \sum_{\tau\in\TPnew{\mathsf{X}}{n}}\pitwo{X}{\tau}(-q^{-1})
        \treeCgp_{\mathsf{X},\tau}\left(q^{-1},(q^{-s_L})_{L\in
          \mathsf{X}_n}\right).
    \end{align*}
\end{thmabc}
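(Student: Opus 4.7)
The plan is to deduce Theorem~\ref{thm:atom.braid} from Theorem~\ref{thm:equivalence} by atom specialization, followed by a combinatorial reorganization of the flag Hilbert--Poincar\'e series of $\mathsf{X}_n$ along total partitions of type $\mathsf{X}$.

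First, by Definition~\ref{def:atom}, the atom zeta function is the specialization of $\zeta_{\A(\lri)}(\bfs)$ obtained by setting $s_x = s_L\,\delta_{|\A_x|=1}$ and identifying atoms of $\mathsf{X}_n$ with its defining linear forms $L\in\mathsf{X}_n$. Substituting into the identity~\eqref{eqn:equiv1} of Theorem~\ref{thm:equivalence} yields
\[
    \zeta^{\atom}_{\mathsf{X}_n(\lri)}\!\left((s_L)_L\right) = \fHP_{\mathsf{X}_n}\!\Bigl(-q^{-1},\,\bigl(q^{-g_x(\bfs)}\bigr)_{x\in\toplat(\mathsf{X}_n)}\Bigr),
\]
where, under the atom specialization, $g_x(\bfs) = \codim(x) + \sum_{L\in\A_x} s_L$. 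The good-reduction hypothesis---automatic in type~$\mathsf{A}$, and guaranteed by the assumption that $q$ is odd in types $\mathsf{B}$ and $\mathsf{D}$---ensures that Theorem~\ref{thm:equivalence} applies and that $\intpos(\mathsf{X}_n(\lri))\cong\intpos(\mathsf{X}_n)$.

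Second, since $\mathsf{X}_n$ is central, I would use the factorization of $\fHP_{\mathsf{X}_n}$ displayed immediately after Definition~\ref{def:algebraic} to isolate the top-element factor $1/(1 - T_{\hat{1}})$. Because $\codim(\hat{1}) = \rk(\mathsf{X}_n) = n$ and $\A_{\hat{1}} = \mathsf{X}_n$, we have $g_{\hat{1}}(\bfs) = n + \sum_L s_L$, which produces exactly the prefactor $1/(1 - q^{-n-\sum_L s_L})$ appearing in the statement. The residual summation runs over flags $F\in\Delta(\proplat(\mathsf{X}_n))$.

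Third, and combinatorially most substantial, I would reorganize this residual sum as a sum over total partitions $\tau\in\TPnew{\mathsf{X}}{n}$. Using the framework of Section~\ref{subsec:braid.total}, I would set up a correspondence that reads every flag $F = (x_1<\cdots<x_\ell)$ of (signed) set partitions as the internal skeleton of a rooted tree on $n$ leaves of the appropriate type. Under this correspondence, each subquotient $\A^{x_k}_{x_{k+1}}$ splits, by the well-known product structure of intervals in the intersection lattices of classical Coxeter arrangements, as a direct product of smaller classical Coxeter arrangements attached to the nodes of $\tau$. Multiplicativity of the Poincar\'e polynomial over direct products then collects the factors in~\eqref{eqn:Poincare-face} into a product indexed by the internal nodes of $\tau$; this is the polynomial $\pitwo{X}{\tau}(-q^{-1})$. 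In parallel, the exponent $g_x(\bfs)$ of each factor $q^{-g_x(\bfs)}/(1 - q^{-g_x(\bfs)})$ is the codimension contribution at the corresponding node plus the sum of $s_L$ over the leaves in the subtree below it, and repackaging these exponential factors over the internal nodes of $\tau$ yields precisely $\treeCgp_{\mathsf{X},\tau}\bigl(q^{-1},(q^{-s_L})_L\bigr)$.

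The main obstacle is rigorously executing the third step in types $\mathsf{B}$ and $\mathsf{D}$, where the intersection lattices are governed by signed set partitions and each subquotient $\A^{x_k}_{x_{k+1}}$ decomposes into several type-$\mathsf{A}$ pieces (for the unsigned blocks merged in the refinement step) together with at most one type-$\mathsf{B}$ or type-$\mathsf{D}$ piece (for the signed block). Verifying that the induced factorization of $\pi_F(-q^{-1})$ matches the node-by-node decoration prescribed by the tree $\tau$ is the delicate combinatorial bookkeeping; once this is in place, the flag sum rearranges along total partitions exactly as required, and the stated formula follows.
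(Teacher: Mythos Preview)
Your outline diverges from the paper's route in an essential way, and the divergence hides a real gap in step~3.

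The paper does \emph{not} pass through Theorem~\ref{thm:equivalence} or the flag sum over $\Delta(\proplat(\mathsf{X}_n))$. Instead it specializes Lemma~\ref{lem:gen-strat.new} directly to the atom variables, obtaining a one-step recursion (Lemmas~\ref{lem:type-A_n-recursion}--\ref{lem:type-D_n-recursion}). The point is that, for $x\in\intpos(\mathsf{X}_n)$, the subarrangement $(\mathsf{X}_n)_x$ is a \emph{direct product} of smaller classical Coxeter arrangements, and the atom zeta function is multiplicative over such products (remark following Definition~\ref{def:atom}). Iterating the recursion, each step branches along the blocks of a set partition; since each block must eventually be refined nontrivially, the recursion tree is precisely a total partition. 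This is why the answer is indexed by $\TPnew{X}{n}$ and why the $\treeCgp$ factors are products indexed by \emph{nodes}.

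Your step~3 implicitly assumes a bijection between flags in $\Delta(\proplat(\mathsf{X}_n))$ and total partitions, with matching term-by-term contributions. Neither holds. Already for $\mathsf{A}_3$ there are $32$ flags but only $|\TP_4|=26$ total partitions. More seriously, the geometric-progression factors do not ``repackage'' as you describe: a single flag element $x$ with partition $\{i,j\}|\{k,\ell\}$ contributes one factor $\GP{q^{-2-s_{ij}-s_{k\ell}}}$, whereas the tree contribution $\treeCgp$ carries a \emph{product} $\GP{q^{-1-s_{ij}}}\GP{q^{-1-s_{k\ell}}}$, one factor per node. What is actually true is an algebraic identity: summing the flag contributions over all chains through $x$ and its refinements yields the product form, e.g.\ $\GP{ab}\bigl(1+\GP{a}+\GP{b}\bigr)=\GP{a}\GP{b}$. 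This is exactly the Boolean/weak-order factorization underlying the multiplicativity of $\zeta^{\atom}$ over products, and it is what the paper's recursive argument builds in automatically. Your write-up neither states nor proves this identity, so the passage from the flag sum to the total-partition sum is not justified. The obstacle you flag in types $\mathsf{B}$ and $\mathsf{D}$ is real (and handled in the paper via Lemmas~\ref{lem:AB-Coxeter-res}--\ref{lem:Poincare-D-res}), but the more basic issue is already present in type~$\mathsf{A}$.
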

  
One may use these formulae to obtain alternative combinatorial formulae for
the (multi- and univariate) topological zeta functions given in
Corollary~\ref{cor:topological}. We leave the details to the reader and refer to Example~\ref{ex:A3}.

As a consequence of Theorem~\ref{thm:atom.braid}, we obtain in
Corollary~\ref{cor:A_n-unlab} an explicit formula for Igusa's local
zeta function $\mathsf{Z}_{f_{\mathsf{A}_n},\lri}(s)$ in terms of
unlabeled rooted trees with $n+1$ leaves. In
Corollary~\ref{cor:B.typeA} we express the atom zeta
functions~$\zeta^{\textup{at}}_{\mathsf{B}_n(\lri)}(\bfs)$
and~$\zeta^{\textup{at}}_{\mathsf{D}_n(\lri)}(\bfs)$ as sums over
$\TPnew{A}{n}$, a set notably smaller than~$\TPnew{B}{n}$
and~$\TPnew{D}{n}$.

\subsection{Coarse flag Hilbert--Poincar\'e series}\label{subsec:coarse.intro}

Consider now the bivariate specialization of the flag
Hilbert--Poincar\'e series~$\fHP_{\A}$ obtained by setting $T_x = T$
for each $x\in \toplat(\A)$:
\begin{defn}\label{def:coarse.new}
    The \emph{coarse flag Hilbert--Poincar\'e series} of $\A$ is
    $$\cfHP_{\A}(Y,T) = 
    \sum_{F\in\Delta(\toplat(\A))} \pi_F(Y)
    \left(\dfrac{T}{1 - T}\right)^{|F|}\in\Q(T)[Y].$$
\end{defn}

We define the polynomial $\mathcal{N}_{\A}(Y, T)\in\Q[Y,T]$ by the
formula
\begin{equation}\label{def:numer.coarse}
    \cfHP_{\A}(Y,T) =
    \frac{\mathcal{N}_{\A}(Y,T)}{(1-T)^{\rank(\A)}}.
\end{equation}
In Section~\ref{sec:coarse} we explore a number of remarkable properties
of these rational functions, including nonnegativity features of
$\mcN_{\A}(Y,T)$ and---in the case of Coxeter
arrangements---connections with Eulerian and Stirling numbers.

In Proposition~\ref{prop:coarse.Y=0}, we observe that $\mcN_{\A}(0,T)$
has nonnegative coefficients. Its proof is based on the fact that
$\cfHP_{\A}(0,T)$ is the coarse Hilbert series of the Stanley--Reisner
ring of the order complex of $\toplat(\A)$. The Cohen--Macaulayness of
this complex implies the nonnegativity of the associated $h$-vector,
i.e.~the coefficients of~$\mathcal{N}_{\A}(0, T)$.

Recall that the $n$th \emph{Eulerian polynomial} $E_n(T)$ is defined via
\begin{equation}\label{def:euler.poly}
  E_n(T) = \sum_{w\in S_n}T^{\mathrm{des}(w)} \in \Z[T],
\end{equation}
where $\des(w) := |\{i\in [n-1] ~|~ w(i) > w(i+1)\}|$.  Let $S(n,k)$
be the Stirling number of the second kind; see
\cite[Sec.~1.9]{Stanley:Vol1}.  It is well-known that
\begin{equation}\label{def:hilbert.series}
  \frac{E_n(T)}{(1-T)^n} = \sum_{k=1}^{n}k!\;S(n,k)
  \left(\frac{T}{1-T}\right)^{k-1}
\end{equation}
is the (coarse) Hilbert series of the Stanley--Reisner ring
$\F[\sd(\partial \Delta_{n-1})]$ associated with the first barycentric
subdivision of the boundary of the $(n-1)$-dimensional simplex
$\Delta_{n-1}$ over a field~$\F$;
cf.\ \cite[Thm.~9.1]{Petersen/15}. 

A real hyperplane arrangement $\A$ is a \emph{Coxeter arrangement} if
the set of reflections across its hyperplanes fixes $\A$ and forms a
finite Coxeter group under composition. We call $\A$
\emph{irreducible} if it is not a direct product of two nontrivial
arrangements. Finite Coxeter arrangements may be decomposed as direct
products of {irreducible} Coxeter arrangements. The latter come in two
classes: \emph{classical} Coxeter arrangements of types $\mathsf{A}$,
$\mathsf{B}$, or $\mathsf{D}$ and \emph{exceptional} Coxeter
arrangements of types $\mathsf{E}_6$, $\mathsf{E}_7$, $\mathsf{E}_8$,
$\mathsf{F}_4$, $\mathsf{G}_2$, $\mathsf{H}_2$, $\mathsf{H}_3$,
$\mathsf{H}_4$, or $\mathsf{I}_{2}(m)$ for $m\geq 7$.

The following result shows that the coarse flag Hilbert--Poincar\'e series of
(most) Coxeter arrangements may be viewed as ``$Y$-analogs'' of the Hilbert
series~\eqref{def:hilbert.series}.

\begin{thmabc}\label{thm:coarse.Y=1}
    Let $\A$ be a Coxeter arrangement with no irreducible factor
    equivalent to~$\mathsf{E}_8$ and $\F$ be a field. Then
    \begin{equation}\label{eqn:Eulerian.coarse}
        \dfrac{\cfHP_{\A}(1, T)}{\pi_{\A}(1)} =
        \frac{E_{\rk(\A)}(T)}{(1-T)^{\rk(\A)}} = \Hilb(\F[\sd(\partial
          \Delta_{\rk(\A)-1})], T).\end{equation}
    In other words,
    $$\mathcal{N}_{\A}(1,T) = \pi_{\A}(1) E_{\rk(\A)}(T)$$ and,
    equivalently, for $1 \leq k \leq \rk(\A)$,
        \begin{align*} 
      \sum_{\substack{F\in\Delta(\proplat(\A)) \\ |F| = k-1}}
      \dfrac{\pi_F(1)}{\pi_{\A}(1)} &= k! \; S(\rk(\A),k).
    \end{align*} 
\end{thmabc}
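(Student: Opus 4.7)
The plan is to prove the equivalent identity $\mcN_\A(1, T) = \pi_\A(1) E_{\rk(\A)}(T)$ by reducing to irreducible Coxeter arrangements and verifying each irreducible type other than $\mathsf{E}_8$. First I would establish a product formula: for a direct product $\A = \A' \times \A''$ of central Coxeter arrangements, $\intpos(\A) = \intpos(\A') \times \intpos(\A'')$, so any flag $F \in \Delta(\proplat(\A))$ projects onto chains $F' \in \Delta(\toplat(\A'))$ and $F'' \in \Delta(\toplat(\A''))$ of distinct coordinate values, with $\pi_F(Y) = \pi_{F'}(Y)\,\pi_{F''}(Y)$. Collecting flags by their projections gives
$$
\cfHP_\A(Y, T) = \sum_{F', F''} \pi_{F'}(Y)\,\pi_{F''}(Y)\, S\bigl(|F'|, |F''|;\, T/(1-T)\bigr),
$$
where the coefficient function $S(m_1, m_2; u)$ counts length-weighted lattice paths in the product poset (three step types: $x$-only, $y$-only, $(x,y)$-simultaneous) and has bivariate generating function $\sum_{m_1,m_2} S(m_1,m_2;u)\, x^{m_1} y^{m_2} = 1/\bigl(1 - u(x + y + xy)\bigr)$. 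Setting $Y = 1$ and invoking Theorem C for $\A'$ and $\A''$ (the inductive hypothesis), the claim for $\A$ reduces to a closed-form convolution identity for Eulerian polynomials $f_{n_1} * f_{n_2} = f_{n_1 + n_2}$ (where $f_n(u) = E_n(T)/(1-T)^n$), which can be checked directly via the explicit generating function for $S$.

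For the classical irreducible types $\mathsf{X}_n \in \{\mathsf{A}_n, \mathsf{B}_n, \mathsf{D}_n\}$, I would apply Theorem \ref{thm:atom.braid} to express $\cfHP_{\mathsf{X}_n}(1, T)$ as an explicit sum over total partitions $\tau \in \TPnew{X}{n}$. Under the specialization $Y = 1$, the polynomials $\pitwo{X}{\tau}(-q^{-1})$ and the geometric-progression products $\treeCgp_{\mathsf{X}, \tau}$ admit concrete evaluations. A combinatorial bijection between (appropriately weighted) total partitions---equivalently, labeled rooted trees with ordered children---and ordered set partitions of $[\rk(\mathsf{X}_n)]$ then identifies the sum with $\pi_{\mathsf{X}_n}(1) \cdot E_{\rk(\mathsf{X}_n)}(T)/(1-T)^{\rk(\mathsf{X}_n)}$.

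For each exceptional irreducible Coxeter arrangement other than $\mathsf{E}_8$---namely $\mathsf{E}_6, \mathsf{E}_7, \mathsf{F}_4, \mathsf{G}_2, \mathsf{H}_3, \mathsf{H}_4$, and $\mathsf{I}_2(m)$ for $m \geq 7$---I would verify the identity by direct computation. The rank-$2$ family $\mathsf{I}_2(m)$ is handled by an elementary calculation, while the remaining exceptional types have intersection lattices of a size tractable for the SageMath package mentioned in the abstract. Combining these verifications with the product reduction completes the proof. The main obstacle is the product reduction, since $\cfHP$ is not naively multiplicative; extracting the Eulerian convolution identity from the interleaving structure of flags in product posets requires the analysis above. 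A secondary obstacle is the sheer size of the intersection lattice of $\mathsf{E}_8$ (with more than $10^6$ flats), which is precisely why this type is excluded from the statement.
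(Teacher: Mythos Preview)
Your product reduction and your treatment of the exceptional types are correct and match the paper: the Hadamard factorization $\widehat{\cfHP_{\A_1\times\A_2}}=\widehat{\cfHP_{\A_1}}\star_T\widehat{\cfHP_{\A_2}}$ is proved via exactly the Delannoy-path combinatorics you sketch, combined with the Carlitz identity for the Eulerian convolution, and the remaining exceptional types are verified by direct computation with \textsf{HypIgu}.

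The gap is in the classical case. Theorem~\ref{thm:atom.braid} concerns the \emph{atom} zeta function---the specialization of $\zeta_{\A(\lri)}(\bfs)$ setting $s_x=0$ for non-atoms---whereas $\cfHP_{\mathsf{X}_n}(Y,T)$ is the specialization of $\fHP_{\mathsf{X}_n}(Y,\bfT)$ setting $T_x=T$ for \emph{all}~$x$; these are different substitutions and neither determines the other. In particular, the defining sum for $\cfHP$ runs over all flags in $\Delta(\proplat(\mathsf{X}_n))$, not over total partitions: $\TPnew{X}{n}$ indexes only those flags where consecutive partitions share no non-singleton block (trees whose every parent has $\geq 2$ children), while a general length-$(k-1)$ flag corresponds to a tree in which each of the $k$ generations is merely required to contain \emph{some} vertex with $\geq 2$ children. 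Your indexing set is therefore too small, and the $\treeCgp$-factors of Theorem~\ref{thm:atom.braid} have no analogue in the coarse setting. What the paper does instead is prove $\sum_{|F|=k-1}\pi_F(1)=\pi_{\mathsf{X}_n}(1)\,k!\,S(n,k)$ directly: flags of length $k-1$ are parametrized by the larger sets $\LRT{n}{k}{X}$ of $\mathsf{X}_n$-labeled rooted trees with $k$ nontrivial generations, one shows that $\pi_F(1)$ factors as the product of the fiber sizes of two forgetful maps (to plane trees and to bar-equivalence classes) times a type-dependent rational correction $C_{\mathsf{X}}(\tau)$, and then reduces to the Cayley-type count $|\PT{n+1}{k}|=k!\,S(n,k)$. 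Types $\mathsf{B}$ and $\mathsf{D}$ require further summation lemmas (a decision-tree probability argument and a ``grafting'' construction, respectively). This argument shares the tree language of Theorem~\ref{thm:atom.braid} but is not a corollary of it.
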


The Stirling numbers of the second kind enter our proof of
Theorem~\ref{thm:coarse.Y=1} via a simple formula, essentially due to Cayley,
for the numbers of plane trees of a given length and number of leaves; see
Lemma~\ref{lem:plane-trees}.

The simple examples in Section~\ref{subsec:generic-cen.coarse} show
that the conclusion of Theorem~\ref{thm:coarse.Y=1} does not hold for
general, non-Coxeter hyperplane arrangements, even when they are
central: the coefficients of $\mathcal{N}_{\A}(1,T)$ are typically not
multiples of~$\pi_{\A}(1)$. However,
equation~\eqref{eqn:Eulerian.coarse} of Theorem~\ref{thm:coarse.Y=1}
holds for small-rank non-Coxeter restrictions of type-$\mathsf{D}$
arrangements; cf.~Appendix~\ref{sec:app.coarse.res-D}.

\begin{quest}
    Which further hyperplane arrangements
    satisfy~\eqref{eqn:Eulerian.coarse}? Which property of hyperplane
    arrangements does this equation reflect?
\end{quest}

To prove Theorem~\ref{thm:coarse.Y=1}, we first reduce to the irreducible
case by showing that coarse flag Hilbert--Poincar\'e series are,
essentially, Hadamard multiplicative; see
Proposition~\ref{prop:Hadamard}. For $\mathsf{X}\in\{\mathsf{A},
\mathsf{B}, \mathsf{D}\}$, the result is proven in
Section~\ref{subsec:setup.coarse}; for the types $\mathsf{I}_2(m)$ it
follows from Proposition~\ref{pro:Im}. We computed the coarse flag
Hilbert--Poincar\'e series of the other irreducible Coxeter
arrangement with the help of \textsf{HypIgu}~\cite{hypigu}, a
SageMath~\cite{sagemath} package developed by the first author to
compute (coarse) flag Hilbert--Poincar\'e series and other rational
functions associated with hyperplane arrangements. The results of
these computations, along with many other examples, are recorded in
Appendix~\ref{sec:app.exa.coarse}; in each case, the validity of
Theorem~\ref{thm:coarse.Y=1} follows by inspection. The type
$\mathsf{E}_8$ is excluded from Theorem~\ref{thm:coarse.Y=1} only
because we do not supply a proof nor an explicit computation.

All our computations support the following general nonnegativity conjecture.
\begin{conabc}\label{conj}
  For all hyperplane arrangements $\A$, the
  polynomial $\mathcal{N}_{\A}(Y,T)\in\Z[Y,T]$ has nonnegative
  coefficients.
\end{conabc}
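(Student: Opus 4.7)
My strategy is to interpolate between the two known special cases: nonnegativity of $\mcN_{\A}(0,T)$ from Cohen--Macaulayness of $\Delta(\toplat(\A))$ (Proposition~\ref{prop:coarse.Y=0}), and the Eulerian identity $\mcN_{\A}(1,T)=\pi_{\A}(1)E_{\rk(\A)}(T)$ for most Coxeter arrangements (Theorem~\ref{thm:coarse.Y=1}). First, I would exploit the Hadamard multiplicativity of Proposition~\ref{prop:Hadamard} to reduce to the case of irreducible arrangements, splitting that case according to whether $\A$ is central; in the noncentral case, a coning argument relating $\cfHP_{\A}$ to $\cfHP_{c\A}$ would ideally transport nonnegativity from the central setting.

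For central irreducible $\A$, the intersection poset $\intpos(\A)$ is a geometric lattice and therefore admits an EL-shelling (Bj\"orner). My plan is to upgrade this to a \emph{$Y$-weighted} shelling: for each maximal chain $c\colon\hat{0}=x_0\lessdot x_1\lessdot\cdots\lessdot x_{\rk(\A)}=\hat{1}$, define a polynomial $P_c(Y,T)\in\Z_{\geq 0}[Y,T]$ by combining the usual shelling statistic $|R(c)|$ with the atomic Poincar\'e factors $\pi_{\A^{x_{k-1}}_{x_k}}(Y)=1+m_k Y$ attached to the covering relations, and prove
\[
  \mcN_{\A}(Y,T) \;=\; \sum_c P_c(Y,T).
\]
At $Y=0$ this should recover the standard shelling formula $\sum_c T^{|R(c)|}$, while at $Y=1$ (for Coxeter $\A$) matching Theorem~\ref{thm:coarse.Y=1} would go through Cayley-type enumerations of plane trees used in Lemma~\ref{lem:plane-trees}. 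An alternative route would be through the $\mfp$-adic interpretation of Theorem~\ref{thm:equivalence}: one would look for a cell decomposition of $\lri^{\dim(\A)}$---likely modelled on the De Concini--Procesi wonderful compactification---whose contributions to $\zeta_{\A(\lri)}(\bfs)$ are manifestly nonnegative geometric series in the $q^{-s_x}$.

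The main obstacle in either approach is the sign cancellation in the direct expansion
\[
  \mcN_{\A}(Y,T) \;=\; \sum_{F\in\Delta(\toplat(\A))} \pi_F(Y)\,T^{|F|}(1-T)^{\rk(\A)-|F|}
\]
coming from the $(1-T)^{\rk(\A)-|F|}$ factors. Resolving these cancellations combinatorially would require a sign-reversing involution on pairs $(F,S)$, where $S$ encodes the binomial expansion of the $(1-T)^{\rk(\A)-|F|}$ factor, whose fixed points are indexed by maximal chains and whose $Y$-weight $\pi_F(Y)$ is preserved. Classical shellability arguments for nonnegativity of ordinary $h$-vectors carry no $Y$-weighting and so do not immediately adapt; incorporating the Poincar\'e factors $1+m_k Y$ compatibly with an EL-labeling appears to require a shelling specifically tailored to the atomic geometry of $\intpos(\A)$, and I expect constructing one to be the crux of the problem.
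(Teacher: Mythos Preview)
The statement you are attempting to prove is \emph{Conjecture~\ref{conj}} in the paper, which the authors explicitly leave open; there is no proof in the paper to compare your proposal against. The paper offers only partial evidence: nonnegativity along $Y=0$ (Proposition~\ref{prop:coarse.Y=0}) and the extensive computations in Appendix~\ref{sec:app.exa.coarse}.

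Your proposal is a research outline rather than a proof, and two of its load-bearing steps do not hold up. First, the Hadamard reduction via Proposition~\ref{prop:Hadamard} does not meaningfully simplify the problem: even granting that nonnegativity of the numerators $\mcN_{\A_i}(Y,T)$ transfers along Hadamard products (which you do not verify), ``irreducible hyperplane arrangement'' is not a classified notion---unlike the Coxeter case---so you are left with a class essentially as wild as the original one. The coning step is similarly undeveloped; no relation between $\cfHP_{\A}$ and $\cfHP_{c\A}$ is established in the paper, and it is not clear one would transport nonnegativity in the needed direction.

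Second, the core of your plan---a $Y$-weighted EL-shelling producing $\mcN_{\A}(Y,T)=\sum_c P_c(Y,T)$ with each $P_c\in\Z_{\ge 0}[Y,T]$---is precisely the missing idea, as you yourself acknowledge in your final paragraph. Checking such a formula at $Y=0$ and, for Coxeter $\A$, at $Y=1$ does not constrain the general case; the examples in Section~\ref{subsec:generic-cen.coarse} already show that $\mcN_{\A}(1,T)/\pi_{\A}(1)$ need not be an Eulerian polynomial, so Theorem~\ref{thm:coarse.Y=1} gives no foothold outside the Coxeter world. Neither the weighted-shelling route nor the $\mfp$-adic/wonderful-compactification alternative is carried beyond the level of a heuristic. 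In short, you have identified plausible lines of attack, but nothing here closes the gap that makes Conjecture~\ref{conj} a conjecture.
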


Indeed, the polynomial $\mathcal{N}_{\A}(Y, T)$ has nonnegative coefficients
for all of the arrangements in our appendix. We furthermore view
Conjecture~\ref{conj} as an extension of the following observation, which uses
deep results from algebraic combinatorics. We note that $\pi_{\A}(Y)$ is the
Poincar\'e polynomial of a quotient of an exterior algebra, known as the
Orlik--Solomon algebra~\cite[Thm.~3.68]{OrlikTerao/92}.
\begin{prop} \label{prop:coarse.Y=0} For all hyperplane arrangements $\A$ we
  have $$\mathcal{N}_{\A}(Y, 0) = \pi_{\A}(Y)$$ and, for all fields
  $\mathbb{F}$,
  \begin{align*} 
    \dfrac{\cfHP_{\A}(0, T)}{1 - T} &=
    \Hilb(\mathbb{F}[\Delta(\intpos(\A))], T).
  \end{align*}
  In particular, the coefficients of both $\mathcal{N}_{\A}(Y, 0)$ and $\mathcal{N}_{\A}(0, T)$ are nonnegative.
\end{prop}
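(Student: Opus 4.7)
\medskip\noindent\emph{Proof plan.} The plan is to verify both identities by direct substitution into the defining sum for $\cfHP_{\A}(Y,T)$, and then to extract the two nonnegativity statements from known properties of Orlik--Solomon algebras and of order complexes of geometric (semi)lattices.

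First I would specialize at $T=0$. In the defining sum
$$ \cfHP_{\A}(Y,T) = \sum_{F \in \Delta(\toplat(\A))} \pi_F(Y)\left(\frac{T}{1-T}\right)^{|F|}, $$
only the empty flag $F = \emptyset$ contributes at $T=0$. Unwinding the conventions $x_0 = \hat{0}$, $x_1 = \emptyset$, $\A_{\emptyset} = \A$ in~\eqref{eqn:Poincare-face}, one obtains $\pi_\emptyset(Y) = \pi_{\A_\emptyset^{\hat{0}}}(Y) = \pi_\A(Y)$. Since $(1-T)^{\rk(\A)}$ specializes to $1$ at $T=0$, this yields $\mcN_\A(Y, 0) = \cfHP_{\A}(Y,0) = \pi_\A(Y)$.

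Next I would specialize at $Y=0$. For any arrangement $\mathcal{B}$, the Poincar\'e polynomial $\pi_{\mathcal{B}}(Y) = \sum_{x} \mu(x)(-Y)^{\rk(x)}$ satisfies $\pi_{\mathcal{B}}(0) = \mu(\hat 0) = 1$, so $\pi_F(0) = 1$ for every flag $F \in \Delta(\toplat(\A))$. Hence $\cfHP_\A(0, T) = \sum_{F \in \Delta(\toplat(\A))}(T/(1-T))^{|F|}$. Using the standard face-wise formula $\Hilb(\mathbb{F}[\Delta], T) = \sum_{F \in \Delta}(T/(1-T))^{|F|}$ for Stanley--Reisner rings and partitioning the chains of $\intpos(\A)$ according to whether they contain $\hat 0$ (which is comparable to every element of $\toplat(\A)$), I obtain
$$ \Hilb(\mathbb{F}[\Delta(\intpos(\A))], T) = \sum_{F \in \Delta(\toplat(\A))}\left(\frac{T}{1-T}\right)^{|F|}\!\left(1 + \frac{T}{1-T}\right) = \frac{\cfHP_\A(0, T)}{1-T}. $$

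For the nonnegativity of $\mcN_\A(Y,0) = \pi_\A(Y)$, one invokes that $\pi_\A(Y)$ is the Hilbert series of the Orlik--Solomon algebra of $\A$, whose graded pieces are $\mathbb{F}$-vector spaces; cf.~\cite[Thm.~3.68]{OrlikTerao/92}. For $\mcN_\A(0,T)$, note that $\dim \Delta(\intpos(\A)) = \rk(\A)$, so the Hilbert-series identity above rewrites as $\Hilb(\mathbb{F}[\Delta(\intpos(\A))], T) = \mcN_\A(0,T)/(1-T)^{\rk(\A)+1}$, identifying $\mcN_\A(0,T)$ as the $h$-polynomial of the Stanley--Reisner ring of $\Delta(\intpos(\A))$. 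Since $\intpos(\A)$ is a geometric lattice (if $\A$ is central) or, more generally, a geometric semilattice, its order complex is shellable---by Bj\"orner's classical result in the former case and by Wachs--Walker in the latter---hence Cohen--Macaulay, whence its $h$-polynomial has nonnegative coefficients. The main obstacle is precisely this last step in the noncentral case, where one cannot appeal directly to the lattice version and must invoke the semilattice shellability theorem.
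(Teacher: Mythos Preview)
Your proof is correct and follows essentially the same route as the paper: direct substitution at $T=0$ (the paper defers this to a lemma but with the same content), the observation $\pi_F(0)=1$ to identify $\cfHP_\A(0,T)/(1-T)$ with the Hilbert series of the Stanley--Reisner ring, and then Cohen--Macaulayness of the order complex of the geometric semilattice $\intpos(\A)$ via Wachs--Walker to get nonnegativity of the $h$-vector. The only cosmetic difference is that the paper phrases the Hilbert-series step at the level of the fine series and cites Cohen--Macaulayness directly rather than via shellability.
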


\subsection{Notation}

We let $\N$ be the set of positive integers. For $I\subseteq \N$, we
denote $I\cup\{0\}$ by $I_0$. For $n\in\N$, set $[n] := \{1,\dots,
n\}$. Let $\delta_P$ be $1$ when property $P$ is true and $0$
otherwise. For a set $I$, denote by $\mathcal{P}(I)$ the power set of
$I$ and by $\mathcal{P}(I; k)$ the set of subsets of $I$ of
cardinality~$k$. We set $\mathcal{P}(n) := \mathcal{P}([n])$ and
$\mathcal{P}(n; k):=\mathcal{P}([n]; k)$. We set $\GP{X}=X/(1-X)$ and
$\GPZ{X} = 1/(1-X)$.

Throughout, $\field$ denotes a field of characteristic zero and $\nofield$ a
number field with ring of integers $\Gri_{\nofield}$. We write $\lri$ for a
compact discrete valuation ring (cDVR) of arbitrary characteristic, often
assumed to be an $\Gri_\nofield$-module. Its residue field has cardinality $q$
and characteristic~$p$. The following table records some further frequently used
notation.

\begingroup \renewcommand\arraystretch{1.18}
\begin{longtable}{l|l|l}
    Symbol & Description & Reference \\ \hline \hline $\A$, $\A(\lri)$
    & hyperplane arrangement in $\field^d$ resp.\ in $\lri^d$&
    \S~\ref{sec:intro},~\ref{subsec:Igusa.intro} \\ $\intpos(\A)$ &
    intersection poset of $\A$& \S~\ref{sec:intro} \\ $\Delta(P)$ &
    order complex of a poset $P$& \S~\ref{sec:intro} \\ $\hat{0},
    \hat{1}$ & bottom resp.\ top elements of a poset&
    \S~\ref{sec:intro} \\ $\widetilde{P}$, $\overline{P}$ & posets
    $P\setminus\{\hat{0}\}$ resp.\ $P\setminus\{\hat{0},\hat{1}\}$&
    \S~\ref{sec:intro} \\ $\mathsf{X}_n$ & Coxeter arrangement of type
    $\mathsf{X}$ and rank $n$& Eq.~\eqref{eqn:classical-Coxeter}
    \\ $\pi_{\A}(Y)$ & Poincar\'e polynomial of $\A$ & Eq.~\eqref{eqn:Poincare-poly} \\ $\pi_{F}(Y)$ & flag Poincar\'e polynomial of $F$ & Eq.~\eqref{eqn:Poincare-face} \\ $\picirc_{F}(Y)$, $\pibar_{F}(Y)$
    & normalized Poincar\'e polynomials of $F$&
    Eq.~\eqref{eqn:pi-circ} \\ $\fHP_{\A}(Y, \bfT)$ & flag
    Hilbert--Poincar\'e series of $\A$&
    Def.~\ref{def:algebraic} \\ $\cfHP_{\A}(Y, T)$ & coarse flag
    Hilbert--Poincar\'e series of $\A$&
    Def.~\ref{def:coarse.new} \\ $\mathcal{N}_{\A}(Y, T)$ & numerator of
    $\cfHP_{\A}(Y, T)$ &
    Eq.~\eqref{def:numer.coarse} \\ $\zeta_{\A(\lri)}(\bfs)$ & analytic
    zeta function of $\A(\lri)$& Def.~\ref{def:analytic.zf}
    \\ $\zeta_{\A}^{\mathrm{top}}(\bfs)$ & multivariate topological zeta function
    of $\A$& Eq.~\eqref{eqn:multi-top-zf}
    \\ $\zeta_{\A(\lri)}^{\atom}(\bfs)$ & atom zeta function
    of $\A(\lri)$& Def.~\ref{def:atom} \\ $\mathsf{Z}_f(s)$ &
    Igusa zeta function of $f\in\Gri_{\nofield}[\bfX]$&
    Eq.~\eqref{eqn:Igusa}\\ $\mathsf{Z}_f^{\mathrm{top}}(s)$ &
    topological zeta function of $f\in\Gri_{\nofield}[\bfX]$ &
    \S~\ref{subsec:Igusa-topological} \\ $\mcP_\msfX(I)$,
    $\mcP_\msfX(I;2)$ & Coxeter versions of $\mcP(I)$
    resp.\ $\mcP(I;2)$& \S~\ref{subsec:braid.total} \\ $\Pitwo{X}{n}$
    & set partitions of type $\mathsf{X}_n$
    &\S~\ref{subsec:braid.total} \\ $\TPnew{\mathsf{X}}{n}$,
    $\TP{n+1}$ & total partitions of type $\mathsf{X}_n$
    resp.\ $\mathsf{A}_n$ & \S~\ref{subsec:TP} \\ $\UTP{n+1}$ &
    unlabeled analog of $\TP{n+1}$ &
    \S~\ref{subsubsec:A.B}\\ $\pitwo{X}{\tau}(Y)$ & Poincar\'e
    polynomial of $\mathsf{X}$-labeled tree $\tau$ &
    Eq.~\eqref{eqn:pi.X.tau} \\ $\treeCgp_{\mathsf{X}, \tau}(Z, \bfT)$
    & rational function of $\mathsf{X}$-labeled tree $\tau$ &
    Eq.~\eqref{eqn:Cgp}\\ $\PT{n}{k}$ & plane trees with $n$
    leaves and $k$ generations &
    Def.~\ref{def:PT.LPT.LRT}\\ $\LPT{n}{k}{X}$, $\LRT{n}{k}{X}$ &
    labeled plane resp.\ rooted trees of type $\mathsf{X}_n$ &
    Def.~\ref{def:PT.LPT.LRT}\\ $\LPTzero{n}{k}{X}$ & ``$0$-labeled''
    plane trees of type $\mathsf{X}_n$ &
    Def.~\ref{def:LPTzero}
\end{longtable}
\endgroup

\section{Analytic zeta functions of hyperplane arrangements}\label{sec:ilzf}

Recall the assumptions made on the hyperplane arrangement~$\A$ at the beginning
of Section~\ref{subsec:Igusa.intro}. In particular, we assume that $\A$ is
defined over a number field $\nofield$ and all cDVRs $\lri$ considered are
$\Gri_\nofield$-modules.  Recall further Definition~\ref{def:analytic.zf} of
the analytic zeta function $\zeta_{\A(\lri)}(\bfs)$ of $\A$ over~$\lri$, and
that we write $\Fq$ for the residue field of~$\lri$. Theorem~\ref{thm:equivalence} will follow from the next theorem.

\begin{thm}\label{thm:explicit-formulae.new}
    If $\A$ has good reduction over~$\Fq$, then
    \begin{align}\label{eqn:general-sum.new}
        \zeta_{\A(\lri)}(\bfs) &= \sum_{F\in\Delta(\toplat(\A))}
        \pi_{F}(-q^{-1}) \prod_{x\in F}
        \dfrac{q^{-\codim(x)-\sum_{y\in \toplat(\A_x)}s_y}}{1 -
          q^{-\codim(x)-\sum_{y\in \toplat(\A_x)}s_y}}.
    \end{align}
    If $\A$ is also central, then
    \begin{multline}\label{eqn:general-sum-central.new}
        \zeta_{\A(\lri)}(\bfs) =\\ \dfrac{1}{1 -
          q^{-\rank(\A)-\sum_{y\in
              \toplat(\A)}s_y}}\sum_{F\in\Delta(\proplat(\A))}
        \pi_{F}(-q^{-1}) \prod_{x\in F}
        \dfrac{q^{-\codim(x)-\sum_{y\in \toplat(\A_x)}s_y}}{1 -
          q^{-\codim(x)-\sum_{y\in \toplat(\A_x)}s_y}}.
    \end{multline}
\end{thm}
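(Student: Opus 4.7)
The plan is to derive both identities from a single recursion obtained by stratifying $\lri^{\dim(\A)}$ via reduction modulo $\mfp$, then solving and iterating. For each $x \in \intpos(\A)$, let $\mathcal{U}_x^\circ$ consist of those $\bfX \in \lri^{\dim(\A)}$ whose reduction modulo $\mfp$ lies in the reduction of $x$ but in no reduction of a strictly greater $y \in \intpos(\A)$. Good reduction ensures the $\mathcal{U}_x^\circ$ partition $\lri^{\dim(\A)}$. On $\mathcal{U}_x^\circ$, every $L \in \A_y$ with $y \in \toplat(\A_x)$ satisfies $|L(\bfX)| < 1$, whereas for $y \in \toplat(\A) \setminus \toplat(\A_x)$ some $L \in \A_y$ has $|L(\bfX)| = 1$ (since $L \cap x > x$ in $\intpos(\A)$, and this is excluded on the stratum), so the integrand of Definition~\ref{def:analytic.zf} collapses on this stratum to $\prod_{y \in \toplat(\A_x)} \norm{\A_y}^{s_y}$.

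Pick $\lri$-linear coordinates (with unit Jacobian, available by good reduction) so that $x = \{\xi_1 = \cdots = \xi_k = 0\}$ with $k = \codim(x)$; each $L \in \A_x$ is then a constant-free linear form in $\xi_1, \ldots, \xi_k$. Writing $\bfX = (\bfX_1, \bfX_2)$ accordingly, $\mathcal{U}_x^\circ$ becomes $\mfp^k \times \mathcal{V}_x$ with $\mathcal{V}_x \subset \lri^{\dim(\A)-k}$ lifting the complement $M(\A^x)(\Fq)$. Hence $\mathrm{vol}(\mathcal{V}_x) = q^{-\dim(\A^x)} \chi_{\A^x}(q) = \pi_{\A^x}(-q^{-1})$ by~\eqref{equ:chi=cha}, and the substitution $\bfX_1 = \pi \bfZ_1$ scales every form by $\pi$ and the $\bfX_1$-measure by $q^{-k}$. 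With $T_x := q^{-\codim(x) - S(x)}$, $S(x) := \sum_{y \in \toplat(\A_x)} s_y$, and $\bfs|_x$ the restriction of $\bfs$ to $\toplat(\A_x)$, the stratum integral equals $\pi_{\A^x}(-q^{-1}) \, T_x \, \zeta_{\A_x(\lri)}(\bfs|_x)$. Summing over $x \in \intpos(\A)$ (with $\zeta_{\A_{\hat{0}}(\lri)} = 1$) yields the master recursion
\[
    \zeta_{\A(\lri)}(\bfs) = \sum_{x \in \intpos(\A)} \pi_{\A^x}(-q^{-1}) \, T_x \, \zeta_{\A_x(\lri)}(\bfs|_x).
\]

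For each $y \in \toplat(\A)$, $\A_y$ is central with top $y$, and the $x = y$ term of the recursion applied to $\zeta_{\A_y(\lri)}$ is self-referential (since $\A^y_y = \varnothing$, so $\pi_{\A^y_y} = 1$), contributing $T_y \zeta_{\A_y(\lri)}$. Solving yields
\[
    (1 - T_y) \zeta_{\A_y(\lri)}(\bfs|_y) = \pi_{\A_y}(-q^{-1}) + \sum_{\hat{0} < y' < y} \pi_{\A^{y'}_y}(-q^{-1}) \, T_{y'} \, \zeta_{\A_{y'}(\lri)}(\bfs|_{y'}),
\]
and an analogous step at the outer level produces the prefactor $(1 - T_{\hat{1}})^{-1}$ of~\eqref{eqn:general-sum-central.new} when $\A$ is itself central. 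Iterating fully, every strict chain $\hat{0} < y_m < \cdots < y_1$ in $\toplat(\A)$ (respectively $\proplat(\A)$) contributes $\prod_{i=1}^m T_{y_i}/(1 - T_{y_i})$ times $\pi_{\A_{y_m}}(-q^{-1}) \cdot \prod_{i=1}^{m-1} \pi_{\A^{y_{i+1}}_{y_i}}(-q^{-1}) \cdot \pi_{\A^{y_1}}(-q^{-1})$. Reindexing $x_i := y_{m+1-i}$ and using the conventions $x_0 = \hat{0}$, $x_{m+1} = \varnothing$ of~\eqref{eqn:Poincare-face}, this product of Poincar\'e polynomials telescopes exactly to $\pi_F(-q^{-1})$ for the flag $F = (x_1 < \cdots < x_m)$, yielding~\eqref{eqn:general-sum.new} and~\eqref{eqn:general-sum-central.new}.

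The main obstacle lies in the stratum-integral computation of the second paragraph: the good-reduction hypothesis is crucial both for the $\lri$-linear coordinate change with unit Jacobian carrying $x$ to a coordinate subspace to exist, and for the complement-counting formula $|M(\A^x)(\Fq)| = \chi_{\A^x}(q)$ to be valid in every characteristic under consideration. A secondary subtlety is the bookkeeping in the iteration: the succession of restriction-and-localization steps must be shown to assemble exactly into the factor $\pi_F(Y) = \prod_{k=0}^{\ell} \pi_{\A^{x_k}_{x_{k+1}}}(Y)$ of~\eqref{eqn:Poincare-face} with correct boundary interpretations, and the repeated solving of the self-referential central-subarrangement terms must produce the geometric factors $T_x/(1-T_x)$ indexed precisely by the flag elements.
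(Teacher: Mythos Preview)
Your proposal is correct and follows essentially the same route as the paper: the paper first establishes exactly your master recursion as Lemma~\ref{lem:gen-strat.new} (via the same stratification by the $\bar{\bfX}$-locus in $\intpos(\A)$ and the count $\chi_{\A^x}(q)$), then isolates the self-referential $x=\hat{1}$ term to obtain~\eqref{eqn:central-recursive}, and finally iterates along flags to assemble $\pi_F$ and the geometric factors, proving~\eqref{eqn:general-sum-central.new} first and deducing~\eqref{eqn:general-sum.new} by substituting the central formula for each $\zeta_{\A_x(\lri)}$ into~\eqref{eqn:rec}. The only cosmetic difference is that the paper uses a translation $X_i\mapsto v_{x,i}+\pi\widetilde{X}_i$ rather than your full $\lri$-affine coordinate change splitting off $x$; both produce the same recursion.
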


Theorem~\ref{thm:equivalence} is now an immediate consequence: its first
equation~\eqref{eqn:equiv1} follows from Theorem~\ref{thm:explicit-formulae.new}
and Definition~\ref{def:algebraic} of $\fHP_{\A}(Y,\bfT)$. The second
equation~\eqref{eqn:equiv2} is deduced from equation~\eqref{eqn:equiv1} by an
application of M\"obius inversion: setting $s_x = \sum_{y\in\toplat(\A_x)} (r_y
- \codim(y))\mu(y, x)$, we have $r_x = \codim(x) + \sum_{y\in\toplat(\A_x)}
s_y$.

In the remainder of this section, we prove
Theorem~\ref{thm:explicit-formulae.new}. For the proof we will need
the following lemma.

\begin{lem}\label{lem:gen-strat.new}
  If $\A$ has good reduction over~$\Fq$, then
  \begin{align}\label{eqn:rec}
    \zeta_{\A(\lri)}(\bfs) &= \sum_{x\in\intpos(\A)}
    q^{-\codim(x)-\sum_{y\in\toplat(\A_x)} s_y}
    \pi_{\A^x}(-q^{-1})
    \zeta_{\A_x(\lri)}\left((s_y)_{y\in\toplat(\A_x)}\right).
  \end{align}
  If $\A$ is also central, then
  \begin{multline}\label{eqn:central-recursive}
    \zeta_{\A(\lri)}(\bfs) = \dfrac{1}{1 -
      q^{-\rank(\A)-\sum_{x\in\toplat(\A)} s_x}}
    \sum_{x\in\intpos(\A)\setminus\{\hat{1}\}}
    q^{-\codim(x)-\sum_{y\in\toplat(\A_x)} s_y}
    \\ \quad\cdot \pi_{\A^x}(-q^{-1})
    \zeta_{\A_x(\lri)}\left((s_y)_{y\in\toplat(\A_x)}\right).
  \end{multline}
\end{lem}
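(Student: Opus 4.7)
The plan is to prove~\eqref{eqn:rec} by stratifying the integration domain $\lri^{\dim(\A)}$ according to the intersection poset of $\A$ and evaluating each stratum via a change of variables; then~\eqref{eqn:central-recursive} follows by isolating the top stratum $x = \hat{1}$ on the right-hand side of~\eqref{eqn:rec} and solving algebraically.

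First, for $x \in \intpos(\A)$, I would define the stratum
\[
U_x = \{\bfX \in \lri^{\dim(\A)} \mid L(\bfX) \in \mfp \Leftrightarrow L \in \A_x\}.
\]
Good reduction ensures that $\lri^{\dim(\A)} = \bigsqcup_{x \in \intpos(\A)} U_x$, and that the reduction of $U_x$ modulo $\mfp$ is precisely the complement, in $V(\A_x) \pmod{\mfp}$, of the restriction arrangement~$\A^x$. On $U_x$, the factor $\norm{\A_y} = \max_{L \in \A_y}|L|$ equals $1$ unless $\A_y \subseteq \A_x$, so the integrand collapses to $\prod_{y \in \toplat(\A_x)} \norm{\A_y}^{s_y}$.

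Next, I would evaluate the $U_x$-integral. By good reduction I may choose $\lri$-integral coordinates so that a rational point of $V(\A_x)$ is the origin and $V(\A_x) = \{0\}^{\codim(x)} \times \field^{\dim(\A) - \codim(x)}$; write $\bfX = (\bfX_1, \bfX_2)$ accordingly. On $U_x$ one has $\bfX_1 \in \mfp^{\codim(x)}$, while $\bar{\bfX}_2$ lies in the complement of $\A^x$ modulo $\mfp$. Since each $L \in \A_x$ vanishes on $V(\A_x)$, it depends only on $\bfX_1$; the substitution $\bfX_1 = \pi \bfX_1'$ contributes the Jacobian $q^{-\codim(x)}$ and turns $|L(\bfX)|$ into $q^{-1}|L(\bfX_1')|$, yielding one factor $q^{-s_y}$ per $y \in \toplat(\A_x)$. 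The $\bfX_2$-integration, over which the integrand is constant, gives the measure of the lift, equal to $\pi_{\A^x}(-q^{-1})$ by the classical point-count $\chi_{\A^x}(q) = q^{\dim(\A^x)}\pi_{\A^x}(-q^{-1})$ over $\Fq$. The remaining $\bfX_1'$-integral, after Fubini along the $(\dim(\A)-\codim(x))$-dimensional center of $\A_x$, coincides with $\zeta_{\A_x(\lri)}((s_y)_{y \in \toplat(\A_x)})$. Collecting factors,
\[
\int_{U_x} \prod_{y \in \toplat(\A)} \norm{\A_y}^{s_y} |\textup{d}\bfX| = \pi_{\A^x}(-q^{-1})\, q^{-\codim(x) - \sum_{y \in \toplat(\A_x)} s_y}\, \zeta_{\A_x(\lri)}\!\left((s_y)_{y \in \toplat(\A_x)}\right),
\]
and summing over $x \in \intpos(\A)$ gives~\eqref{eqn:rec}. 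For the central case, the $x = \hat{1}$ summand on the right of~\eqref{eqn:rec} equals $q^{-\rank(\A) - \sum_y s_y}\, \zeta_{\A(\lri)}(\bfs)$; moving it across and dividing by $1 - q^{-\rank(\A) - \sum_y s_y}$ yields~\eqref{eqn:central-recursive}.

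The hard part is the change-of-variables step: justifying the existence of an $\lri$-integral coordinate system adapted to $V(\A_x)$ (using good reduction to locate a suitable $\nofield$-rational point on $V(\A_x)$) and then correctly identifying the residual $\bfX_1'$-integral with $\zeta_{\A_x(\lri)}$. The latter hinges on $\A_x$ being central with center $V(\A_x)$, so that Fubini along the tangential $\lri^{\dim(\A)-\codim(x)}$ direction contributes the trivial factor $1$ and the essentialized arrangement on the normal $\lri^{\codim(x)}$-space carries exactly the same analytic zeta function as $\A_x(\lri)$.
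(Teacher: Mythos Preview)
Your approach is correct and matches the paper's: stratify $\lri^{\dim(\A)}$ by the $U_x$, invoke the point count $\chi_{\A^x}(q)=q^{\dim(\A)-\codim(x)}\pi_{\A^x}(-q^{-1})$ for the complement of $\A^x$ over $\F_q$, and reduce each stratum integral to $\zeta_{\A_x(\lri)}$ via a uniformizing substitution. The only cosmetic difference is that the paper applies the single change of variables $\bfX\mapsto v_x+\pi\widetilde{\bfX}$ on each of the $\chi_{\A^x}(q)$ cosets $v_x+\mfp^d\subseteq U_x$, rather than first passing to coordinates adapted to $V(\A_x)$ and scaling only the normal directions; your Fubini step identifying the residual integral with $\zeta_{\A_x(\lri)}$ is exactly what the paper leaves implicit.
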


\begin{proof}
    Suppose $\lri$ has maximal ideal $\mathfrak{p}$, and let
    $d=\dim(\A)$. For $x\in\toplat(\A)$, set 
    \begin{align*}
        U_x &= \left\{z\in\lri^d ~\middle|~ L(z) \in
        \mathfrak{p} \iff L \in \A_x \right\}
    \end{align*}
    and $U_{\hat{0}} = \lri^d\setminus\bigcup_{y\in\toplat(\A)} U_y$.
    Fix $x\in\intpos(\A)$. The number of vectors $v\in\mathbb{F}_q^d$
    such that $v+\mathfrak{p}^d\subseteq U_x$ is $\chi_{\A^x}(q)$,
    where $\chi_{\A^x}(Y)$ is the characteristic polynomial of $\A^x$;
    see \cite[Thm.~5.15]{Stanley/07}. Recall from~\eqref{equ:chi=cha}
    that $\chi_{\A^x}(Y) = Y^{d-\rk(x)}\pi_{\A^x}(-Y^{-1})$. Since
    $\A$ has good reduction over~$\Fq$,
    \begin{align*}
        \zeta_{\A(\lri)}(\bfs) &= \sum_{x\in\intpos(\A)} \int_{U_x} \prod_{y\in \toplat(\A)} \norm{\A_y}^{s_y} \, |\textup{d}\bfX| \\
        &= \sum_{x\in\intpos(\A)} q^{-d-\sum_{y\in\toplat(\A_x)}s_y} \chi_{\A^x}(q) \zeta_{\A_x(\lri)}\left((s_y)_{y\in\toplat(\A_x)}\right) \\
        &= \sum_{x\in\intpos(\A)} q^{-\codim(x)-\sum_{y\in\toplat(\A_x)}s_y} \pi_{\A^x}(-q^{-1}) \zeta_{\A_x(\lri)}\left((s_y)_{y\in\toplat(\A_x)}\right) . 
    \end{align*}
    We give some justification for the second equality: for each
    $x\in\intpos(\A)$, we choose $v_x\in\F_q^d$ such that
    $v_x+\mfp^d\subseteq U_x$ and apply a change of variables
    $X_i\mapsto v_{x,i} + \pi \widetilde{X}_i$, where $\pi$ is a
    uniformizer of~$\lri$.
    
    If $\A$ is also central, then $\hat{1}\in\intpos(\A)$, so
    \begin{align*} 
        \int_{U_{\hat{1}}} \prod_{y\in \toplat(\A)} \norm{\A_y}^{s_y}
        \, |\textup{d}\bfX| &= q^{-\rank(\A) - \sum_{y\in\toplat(\A)}
          s_y}\zeta_{\A(\lri)}(\bfs). 
    \end{align*}
    Hence
    \begin{align*}
      \zeta_{\A(\lri)}(\bfs) &= q^{-\rank(\A) - \sum_{y\in\toplat(\A)}
      s_y}\zeta_{\A(\lri)}(\bfs) + \\ &\quad \sum_{x\in\intpos(\A) \setminus \{\hat{1}\}} q^{-\codim(x)-\sum_{y\in\toplat(\A_x)}s_y} \pi_{\A^x}(-q^{-1}) \zeta_{\A_x(\lri)}\left((s_y)_{y\in\toplat(\A_x)}\right).
    \end{align*}
    Solving for $\zeta_{\A(\lri)}(\bfs)$ yields~\eqref{eqn:central-recursive}.
\end{proof}

In Section~\ref{subsec:proof.thm.braid.detail}, we will specify
Lemma~\ref{lem:gen-strat.new} combinatorially in case of the classical
Coxeter arrangements.

\subsection{Proof of Theorem~\ref{thm:equivalence}}

As explained above, it suffices to prove
Theorem~\ref{thm:explicit-formulae.new}. We start by
proving~\eqref{eqn:general-sum-central.new} and thus assume that $\A$
is central.

Observe that if $x\in\intpos(\A)$ and $y\in\intpos(\A_x)$, then $y\leq x$.
Thus, applying Lemma~\ref{lem:gen-strat.new} recursively yields a sum indexed
by a subset of $\Delta(\intpos(\A)\setminus\{\hat{1}\})$. With the only
exception of~$x=\hat{0}$, every term in the sum in
Lemma~\ref{lem:gen-strat.new} contains a 
$\zeta_{\A_x(\lri)}$-factor.  Hence, applying Lemma~\ref{lem:gen-strat.new}
recursively yields a sum indexed by the flags in
$\{ F \in \Delta(\intpos(\A))~|~ \hat{0}\in F,\; \hat{1}\notin F\}$, which is
in bijection with $\Delta(\proplat(\A))$.
    
Let $F=(x_1<\cdots <x_\ell)\in\Delta(\proplat(\A))$ be a (possibly
empty) flag, and set $G = (x_0<x_1<\cdots <x_{\ell})$ where
$x_0=\hat{0}$. We prove that the $F$-term
in~\eqref{eqn:general-sum-central.new} is the sum given by applying
Lemma~\ref{lem:gen-strat.new} $\ell+1$ times to $G$, starting with
$x_\ell$ and descending to~$x_0$.  By Lemma~\ref{lem:gen-strat.new},
the term associated with $y:= x_{\ell}$
in~\eqref{eqn:central-recursive} is
\begin{align*} 
    \dfrac{\pi_{\A^y}(-q^{-1})}{1 -
      q^{-\rank(\A)-\sum_{z\in\toplat(\A)} s_z}} q^{-\codim(y) -
      \sum_{z\in\toplat(\A_y)}
      s_z}\zeta_{\A_y(\lri)}\left((s_z)_{z\in\toplat(\A_y)}\right).
\end{align*}
If $\ell=0$, then this is indeed equal to the $F$-term. Thus, by
induction on~$\ell$, the $G$-term is
\begin{align*} 
    \dfrac{\prod_{k=0}^{\ell}\pi_{\A^{x_k}_{x_{k+1}}}(-q^{-1})}{1 -
      q^{-\rank(\A)-\sum_{z\in\toplat(\A)} s_z}} \prod_{y\in F}
    \dfrac{q^{-\codim(y) - \sum_{z\in\toplat(\A_y)}
        s_z}}{1-q^{-\codim(y) - \sum_{z\in\toplat(\A_y)} s_z}},
\end{align*} 
where $x_{\ell+1}=\emptyset$. Since this is the $F$-term
in~\eqref{eqn:general-sum-central.new},
equation~\eqref{eqn:general-sum-central.new} follows.

We proceed to the proof of~\eqref{eqn:general-sum.new}.  
For each $x\in \toplat(\A)$, the subarrangement $\A_x$ is central, so
by~\eqref{eqn:general-sum-central.new},
\begin{multline*} 
    \zeta_{\A_x(\lri)}(\bfs) =\\ \dfrac{1}{1 - q^{-\codim(x)-\sum_{y\in
          \toplat(\A_x)}s_y}}
    \sum_{F\in\Delta(\proplat(\A_x))} \pi_{F}(-q^{-1}) \prod_{y\in F}
    \dfrac{q^{-\codim(y)-\sum_{z\in \toplat(\A_y)}s_z}}{1 -
      q^{-\codim(y)-\sum_{z\in \toplat(\A_y)}s_z}}.
\end{multline*} 
Substituting this expression into~\eqref{eqn:rec} yields
\begin{align*} 
    \zeta_{\A(\lri)}(\bfs) &=
    \sum_{x\in\intpos(\A)}\dfrac{q^{-\codim(x)-\sum_{y\in\toplat(\A_x)}
        s_y} \pi_{\A^x}(-q^{-1})}{1 - q^{-\codim(x)-\sum_{y\in
          \toplat(\A_x)}s_y}}\\ &\quad \cdot
    \sum_{F\in\Delta(\proplat(\A_x))} \pi_{F}(-q^{-1}) \prod_{y\in F}
    \dfrac{q^{-\codim(y)-\sum_{z\in \toplat(\A_y)}s_z}}{1 -
      q^{-\codim(y)-\sum_{z\in \toplat(\A_y)}s_z}} \\ &=
    \sum_{F\in\Delta(\toplat(\A))} \pi_{F}(-q^{-1}) \prod_{x\in F}
    \dfrac{q^{-\codim(x)-\sum_{y\in \toplat(\A_x)}s_y}}{1 -
      q^{-\codim(x)-\sum_{y\in \toplat(\A_x)}s_y}}. \qedhere
\end{align*}
This concludes the proof of Theorem~\ref{thm:explicit-formulae.new} and thus
Theorem~\ref{thm:equivalence}. \hfill$\square$

\subsection{Topological zeta functions}
\label{subsec:topological}

Let $Q$ and $\bfs=(s_x)_{x\in\toplat(\A)}$ be indeterminates and
abbreviate $(\qvar^{-s_x})_{x\in \toplat(\A)}$ to~$\qvar^{-\bfs}$. By
expressing a rational function $W(Y, \bfT)\in\Q(Y,\bfT)$ as a power
series in $\Q(\bfs)\llbracket \qvar-1\rrbracket$, via $W(Y,
\bfT)\mapsto W(-\qvar^{-1}, \qvar^{-\bfs})$, one obtains a rational
function $W^{\mathrm{top}}(\bfs)\in \Q(\bfs)$ as the constant term of
the power series.  Informally speaking, this yields the limit of
$\zeta_{\A(\lri)}(\bfs)$ as ``$q\rightarrow 1$.'' For further details,
see Denef--Loeser~\cite[Sec.~2]{DL:top-zf} and
Rossmann~\cite[Sec.~5]{Rossmann:top-zf}.

For $a\in\Z[\bfs]$, note that $\qvar^{a} = (1 + (\qvar-1))^a =
\sum_{k=0}^\infty \binom{a}{k}(\qvar-1)^k$, where
$\binom{a}{k}=a(a-1)\cdots (a-k+1)/k!$. Thus, the constant term of
$\qvar^a\in \Q(\bfs)\llbracket \qvar-1 \rrbracket$ is $1$, and the
constant term of
\begin{align*} 
    \dfrac{\qvar-1}{\qvar^a-1} &=
    \left(\sum_{k=0}^\infty\binom{a}{k+1}(\qvar-1)^k\right)^{-1}
\end{align*}
is $1/a\in\Q(\bfs)$.

For a flag $F\in\Delta(\toplat(\A))$, recall
definitions~\eqref{eqn:pi-circ} of the rational
functions~$\picirc_F(Y)$ and~$\pibar_F(Y)$. The following lemma
records the observation that they are actually often polynomials.

\begin{lem}\label{lem:pi-circ}
    For a hyperplane arrangement $\A$ and $F\in\Delta(\toplat(\A))$,
    we have~$\picirc_{F}(Y)\in\Z[Y]$. If $\A$ is central and $F\in\Delta(\proplat(\A))$,
    then $\pibar_{F}(Y)\in\Z[Y]$.
\end{lem}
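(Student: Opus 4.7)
The plan is to reduce everything to a single identity about the Poincar\'e polynomial of a nonempty central arrangement, namely that $(1+Y)$ divides $\pi_{\A}(Y)$ in $\Z[Y]$. Given the factorization
\[
\pi_F(Y) = \pi_{\A^{x_\ell}}(Y) \prod_{k=0}^{\ell-1} \pi_{\A^{x_k}_{x_{k+1}}}(Y),
\]
the lemma will follow once enough of the factors can be shown to be Poincar\'e polynomials of nonempty central arrangements.

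First, I would record the divisibility statement. Since $\pi_{\A}(-1) = \sum_{x\in\intpos(\A)} \mu(\hat{0},x)$, the classical Weisner/M\"obius identity $\sum_{x\in P}\mu(\hat{0},x)=0$ valid in any finite poset $P$ with $\hat{0}\neq \hat{1}$ shows $\pi_{\A}(-1)=0$ whenever $\A$ is a nonempty central arrangement (equivalently, $\intpos(\A)$ has a top element strictly above $\hat{0}$). Hence $(1+Y)\mid \pi_{\A}(Y)$ in $\Z[Y]$ in that case.

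Next, fix $F=(x_1<\dots<x_\ell)\in\Delta(\toplat(\A))$. For each $k=0,\dots,\ell-1$, every hyperplane in $\A^{x_k}_{x_{k+1}}$ contains $x_{k+1}$, so the arrangement is central with top element (the image of) $x_{k+1}$; it is nonempty because $x_{k+1}>x_k$ in $\intpos(\A)$ forces at least one hyperplane in $\A_{x_{k+1}}\setminus \A_{x_k}$. The divisibility step then gives $(1+Y)\mid \pi_{\A^{x_k}_{x_{k+1}}}(Y)$ for $k=0,\dots,\ell-1$, and multiplying these $\ell=|F|$ factors together yields $(1+Y)^{|F|}\mid \pi_F(Y)$, which is the first claim.

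For the second claim, suppose further that $\A$ is central and $F\in\Delta(\proplat(\A))$, so $x_\ell<\hat{1}$. Then the remaining factor $\pi_{\A^{x_\ell}}(Y)=\pi_{\A^{x_\ell}_{\emptyset}}(Y)$ also comes from a nonempty central arrangement: it is central because $\intpos(\A^{x_\ell})$ inherits the top element $\hat{1}$ from $\intpos(\A)$ (centrality of $\A$ ensures $x_\ell\cap H\supseteq \hat{1}$ for every $H\in\A$), and it is nonempty since $x_\ell\neq\hat{1}$ provides at least one $H\in\A$ with $H\not\supseteq x_\ell$. Hence $(1+Y)$ divides this last factor too, which together with the previous paragraph yields $(1+Y)^{|F|+1}\mid \pi_F(Y)$.

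The only real care needed is the bookkeeping for non-central $\A$ in the last factor: the restriction $\A^{x_\ell}$ need not be central, so one must not try to extract an extra $(1+Y)$ from it, which is exactly why $\picirc_F$ uses the exponent $|F|$ while $\pibar_F$ (requiring the central hypothesis) uses $|F|+1$. I anticipate no substantive obstacle beyond checking these nonemptiness/centrality conditions carefully.
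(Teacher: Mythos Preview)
Your proof is correct and follows essentially the same approach as the paper's: both reduce to the fact that $(1+Y)\mid\pi_{\A'}(Y)$ for any nonempty central arrangement $\A'$, and then check that the $\ell$ factors $\pi_{\A^{x_{k-1}}_{x_k}}(Y)$ (and, in the central case, also $\pi_{\A^{x_\ell}}(Y)$) come from such arrangements. The paper simply cites \cite[Prop.~2.51]{OrlikTerao/92} for the divisibility, whereas you derive it from $\sum_{x\le\hat 1}\mu(\hat 0,x)=0$; otherwise the arguments coincide.
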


\begin{proof}
  If an arrangement $\A'$ is nonempty and central, then $\pi_{\A'}(Y)$
  is divisible by $1+Y$; see~\cite[Prop.~2.51]{OrlikTerao/92}. Suppose
  that $F=(x_1< \cdots <x_{\ell})\in\Delta(\toplat(\A))$, with
  $\ell\geq 0$.  Recalling that $x_0=\hat{0}$, we find that
  $$ \pi^{\circ}_{F}(Y) =
  \pi_{\A^{x_{\ell}}}(Y)\prod_{k=1}^{\ell}
  \frac{\pi_{\A_{x_{k}}^{x_{k-1}}}(Y)}{1+Y}.
  $$ For each $k\in [\ell]$, the arrangement $\A_{x_{k}}^{x_{k-1}}$ is
  central and nonempty; thus $\picirc_F(Y)\in\Z[Y]$. If $\A$ is
  central, then so is $\A^{x_{\ell}}$. Moreover $\A^{x_{\ell}}$ is
  nonempty if and only if $\hat{1}\notin F$. Hence,
  $\pibar_F(Y)\in\Z[Y]$ if~$F\in\Delta(\proplat(\A))$.
\end{proof}

We use the following notation in the next proof. For $f\geq 1$, let
$\lri^{(f)}$ be the (essentially unique) unramified degree-$f$
extension of~$\lri$. As in ~\cite[Def.~5.13]{Rossmann:top-zf}, we take
the following limit---well-defined for all cDVRs $\lri$ whose residue
characteristics avoid a finite set of rational primes, depending only
on $\A$---as the definition of the \emph{multivariate topological zeta
  function} associated with $\A$:
\begin{equation}\label{eqn:multi-top-zf}
  \zeta_{\A}^{\mathrm{top}}(\bfs) :=
        \lim_{\substack{f\rightarrow 0 \\ f\in\N}}
        \zeta_{\A(\lri^{(f)})}(\bfs).
\end{equation}

\begin{proof}[Proof of Corollary~\ref{cor:topological}]
  For $x\in\toplat(\A)$, let $g_x(\bfs)$ be as
  in~\eqref{def:gxs}. Then, by the  above,
  \begin{equation}\label{eqn:q-1.expansion}
      \begin{split}
      \fHP_{\A}\left(-\qvar^{-1},
      (\qvar^{-g_x(\bfs)})_{x\in\toplat(\A)}\right) &=
      \sum_{F\in\Delta(\toplat(\A))} \qvar^{-|F|}\pi_{F}^{\circ}(-\qvar^{-1}) \prod_{x\in
        F}\dfrac{\qvar-1}{\qvar^{g_x(\bfs)}-1}
      \\ &= \sum_{F\in\Delta(\toplat(\A))} \pi_{F}^{\circ}(-1)
      \prod_{x\in F}\dfrac{1}{g_x(\bfs)} + O((\qvar-1)).
      \end{split}
  \end{equation}
  By Theorem~\ref{thm:equivalence} and~\cite[Thm.~5.12]{Rossmann:top-zf},
  $\zeta_{\A}^{\mathrm{top}}(\bfs)$ is the constant term
  in~\eqref{eqn:q-1.expansion}:
  \begin{align*} 
      \zeta_{\A}^{\mathrm{top}}(\bfs) &= \lim_{\substack{f\rightarrow 0
          \\ f\in\N}}\fHP_{\A}\left(-q^{-f}, (q^{-f\cdot
        g_x(\bfs)})_{x\in\toplat(\A)}\right) =
      \sum_{F\in\Delta(\toplat(\A))} \pi_{F}^{\circ}(-1)
      \prod_{x\in F}\dfrac{1}{g_x(\bfs)}. 
  \end{align*} 
  If $\A$ is central, Lemma~\ref{lem:pi-circ} implies that
  $\pi_F^{\circ}(-1)=0$ for all $F\in\Delta(\proplat(\A))$. Hence
  \begin{align*} 
    \zeta^{\mathrm{top}}_{\A}(\bfs) &= \sum_{\substack{F\in\Delta(\toplat(\A)) \\ \hat{1}\in F}} \pi_{F}^{\circ}(-1)
    \prod_{x\in F}\dfrac{1}{g_x(\bfs)} = \dfrac{1}{g_{\hat{1}}(\bfs)} \sum_{F\in\Delta(\proplat(\A))} \overline{\pi}^{\circ}_F(-1) \prod_{x\in F} \dfrac{1}{g_x(\bfs)}. \qedhere
  \end{align*}
\end{proof}

\section{Self-reciprocity}\label{sec:reci}

In this section, we prove the self-reciprocity in
Theorem~\ref{thm:reciprocity.new} for $\fHP_{\A}(Y,\bfT)$ of a central
hyperplane arrangement~$\A$ over a field of characteristic zero---without loss
of generality, a number field $\nofield$ with ring of
integers~$\Gri_\nofield$. For this we first prove the corresponding result
(Corollary~\ref{cor:functional-eqn}) for the analytic zeta function
$\zeta_{\A(\lri)}(\bfs)$ over generic cDVRs $\lri$, and then we apply
Theorem~\ref{thm:equivalence}.

\subsection{Functional equations for multivariate Igusa zeta functions}\label{subsec:funeq}

Let $\mcL$ be a finite index set and $\mcF=(\bff_x)_{x\in\mcL}$ with
sets $\bff_x$ of polynomials over $\Gri_{\nofield}$ in $d$
variables~$\bfX=(X_1,\dots,X_d)$, each homogeneous of
degree~$d_x$. For a cDVR $\lri$ with $\Gri_{\nofield}$-module
structure, set
\begin{equation}\label{eqn:gen.int}
  Z_{\mcF(\lri)}(\bfs) := \int_{\lri^d} \prod_{x\in\mcL}
  \|\bff_x\|^{s_x} |\textup{d}\bfX|.
\end{equation}
As before, $|\textup{d}\bfX|$ denotes the
normalized additive Haar measure on~$\lri^d$. The next theorem follows
from well-known results.

\begin{thm}[Analytic self-reciprocity]\label{thm:funeq.mult}
  For such a family $\mathcal{F}$, there exists a finite set $S$ of
  primes such that, if $\lri$ is a cDVR and an $\Gri_\nofield$-module
  with residue field of cardinality $q$ and characteristic not in $S$,
  then the following holds:
  \begin{equation}\label{eqn:funeq.mult}
    \left. Z_{\mcF(\lri)}(\bfs)\right|_{q\rightarrow q^{-1}} =
    q^{-\sum_{x\in\mcL}d_xs_x} Z_{\mcF(\lri)}(\bfs).
    \end{equation}
\end{thm}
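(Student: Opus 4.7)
The plan is to exploit the homogeneity hypothesis to reduce \eqref{eqn:funeq.mult} to a ``projectivized'' functional equation, which then falls out of the standard resolution-of-singularities analysis of Denef, extended multivariately by Veys and Z\'u\~niga-Galindo~\cite{VeysZunigaGalindo/08}.

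For the radial reduction, let $\pi$ be a uniformizer of $\lri$ and write $\lri^d\setminus\{0\} = \bigsqcup_{k\geq 0}\pi^k U$ with $U := \lri^d\setminus\pi\lri^d$. The homogeneity of each $\bff_x$ gives $\|\bff_x(\pi^k\bfX)\| = q^{-kd_x}\|\bff_x(\bfX)\|$, so a shell-by-shell change of variables yields
\begin{equation*}
\int_{\pi^k U}\prod_{x\in\mcL}\|\bff_x\|^{s_x}|\textup{d}\bfX| = q^{-k(d+\sum_{x\in\mcL}d_xs_x)}J(\bfs),\qquad J(\bfs) := \int_U \prod_{x\in\mcL}\|\bff_x\|^{s_x}|\textup{d}\bfX|.
\end{equation*}
Summing over $k$ gives $Z_{\mcF(\lri)}(\bfs) = J(\bfs)/(1 - q^{-d - \sum_{x\in\mcL}d_xs_x})$, and a short manipulation shows that~\eqref{eqn:funeq.mult} is equivalent to the ``projective'' functional equation
\begin{equation*}
J(\bfs)|_{q\to q^{-1}} = -q^dJ(\bfs).
\end{equation*}

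For this reduced statement I would invoke Hironaka's theorem, applied over a suitable localization of $\Gri_\nofield$, to obtain a proper birational morphism $h\colon Y\to\mathbb{A}^d_{\Gri_\nofield}$ with $Y$ regular such that each pulled-back ideal $h^*\bff_x\cdot\mathcal{O}_Y$ is locally principal and monomial, supported on a strict normal crossings divisor $\bigcup_{i\in T}E_i$ carrying the numerical data $N_{x,i} := \mathrm{mult}_{E_i}(h^*\bff_x)$ and $\nu_i - 1 := \mathrm{mult}_{E_i}(K_{Y/\mathbb{A}^d})$. Letting $S$ be the finitely many primes where $Y$, the $E_i$, and the stratification $(E_I^\circ)_{I\subseteq T}$ fail to have good reduction, the multivariate Denef formula~\cite{VeysZunigaGalindo/08} expresses $J(\bfs)$ as a finite $\Z[(q^{-s_x})_{x\in\mcL}]$-linear combination, indexed by $I\subseteq T$, of terms of the form $q^{-d}|E_I^\circ(\F_q)|\prod_{i\in I}\frac{(q-1)q^{-\nu_i-\sum_{x\in\mcL}N_{x,i}s_x}}{1-q^{-\nu_i-\sum_{x\in\mcL}N_{x,i}s_x}}$, where $E_I^\circ := \bigcap_{i\in I}E_i\setminus\bigcup_{j\notin I}E_j$.

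Under $q\mapsto q^{-1}$, each local rational factor transforms as $\frac{(q-1)q^{-a}}{1-q^{-a}}\mapsto -q^{a-1}\frac{(q-1)q^{-a}}{1-q^{-a}}$, and by Grothendieck--Lefschetz/Poincar\'e duality the virtual Poincar\'e polynomials of the smooth strata $E_I^\circ$ (reorganized via inclusion--exclusion into contributions from the smooth projective closures $\bigcap_{i\in I}E_i$) pick up compensating signs and powers of $q$. Assembling these contributions yields $J(\bfs)|_{q\to q^{-1}} = -q^dJ(\bfs)$, as required. The main obstacle is verifying the precise combinatorial cancellation ensuring that these stratum-wise transformations assemble into exactly $-q^d$, independent of $\bfs$ and of the resolution chosen; this is a well-understood feature of Denef's method and is packaged in the cited multivariate extension.
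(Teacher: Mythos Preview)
Your approach is essentially the one the paper invokes: the statement is declared to ``follow from well-known results,'' namely the Denef--Meuser functional equation~\cite{DM/91} extended to polynomial mappings by Veys and Z\'u\~niga-Galindo~\cite{VeysZunigaGalindo/08} and to finite~$\mcL$ in~\cite{Voll/10}, with the transfer principle~\cite{CluckersLoeser/10} handling positive characteristic. Your sketch of the radial reduction and the resolution-based analysis is exactly how that argument proceeds.

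One technical point deserves tightening. For the Poincar\'e-duality step to produce the clean transformation $|E_I^\circ(\F_q)| \mapsto \pm q^{\dim}\,|E_I^\circ(\F_q)|$ (after the inclusion--exclusion reorganization you describe), the closed strata $\bigcap_{i\in I}E_i$ must be smooth and \emph{projective}. This is why Denef--Meuser resolve the projectivized hypersurface in $\mathbb{P}^{d-1}$, not the affine cone in~$\mathbb{A}^d$: your integral $J(\bfs)$, being over $U=\lri^d\setminus\pi\lri^d$, descends to~$\mathbb{P}^{d-1}(\lri)$, and the resolution should accordingly be taken there. As written, your $h\colon Y\to\mathbb{A}^d_{\Gri_\nofield}$ would give affine strata, for which the point-count symmetry fails. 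You seem aware of this (you mention ``smooth projective closures''), but the setup should be corrected so that the geometry actually lives over projective space. With that adjustment, the argument goes through as you indicate.
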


The proof of this local functional equation for a single homogeneous
polynomial (i.e.\ $\mcL=\{x\}$ and $\bff_x = \{f\}$) by Denef and
Meuser~\cite{DM/91} is based on an analysis of explicit formulae for
the $\mfp$-adic integral. In characteristic zero, the latter are
obtained from a (Hironaka) resolution of singularities of the
projective hypersurface defined by~$f$. Veys and Z\'u\~niga-Galindo
noted that these formulae and arguments extend to the case of
polynomial mappings (i.e.\ $|\mcL|=1$);
cf.~\cite{VeysZunigaGalindo/08}. The general case (i.e.~finite $\mcL$)
poses no additional conceptional difficulties; cf.~\cite{Voll/10}. At
the cost of discarding finitely many further residue class
characteristics, the transfer
principle~\cite[Thm.~9.2.4]{CluckersLoeser/10} implies that the same
formulae also hold in (sufficiently large) positive characteristic.

In general, the operation $q \rightarrow q^{-1}$ needs a precise
algebro-geometric definition; see, e.g.,
\cite[Rem.~1.7]{Voll/19}. If, however, there exists 
$W\in \Q(Y,(T_x)_{x\in\mcL})$ such that, for all cDVRs $\lri$ avoiding a
finite set of ``bad'' residue characteristics,
$$ Z_{\mcF(\lri)}(\bfs) =
W\left(-q^{-1},\left(q^{-s_x}\right)_{x\in\mcL}\right),$$ then the
inversion of $q$ amounts to the formal inversion of the variables $Y$
and $T_x$, for each $x\in\mcL$; cf.\ \cite[Sec.~4]{Rossmann/18}. This
is the case if $d_x=1$ for all $x\in\mcL$, as in the current
paper. The content of \eqref{eqn:funeq.mult} is that $W$ satisfies the
palindromic symmetry

$$W\left(Y^{-1},\left( T^{-1}_x\right)_{x\in\mcL}\right) = \left(\prod_{x\in
  L}T_x^{d_x}\right)W\left(Y,\left( T_x\right)_{x\in\mcL}\right).$$

\begin{cor}\label{cor:functional-eqn}
  Let $\A$ be a central hyperplane arrangement defined over a number
  field $\nofield$ and $\lri$ be a cDVR and an $\Gri_\nofield$-module
  with residue field cardinality $q$ such that $\A$ has good
  reduction over~$\Fq$. Then
  $$\left.\zeta_{\A(\lri)}(\bfs)\right|_{q\rightarrow q^{-1}} =
  q^{-\sum_{x\in \toplat(\A)}s_x} \zeta_{\A(\lri)}(\bfs).$$
\end{cor}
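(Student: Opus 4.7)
The plan is to deduce Corollary~\ref{cor:functional-eqn} as a direct specialization of the analytic self-reciprocity result recorded in Theorem~\ref{thm:funeq.mult}. The crucial observation is that for a central arrangement we may, without loss of generality, choose our defining linear polynomials so that every $L\in\A$ is homogeneous of degree~$1$: since $\bigcap_{H\in\A}H\neq\varnothing$, after translating coordinates by a point in this intersection (which, up to enlarging $\nofield$, may be taken in $\Gri_\nofield^d$ and then viewed inside~$\lri^d$), each affine form $L(\bfX)=c_L+\sum_j\alpha_{L,j}X_j$ becomes a homogeneous linear form $\sum_j\alpha_{L,j}X_j$. This translation preserves the Haar measure on $\lri^d$ as well as the intersection poset, so it does not affect $\zeta_{\A(\lri)}(\bfs)$.

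Next I would define the family $\mcF=(\bff_x)_{x\in\toplat(\A)}$ indexed by the finite set $\mcL=\toplat(\A)$ by setting $\bff_x:=\A_x$ for each $x$. By the above normalization, every element of $\bff_x$ is homogeneous of degree~$d_x=1$, so $\mcF$ satisfies the hypotheses of Theorem~\ref{thm:funeq.mult}. Comparing Definition~\ref{def:analytic.zf} with~\eqref{eqn:gen.int}, one sees immediately that
\[
\zeta_{\A(\lri)}(\bfs) \;=\; Z_{\mcF(\lri)}(\bfs).
\]
Provided the residue characteristic of $\lri$ avoids both the finite set of ``bad'' primes of $\A$ (so that good reduction holds) and the finite set $S$ attached to $\mcF$ in Theorem~\ref{thm:funeq.mult}, the latter theorem yields
\[
\left.\zeta_{\A(\lri)}(\bfs)\right|_{q\rightarrow q^{-1}} \;=\; q^{-\sum_{x\in\toplat(\A)}d_x s_x}\,\zeta_{\A(\lri)}(\bfs) \;=\; q^{-\sum_{x\in\toplat(\A)}s_x}\,\zeta_{\A(\lri)}(\bfs),
\]
since $d_x=1$ for every $x$.

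The only mild subtlety—the ``main obstacle,'' such as it is—lies in making sure that the operation $q\mapsto q^{-1}$ is meaningful here. This is taken care of by the combination of Theorem~\ref{thm:equivalence} and Definition~\ref{def:algebraic}: the theorem shows that $\zeta_{\A(\lri)}(\bfs)$ equals the evaluation at $(Y,\bfT)=(-q^{-1},(q^{-g_x(\bfs)})_x)$ of the explicit rational function $\fHP_{\A}(Y,\bfT)\in\Q(\bfT)[Y]$, so the inversion of~$q$ amounts to the formal inversion of~$Y$ and of each $T_x$, as discussed in the paragraph after Theorem~\ref{thm:funeq.mult}. Hence no algebro-geometric definition of $q\to q^{-1}$ beyond the formal one is needed, and the functional equation is precisely the palindromic identity asserted. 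Excluding finitely many additional residue characteristics if necessary (via the transfer principle) ensures the conclusion in positive characteristic as well, completing the proof.
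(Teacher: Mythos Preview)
Your argument is correct in spirit and close to the paper's, but the route by which you match the precise hypothesis of the corollary (``good reduction over $\Fq$'' and nothing more) differs from what the paper does, and your last sentence obscures rather than completes the argument.

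The paper does not invoke Theorem~\ref{thm:equivalence} here. Instead, it observes that $\toplat(\A)$ is a \emph{simple arrangement of smooth subvarieties} in the sense of Hu, so that an explicit, combinatorially defined iterated blow-up furnishes a resolution of singularities. The point of this is that \emph{this particular} resolution has good reduction over $\Fq$ if and only if $\A$ does; hence the unspecified finite set $S$ in Theorem~\ref{thm:funeq.mult} may be taken to coincide with the set of primes of bad reduction of $\A$, and the corollary follows for every $\lri$ with good reduction, in any characteristic.

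Your alternative---using Theorem~\ref{thm:equivalence} to write $\zeta_{\A(\lri)}(\bfs)=\fHP_{\A}(-q^{-1},(q^{-g_x(\bfs)})_x)$ and then interpreting \eqref{eqn:funeq.mult} as a palindromic identity for the fixed rational function $\fHP_{\A}$---also works, and is arguably more in keeping with the combinatorial philosophy of the paper. But then you should finish as follows: the identity \eqref{eqn:funeq.mult}, valid for the infinitely many $q$ avoiding $S$, forces the palindromic identity for $\fHP_{\A}$ as an equality in $\Q(Y,\bfT)$; since Theorem~\ref{thm:equivalence} says $\zeta_{\A(\lri)}(\bfs)$ is given by this rational function for \emph{every} $\lri$ with good reduction, the functional equation holds for all such $\lri$. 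Your closing appeal to the transfer principle and to ``excluding finitely many additional residue characteristics'' is therefore unnecessary and, as written, leaves the impression that you have proved something strictly weaker than the stated corollary.
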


\begin{proof}
As $\A$ is central we may, without loss of generality, assume that
$\A$ consists of homogeneous polynomials. Therefore
$\zeta_{\A(\lri)}(\bfs)$ is of the form~\eqref{eqn:gen.int} with
$d=\dim(\A)$, $\mcL = \toplat(\A)$, $\bff_x=\A_x$, and $d_x=1$ for all
$x\in\toplat(\A)$. Instead of choosing an unspecified (and
uncontrollable) resolution of singularities of the hyperplane
arrangement $\A$, we observe that $\toplat(\A)$ is a prime example of
what Hu~\cite{Hu/03} calls a simple arrangement of smooth
subvarieties. For such arrangements a combinatorially defined chain of
blow-ups along the respective worst-intersection locus yields a
resolution of singularities~\cite[Thm.~1.1]{Hu/03}. This resolution
has good reduction over $\Fq$ if and only if $\A$ does. In this case,
the resulting formulae apply to cDVRs of characteristic zero and
positive characteristic alike.
\end{proof}

\subsection{Proof of Theorem~\ref{thm:reciprocity.new}}

Let $\A$, $\lri$, and $q$ be as in Corollary~\ref{cor:functional-eqn}. Recall
the definition~\eqref{def:hxr} of $h_x(\bm{r})$ for~$x\in\toplat(\A)$. By
M\"obius inversion we have $-\sum_{x\in \toplat(\A)}h_x(\bfr) = \codim(\hat{1})
- r_{\hat{1}} = \rank(\A) - r_{\hat{1}}$. Hence, by
Theorem~\ref{thm:equivalence}~\eqref{eqn:equiv2} and
Corollary~\ref{cor:functional-eqn},
\begin{align*} 
    \fHP_{\A}\left(-q,\left(q^{r_x}\right)_{x\in \toplat(\A)}\right) &=
    \left.\zeta_{\A(\lri)}\left(\left(h_x(\bfr)\right)_{x\in\toplat(\A)}\right)\right|_{q\rightarrow
      q^{-1}} \\ &= q^{-\sum_{x\in \toplat(\A)}h_x(\bfr)}
    \zeta_{\A(\lri)}\left(\left(h_x(\bfr)\right)_{x\in\toplat(\A)}\right)
    \\ &= q^{-\sum_{x\in \toplat(\A)}h_x(\bfr)}
    \fHP_{\A}\left(-q^{-1},\left(q^{-r_x}\right)_{x\in
      \toplat(\A)}\right)\\
    &=q^{\rank(\A) -
      r_{\hat{1}}}\fHP_{\A}\left(-q^{-1},\left(q^{-r_x}\right)_{x\in
      \toplat(\A)}\right).
\end{align*} 
As this equality holds for infinitely many~$q$, this proves Theorem~\ref{thm:reciprocity.new}. \hfill$\square$

\subsection{An alternative formulation}

Writing $\fHP_{\A}(Y,\bfT)$ over the common denominator $\prod_{x\in
  \toplat(\A)}(1-T_x)$ and comparing coefficients, we see that the
functional equation \eqref{eqn:reciprocity.new} is equivalent to the
following consequence of Theorem~\ref{thm:reciprocity.new}. For
$\mathcal{S}\subseteq \proplat(\A)$, we set $\mathcal{S}^\comp = \proplat(\A)\setminus \mathcal{S}$,
both viewed as subposets of $\proplat(\A)$.

\begin{cor}\label{cor:crunch-pt}
    For all $\mathcal{S}\subseteq \proplat(\A)$,
    \begin{align*} 
        \sum_{F\in\Delta(\mathcal{S})} (-1)^{|F|}\pi_F(Y) &=
        -(-Y)^{\rank(\A)}\sum_{F\in\Delta(\mathcal{S}^\comp)}
        (-1)^{|F|}\pi_F(Y^{-1}).
    \end{align*} 
\end{cor}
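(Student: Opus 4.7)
The corollary is advertised as equivalent to the functional equation \eqref{eqn:reciprocity.new}, so my plan is to clear the common denominator $D(\bfT):=\prod_{x\in\toplat(\A)}(1-T_x)$ on both sides of \eqref{eqn:reciprocity.new} and then match coefficients of monomials $\prod_{x\in H} T_x$ for subsets $H\subseteq\toplat(\A)$ with $\hat{1}\in H$. The subset $\mathcal{S}\subseteq\proplat(\A)$ in the corollary will correspond to $H=\mathcal{S}\cup\{\hat{1}\}$.

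First I would compute $N(Y,\bfT):=D(\bfT)\cdot\fHP_{\A}(Y,\bfT)$ using the central-case expression of $\fHP_{\A}$: the prefactor $1/(1-T_{\hat{1}})$ cancels with the $(1-T_{\hat{1}})$ in $D(\bfT)$, leaving
\[
N(Y,\bfT) \;=\; \sum_{F\in\Delta(\proplat(\A))}\pi_F(Y)\prod_{x\in F}T_x \prod_{x\in\proplat(\A)\setminus F}(1-T_x),
\]
which is independent of $T_{\hat{1}}$. Expanding the second product as $\sum_{G\subseteq\proplat(\A)\setminus F}(-1)^{|G|}\prod_{x\in G}T_x$ and regrouping by $H=F\sqcup G\subseteq\proplat(\A)$, I obtain
\[
[T^H]\,N(Y,\bfT) \;=\; (-1)^{|H|}\sum_{F\in\Delta(H)}(-1)^{|F|}\pi_F(Y).
\]

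Next I treat the left-hand side $N'(Y,\bfT):=D(\bfT)\cdot\fHP_{\A}(Y^{-1},(T_x^{-1})_{x\in\toplat(\A)})$ analogously. Using the identities $\tfrac{T^{-1}}{1-T^{-1}}=-\tfrac{1}{1-T}$ and $\tfrac{1}{1-T^{-1}}=-\tfrac{T}{1-T}$, the central-case formula becomes
\[
\fHP_{\A}(Y^{-1},(T_x^{-1})) \;=\; -\frac{T_{\hat{1}}}{1-T_{\hat{1}}}\sum_{F\in\Delta(\proplat(\A))}(-1)^{|F|}\pi_F(Y^{-1})\prod_{x\in F}\frac{1}{1-T_x},
\]
so that, after multiplying by $D(\bfT)$,
\[
N'(Y,\bfT) \;=\; -T_{\hat{1}}\sum_{F\in\Delta(\proplat(\A))}(-1)^{|F|}\pi_F(Y^{-1})\prod_{x\in\proplat(\A)\setminus F}(1-T_x).
\]
Expanding the last product and extracting the coefficient of $\prod_{x\in H}T_x$ for $H=\mathcal{S}\cup\{\hat{1}\}$ forces $G=\mathcal{S}$ and $F\cap\mathcal{S}=\emptyset$, i.e.\ $F\in\Delta(\proplat(\A)\setminus\mathcal{S})=\Delta(\mathcal{S}^\comp)$, giving
\[
[T^H]\,N'(Y,\bfT) \;=\; -(-1)^{|\mathcal{S}|}\sum_{F\in\Delta(\mathcal{S}^\comp)}(-1)^{|F|}\pi_F(Y^{-1}).
\]

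Finally, Theorem~\ref{thm:reciprocity.new} reads $N'(Y,\bfT)=(-Y)^{-\rank(\A)}T_{\hat{1}}N(Y,\bfT)$. Equating the coefficients of $\prod_{x\in H}T_x$ for $H=\mathcal{S}\cup\{\hat{1}\}$, cancelling the common sign $(-1)^{|\mathcal{S}|}$, and multiplying by $(-Y)^{\rank(\A)}$ yields exactly the claimed identity. The only mild points to check carefully are the bookkeeping of the $T_{\hat{1}}$-factor (both sides vanish in $[T^H]$ when $\hat{1}\notin H$, so no information is lost) and the sign cancellation; neither is a serious obstacle, since the computation is a coefficient-extraction in a polynomial identity.
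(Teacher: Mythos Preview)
Your proposal is correct and follows exactly the approach the paper indicates: the paper merely states that one writes $\fHP_{\A}(Y,\bfT)$ over the common denominator $\prod_{x\in\toplat(\A)}(1-T_x)$ and compares coefficients, and you have carried out precisely this computation in full detail. The bookkeeping with the $T_{\hat{1}}$-factor and the sign cancellations are all handled correctly.
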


It is tempting to try and interpret this reciprocity result directly
in terms of the topology and combinatorics of~$\A$, bypassing the
analytic reciprocity result Theorem~\ref{thm:funeq.mult}. Such a proof
might help to relax the assumptions of
Theorem~\ref{thm:reciprocity.new} to include, perhaps, central
arrangements over fields of positive characteristic.

\begin{remark}\label{rem:jku.cor1.8}
  Corollary~\ref{cor:crunch-pt} for $\mathcal{S}=\proplat(\A)$ implies, in the
  case of a representable matroid, the identity in~\cite[Cor.~1.8]{JKU/21}.
\end{remark}

\section{Examples and applications}\label{sec:exa}

We discuss various examples and applications of flag Hilbert--Poincar\'e series.

\subsection{Arrangements of $m$ parallel lines}\label{subsec:m-para}

Let $m\in\N$ and consider the arrangement $\A = \{H_1,\dots, H_m\}
=\{X_2 - X_1 - k ~|~ k\in [m-1]_0\}$ of $m$ parallel lines
in~$\field^2$.  Note that if $m=1$, then $\A\cong \mathsf{A}_1$; if
$m\geq 2$, then $\A$ is not central. The flag Poincar\'e polynomials are
\begin{align*} 
  \pi_\varnothing(Y) &= 1 + mY, & \pi_{(H_i)}(Y) &= 1 + Y.
\end{align*}
With these data, the following is evident.

\begin{prop}
    For $m\in\N$,
    \begin{align*} 
        \fHP_{\A}(Y, \bfT) &= 1+mY + (1+Y)\sum_{i=1}^m \gp{T_i}, \\
        \zeta_{\A}^{\mathrm{top}}(\bfs) &= 1-m + \sum_{i=1}^m \dfrac{1}{1+s_i}.
    \end{align*}
\end{prop}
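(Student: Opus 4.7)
The plan is to read off the intersection poset of $m$ parallel lines and then apply Definition~\ref{def:algebraic} and Corollary~\ref{cor:topological} directly; both formulas reduce to routine bookkeeping because the poset is essentially trivial.

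First I would describe $\intpos(\A)$. Since the $H_i$ are pairwise parallel, no two of them meet, so the intersection poset has $\hat{0}=\field^2$ at the bottom and the $m$ atoms $H_1,\dots,H_m$ at rank~$1$, with no elements of higher rank. Consequently $\toplat(\A)=\{H_1,\dots,H_m\}$ is an antichain and $\Delta(\toplat(\A))$ consists only of the empty flag and the $m$ singleton flags $(H_i)$.

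Next I would verify the two flag Poincar\'e polynomials stated in the excerpt. For $F=\varnothing$ we have $\pi_{\varnothing}(Y)=\pi_{\A}(Y)$; since every nonempty subset of $\A$ has empty intersection, the M\"obius function of $\intpos(\A)$ gives $\mu(\hat{0})=1$, $\mu(H_i)=-1$, and hence $\pi_{\A}(Y)=1+mY$. For $F=(H_i)$ the definition~\eqref{eqn:Poincare-face} yields $\pi_{(H_i)}(Y)=\pi_{\A_{H_i}}(Y)\,\pi_{\A^{H_i}}(Y)$. Here $\A_{H_i}=\{H_i\}$, so $\pi_{\A_{H_i}}(Y)=1+Y$, and $\A^{H_i}=\varnothing$ (every other line is parallel to $H_i$), so $\pi_{\A^{H_i}}(Y)=1$. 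This gives $\pi_{(H_i)}(Y)=1+Y$.

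Plugging these into Definition~\ref{def:algebraic} gives immediately
\[
\fHP_{\A}(Y,\bfT)=(1+mY)+(1+Y)\sum_{i=1}^{m}\gp{T_i},
\]
which is the first claimed identity. For the topological zeta function I would invoke formula~\eqref{eqn:topo.multi} of Corollary~\ref{cor:topological}. Using $\picirc_F(Y)=\pi_F(Y)/(1+Y)^{|F|}$, we get $\picirc_{\varnothing}(-1)=1-m$ and $\picirc_{(H_i)}(-1)=1$. Since $\codim(H_i)=1$ and $\toplat(\A_{H_i})=\{H_i\}$, the factor attached to $H_i$ equals $1/(1+s_i)$, yielding
\[
\zeta_{\A}^{\mathrm{top}}(\bfs)=(1-m)+\sum_{i=1}^{m}\frac{1}{1+s_i}.
\]
There is no genuine obstacle here; the only point that deserves a brief mention is why $\A^{H_i}$ is empty (which is just the parallelism hypothesis), and the observation that, for $m\geq 2$, $\A$ is noncentral so Theorem~\ref{thm:reciprocity.new} does not apply---indeed, one checks by inspection that $\fHP_{\A}(Y^{-1},\bfT^{-1})$ is not a monomial multiple of $\fHP_{\A}(Y,\bfT)$, illustrating the remark in Section~\ref{sec:intro} that self-reciprocity fails for noncentral arrangements.
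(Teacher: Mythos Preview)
Your proof is correct and follows exactly the approach implicit in the paper, which simply states that ``with these data, the following is evident'' after recording the flag Poincar\'e polynomials $\pi_{\varnothing}(Y)=1+mY$ and $\pi_{(H_i)}(Y)=1+Y$. You have supplied the straightforward verification that the paper omits, including the justification that $\A^{H_i}=\varnothing$ and the direct application of Definition~\ref{def:algebraic} and Corollary~\ref{cor:topological}.
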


From the proposition, it follows that 
\begin{align}\label{eqn:non-reciprocal}
  \fHP_{\A}\left(Y^{-1}, (T_i^{-1})_{i=1}^m\right) &= m(1+Y^{-1}) - Y^{-1}\fHP_{\A}(Y, \bfT).
\end{align}
If $m=1$, then the right hand side of~\eqref{eqn:non-reciprocal} is
$-Y^{-1}T\fHP_{\A}(Y, T)$. If $m\geq 2$, however, then $\fHP_{\A}(Y,
\bfT)$ is not self-reciprocal.

\subsection{Arrangements of $m$ lines through the origin: $\mathsf{I}_2(m)$}\label{subsec:I2m}

Let $m\geq 2$ and $\zeta_m$ be a primitive $m$th root of unity. We consider the
Coxeter arrangement $\A=\{H_1,\dots,H_m\}$ of type~$\msfI_2(m)$, comprising $m$
distinct lines $H_i$ through the origin in~$K^2$, where $K=\Q(\zeta_m)$. Let
$\A$ be set of the $m$ linear factors of the polynomial $X_1^m+X_2^m\in
K[\bfX]$. The flag Poincar\'e polynomials are
\begin{align*}
    \pi_\varnothing(Y) &= (1+Y)(1+(m-1)Y), & \pi_{(H_i)}(Y) &=
    (1+Y)^2.
\end{align*}

With these data, it is easy to deduce the following.
\begin{prop}\label{pro:Im} For $m\geq 2$,
    \begin{align*} 
      \fHP_{\msfI_2(m)}(Y,\bfT) &= \frac{1 + Y}{1-T_{\hat{1}}} \left(1+(m-1)Y + (1+Y)\sum_{i=1}^m \gp{T_i}\right), \\
      \zeta_{\msfI_2(m)}^{\mathrm{top}}(\bfs) &= \dfrac{1}{2 + s_{\hat{1}} + s_1 + \cdots + s_m} \left(2-m + \sum_{i=1}^m \dfrac{1}{1+s_i}\right).
    \end{align*}
\end{prop}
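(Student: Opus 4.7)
The proof proceeds by direct computation, exploiting the fact that $\mathsf{I}_2(m)$ has an especially simple intersection poset: besides $\hat 0$, the lattice $\intpos(\mathsf{I}_2(m))$ consists of $m$ pairwise incomparable atoms $H_1,\dots,H_m$ of rank~$1$ (any two meet only at the origin) and the top element $\hat 1$ of rank~$2$. Consequently the chains in $\proplat(\mathsf{I}_2(m))=\{H_1,\dots,H_m\}$ are exactly the empty flag and the singletons $(H_i)$, and the only additional chains in $\Delta(\toplat(\mathsf{I}_2(m)))$ are obtained by appending $\hat 1$; this is what produces the prefactor $1/(1-T_{\hat 1})$ in the central version of Definition~\ref{def:algebraic}.

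First I would verify the listed flag Poincar\'e polynomials. Since $\mathsf{I}_2(m)$ is a central essential rank-$2$ arrangement with $m$ hyperplanes, $\pi_{\mathsf{I}_2(m)}(Y)=(1+Y)(1+(m-1)Y)$, which is $\pi_\varnothing(Y)$. For each atom $H_i$, the subarrangement $\A_{H_i}=\{H_i\}$ is Boolean of rank~$1$, so $\pi_{\A_{H_i}}(Y)=1+Y$; and the restriction $\A^{H_i}$ consists of a single point on the line $H_i$, again contributing $1+Y$. Hence $\pi_{(H_i)}(Y)=(1+Y)^2$ as asserted. Substituting these two values into the central form
\[
\fHP_{\A}(Y,\bfT)=\frac{1}{1-T_{\hat 1}}\sum_{F\in\Delta(\proplat(\A))}\pi_F(Y)\prod_{x\in F}\frac{T_x}{1-T_x}
\]
and pulling out the common factor $1+Y$ yields the stated closed form for $\fHP_{\mathsf{I}_2(m)}(Y,\bfT)$.

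For the topological zeta function I would apply the central formula \eqref{eqn:topo.multi.central} of Corollary~\ref{cor:topological}. The rank is $2$, and $\sum_{y\in\toplat(\A)}s_y=s_{\hat 1}+\sum_{i=1}^m s_i$, producing the outer factor $1/(2+s_{\hat 1}+\sum_i s_i)$. It remains to evaluate $\pibar_F(-1)$ for each $F\in\Delta(\proplat(\A))$: for $F=\varnothing$, $\pibar_\varnothing(Y)=\pi_\varnothing(Y)/(1+Y)=1+(m-1)Y$, giving $\pibar_\varnothing(-1)=2-m$; for $F=(H_i)$, $\pibar_{(H_i)}(Y)=(1+Y)^2/(1+Y)^2=1$, giving $\pibar_{(H_i)}(-1)=1$. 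Since $\rk(H_i)=1$ and $\toplat(\A_{H_i})=\{H_i\}$, the $F=(H_i)$ term contributes $1/(1+s_i)$, and summation over flags reproduces the claimed formula.

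There is no substantive obstacle here: the entire argument is an unpacking of Definition~\ref{def:algebraic} and Corollary~\ref{cor:topological} against the trivial poset combinatorics of $\mathsf{I}_2(m)$. The only point requiring mild care is ensuring that $\pibar_\varnothing$ is divided by $(1+Y)^{|\varnothing|+1}=1+Y$ (not by $(1+Y)^0=1$) and that the global denominator in \eqref{eqn:topo.multi.central} correctly incorporates the contribution of $\hat 1$.
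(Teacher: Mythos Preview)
Your proposal is correct and follows essentially the same approach as the paper, which simply lists the flag Poincar\'e polynomials $\pi_\varnothing(Y)=(1+Y)(1+(m-1)Y)$ and $\pi_{(H_i)}(Y)=(1+Y)^2$ and then states that the proposition is ``easy to deduce'' from these data. You have correctly unpacked the poset structure, verified the Poincar\'e polynomials, and substituted into Definition~\ref{def:algebraic} and Corollary~\ref{cor:topological}, which is exactly what the paper intends.
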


\subsection{Shi arrangement $\mathcal{S}\mathsf{A}_2$}
\label{subsec:Shi-A2}

The Shi arrangement of type $\mathsf{A}_2$ is
\begin{align*} 
    \mathcal{S}\mathsf{A}_2 &= \{ X_i - X_j - \epsilon ~|~ 1\leq i < j
    \leq 3,\; \epsilon\in\{0,1\}\}.
\end{align*}
We write $H_{ij}$ and $H_{ij}^*$ for the affine subspaces determined
by $X_i-X_j$ and $X_i-X_j-1$ in $\field^3$, respectively. These six
(hyper-)planes intersect in six lines in $\field^3$. Three of these
lines are the intersection of three planes, three the intersections of
two planes. Let $\1$ be the linear subspace of $\field^3$ spanned by
$(1, 1, 1)$. The poset $\toplat(\mathcal{S}\mathsf{A}_2)$ is given in
Figure~\ref{fig:Shi-A_2}, and the flag Poincar\'e polynomials are
\begin{align*}
    \pi_{\emptyset}(Y) &= (1 + 3Y)^2, \\ \pi_{(H_{12})}(Y) =
    \pi_{(H_{23})}(Y) = \pi_{(H_{13}^*)}(Y) &= (1 + Y)(1 + 2Y),
    \\ \pi_{(H_{13})}(Y) = \pi_{(H_{12}^*)}(Y) = \pi_{(H_{23}^*)}(Y)
    &= (1 + Y)(1 + 3Y), \\ \pi_{(\1)}(Y) = \pi_{(\1 + e_1)}(Y) =
    \pi_{(\1 - e_3)}(Y) &= (1 + Y)(1 + 2Y), \\ \pi_{(\1 + e_2)}(Y) =
    \pi_{(\1 - e_2)}(Y) = \pi_{(\1 - e_2 - 2e_3)}(Y) &= (1 + Y)^2,
    \\ \pi_{F}(Y) &= (1 + Y)^2,
\end{align*}
where $F\in\Delta(\toplat(\mathcal{S}\mathsf{A}_2))$ is any maximal chain.
Referring to Figure~\ref{fig:Shi-A_2}, we label the planes (resp.~lines) from
left to right by the integers $\{1,\dots,6\}$ (resp.~$\{7,\dots, 12\}$). Recall
$\GP{X}$ and $\GPZ{X}$ are the respective geometric progressions
$\frac{X}{1-X}$ and $\frac{1}{1-X}$.

\begin{prop}\label{pro:shi.A3} We have
\begin{align*} 
    \fHP_{\mathcal{S}\mathsf{A}_2}(Y,\bfT) &= (1+3Y)^2 +
    (1+Y)(1+2Y)\Big( \GP{T_1} + \GP{T_3} + \GP{T_5} + \GP{T_7}
    \\ &\quad + \GP{T_9} + \GP{T_{11}}\Big) + (1 + Y)(1 + 3Y)\Big(
    \GP{T_2} + \GP{T_4} \\ &\quad + \GP{T_6}\Big) + (1 + Y)^2 \Big(
    \GP{T_8}\left(1 + \GP{T_2} + \GP{T_4}\right) \\ &\quad +
    \GP{T_{10}}\left(1 + \GP{T_2} + \GP{T_6}\right) +
    \GP{T_{12}}\left(1 + \GP{T_4} + \GP{T_6}\right) \\ &\quad +
    \GP{T_7}\left(\GP{T_1} + \GP{T_2} + \GP{T_3}\right) +
    \GP{T_9}(\GP{T_1} + \GP{T_4} \\ &\quad + \GP{T_5}) +
    \GP{T_{11}}\left(\GP{T_3} + \GP{T_5} + \GP{T_6}\right) \Big) ,
\end{align*}
\begin{align*} 
  \zeta_{\mathcal{S}\mathsf{A}_2}^{\mathrm{top}}(\bfs) &= 4 - \sum_{i=1}^6\GPZ{-s_i} - \sum_{i=1}^3\GPZ{-s_{2i}} \\
  &\quad + \GPZ{-s_1-s_2-s_3-s_7-1}\left(\GPZ{-s_1}+
  \GPZ{-s_2} + \GPZ{-s_3}-1\right)  \\ 
  &\quad + \GPZ{-s_1-s_4-s_5-s_9-1}\left(\GPZ{-s_1} +
  \GPZ{-s_4} + \GPZ{-s_5} -1\right) \\
  &\quad + \GPZ{-s_3-s_5-s_6-s_{11}-1}\left(\GPZ{-s_3} +
  \GPZ{-s_5} + \GPZ{-s_6}-1\right)  \\
  &\quad + \GPZ{-s_2-s_4-s_8-1}\left(\GPZ{-s_2} +
  \GPZ{-s_4}\right) \\
  &\quad + \GPZ{-s_2-s_6-s_{10}-1}\left(\GPZ{-s_2} +
  \GPZ{-s_6}\right) \\
  &\quad + \GPZ{-s_4-s_6-s_{12}-1}\left(\GPZ{-s_4} +
  \GPZ{-s_6}\right).
\end{align*}
\end{prop}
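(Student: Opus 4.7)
The plan is to compute both generating functions directly from their defining sums, using the flag Poincar\'e polynomials already tabulated before the proposition and the incidence structure of $\toplat(\mathcal{S}\mathsf{A}_2)$.

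First I would record the intersection poset. The six hyperplanes $H_{ij}, H_{ij}^*$ for $1\leq i<j\leq 3$ meet pairwise (non-parallel pairs) in six lines in $\field^3$. Three of these lines --- namely $\1, \1+e_1, \1-e_3$ --- arise as the common intersection of three hyperplanes (so the three pairwise intersections coincide); the other three --- $\1+e_2, \1-e_2, \1-e_2-2e_3$ --- are intersections of exactly two hyperplanes. With the labeling of the proposition (planes $1,\dots,6$ and lines $7,\dots,12$ from left to right in Figure~\ref{fig:Shi-A_2}), the incidence pattern is: lines $7,9,11$ each lie on three planes (those indexed by $\{1,2,3\},\{1,4,5\},\{3,5,6\}$ respectively), and lines $8,10,12$ each lie on two planes (those indexed by $\{2,4\},\{2,6\},\{4,6\}$). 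Thus $\Delta(\toplat(\mathcal{S}\mathsf{A}_2))$ consists of the empty flag, $12$ singletons, and $3\cdot 3+3\cdot 2=15$ length-two chains $(P,L)$ with $L\subset P$.

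Next I would substitute into Definition~\ref{def:algebraic}. The empty flag contributes $(1+3Y)^2$. Grouping singletons by the given Poincar\'e values yields $(1+Y)(1+2Y)\bigl(\GP{T_1}+\GP{T_3}+\GP{T_5}+\GP{T_7}+\GP{T_9}+\GP{T_{11}}\bigr)$ and $(1+Y)(1+3Y)\bigl(\GP{T_2}+\GP{T_4}+\GP{T_6}\bigr)$, plus the three singleton contributions $(1+Y)^2\bigl(\GP{T_8}+\GP{T_{10}}+\GP{T_{12}}\bigr)$. For every length-two chain $\pi_{(P,L)}(Y)=(1+Y)^2$, so pulling out the common $(1+Y)^2$ factor and organizing by line one obtains exactly the six terms appearing in the stated formula for $\fHP_{\mathcal{S}\mathsf{A}_2}$; absorbing the three ``bare line'' singletons for $L\in\{8,10,12\}$ into the leading $1$ inside the parenthesis $(1+\GP{T_2}+\GP{T_4})$ etc.\ gives the claimed expression verbatim.

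For the topological zeta function I would apply Corollary~\ref{cor:topological} with the evaluations $\picirc_F(-1)$. Computing $\picirc_F(Y)=\pi_F(Y)/(1+Y)^{|F|}$ flag by flag gives $\picirc_\emptyset(-1)=(1-3)^2=4$; for the singletons, planes $1,3,5$ yield $1+2Y\big|_{Y=-1}=-1$ and planes $2,4,6$ yield $1+3Y\big|_{Y=-1}=-2$, lines $7,9,11$ yield $-1$, and the critical observation is that lines $8,10,12$ yield $\picirc(-1)=(1+Y)\big|_{Y=-1}=0$, so these three ``double lines'' contribute only through the length-two chains above them, where $\picirc_F(-1)=1$. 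Finally, reading off the linear forms $\codim(x)+\sum_{y\in\toplat(\A_x)}s_y$ gives $1+s_i$ for each plane, $2+s_L+\sum_{P\ni L}s_P$ for each line, and writing these as $\GPZ{-s_i}$ and $\GPZ{-s_1-s_2-s_3-s_7-1}$ etc.\ yields the stated formula. The main obstacle is bookkeeping --- correctly matching the labels $1,\dots,12$ to the poset elements and tracking the incidences between planes and lines --- since once the poset is pinned down, everything reduces to substituting into Definition~\ref{def:algebraic} and Corollary~\ref{cor:topological}.
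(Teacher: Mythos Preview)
Your proposal is correct and follows exactly the (implicit) approach of the paper: the proposition is a direct readout from Definition~\ref{def:algebraic} and Corollary~\ref{cor:topological} once the poset of Figure~\ref{fig:Shi-A_2} and the tabulated flag Poincar\'e polynomials are in hand. Your bookkeeping of the labels and incidences matches the paper's conventions precisely.
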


\begin{figure}[h]
    \centering
    \begin{tikzpicture}
        \pgfmathsetmacro{\y}{1.25}

        \node(H1) at (-5,\y) {$H_{12}$}; \node(H2) at (-3,\y)
             {$H_{13}$}; \node(H3) at (-1,\y) {$H_{23}$}; \node(H6) at
             (1,\y) {$H_{23}^*$}; \node(H5) at (3,\y) {$H_{13}^*$};
             \node(H4) at (5,\y) {$H_{12}^*$}; \node(I1) at
             (-5,2*\y+0.5) {$\1$}; \node(I3) at (-1,2*\y+0.5) {$\1 -
               e_3$}; \node(I4) at (1,2*\y+0.5) {$\1 - e_2$};
             \node(I2) at (3,2*\y+0.5) {$\1 + e_1$}; \node(I5) at
             (-3,2*\y+0.5) {$\1 + e_2$}; \node(I6) at (5,2*\y+0.5)
                  {$\1 - e_2 - 2e_3$};

        \draw[-stealth] (H1) -- (I1); \draw[-stealth] (H2) -- (I1);
        \draw[-stealth] (H3) -- (I1); \draw[-stealth] (H3) -- (I2);
        \draw[-stealth] (H4) -- (I2); \draw[-stealth] (H5) -- (I2);
        \draw[-stealth] (H1) -- (I3); \draw[-stealth] (H5) -- (I3);
        \draw[-stealth] (H6) -- (I3); \draw[-stealth] (H2) -- (I4);
        \draw[-stealth] (H4) -- (I4); \draw[-stealth] (H2) -- (I5);
        \draw[-stealth] (H6) -- (I5); \draw[-stealth] (H4) -- (I6);
        \draw[-stealth] (H6) -- (I6);
    \end{tikzpicture}
    \caption{The poset $\toplat(\mathcal{S}\mathsf{A}_2)$ for the Shi
      arrangement $\mathcal{S}\mathsf{A}_2$.}
    \label{fig:Shi-A_2}
\end{figure}

Formulae for the coarse flag Hilbert--Poincar\'e series associated with additional Shi
arrangements are given in Section~\ref{subsec:shi.coarse}.

\subsection{Braid arrangement~$\mathsf{A}_3$}
\label{subsec:A2}

The Coxeter (or {braid}) arrangement of type $\mathsf{A}_3$ is
\begin{align*} 
  \{X_i - X_j ~|~ 1\leq i < j \leq 4\}.
\end{align*}
We write $H_{ij} = V(X_i - X_j)$. For distinct $i,j,k,\ell\in[4]$ and
maximal flags $F$, the flag Poincar\'e polynomials are
\begin{align*} 
    \pi_{\emptyset}(Y) &= (1+Y)(1+2Y)(1 + 3Y), \\ \pi_{(H_{ij})}(Y) =
    \pi_{(H_{ij}\cap H_{ik})}(Y) &= (1+Y)^2(1+2Y), \\ \pi_{(H_{ij}\cap
      H_{k\ell})}(Y) = \pi_{F}(Y) &= (1+Y)^3.
\end{align*}
We associate to $H_{ij}$, $H_{ij}\cap H_{ik}$, and $H_{ij}\cap H_{k\ell}$ the
indeterminates $T_I$, $T_J$, and $T_{I|K}$ and $s_I$, $s_J$, and $s_{I|K}$
respectively, provided $I=\{i,j\}$, $J=\{i,j,k\}$, and $K=\{k,\ell\}$. 

\begin{prop}\label{prop:braid.A3}
\begin{align*} 
    (1-T_{\hat{1}})\fHP_{\mathsf{A}_3}(Y, \bfT) &= (1+Y)(1+2Y)(1+3Y) \\ &\quad +
    (1+Y)^2(1+2Y)\left(\sum_{I\in\mcP(4;2)} \GP{T_I} +
    \sum_{J\in\mcP(4;3)} \GP{T_J} \right) \\ &\quad + (1+Y)^3
    \left(\sum_{\substack{I,J\in \mcP(4;2) \\ I\cup J = [4]}}
    \GP{T_{I|J}}(1 + \GP{T_I} + \GP{T_J}) \right) \\ &\quad + (1 +
    Y)^3 \left(\sum_{I\in\mcP(4;3)}\sum_{J\in\mcP(I;2)}
    \GP{T_I}\GP{T_J} \right).
\end{align*}
With $$Q(\bfs)=3+s_{\hat{1}}+\sum_{I\in\mcP(4;2)\cup \mcP(4;3)}s_I+\sum_{\substack{I, J\in \mcP(4;2) \\ I\cup J=[4]}}s_{I|J},$$ 
\begin{align*} 
  Q(\bfs)\cdot\zeta_{\mathsf{A}_3}^{\mathrm{top}}(\bfs) &= 2 - \left(\sum_{I\in\mcP(4;2)} \GPZ{-s_I} +
  \sum_{J\in\mcP(4;3)} \GPZ{-1-s_J - \sum_{K\in \mcP(J;2)}s_K} \right) \\ &\quad + \left(\sum_{\substack{I,J\in \mcP(4;2) \\ I\cup J = [4]}}
  \GPZ{-1-s_{I|J}-s_I-s_J}(\GPZ{-s_I} + \GPZ{-s_J}) \right) \\ &\quad + \left(\sum_{I\in\mcP(4;3)}\sum_{J\in\mcP(I;2)}\GPZ{-1-s_I-\sum_{K\in\mcP(I;2)}s_K}\GPZ{-s_J} \right).
\end{align*}
\end{prop}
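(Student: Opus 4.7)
The plan is to apply Definition~\ref{def:algebraic} (in its central-arrangement form) and Corollary~\ref{cor:topological} directly to the combinatorics of $\mathsf{A}_3$, using the flag Poincar\'e polynomials already tabulated in this subsection. First I would enumerate $\proplat(\mathsf{A}_3)$: six rank-one atoms $H_I$ indexed by $I\in\mcP(4;2)$; seven rank-two flats split into four ``triple'' flats $H_J$ indexed by $J\in\mcP(4;3)$ and three ``disjoint-pair'' flats $H_{I|K}$ indexed by the unordered partitions $\{I,K\}$ of $[4]$ into two pairs. No rank-three element lies in $\proplat$, since the intersection $\hat{1}$ is removed.

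Next I would group the flags in $\Delta(\proplat(\mathsf{A}_3))$ by length. The empty flag contributes $(1+Y)(1+2Y)(1+3Y)$. Length-one flags split into the three orbits above, with the flag Poincar\'e polynomials supplied; summing them over the correct index sets produces the $\sum\GP{T_I}$, $\sum\GP{T_J}$, and $\sum\GP{T_{I|J}}$ terms (the last appears only as part of the factor $\GP{T_{I|J}}(1+\GP{T_I}+\GP{T_J})$). Length-two flags are of the form $(H_I<H_J)$ with $I\subset J$, $|J|=3$, giving $4\cdot 3=12$ flags and the final double sum, or $(H_I<H_{I|K})$ with $I\in\{I,K\}$, giving $3\cdot 2=6$ flags which combine with the length-one $\GP{T_{I|J}}$-terms to produce the factor $\GP{T_{I|J}}(1+\GP{T_I}+\GP{T_J})$. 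Assembling these contributions according to the central formula and multiplying through by $(1-T_{\hat 1})$ yields the first identity.

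For the topological zeta function I would apply the central-arrangement form of Corollary~\ref{cor:topological}. The values $\pibar_F(-1)$ are read off from the given flag Poincar\'e polynomials and the formula $\pibar_F(Y)=\pi_F(Y)/(1+Y)^{|F|+1}$: one obtains $2$ for the empty flag, $-1$ for length-one flags at an atom or a triple flat, $0$ for a disjoint-pair flat, and $1$ for every length-two flag. For each $x\in\toplat(\mathsf{A}_3)$ I would compute the denominator $\rk(x)+\sum_{y\in\toplat(\mathsf{A}_{3,x})}s_y$ appearing in the corollary: $1+s_I$ at an atom, $2+s_J+\sum_{K\in\mcP(J;2)}s_K$ at a triple flat, $2+s_{I|J}+s_I+s_J$ at a disjoint-pair flat, and $3+s_{\hat 1}+\sum s_I+\sum s_J+\sum s_{I|J}=Q(\bfs)$ at $\hat 1$, rewritten as $\GPZ{-a_x}$-factors. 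Collecting the contributions of the nonzero flags yields the second identity after multiplication by $Q(\bfs)$.

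Nothing in this argument is conceptually delicate; the main obstacle is purely bookkeeping, specifically the careful enumeration of length-two flags (separating the twelve ``chain'' flags from the six ``disjoint-pair'' flags), and matching the denominator $\toplat(\mathsf{A}_{3,x})$ at each rank-two flat so that the rational factors appear in the form written in the proposition. Once those are lined up, both formulas follow from a direct substitution into Definition~\ref{def:algebraic} and Corollary~\ref{cor:topological}.
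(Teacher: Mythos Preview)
Your proposal is correct and is exactly the approach the paper takes: the proposition is stated without a separate proof, being treated as a direct consequence of the tabulated flag Poincar\'e polynomials together with Definition~\ref{def:algebraic} and Corollary~\ref{cor:topological}. Your bookkeeping of the thirteen rank-one and rank-two flats, the eighteen length-two flags, and the values $\pibar_F(-1)\in\{2,-1,0,1\}$ is accurate and reproduces both identities.
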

    
In Example~\ref{ex:A3} we obtain formulae for the Igusa and univariate
topological zeta functions associated with $\mathsf{A}_3$ in terms of
certain labeled rooted trees.

\subsection{Boolean arrangements}
\label{subsec:Boolean} 

Consider the arrangement $\A$ comprising all the coordinate
hyperplanes in $\field^n$, also known as the Boolean arrangement and
equivalent to~$\mathsf{A}_1^n$.  Since $\A_x^y$ is Boolean for all
$x,y\in\intpos(\A)$ with $y<x$, it follows that $\pi_F(Y)=(1+Y)^n$ for
all $F\in \Delta(\toplat(\A))$. Thus,
\begin{equation*}\label{eqn:alg.boolean}
    \fHP_{\mathsf{A}_1^n}(Y,\bfT) = (1+Y)^n \sum_{F\in
      \Delta(\toplat(\A))} \prod_{x\in F}\gp{T_x}.
\end{equation*}
We identify $\Delta(\toplat(\A))$ with $\widetilde{\textup{WO}}([n])$,
the poset of chains of nonempty subsets of $[n]$ and rewrite
$\fHP_{\A}(Y,(T_I)_{I\in \mcP(n)\setminus\{\varnothing\}})$ in terms of
the \emph{weak order zeta function} (cf.~\cite[Def.~2.9]{SV1/15}) 
\begin{equation}\label{eqn:WO}
  \mathcal{I}^{\textup{WO}}_n\left((T_I)_{I\in
    \mcP(n)\setminus\{\varnothing\}}\right) := \sum_{F\in
    \widetilde{\textup{WO}}([n])}\prod_{I\in F} \gp{T_I};
\end{equation} 

\begin{prop}\label{prop:boolean} 
    For $n\geq 1$,
    \begin{align*}
      \fHP_{\mathsf{A}_1^n}(Y,\bfT) &= (1+Y)^n \mathcal{I}^{\textup{WO}}_n(\bfT), \\
      \zeta_{\mathsf{A}_1^n}^{\mathrm{top}}(\bfs) &= \sum_{\substack{F\in
      \widetilde{\textup{WO}}([n]) \\ |F|=n}} \prod_{I\in F}\dfrac{1}{|I| + \sum_{J\in\mathcal{P}(I)\setminus\{\varnothing\}}s_J}.
    \end{align*}
\end{prop}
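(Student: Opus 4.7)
The proof is essentially immediate from the setup provided just before the statement, together with Corollary~\ref{cor:topological}. I would organize it as follows.

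\textbf{The first identity.} The plan is to verify that $\pi_F(Y)=(1+Y)^n$ for \emph{every} $F\in\Delta(\toplat(\mathsf{A}_1^n))$, from which the formula follows by substitution into Definition~\ref{def:algebraic} and the identification $\Delta(\toplat(\mathsf{A}_1^n))\cong\widetilde{\textup{WO}}([n])$. For the verification, I would identify each $x\in\toplat(\mathsf{A}_1^n)$ with the subset $I_x\subseteq[n]$ of coordinates cut out by $x$, so that $\codim(x)=|I_x|$. Given a flag $F=(x_1<\cdots<x_\ell)$ with $x_0=\hat 0$ and $x_{\ell+1}=\varnothing$, each $\mathsf{A}_{x_{k+1}}^{x_k}$ is a Boolean arrangement of rank $|I_{x_{k+1}}|-|I_{x_k}|$ (using the convention $I_{x_0}=\varnothing$, $I_{x_{\ell+1}}=[n]$), whose Poincar\'e polynomial is $(1+Y)^{|I_{x_{k+1}}|-|I_{x_k}|}$. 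The product telescopes:
\begin{align*}
\pi_F(Y)=\prod_{k=0}^{\ell}(1+Y)^{|I_{x_{k+1}}|-|I_{x_k}|}=(1+Y)^n.
\end{align*}
Since $\pi_F(Y)$ is independent of $F$, the formula for $\fHP_{\mathsf{A}_1^n}$ follows from~\eqref{eqn:WO}.

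\textbf{The second identity.} The plan is to apply the multivariate topological formula~\eqref{eqn:topo.multi} of Corollary~\ref{cor:topological}. By definition~\eqref{eqn:pi-circ} and the computation above,
\begin{align*}
\picirc_F(Y)=\frac{\pi_F(Y)}{(1+Y)^{|F|}}=(1+Y)^{n-|F|}.
\end{align*}
Evaluating at $Y=-1$ yields $\picirc_F(-1)=\delta_{|F|=n}$, so only maximal flags $F\in\widetilde{\textup{WO}}([n])$ with $|F|=n$ contribute. For such $x\in F$ corresponding to a subset $I\subseteq[n]$, we have $\codim(x)=|I|$, while $\toplat(\mathsf{A}_x)$ is identified with $\mathcal{P}(I)\setminus\{\varnothing\}$. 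Substituting these into~\eqref{eqn:topo.multi} delivers exactly the stated expression.

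\textbf{Obstacles.} There is essentially no hard step: both parts reduce to the standard Boolean identification of the intersection lattice with the Boolean lattice on $[n]$, plus a telescoping product and the easy vanishing $\picirc_F(-1)=0$ unless $|F|$ is maximal. The most error-prone point is keeping track of the convention $\A_\varnothing=\A$ in the boundary term $k=\ell$ of the flag product, which is why I would spell out the telescoping explicitly before concluding.
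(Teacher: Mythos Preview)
Your proposal is correct and follows essentially the same approach as the paper. The first identity is handled identically (the paper establishes $\pi_F(Y)=(1+Y)^n$ in the text preceding the proposition, and your telescoping argument just spells this out), and for the second identity both you and the paper invoke Corollary~\ref{cor:topological}; the only cosmetic difference is that you use the general formula~\eqref{eqn:topo.multi} with $\picirc_F(-1)=\delta_{|F|=n}$, whereas the paper's one-line proof uses the central formula~\eqref{eqn:topo.multi.central} with $\pibar_F(-1)$ vanishing on nonmaximal flags of~$\proplat(\mathsf{A}_1^n)$.
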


\begin{proof}
  By Lemma~\ref{lem:pi-circ}, $\pibar_F(-1)=0$ if and only if $F\in\widetilde{\textup{WO}}([n])$ is not maximal.
\end{proof}

The neat factorization of $\fHP_{\mathsf{A}_1^n}(Y,\bfT)$ as a product
of a polynomial in $Y$ and a rational function in $\bfT$ seems to be
atypical for these series. We have not observed such a factorization
anywhere outside the family of Boolean arrangements.

\subsection{Generic central arrangements}\label{subsec:gen.central.arr}

Let $m,n\in\N$ with $n\leq m$. We consider the arrangement
$\mathcal{U}_{n,m}$ of $m$ hyperplanes through the origin in
$\field^n$ in general position. That is, for each $k\leq n$, every
$k$-set of hyperplanes intersects in a codimension-$k$ subspace. This
is also known as the $m$-element uniform matroid of rank~$n$. Observe
that $\mathcal{U}_{2, m} \cong \mathsf{I}_2(m)$, covered in
Section~\ref{subsec:I2m}, and $\mathcal{U}_{n, n}$ is Boolean, seen in
Section~\ref{subsec:Boolean}.

\begin{lem}\label{lem:Poincare-uniform}
    Let $m,n\in \N$ with $n\leq m$. The Poincar\'e polynomial of
    $\mathcal{U}_{m,n}(Y)$ is
    \begin{align}\label{eqn:Poincare-uniform}
        \pi_{\mathcal{U}_{n, m}}(Y) &= (1 + Y)
        \sum_{k=0}^{n-1}\binom{m-1}{k}Y^k.
    \end{align}
\end{lem}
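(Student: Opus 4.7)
The plan is to compute the M\"obius function of $\intpos(\mathcal{U}_{n,m})$ explicitly and then unpack the definition~\eqref{eqn:Poincare-poly}. First I would describe the intersection poset: since the $m$ hyperplanes are in general position in $\field^n$, every subset of at most $n-1$ of them intersects in a distinct subspace of the expected codimension, while every subset of size $\geq n$ intersects in the origin. Thus $\intpos(\mathcal{U}_{n,m})$ contains exactly $\binom{m}{k}$ elements of rank $k$ for each $k\in\{0,1,\dots,n-1\}$, together with a unique top element $\hat{1}$ of rank $n$.

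Next I would compute the M\"obius values. For $x\in\intpos(\mathcal{U}_{n,m})$ of rank $k<n$, the interval $[\hat{0},x]$ is isomorphic to the Boolean lattice $B_k$ on the $k$ hyperplanes cutting out $x$; hence $\mu(\hat{0},x)=(-1)^k$. The value at $\hat{1}$ is then forced by the defining identity $\sum_{y\in\intpos(\mathcal{U}_{n,m})}\mu(\hat{0},y)=0$, which, combined with the standard binomial identity $\sum_{k=0}^{n-1}(-1)^k\binom{m}{k}=(-1)^{n-1}\binom{m-1}{n-1}$, gives $\mu(\hat{0},\hat{1})=(-1)^n\binom{m-1}{n-1}$.

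Finally I would substitute into the definition~\eqref{eqn:Poincare-poly}:
\begin{align*}
  \pi_{\mathcal{U}_{n,m}}(Y) &= \sum_{k=0}^{n-1}\binom{m}{k}(-1)^k(-Y)^k+(-1)^n\binom{m-1}{n-1}(-Y)^n\\
  &=\sum_{k=0}^{n-1}\binom{m}{k}Y^k+\binom{m-1}{n-1}Y^n,
\end{align*}
and then verify, using Pascal's rule $\binom{m}{k}=\binom{m-1}{k}+\binom{m-1}{k-1}$, that this expression equals $(1+Y)\sum_{k=0}^{n-1}\binom{m-1}{k}Y^k$.

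There is no serious obstacle here; the only subtlety is the correct determination of $\mu(\hat{0},\hat{1})$, since $\hat{1}$ is the unique element of $\intpos(\mathcal{U}_{n,m})$ whose interval from $\hat{0}$ is not Boolean. An alternative path, worth mentioning as a sanity check, is to invoke the relation~\eqref{equ:chi=cha} and compare with the well-known characteristic polynomial of the uniform matroid $U_{n,m}$.
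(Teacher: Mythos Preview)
Your proof is correct, but it follows a different route than the paper. The paper argues by induction: it first disposes of the Boolean case $n=m$, and for $m>n$ applies the Deletion--Restriction Theorem with respect to any $H\in\mathcal{U}_{n,m}$, using $\mathcal{U}_{n,m}^H\cong\mathcal{U}_{n-1,m-1}$ and $\mathcal{U}_{n,m}\setminus\{H\}\cong\mathcal{U}_{n,m-1}$ to obtain the recursion $\pi_{\mathcal{U}_{n,m}}(Y)=\pi_{\mathcal{U}_{n,m-1}}(Y)+Y\,\pi_{\mathcal{U}_{n-1,m-1}}(Y)$. Your approach instead exploits the explicit description of $\intpos(\mathcal{U}_{n,m})$ as a truncated Boolean lattice to compute all M\"obius values directly and then appeals to a binomial identity. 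Your argument is more self-contained, needing only the definition~\eqref{eqn:Poincare-poly} and elementary combinatorics; the paper's argument, by contrast, invokes a standard structural tool of the subject (used again later for the type-$\mathsf{D}$ restrictions in Lemma~\ref{lem:Poincare-D-res}) and avoids the separate bookkeeping for $\hat{1}$. One small caveat: your closing remark that $[\hat{0},\hat{1}]$ is ``not Boolean'' is only accurate for $m>n$; when $m=n$ the whole lattice is Boolean, though your formula $\mu(\hat{0},\hat{1})=(-1)^n\binom{m-1}{n-1}$ remains valid in that case as well.
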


\begin{proof}
  If $n=m$, then $\pi_{\mathcal{U}_{n, n}}(Y) = (1+Y)^n$, and the
  lemma follows in this case, so we assume $m>n$.  We proceed by
  induction on $n$, with base case
  $\mathcal{U}_{1,m}\cong\mathcal{U}_{1,1}$.  Let
  $H\in\mathcal{U}_{n,m}$, so $\mathcal{U}_{n, m}^H \cong
  \mathcal{U}_{n-1, m-1}$ and $\mathcal{U}_{n, m}\setminus \{H\} \cong
  \mathcal{U}_{n, m-1}$. Hence by the Deletion-Restriction
  Theorem~\cite[Prop.~2.56]{OrlikTerao/92},
    \begin{align}\label{eqn:uniform-DRT}
      \pi_{\mathcal{U}_{n, m}}(Y) &= \pi_{\mathcal{U}_{n, m-1}}(Y) +
                                    Y\pi_{\mathcal{U}_{n-1, m-1}}(Y).
    \end{align}
    By induction, the lemma follows. 
\end{proof}

We augment our power-set notation to accommodate multiple, prescribed
subset sizes. For $I,J\subset \N_0$, we define $\mcP(I;J) =
\bigcup_{j\in J} \mcP(I; j) \subseteq \mcP(I)$. Observe that for
$n\leq m$ we have $\proplat(\mathcal{U}_{n,m})\cong \mathcal{P}([m];
       [n-1])$ and $\toplat(\mathcal{U}_{n,n}) \cong \mathcal{P}([n];
       [n])$.

The next proposition generalizes
Proposition~\ref{prop:boolean}, as~$\mathsf{A}_1^n \cong
\mathcal{U}_{n,n}$, and Proposition~\ref{pro:Im}, since $\mathsf{I}_2(m) \cong \mathcal{U}_{2,m}$.

\begin{prop}\label{prop:uniform-zeta}
Let $m,n\in\N$ with $n\leq m$. For $\bfT=(T_{\hat{1}}, (T_I)_{I\in
\mcP([m]; [n-1])})$ and $Q(\bfs) = n +
s_{\hat{1}}+\sum_{J\in\mathcal{P}([m];[n-1])}s_J$, 
    \begin{align*} 
        \fHP_{\mathcal{U}_{n,m}}\left(Y, \bfT\right) &= \dfrac{1 + Y}{1 - T_{\hat{1}}}\sum_{k=0}^{n-1}\binom{m-1}{k}Y^k +
        \sum_{\ell=1}^{n-1}\sum_{k=0}^{n-\ell-1} \dfrac{(1+Y)^{\ell+1}}{1 - T_{\hat{1}}} \binom{m-\ell-1}{k}Y^k \\
        &\quad \cdot \sum_{I\in \mcP([m]; \ell)}  T_I
        \mathcal{I}^{\textup{WO}}_\ell\left((T_J)_{J\in \mcP(I;
        [\ell])}\right) , \\
         \zeta_{\mathcal{U}_{n,m}}^{\mathrm{top}}(\bfs) \cdot Q(\bfs) &= \sum_{k=0}^{n-1}(-1)^k\binom{m-1}{k} \\
        &\quad + \sum_{\ell=1}^{n-1}\sum_{I\in\mathcal{P}([m];\ell)} \sum_{\substack{F\in\widetilde{\mathrm{WO}}(I) \\ |F|=\ell}} \dfrac{\sum_{k=0}^{n-\ell-1}(-1)^k\binom{m-\ell-1}{k}}{\prod_{J\in F}\left(|J| + \sum_{K\in\mathcal{P}(J)\setminus\{\varnothing\}}s_K\right)}.
    \end{align*} 
\end{prop}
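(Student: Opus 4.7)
The plan is to evaluate $\fHP_{\mathcal{U}_{n,m}}(Y,\bfT)$ directly from Definition~\ref{def:algebraic} and $\zeta^{\mathrm{top}}_{\mathcal{U}_{n,m}}(\bfs)$ from Corollary~\ref{cor:topological}, using an explicit description of flag Poincar\'e polynomials afforded by the general-position hypothesis. I would first identify $\toplat(\mathcal{U}_{n,m}) = \mcP([m];[n-1]) \cup \{\hat{1}\}$, where $I\in \mcP([m];[n-1])$ corresponds to the codimension-$|I|$ flat $\bigcap_{j\in I} H_j$ (so $\rk(x)=|I|$ for $x\leftrightarrow I$) and $\hat{1}$ to the origin. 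For a nonempty flag $F=(x_1<\cdots<x_\ell)\in \Delta(\proplat(\mathcal{U}_{n,m}))$ with $x_j \leftrightarrow I_j$ and $i_j := |I_j|$, the key input from general position is that for each $0\leq j<\ell$ the poset $\intpos(\A^{x_j}_{x_{j+1}})$ identifies with the interval $[I_j,I_{j+1}]$ in $\mcP([m])$ and is thus Boolean of rank $i_{j+1}-i_j$, while $\intpos(\A^{x_\ell})$ identifies via $J\mapsto J\setminus I_\ell$ with the intersection lattice of $\mathcal{U}_{n-i_\ell,m-i_\ell}$. Applying Lemma~\ref{lem:Poincare-uniform} then yields
\begin{align*}
    \pi_F(Y) = (1+Y)^{i_\ell+1} \sum_{k=0}^{n-i_\ell-1} \binom{m-i_\ell-1}{k} Y^k,
\end{align*}
and the same formula with $i_\ell$ replaced by $0$ correctly reproduces $\pi_\varnothing(Y)=\pi_{\mathcal{U}_{n,m}}(Y)$.

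For the first formula of the proposition, since $\mathcal{U}_{n,m}$ is central, Definition~\ref{def:algebraic} gives
\begin{align*}
    (1-T_{\hat{1}})\fHP_{\mathcal{U}_{n,m}}(Y,\bfT) = \sum_{F\in\Delta(\proplat(\mathcal{U}_{n,m}))} \pi_F(Y) \prod_{x\in F} \gp{T_x}.
\end{align*}
The empty flag contributes $(1+Y)\sum_{k=0}^{n-1} \binom{m-1}{k}Y^k$, producing the first summand. I would then group nonempty flags by their top element $I \in \mcP([m];\ell)$ with $\ell\in\{1,\ldots,n-1\}$: the $Y$-factor depends only on $\ell$, and the $\bfT$-factor $\sum_{F\text{ with top }I}\prod_{x\in F}\gp{T_x}$ matches $T_I \mathcal{I}^{\textup{WO}}_\ell\!\left((T_J)_{J\in\mcP(I;[\ell])}\right)$. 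To see this, split chains $G$ in $\widetilde{\textup{WO}}(I)$ according to whether $I\in G$: writing $A := \sum_{G\text{ chain},\, I\notin G}\prod_{J\in G}\gp{T_J}$, one obtains $\mathcal{I}^{\textup{WO}}_\ell = A(1+\gp{T_I}) = A/(1-T_I)$, hence $T_I\mathcal{I}^{\textup{WO}}_\ell = \gp{T_I}A$, which equals the $\bfT$-factor above.

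For the second formula, I apply equation~\eqref{eqn:topo.multi.central} of Corollary~\ref{cor:topological}. The identifications above give $\rk(\A)+\sum_{y\in\toplat(\A)} s_y = Q(\bfs)$ and $\rk(x)+\sum_{y\in\toplat(\A_x)} s_y = |J|+\sum_{K\in\mcP(J)\setminus\{\varnothing\}}s_K$ for $x\leftrightarrow J$. The explicit $\pi_F(Y)$ yields $\pibar_F(Y) = (1+Y)^{i_\ell-\ell}\sum_{k=0}^{n-i_\ell-1}\binom{m-i_\ell-1}{k}Y^k$ (with $\ell=|F|$), which vanishes at $Y=-1$ unless $i_\ell=\ell$; in that case the strict chain $|I_1|<\cdots<|I_\ell|=\ell$ forces $|I_j|=j$, so $F$ is a maximal chain in $\widetilde{\textup{WO}}(I)$ with $I=I_\ell$, and $\pibar_F(-1) = \sum_{k=0}^{n-\ell-1}(-1)^k\binom{m-\ell-1}{k}$. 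Adding the empty-flag contribution $\sum_{k=0}^{n-1}(-1)^k\binom{m-1}{k}$ yields the second formula. The main obstacle is the intersection-poset verification for the intermediate arrangements $\A^{x_j}_{x_{j+1}}$ and $\A^{x_\ell}$; once this is established, the rest is a matter of substitution and the elementary algebraic identity for $T_I\mathcal{I}^{\textup{WO}}_\ell$.
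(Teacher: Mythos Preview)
Your proof is correct and follows essentially the same approach as the paper: both decompose the sum over flags by the top element $x\leftrightarrow I$, use the structural facts $(\mathcal{U}_{n,m})_x\cong\mathsf{A}_1^{|I|}$ and $\mathcal{U}_{n,m}^x\cong\mathcal{U}_{n-|I|,m-|I|}$ together with Lemma~\ref{lem:Poincare-uniform}, and identify the inner sum with the weak-order zeta function of Proposition~\ref{prop:boolean}. The only difference is organizational: the paper routes the first formula through the analytic recursion (Lemma~\ref{lem:gen-strat.new} via Theorem~\ref{thm:equivalence}), whereas you work directly from Definition~\ref{def:algebraic}; for the topological formula, both arguments coincide.
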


\begin{proof}
  For all $x\in\proplat(\mathcal{U}_{n, m})$ with $\ell = \codim(x)$, we find
  that $(\mathcal{U}_{n, m})_x\cong \mathsf{A}_1^{\ell}$ and $\mathcal{U}_{n,
  m}^x\cong \mathcal{U}_{n-\ell, m-\ell}$. Apply Theorem~\ref{thm:equivalence},
  Lemmas~\ref{lem:gen-strat.new} and~\ref{lem:Poincare-uniform}, and
  Proposition~\ref{prop:boolean}, yielding the flag Hilbert--Poincar\'e
  series. For the topological zeta function, let $F=(x_1 < \cdots < x_{\ell})\in
  \Delta(\proplat(\mathcal{U}_{n,m}))$ with $\ell\geq 1$, and assume
  $\rk(x_{\ell})=r\in[n-1]$. If $r > \ell$, then $(x_1 < \cdots < x_{\ell-1})$
  is not a maximal flag in $\mathcal{U}_{n,m}^{x_{\ell}}\cong\mathsf{A}_1^r$.
  Hence, $\pibar_F(-1)=0$. Thus, we only consider flags $(x_1<\cdots <x_{\ell})$ such that 
  $\rk(x_i)=i$ for all $i\in [\ell]$. Applying Corollary~\ref{cor:topological}, Proposition~\ref{prop:boolean}, and Lemma~\ref{lem:Poincare-uniform}, the proposition follows.
\end{proof}

\begin{remark}
  Motivic zeta functions associated with generic (central) hyperplane
  arrangements are a focus of \cite[Sec.~5
    and~6]{budur2020contact}. We consider that the combinatorial
  considerations there may yield descriptions of the
  \emph{coefficients} of (specific substitutions of) the flag
  Hilbert--Poincar\'e series $\fHP_{\mathcal{U}_{n,m}}$ described in
  Proposition~\ref{prop:uniform-zeta}.
\end{remark}

\subsection{The Fano plane -- an example in characteristic two}
\label{subsec:Fano}

Let $\A$ be the Fano arrangement, comprising the seven (hyper-)planes
in~$\Ftwo^3$. We compute 
$\fHP_{\A}(Y, \bfT)$ and verify that it
satisfies~\eqref{eqn:reciprocity.new} of
Theorem~\ref{thm:reciprocity.new}, despite the facts that $\A$ is
\emph{a priori} an arrangement in characteristic two and has \emph{a
  posteriori} no equivalent arrangement in characteristic zero;
see~\cite[Prop.~6.4.8]{Oxley}.

For $x\in\proplat(\A)$ and maximal flags
$F\in\Delta(\proplat(\A))$, the flag Poincar\'e polynomials are
\begin{align*} 
    \pi_{\emptyset}(Y) &= (1+Y)(1+2Y)(1+4Y), \\ \pi_{(x)}(Y) &=
    (1+Y)^2(1+2Y), \\ \pi_F(Y) &= (1+Y)^3.
\end{align*}
Define sets of integers:
\begin{align*} 
  \mathcal{X}_8 &= \{1, 3, 5\}, & \mathcal{X}_9 &= \{1, 4, 6\}, & \mathcal{X}_{10} &= \{1, 2, 7\}, & \mathcal{X}_{11} &= \{3, 4, 7\}, \\
  \mathcal{X}_{12} &= \{5, 6, 7\}, & \mathcal{X}_{13} &= \{2, 4, 5\}, & \mathcal{X}_{14} &= \{2, 3, 6\}.
\end{align*}
Thus, for an appropriate labeling of the seven planes by
$\{1,\dots,7\}$ and the seven lines by~$\{8,\dots, 14\}$, we obtain
the following. 
\begin{prop} 
  For $\bfT= (T_{\hat{1}},T_1,\dots,T_{14})$ and
  $Q(\bfs)=3+s_{\hat{1}}+s_1+\cdots +s_{14}$,
  \begin{align*} 
      (1 - T_{\hat{1}}) \fHP_{\A}(Y, \bfT) &= 
      (1+Y)(1+2Y)(1+4Y) + (1+Y)^2(1+2Y)\sum_{i=1}^{14} \GP{T_i} \\ 
      &\quad + (1+Y)^3 \sum_{i=8}^{14} \sum_{j\in \mathcal{X}_i}\GP{T_i}\GP{T_j},
  \end{align*}
  \vspace{-1em}
  \begin{align*}
      Q(\bfs)\zeta_{\A}^{\mathrm{top}}(\bfs) &= 3 - \sum_{i=1}^7\GPZ{-s_i} \\
      &\quad + \sum_{i=8}^{14} \GPZ{-1-s_i-\sum_{j\in\mathcal{X}_i}s_j} \left(-1+\sum_{j\in\mathcal{X}_i} \GPZ{-s_j}\right)
  \end{align*}
\end{prop}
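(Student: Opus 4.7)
The plan is to apply Definition~\ref{def:algebraic} (in its central form) and equation~\eqref{eqn:topo.multi.central} of Corollary~\ref{cor:topological} directly, after making the intersection poset and the flag Poincar\'e polynomials explicit. Since $\A$ consists of the seven $2$-dimensional subspaces of $\Ftwo^3$, the elements of $\intpos(\A)$ are $\hat{0}=\Ftwo^3$; the seven atoms (the planes, labelled $1,\dots,7$); the seven corank-$2$ elements (the lines, labelled $8,\dots,14$); and $\hat{1}=\{0\}$. The Fano incidence is encoded by the sets $\mathcal{X}_8,\dots,\mathcal{X}_{14}$: each line $y_i$ lies in exactly three planes, namely those indexed by $\mathcal{X}_i$, while each plane contains exactly three lines. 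Hence $\Delta(\proplat(\A))$ consists of the empty flag, the $14$ singleton flags, and the $7\cdot 3 = 21$ maximal two-element flags $(x_j<y_i)$ with $i\in\{8,\dots,14\}$ and $j\in\mathcal{X}_i$.

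Next I verify the three flag Poincar\'e polynomials recorded in the excerpt. A direct M\"obius computation on $\intpos(\A)$ yields $\mu(\hat{0})=1$, $\mu(x)=-1$ for each atom, $\mu(y)=2$ for each line, and $\mu(\hat{1})=-8$, whence $\chi_\A(t)=(t-1)(t-2)(t-4)$ and $\pi_\A(Y)=(1+Y)(1+2Y)(1+4Y)$ by~\eqref{equ:chi=cha}. For a singleton flag $F=(x)$, the product $\pi_{\A_x}(Y)\pi_{\A^x}(Y)$ is computed case-by-case: if $x$ is a plane then $\A_x=\{x\}$ and $\A^x$ is equivalent to $\mathcal{U}_{2,3}$ (three lines in the plane~$x$); if $x$ is a line then $\A_x\cong\mathcal{U}_{2,3}$ (three planes through~$x$) and $\A^x\cong\mathsf{A}_1$ (the origin inside~$x$). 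In either case the product equals $(1+Y)^2(1+2Y)$. For a maximal flag $F=(x_j<y_i)$ one checks analogously that each of the three factors $\pi_{\A_{x_j}}$, $\pi_{\A_{y_i}^{x_j}}$, $\pi_{\A^{y_i}}$ equals $1+Y$, so $\pi_F(Y)=(1+Y)^3$.

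With this data, the formula for $(1-T_{\hat{1}})\fHP_\A(Y,\bfT)$ follows by reading off the central-case version of Definition~\ref{def:algebraic} and grouping summands by flag length (and, among maximal flags, by the common line~$y_i$). For the topological zeta function I invoke~\eqref{eqn:topo.multi.central}: using Lemma~\ref{lem:pi-circ}, the required values are $\pibar_\emptyset(-1)=(1-2)(1-4)=3$, $\pibar_{(x)}(-1)=-1$ for each rank-$1$ or rank-$2$ singleton, and $\pibar_F(-1)=1$ for each maximal flag. Substituting $\rk(x)+\sum_{y\in\toplat(\A_x)}s_y = 1+s_i$ for a plane $x_i$, $2+s_i+\sum_{j\in\mathcal{X}_i}s_j$ for a line $y_i$, and $Q(\bfs)=3+s_{\hat{1}}+\sum_{i=1}^{14}s_i$ for $\hat{1}$, the claimed identity emerges after collecting the singleton-line and two-element contributions by factoring out $\GPZ{-1-s_i-\sum_{j\in\mathcal{X}_i}s_j}$ for each line~$y_i$.

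The only non-routine step is the bookkeeping of incidences via the $\mathcal{X}_i$; all other computations are immediate from the $\intpos(\A)$ data and the definitions.
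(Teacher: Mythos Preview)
Your proposal is correct and follows exactly the route the paper takes: the paper records the three flag Poincar\'e polynomials and the incidence sets $\mathcal{X}_i$, and the proposition is then a direct reading of Definition~\ref{def:algebraic} (central form) and the combinatorial formula~\eqref{eqn:topo.multi.central}. Your verification of the flag Poincar\'e polynomials via the M\"obius function and the identifications $\A_x\cong\mathsf{A}_1$, $\A^x\cong\mathcal{U}_{2,3}$ (and vice versa for lines) is in fact more explicit than what the paper spells out. One minor caveat worth flagging: Corollary~\ref{cor:topological} is stated for arrangements over fields of characteristic zero, whereas the Fano arrangement has no such representation; the paper, like you, silently takes the purely combinatorial formula~\eqref{eqn:topo.multi.central} as the working definition of $\zeta_\A^{\mathrm{top}}$ here.
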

One may verify with a computer algebra system, e.g.\ Maple\footnote{Maple is a
  trademark of Waterloo Maple Inc.}, that
\begin{align*} 
    \fHP_{\A}(Y^{-1}, T_{\hat{1}}^{-1}, T_1^{-1}, \dots, T_{14}^{-1}) &=
    (-Y)^{-3}T_{\hat{1}} \fHP_{\A}(Y, \bfT).
\end{align*}

\subsection{An application to ask zeta functions}\label{subsec:ask}

We briefly explain a connection between the zeta functions we
associate with Boolean arrangements (see Section~\ref{subsec:Boolean})
and zeta functions associated with hypergraphs introduced and studied
in~\cite{RV:CICO}.

Hypergraphs are used to parametrize modules of
$n\times m$-matrices satisfying combinatorially defined support
constraints. More precisely, if $\mathsf{H}$ is a hypergraph on $[n]$
with hyperedges $e_1,\dots, e_m$, then let $M_\mathsf{H}\subseteq \mathrm{M}_{n\times m}(\Z)$
be the module of integral matrices $[a_{ij}]$ with $a_{ij}=0$ whenever
vertex $i$ and hyperedge $e_j$ are not incident.  A hypergraph is uniquely determined by its
\emph{hyperedge multiplicities} $\bm{\mu} = (\mu_I)_{I\subseteq [n]}
\in \N_0^{\mathcal{P}(n)}$, in particular, $m=\sum_{I\subseteq
  [n]}\mu_I$. For an arbitrary cDVR
$\lri$ with maximal ideal~$\mfp$ of index $q$ and
$k\in\N_0$, we write $M_\mathsf{H}(\lri/\mfp^k) := M_\mathsf{H}\otimes
\lri/\mfp^k$ and $M_\mathsf{H}(\lri) := M_\mathsf{H}\otimes
\lri$. Informally speaking, these are the modules of
$n\times m$-matrices over the respective rings satisfying the support
constraints defining $M_\mathsf{H}$. 

The \emph{average size of the kernels} in $M_\mathsf{H}(\lri/\mfp^k)$
is
\begin{align*} 
    \mathrm{ask}(M_\mathsf{H}(\lri/\mfp^k))&= \dfrac{1}{|M_\mathsf{H}(\lri/\mfp^k)|}\sum_{a\in M_\mathsf{H}(\lri/\mfp^k)}
    |\ker(a)|.
\end{align*}
The \emph{(analytic) ask zeta function of $\mathsf{H}$} over $\lri$ is
\begin{align*} 
    \zeta^{\mathsf{ask}}_{M_\mathsf{H}(\lri)}(s) &= \sum_{k=0}^\infty
    \mathrm{ask}(M_\mathsf{H}(\lri/\mfp^k)) q^{-ks}.
\end{align*} 
(In the notation of \cite[Sec.~3.2]{RV:CICO}, the function
$\zeta^{\mathsf{ask}}_{M_\mathsf{H}(\lri)}(s)$ coincides with the ask
zeta function $\zeta_{\eta^{\lri}}^{\mathsf{ask}}(s)$ associated with
the incidence representation $\eta^\lri$ of $\mathsf{H}$ over $\lri$.)

In ~\cite[Thm.~A]{RV:CICO}, Rossmann and the second author prove the
existence of a bivariate rational function $W_{\mathsf{H}}(X, T)\in
\Q(X, T)$ such that~$\zeta^{\mathsf{ask}}_{M_\mathsf{H}(\lri)}(s) =
W_{\mathsf{H}}(q, q^{-s})$ for all $\lri$ with residue field
cardinality $q$. This rational function is given in terms of weak
orders on~$[n]$; cf.~\cite[Thm.~C]{RV:CICO}.  We show that these
rational functions may be expressed as substitutions of analytic zeta
functions associated with Boolean arrangements, which are determined
by Proposition~\ref{prop:boolean}.

Since ${\intpos}(\mathsf{A}_1^{n+1})$ is isomorphic to the subset
lattice of $[n+1]$, we index the indeterminates of
$\zeta_{\mathsf{A}_1^{n+1}}$ in the next proposition by the
nonempty subsets of~$[n+1]$. We further define polynomials for each $J
\subseteq [n+1]$,

$$ f_{n,J}\left(X, Y, (Z_I)_{I\subseteq [n]}\right) := \begin{cases} X
  + Y - Z_\emptyset - n - 1 & \textup{ if }J = \{n+1\},
  \\ -Z_{J\setminus\{n+1\}} & \textup{ if }\{n+1\} \subsetneq J, \\ 0
  & \text{ otherwise}.
   \end{cases}
$$

\begin{prop}\label{prop:ask}
  Let $\mathsf{H}$ be a hypergraph on $[n]$ with hyperedge
  multiplicities $\bm{\mu} = (\mu_I)_{I\subseteq [n]} \in
  \N_0^{\mathcal{P}(n)}$, and set~$m=\sum_{I\subseteq [n]}\mu_I$. If
  $\lri$ is a cDVR as above, then
    \begin{align*} 
        \zeta^{\mathsf{ask}}_{M_\mathsf{H}(\lri)}(s) &= (1 -
        q^{-1})^{-1}\zeta_{\mathsf{A}_1^{n+1}}\left(\left(f_{n,J}\left(s,
        m, (\mu_I)_{I\subseteq [n]}\right)\right)_{\varnothing \neq
          J\subseteq[n+1]}\right).
    \end{align*} 
\end{prop}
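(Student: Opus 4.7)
The plan is to derive the proposition by matching two $\mathfrak{p}$-adic integrals: the integral representation of $\zeta^{\mathsf{ask}}_{M_\mathsf{H}(\lri)}(s)$ established in \cite{RV:CICO}, and the integral defining $\zeta_{\mathsf{A}_1^{n+1}(\lri)}(\bfs)$ from Definition~\ref{def:analytic.zf}, evaluated at $s_J = f_{n,J}$.

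First, I would identify $\toplat(\mathsf{A}_1^{n+1})$ with the lattice of nonempty subsets $J \subseteq [n+1]$ under reverse inclusion, via $x_J := \bigcap_{j \in J} V(X_j)$, so that $\norm{\mathcal{A}_{x_J}} = \max_{j \in J}|X_j|$. Definition~\ref{def:analytic.zf} then yields
\begin{equation*}
    \zeta_{\mathsf{A}_1^{n+1}(\lri)}(\bfs) = \int_{\lri^{n+1}} \prod_{\varnothing \neq J \subseteq [n+1]} \Big(\max_{j \in J}|X_j|\Big)^{s_J} |\textup{d}\bfX|.
\end{equation*}

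Second, I would carry out the substitution $s_J = f_{n,J}$. Writing $Y := X_{n+1}$, note that $f_{n,J} = 0$ whenever $n+1 \notin J$, so those factors disappear. Every remaining $J$ has the form $J = J' \cup \{n+1\}$ for some $J' \subseteq [n]$, and $\max_{j \in J}|X_j| = \max(|Y|, \max_{i \in J'}|X_i|)$, where the inner maximum is taken to be $0$ when $J' = \varnothing$. From the definition of $f_{n,J}$, the substituted integrand collapses to
\begin{equation*}
    |Y|^{s + m - \mu_\varnothing - n - 1} \prod_{\varnothing \neq J' \subseteq [n]} \max\big(|Y|, \max_{i \in J'}|X_i|\big)^{-\mu_{J'}}.
\end{equation*}

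Third, I would invoke the explicit integral representation of the ask zeta function given in \cite[Thm.~C]{RV:CICO}, which expresses $\zeta^{\mathsf{ask}}_{M_\mathsf{H}(\lri)}(s)$, up to the universal prefactor $(1-q^{-1})^{-1}$, as precisely the integral over $\lri^{n+1}$ of the integrand displayed above. Matching the two integrals delivers the proposition.

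The main obstacle is pinning down the precise $|Y|$-exponent $s + m - \mu_\varnothing - n - 1$ and the prefactor $(1-q^{-1})^{-1}$. For a self-contained derivation, one would begin from
\begin{equation*}
    \mathrm{ask}(M_\mathsf{H}(\lri/\mfp^k)) = q^{-mk} \sum_{v \in (\lri/\mfp^k)^m} \#\{a \in M_\mathsf{H}(\lri/\mfp^k) : av \equiv 0 \pmod{\mfp^k}\},
\end{equation*}
split off the $\mu_\varnothing$ columns of $M_\mathsf{H}$ indexed by hyperedges of empty support (which decouple from the kernel condition and contribute a free factor of $q^{\mu_\varnothing k}$), stratify $v$ by the valuation pattern of its coordinates grouped by hyperedge-class $I \subseteq [n]$, and introduce an auxiliary variable $Y$ whose valuation encodes the overall maximum among these valuations. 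The $|Y|$-exponent then arises from combining $-mk$, $\mu_\varnothing k$, and the Haar-measure normalization $q^{-(n+1)k}$ relating integration over $\lri^{n+1}$ to summation over $(\lri/\mfp^k)^m$, while the prefactor $(1-q^{-1})^{-1}$ emerges from the geometric series obtained upon collapsing the outer summation over $k \geq 0$ into the $Y$-integration.
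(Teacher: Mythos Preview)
Your approach is essentially identical to the paper's: both compute the Boolean-arrangement integral with the substituted variables, collapse the factors with $n+1\notin J$, and then match the resulting integrand against the integral representation of the ask zeta function from \cite{RV:CICO}. The only discrepancy is bibliographic: the integral representation you need is \cite[\S5.1]{RV:CICO} together with \cite[(5.2)]{RV:CICO} (the paper's $\mathsf{Z}_{[n],\emptyset}$), not \cite[Thm.~C]{RV:CICO}, which is the weak-order formula for the rational function $W_{\mathsf{H}}$.
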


\begin{proof}
    In~\cite[\S~5.1]{RV:CICO}, Rossmann and the second author define
    the $\mfp$-adic integral
    \begin{align*} 
        \mathsf{Z}_{[n], \emptyset}\left(s_0, (s_I)_{I\subseteq
          [n]}\right) &= \int_{\lri^{n+1}} |Y|^{s_0}
        \prod_{I\subseteq [n]} \norm{Z_I; Y}^{s_I}\,
        |\textup{d}\bfZ||\textup{d}Y|,
    \end{align*} 
    where $Z_I:= \{Z_i ~|~ i \in I\}$, and one integrates with regards
    to the normalized additive Haar measure on
    $\lri^{n+1}$. Applying~\cite[(5.2)]{RV:CICO}, we obtain
    \begin{align*} 
      \MoveEqLeft{\zeta_{\mathsf{A}_1^{n+1}(\lri)}\left((f_{n,J}(s,m,(\mu_I)_{I\subseteq
          [n]})_{\varnothing \neq J\subseteq [n+1]})\right)}\\ &=
      \int_{\lri^{n+1}} \prod_{J \subseteq [n+1]}
      \norm{X_J}^{f_{n,J}(s,m,(\mu_I)_{I\subseteq [n]})} \,
      |\textup{d}\bfX| \\ &= \int_{\lri^{n+1}} |Y|^{s-n+m-1}
      \prod_{I\subseteq [n]} \norm{Z_I; Y}^{-\mu_I}\,
      |\textup{d}\bfZ||\textup{d}Y| \\ &=
      \mathsf{Z}_{[n],\emptyset}\left(s-n+m-1, (-\mu_I)_{I\subseteq
        [n]}\right) \\ &= (1 - q^{-1})
      \zeta^{\mathsf{ask}}_{M_\mathsf{H}(\lri)}(s) .
    \end{align*}
    (For us, the invariant called $d$ in \cite[(5.2)]{RV:CICO} is zero.)
  \end{proof}
  It is of interest to explore whether there are comparable
  applications of zeta functions associated with hyperplane
  arrangements other than Boolean ones, say to ask zeta functions
  associated with more general module representations.

\section{Classical Coxeter arrangements, total partitions, and rooted
  trees}\label{sec:total-partitions}

We focus now on the classical Coxeter arrangements. In
Section~\ref{subsec:proof.thm.braid.detail} we prove
Theorem~\ref{thm:total-partition}, the precise version of
Theorem~\ref{thm:atom.braid}. It provides a fully explicit, finitary
combinatorial description of the atom zeta functions associated with
these Coxeter arrangements in terms of total partitions or,
equivalently, certain labeled rooted trees. The groundwork for this
result is laid in Sections~\ref{subsec:braid.total}
and~\ref{subsec:TP}.  Throughout this section, let
$\mathsf{X}\in\{\mathsf{A},\mathsf{B},\mathsf{D}\}$ and $n\in\N$, with
$n\geq 2$ if $\mathsf{X}=\mathsf{D}$.

\subsection{Classical Coxeter arrangements and set partitions} \label{subsec:braid.total}

We recall some standard facts and develop notation concerning classical
Coxeter arrangements and their intersection posets. The classical Coxeter
arrangements of rank $n$ are
\begin{equation} \label{eqn:classical-Coxeter}
  \begin{aligned}
    \mathsf{A}_n &:= \left\{ X_i - X_j ~\middle|~ 1\leq i < j \leq n+1
    \right\}, \\ \mathsf{B}_n &:= \left\{ X_i - \varepsilon X_j
    ~\middle|~ 1\leq i < j \leq n, \; \epsilon \in\{-1,1\} \right\}
    \cup \left\{ X_k ~\middle|~ k\in[n] \right\}, \\ \mathsf{D}_n &:=
    \left\{ X_i - \varepsilon X_j ~\middle|~ 1\leq i < j \leq n, \;
    \epsilon \in\{-1,1\} \right\}. 
  \end{aligned}
\end{equation} 
We set $\mathsf{D}_1 := \mathsf{A}_1$, and
$\mathsf{A}_0=\mathsf{B}_0=\mathsf{D}_0$ is the empty arrangement.

The intersection posets of these hyperplane arrangements are
isomorphic to the posets of set partitions of the respective types. In
the sequel, we recall Bj\"orner and
Wachs's~\cite{BW:partition-lattices} description of these posets.  

The set $\Pitwo{A}{n}$ is the poset of set partitions of
$[n+1]$, ordered by refinement, with minimal element $\hat{0}=
\{\{1\},\{2\},\dots,\{n+1\}\}$ and maximal element~$\hat{1} =
\{[n+1]\}$. For example,
\begin{align*}
    \Pitwo{A}{2} &= \{0|1|2, \; 01|2, \; 02|1, \; 0|12, \;
    012\}.
\end{align*}
Here and below, we use the abbreviated notation such as $0|12$ instead
of~$\{\{0\},\{1,2\}\}$. We will refer to elements of set partitions as
\emph{blocks}.

The elements of $\Pitwo{B}{n}$ are obtained from set partitions of
$[n]_0$, where the nonminimal integers of blocks not containing $0$
may each be independently \emph{barred}, i.e.\ decorated with a
special symbol called \emph{bar}. For example,
\begin{align*}
    \Pitwo{B}{2} &= \{ 0|1|2, \;01|2,\; 02|1, \;0|12,\;
    0|1\bar{2}, \;012\}.
\end{align*}
We say that two (possibly barred) integers $i,j\in [n]_0$ have the
\emph{same parity} in $P\in\Pitwo{B}{n}$ if they are both barred or
both not barred in $P$; otherwise they have \emph{different
  parity}. The \emph{zero block} of $P\in\Pitwo{B}{n}$ is the unique
block containing~$0$, denoted by $P_0$.  Before defining the poset
structure on $\Pitwo{B}{n}$, we define $\Pitwo{D}{n}\subset
\Pitwo{B}{n}$ as the sub(po-)set of elements whose zero block is not
of size two. For example,
\begin{align*}
    \Pitwo{D}{2} &= \{ 0|1|2, \;0|12,\; 0|1\bar{2}, \;012\}.
\end{align*}

To define the poset structure on $\Pitwo{B}{n}$ and $\Pitwo{D}{n}$, we
describe two operations on the blocks of a set partition in $\Pitwo{B}{n}$,
viz.\ \emph{bar} and \emph{unbar}.  To bar a block is to put a bar over all
numbers without a bar (even the minimal one) and to remove the bar over all
numbers with a bar. To unbar a block is to remove all bars. For example, if
$b=1\bar{2}3\bar{4}$ is a block, then the bar and unbar of $b$ are
$\overline{b}= \bar{1}2\bar{3}4$ and $\widetilde{b}= 1234$ respectively. By
definition, partitions $\rho,\sigma\in\Pitwo{B}{n}$ satisfy $\rho\leq\sigma$
if, for each block $b$ of~$\rho$, either
\begin{enumerate}
    \item $b$ is contained in a nonzero block of $\sigma$,
    \item $\bar{b}$ is contained in a nonzero block of $\sigma$, or
    \item $\widetilde{b}$ is contained in the zero block of $\sigma$.
\end{enumerate}

We recall the respective explicit isomorphisms $\Pitwo{X}{n} \cong
\intpos(\mathsf{X}_n)$ given in
\cite[Sec.~6--8]{BW:partition-lattices}. For $i,j,k\in[n]$ with $i\neq
j$, define hyperplanes $H_k = V\left(X_k\right)$ and
\begin{align*} 
    H_{J} &= \left\{ \begin{array}{ll} V\left(X_i - X_j\right) &
      \text{if }J = \{i,j\}\text{ or } J = \{\bar{i}, \bar{j}\},
      \\ [1em] V\left(X_i + X_j\right) & \text{if }J =
      \{\bar{i},j\}\text{ or } J = \{i, \bar{j}\}.
    \end{array}\right.
\end{align*} 
Let $\rhorep{A}{n}: \Pitwo{A}{n} \rightarrow
\intpos(\mathsf{A}_n)$ be the isomorphism
\begin{align}\label{eqn:type-A-iso}
  P\mapsto \bigcap_{\substack{I\in P \\ |I|\geq
      2}}\bigcap_{\substack{J\subseteq I \\ |J|=2}} H_{J},
\end{align} 
and let $\rhorep{B}{n} : \Pitwo{B}{n} \rightarrow
\intpos(\mathsf{B}_n)$ be the isomorphism
\begin{align} \label{eqn:type-B-iso} P\mapsto \left(\bigcap_{k\in P_0\setminus\{0\}}
  H_k \right)\cap \bigcap_{\substack{I\in P\setminus\{P_0\} \\ |I|\geq
      2}}\bigcap_{\substack{J\subseteq I \\ |J|=2}} H_{J} .
\end{align} 
The isomorphism $\rhorep{D}{n} : \Pitwo{D}{n} \rightarrow
\intpos(\mathsf{D}_n)$ is obtained by restricting
$\rhorep{B}{n}$ to $\Pitwo{D}{n}$.

For a set $I$, let $\mcP(I)$ be the power set of~$I$. If
$I\subset\N_0$, set $\mcP_{\mathsf{A}}(I):= \mcP(I)$.  Let
$\mcP_\mathsf{B}(I)$ be the set of subsets obtained from elements of
$\mcP_\mathsf{A}(I)$, where nonminimal elements of sets not
containing~$0$ may be independently barred. For example,
$$\mcP_\mathsf{B}(\{0,1,2\}) = \mcP_\mathsf{A}(\{0,1,2\}) \cup
\{\{1,\overline{2}\}\}.$$ Set
$\mcP_\mathsf{D}(I) = \mcP_\mathsf{B}(I) \setminus\{\{0, i\}~|~ 0\neq i\in
I\}$. We write $\mathcal{P}_{\mathsf{X}}(I;2)$ for the set of $2$-element
subsets of $\mathcal{P}_{\mathsf{X}}(I)$, which are in bijection with the
atoms of~$\Pitwo{X}{n}$.

\subsection{Total partitions and labeled rooted trees}
\label{subsec:TP}

A \emph{total partition} (\emph{of type~$\mathsf{X}_n$}) is a flag of
partitions in $\Pitwo{X}{n}$ of the form
\begin{equation}\label{eqn:TP-chain}
    \hat{0} = P_0 < P_1 < \cdots < P_h < P_{h+1}=\hat{1}
\end{equation}
such that $P_{i}\cap P_{i+1}$ is either empty or contains only
singletons for all $i\in[h-1]$. The set of total partitions of type
$\mathsf{X}$ is denoted by $\TPnew{\mathsf{X}}{n}$. We also write
$\TP{n+1}$ for~$\TPnew{\mathsf{A}}{n}$.  The sequence
$(\#\TP{n})_{n\in\N}$ is also known as the solution of Schr\"oder's
fourth problem;
cf.\ \cite[\href{http://oeis.org/A000311}{A000311}]{OEIS}.

Below we give a description of total partitions in terms of labeled
rooted trees.

\subsubsection{$\mathsf{X}_n$-labeled rooted trees}\label{subsubsec:Xn.labeled.rooted.trees}
To describe the sets $\TPnew{\mathsf{X}}{n}$ 
in terms of labeled rooted trees, we first
establish some terminology, borrowing
from~\cite[Appendix]{Stanley:Vol1}. Given a rooted tree $\tau$, we
denote by $V(\tau)$ the set of vertices and by~$P(\tau)$ the set of
\emph{parents} of~$\tau$, viz.\ vertices which are not leaves. A
vertex $u$ is a \emph{descendant} of a vertex $v$ if either $u$ is a
child of $v$ or $u$ is a descendant of a child of~$v$. The term
\emph{ancestor} is defined analogously.  Two distinct vertices are
\emph{siblings} if they have the same parent. We call a vertex $u$
\emph{unbranched} if either $u$ is a leaf or $u$ has exactly one leaf
descendant. Given a labeled rooted tree~$\tau$, we denote by
$v_\alpha:=v_\alpha(\tau)\in V(\tau)$ the leaf of $\tau$
labeled~$\alpha$. Write $V(\tau,\alpha)$ for the set of all ancestors
of~$v_\alpha$. We denote by $v_\alpha^+:=v_\alpha^+(\tau)\in V(\tau,
\alpha)$ the first ancestor of $v_\alpha$ with at least two children.

\begin{defn}\label{def:X-label}
Suppose that $\tau$ is a rooted tree with $n+1$ leaves. We call $\tau$
\begin{enumerate}
  \item \emph{$\mathsf{A}$-labeled} if each integer in $[n+1]$ labels
    a unique leaf of $\tau$, 
  \item \emph{$\mathsf{B}$-labeled} if each integer in $[n]_0$ labels
    a unique leaf of $\tau$, where each positive integer may be
    barred,
  \item\label{def:D-label} \emph{$\mathsf{D}$-labeled} if $\tau$ is 
    $\mathsf{B} $-labeled and the following holds: if all of the children of 
    $v_0^+$ are unbranched, then $v_0^+$ has at least three children.
\end{enumerate}
We also say that $\tau$ is \emph{$\mathsf{X}_n$-labeled} if $\tau$ is
$\mathsf{X}$-labeled with $n+1$ leaves.

We write $\LRTtwo{\mathsf{X}}{n}$ for the respective sets of
$\mathsf{X}_n$-labeled rooted trees.
\end{defn}

\begin{defn}\label{defn:DLL}
  Suppose $\tau$ is an $\mathsf{X}$-labeled rooted tree and $u\in
  V(\tau)$. If $u$ is a leaf, let $\mathrm{DLL}(\tau, u)$ be the
  (singleton) set containing the label of $u$; otherwise, let
  $\mathrm{DLL}(\tau, u)$ be the set of labels of all descendants of
  $u$ which are leaves.
\end{defn}

\begin{defn}\label{defn:standard-form}
  We call $\tau\in\LRTtwo{\mathsf{X}}{n}$ in \emph{standard form} if,
  for all $u \in V(\tau,0)$ and for all children of $v$ of~$u$, the
  minimal label of $\mathrm{DLL}(\tau,v)$ is not barred. Note that
  every tree $\tau\in\LRTtwo{A}{n}$ is in standard form.
\end{defn}

We now identify $\TPnew{\mathsf{X}}{n}$ with the subset of
$\LRTtwo{\mathsf{X}}{n}$ comprising trees in standard form with the property
that every parent has at least two children. See, for example,
Figure~\ref{fig:B2-normal-form} for the set $\TPnew{\mathsf{B}}{2}$; the first,
fourth, and fifth trees in Figure~\ref{fig:B2-normal-form} form the
set~$\TPnew{\mathsf{D}}{2}$. See Figure~\ref{fig:tree-total-partition} for an
example tree in~$\TPnew{\mathsf{B}}{9}$.

\begin{figure}[h]
    \centering
    \begin{subfigure}[b]{0.18\textwidth}
        \centering
        \begin{tikzpicture} 
            \node(1) at (0, 0.5) [circle, fill=black, inner sep=1.5pt]
                 {}; \node(2) at (-0.5, 0) [circle, fill=black, inner
                   sep=1.5pt, label={below:$0$}] {}; \node(3) at (0,
                 0) [circle, fill=black, inner sep=1.5pt,
                   label={below:$1$}] {}; \node(4) at (0.5, 0)
                 [circle, fill=black, inner sep=1.5pt,
                   label={below:$2$}] {}; \draw[-] (1) -- (2);
                 \draw[-] (1) -- (3); \draw[-] (1) -- (4);
        \end{tikzpicture}
    \end{subfigure}~%
    \begin{subfigure}[b]{0.18\textwidth}
        \centering
        \begin{tikzpicture} 
            \node(1) at (0, 1) [circle, fill=black, inner sep=1.5pt]
                 {}; \node(2) at (-0.33, 0.5) [circle, fill=black,
                   inner sep=1.5pt] {}; \node(3) at (0.33, 0.5)
                 [circle, fill=black, inner sep=1.5pt,
                   label={below:$2$}] {}; \node(4) at (-0.66, 0)
                 [circle, fill=black, inner sep=1.5pt,
                   label={below:$0$}] {}; \node(5) at (0, 0) [circle,
                   fill=black, inner sep=1.5pt, label={below:$1$}] {};
                 \draw[-] (1) -- (2); \draw[-] (1) -- (3); \draw[-]
                 (2) -- (4); \draw[-] (2) -- (5);
        \end{tikzpicture}
    \end{subfigure}~%
    \begin{subfigure}[b]{0.18\textwidth}
        \centering
        \begin{tikzpicture} 
            \node(1) at (0, 1) [circle, fill=black, inner sep=1.5pt]
                 {}; \node(2) at (-0.33, 0.5) [circle, fill=black,
                   inner sep=1.5pt] {}; \node(3) at (0.33, 0.5)
                 [circle, fill=black, inner sep=1.5pt,
                   label={below:$1$}] {}; \node(4) at (-0.66, 0)
                 [circle, fill=black, inner sep=1.5pt,
                   label={below:$0$}] {}; \node(5) at (0, 0) [circle,
                   fill=black, inner sep=1.5pt, label={below:$2$}] {};
                 \draw[-] (1) -- (2); \draw[-] (1) -- (3); \draw[-]
                 (2) -- (4); \draw[-] (2) -- (5);
        \end{tikzpicture}
    \end{subfigure}~%
    \begin{subfigure}[b]{0.18\textwidth}
        \centering
        \begin{tikzpicture} 
            \node(1) at (0, 1) [circle, fill=black, inner sep=1.5pt]
                 {}; \node(2) at (-0.33, 0.5) [circle, fill=black,
                   inner sep=1.5pt] {}; \node(3) at (0.33, 0.5)
                 [circle, fill=black, inner sep=1.5pt,
                   label={below:$0$}] {}; \node(4) at (-0.66, 0)
                 [circle, fill=black, inner sep=1.5pt,
                   label={below:$1$}] {}; \node(5) at (0, 0) [circle,
                   fill=black, inner sep=1.5pt, label={below:$2$}] {};
                 \draw[-] (1) -- (2); \draw[-] (1) -- (3); \draw[-]
                 (2) -- (4); \draw[-] (2) -- (5);
        \end{tikzpicture}
    \end{subfigure}~%
    \begin{subfigure}[b]{0.18\textwidth}
        \centering
        \begin{tikzpicture} 
            \node(1) at (0, 1) [circle, fill=black, inner sep=1.5pt]
                 {}; \node(2) at (-0.33, 0.5) [circle, fill=black,
                   inner sep=1.5pt] {}; \node(3) at (0.33, 0.5)
                 [circle, fill=black, inner sep=1.5pt,
                   label={below:$0$}] {}; \node(4) at (-0.66, 0)
                 [circle, fill=black, inner sep=1.5pt,
                   label={below:$1$}] {}; \node(5) at (0, 0) [circle,
                   fill=black, inner sep=1.5pt,
                   label={[label distance=-1pt]below:$\bar{2}$}] {}; \draw[-] (1) -- (2);
                 \draw[-] (1) -- (3); \draw[-] (2) -- (4); \draw[-]
                 (2) -- (5);
        \end{tikzpicture}
    \end{subfigure}
    \caption{All of the trees in $\TPnew{B}{2}$.}
    \label{fig:B2-normal-form}
\end{figure}

\subsubsection{From leaf labels to blocks}
\label{subsubsec:labels}

Suppose that $\tau$ is an $\mathsf{X}_n$-labeled tree. The leaf labels of $\tau$
determine a unique block for each $u\in V(\tau)$, which we denote by
$\lambda(\tau, u)$. These are the blocks that make up the set partitions in the
flags. Below we describe how a labeled rooted tree
$\tau\in\TPnew{\mathsf{X}}{n}$ uniquely determines a flag in
$\Pitwo{\mathsf{X}}{n}$ like in~\eqref{eqn:TP-chain}, but we emphasize that this
works for all $\tau\in \LRTtwo{X}{n}$. 

For a leaf $v_\alpha$ labeled $\alpha$, we let $\lambda(\tau, v_\alpha)$ be the
unbarred (singleton) block~$\widetilde{\{\alpha\}}$. If $u\in P(\tau)$ and
$\mathsf{X}=\mathsf{A}$, then $\lambda(\tau,u)$ is the union of the
$\lambda(\tau, u')$, where $u'$ ranges over the children of~$u$. This determines
all blocks $\lambda(\tau, u)$ in type $\mathsf{A}$, so we assume that
$\mathsf{X}\in\{\mathsf{B},\mathsf{D}\}$. If $u\in V(\tau, 0)$, then
$\lambda(\tau, u)$ is the union of the unbarred blocks $\lambda(\tau, u')$,
where $u'$ ranges over the children of~$u$. If $u\in P(\tau)\setminus
V(\tau, 0)$, then $\lambda(\tau, u)$ is the union of the $\lambda(\tau, u')$,
where $u'$ ranges over the children of~$u$, each of which is barred, if
necessary, so that the following hold.
\begin{enumerate}
    \item\label{one} For the unique child $u'$ of $u$ such that $\lambda(\tau,
    u')$ contains the minimal label in $\mathrm{DLL}(\tau,u)$, the block
    $\lambda(\tau, u')$ is not barred, and 
    \item\label{two} for all other children $u''$ of $u$, the block
    $\lambda(\tau, u'')$ is barred, if necessary, so the minimal labels in
    $\mathrm{DLL}(\tau,u')$ and $\mathrm{DLL}(\tau,u'')$ have the same parity
    (as labels) if and only if they have the same parity in $\lambda(\tau, u)$.
\end{enumerate} 

\begin{figure}[h]
  \centering 
  \begin{tikzpicture} 
    \pgfmathsetmacro{\x}{1.5}
    \pgfmathsetmacro{\y}{0.5}
    \node (v1) at (-3*\x, \y) [circle, fill=black, inner sep=1.5pt, label={above:$u_1$}] {};
    \node (l1) at (-3.25*\x, 0) [circle, fill=black, inner sep=1.5pt, label={below:$1$}] {};
    \node (r1) at (-2.75*\x, 0) [circle, fill=black, inner sep=1.5pt, label={below:$2$}] {};
    \node (v2) at (-1*\x, \y) [circle, fill=black, inner sep=1.5pt, label={above:$u_2$}] {};
    \node (l2) at (-1.25*\x, 0) [circle, fill=black, inner sep=1.5pt, label={below:$1$}] {};
    \node (r2) at (-0.75*\x, 0) [circle, fill=black, inner sep=1.5pt, label={[label distance=-1pt]below:$\bar{2}$}] {};
    \node (v3) at (\x, \y) [circle, fill=black, inner sep=1.5pt, label={above:$u_3$}] {};
    \node (r3) at (1.25*\x, 0) [circle, fill=black, inner sep=1.5pt, label={below:$2$}] {};
    \node (l3) at (0.75*\x, 0) [circle, fill=black, inner sep=1.5pt, label={[label distance=-1pt]below:$\bar{1}$}] {};
    \node (v4) at (3*\x, \y) [circle, fill=black, inner sep=1.5pt, label={above:$u_4$}] {};
    \node (r4) at (3.25*\x, 0) [circle, fill=black, inner sep=1.5pt, label={[label distance=-1pt]below:$\bar{2}$}] {};
    \node (l4) at (2.75*\x, 0) [circle, fill=black, inner sep=1.5pt, label={[label distance=-1pt]below:$\bar{1}$}] {};
    \node at (-3*\x, -2*\y) {$\lambda(\tau_1,u_1)=12$};
    \node at (-1*\x, -2*\y) {$\lambda(\tau_2,u_2)=1\bar{2}$};
    \node at (\x, -2*\y) {$\lambda(\tau_3,u_3)=1\bar{2}$};
    \node at (3*\x, -2*\y) {$\lambda(\tau_4,u_4)=12$};
    \draw (v1) -- (l1);
    \draw (v1) -- (r1);
    \draw (v2) -- (l2);
    \draw (v2) -- (r2);
    \draw (v3) -- (l3);
    \draw (v3) -- (r3);
    \draw (v4) -- (l4);
    \draw (v4) -- (r4);
  \end{tikzpicture}
  \caption{Four examples of blocks, contained in four different $\mathsf{B}$-labeled trees $\tau_i$, uniquely determined by~\eqref{one} and~\eqref{two}.}
  \label{fig:property-two}
\end{figure}

Figure~\ref{fig:tree-total-partition} demonstrates the role of blocks
in the translation between total partitions as flags and labeled
rooted trees. With this identification, we will freely use
tree-centric terminology in our further discussion of total
partitions.

\begin{figure}[h]
    \centering
    \begin{subfigure}[b]{0.45\textwidth}
        \centering
        \begin{tikzpicture}
            \node(S) at (0, 3) [circle, fill=black, inner sep=1.5pt]
                 {}; \node(L) at (-1.5, 2) [circle, fill=black, inner
                   sep=1.5pt] {}; \node(M) at (0, 2) [circle,
                   fill=black, inner sep=1.5pt] {}; \node(R) at (1.5,
                 2) [circle, fill=black, inner sep=1.5pt,
                   label={below:$4$}] {}; \node(LL) at (-1.75, 1)
                 [circle, fill=black, inner sep=1.5pt,
                   label={below:$2$}] {}; \node(LR) at (-1.25, 1)
                 [circle, fill=black, inner sep=1.5pt] {}; \node(ML)
                 at (-0.75, 1) [circle, fill=black, inner sep=1.5pt,
                   label={below:$0$}] {}; \node(MM1) at (-0.25, 1)
                 [circle, fill=black, inner sep=1.5pt,
                   label={below:$1$}] {}; \node(MM2) at (0.25, 1)
                 [circle, fill=black, inner sep=1.5pt,
                   label={below:$8$}] {}; \node(MR) at (0.75, 1)
                 [circle, fill=black, inner sep=1.5pt] {}; \node(MRL)
                 at (0.25, 0) [circle, fill=black, inner sep=1.5pt,
                   label={below:$3$}] {}; \node(MRM) at (0.75, 0)
                 [circle, fill=black, inner sep=1.5pt,
                   label={[label distance=-1pt]below:$\bar{6}$}] {}; \node(MRR) at (1.25,
                 0) [circle, fill=black, inner sep=1.5pt,
                   label={below:$7$}] {}; \node(LRL) at (-1.5,
                 0) [circle, fill=black, inner sep=1.5pt,
                   label={[label distance=-1pt]below:$\bar{5}$}] {}; \node(LRR) at (-1, 0)
                 [circle, fill=black, inner sep=1.5pt,
                   label={[label distance=-1pt]below:$\bar{9}$}] {};

            \draw (S) -- (L); \draw (S) -- (M); \draw (S) -- (R);
            \draw (L) -- (LL); \draw (L) -- (LR); \draw (M) -- (ML);
            \draw (M) -- (MM1); \draw (M) -- (MM2); \draw (M) -- (MR);
            \draw (MR) -- (MRL); \draw (MR) -- (MRM); \draw (MR) --
            (MRR); \draw (LR) -- (LRL); \draw (LR) -- (LRR);
        \end{tikzpicture}
        \caption{A labeled rooted tree in $\TPnew{B}{9}$.}
    \end{subfigure}~%
    \begin{subfigure}[b]{0.45\textwidth}
        \centering \raisebox{3mm}{
        \begin{tikzpicture}
            \node at (0, 3) {$\widehat{1}=0123456789$}; \node at (0,
            2) {$2\bar{5}\bar{9}|013678|4$}; \node at (0, 1)
            {$2|59|0|1|8|3\bar{6}7|4$}; \node at (0, 0)
            {$\widehat{0}=2|5|9|0|1|8|3|6|7|4$};
        \end{tikzpicture}
        }
        \caption{A flag of set partitions in $\Pitwo{B}{9}$.}
    \end{subfigure}
    \caption{An illustration of the bijection between $\TPnew{B}{9}$ and certain
      flags of set partitions in $\Pitwo{B}{9}$.}
    \label{fig:tree-total-partition}
\end{figure}

\subsection{Atom zeta functions and rooted trees}\label{subsec:atom.root.trees}

Theorem~\ref{thm:atom.braid} is a paraphrase of the more precise
Theorem~\ref{thm:total-partition}, which gives uniform formulae for the atom
zeta function $\zeta_{\mathsf{X}_n(\lri)}^{\atom}(\bfs)$, for
$\mathsf{X}\in\{\mathsf{A},\mathsf{B},\mathsf{D}\}$, in terms of
$\mathsf{X}_n$-labeled rooted trees. We prove Theorem~\ref{thm:total-partition} in Section~\ref{subsec:proof.thm.braid.detail}. In Corollary~\ref{cor:A_n-unlab} we record
a variant of this formula for Igusa's local zeta function associated with the
braid arrangements in
terms of unlabeled trees. In Corollary~\ref{cor:B.typeA} we give formulae for
types $\mathsf{B}$ and $\mathsf{D}$ in terms of total partitions of
type~$\mathsf{A}$. 

We begin by giving definitions of the numerical data in the
statement of Theorem~\ref{thm:atom.braid}. For a rooted tree $\tau$
and $u\in V(\tau)$, we define the counting functions
\begin{align}\label{def:count.c}
    \kind(\tau, u) &= \text{the number of children of $u$},
    \\ \kb(\tau, u) &= \text{the number of unbranched children of
      $u$}.\label{def:count.u}
\end{align}
If $\tau\in\TPnew{\mathsf{X}}{n}$, then $\kind(\tau, u)\neq 1$ for all
$u\in V(\tau)$.

For $n\geq 0$, define polynomials $\pb[Y]{n} = 1 + nY$ and
\begin{align*}
    \pb[Y]{n}! &= \prod_{i=1}^n \pb[Y]{i}, & \pb[Y]{n}!! &=
    \prod_{i=1}^n \pb[Y]{2i-1}.
\end{align*}
Note that $\pb[Y]{0}! = \pb[Y]{0}!! = 1$. For $\tau \in\LRTtwo{X}{n}$, we define
three polynomials in $\Z[Y]$, depending on the type $\mathsf{X}\in
\{\mathsf{A}, \mathsf{B}, \mathsf{D}\}$:
\begin{equation}\label{eqn:pi.X.tau}
    \begin{split}
    \pitwo{A}{\tau}(Y) &= \prod_{u\in P(\tau)}
    \pb[Y]{\kind(\tau, u)-1}!, \\
    \pitwo{B}{\tau}(Y) &=
    \left(\prod_{u\in V(\tau,0)} \pb[Y]{\kind(\tau,
      u)-1}!!\right)\left( \prod_{u\in P(\tau)\setminus V(\tau,0)}
    \pb[Y]{\kind(\tau, u)-1}!\right), \\
    \pitwo{D}{\tau}(Y) &=
    \pitwo{B}{\tau}(Y)
    \cdot\frac{\pb[Y]{2\kind(\tau,v_0^+(\tau))-\kb(\tau,v_0^+(\tau))-2}
      \pb[Y]{\kind(\tau,v_0^+(\tau))-2}!!}{\pb[Y]{\kind(\tau,v_0^+(\tau))-1}!!};
    \end{split}
\end{equation}
see, for instance, Examples~\ref{ex:char.poly} and
\ref{ex:numerical-data}. In fact, $\pitwo{X}{\tau}(Y)$ is the
Poincar\'e polynomial $\pi_F(Y)$, where $F$ is a flag determined by
$\mathsf{X}$ and $\tau$; cf.\ Lemma~\ref{lem:general-Poincare}.

\begin{ex}[Poincar\'e polynomials of classical Coxeter arrangements]
  \label{ex:char.poly} 
  Let $\tau$ be the rooted tree whose root vertex has exactly $n+1$ children,
  all of which are leaves. For each
  $\mathsf{X}\in\{\mathsf{A}, \mathsf{B}, \mathsf{D}\}$, there is a unique
  $\mathsf{X}$-labeling for~$\tau$. The polynomials $\pitwo{X}{\tau}(Y)$ are
  just the Poincar\'e polynomials $\pi_{\mathsf{X}_n}(Y)$ of the respective
  arrangements:
    \begin{align*}
        \pitwo{A}{\tau}(Y) &= \prod_{i=1}^n(1+iY) =
        \pi_{\mathsf{A}_n}(Y), \\
        \pitwo{B}{\tau}(Y) &=
        \prod_{i=1}^n(1+(2i-1)Y) = \pi_{\mathsf{B}_n}(Y),
        \\
        \pitwo{D}{\tau}(Y) &= (1 + (n-1)Y)
        \prod_{i=1}^{n-1}(1+(2i-1)Y) = \pi_{\mathsf{D}_n}(Y);
    \end{align*}
cf.~\cite[Thm.~4.137]{OrlikTerao/92}.
\end{ex} Suppose that $\tau$ is an
$\mathsf{X}$-labeled rooted tree with root vertex $v$.  Let
$V_\circ(\tau) = P(\tau)\setminus \{v\}$ and $V_\circ(\tau, \alpha) =
V(\tau, \alpha)\setminus\{v\}$. Recalling that $\GP{X}$ denotes the
geometric progression $\frac{X}{1-X}$, we define the geometric
progression
\begin{equation}\label{eqn:Cgp}
    \begin{split}
    \treeCgp_{\mathsf{X},\tau}\left(Z, \bfT\right) &=
    \left(\prod_{u\in V_\circ(\tau)\setminus
      V_\circ(\tau,0)}\GP{Z^{|\lambda(\tau, u)|-1}\prod_{J\in
        \mcP(\lambda(\tau, u); 2)}T_J}\right) \\ &\quad \cdot \left(
    \prod_{u\in V_\circ(\tau,0)} \GP{Z^{|\lambda(\tau,
        u)|-1}\prod_{J\in \mcP_{\mathsf{X}}(\lambda(\tau, u); 2)}T_J}
    \right).
    \end{split}
\end{equation}
If $\mathsf{X}=\mathsf{A}$, then the second factor is $1$
since no leaf is labeled with~$0$. Examples of $\treeCgp_{\mathsf{X},\tau}(Z, \bfT)$ are seen in Example~\ref{ex:numerical-data}.

\begin{ex}[Numerical data] 
  \label{ex:numerical-data}
    Figure~\ref{fig:total-partition-data} shows the numerical
    data associated with the tree $\tau\in\TPnew{B}{9}$ from
    Figure~\ref{fig:tree-total-partition}, where
    \begin{alignat*}{2}
        C_{\mathsf{A}_1} &= \GP{Z\, T_{59}}, & C_{\mathsf{A}_2} &=
        \GP{Z^2\, T_{2\bar{5}} T_{2\bar{9}} T_{\bar{5}\bar{9}}},
        \\ C_{\mathsf{A}_2}' &= \GP{Z^2\,
          T_{3\overline{6}}T_{37}T_{\overline{6}7}},\quad
        & C_{\mathsf{B}_5} &= \GP{Z^5 \prod_{J\in
            \mcP_{\mathsf{B}}(\{0,1,3,6,7,8\};2)}T_J}.
    \end{alignat*}
    Thus, $\treeCgp_{\mathsf{B},\tau}(Z, \bfT) =
    C_{\mathsf{A}_1}C_{\mathsf{A}_2}C_{\mathsf{A}_2}'C_{\mathsf{B}_5}$
    and $\pitwo{B}{\tau}(Y) =
    \pb[Y]{2}!!\pb[Y]{3}!!\pb[Y]{1}!^2\pb[Y]{2}!$.
    \begin{figure}[h]
        \centering
        \begin{subfigure}[b]{0.45\textwidth}
            \centering
            \begin{tikzpicture}
                \pgfmathsetmacro{\y}{1}
                \node(S) at (0, 3*\y) {$\pb[Y]{2}!!$}; \node(L) at
                (-1.75, 2*\y) {$\pb[Y]{1}!$}; \node(M) at (0, 2*\y)
                     {$\pb[Y]{3}!!$}; \node(R) at (1.5, 2*\y) [circle,
                       fill=black, inner sep=1.5pt, label={below:$4$}]
                     {}; \node(LL) at (-2.1, \y) [circle, fill=black,
                       inner sep=1.5pt, label={below:$2$}] {};
                     \node(LR) at (-1.45, \y) {$\pb[Y]{1}!$}; \node(ML)
                     at (-0.75, \y) [circle, fill=black, inner
                       sep=1.5pt, label={below:$0$}] {}; \node(MM1) at
                     (-0.25, \y) [circle, fill=black, inner sep=1.5pt,
                       label={below:$1$}] {}; \node(MM2) at (0.25, \y)
                     [circle, fill=black, inner sep=1.5pt,
                       label={below:$8$}] {}; \node(MR) at (1, \y)
                     {$\pb[Y]{2}!$}; \node(MRL) at (0.5, 0) [circle,
                       fill=black, inner sep=1.5pt, label={below:$3$}]
                     {}; \node(MRM) at (1, 0) [circle, fill=black,
                       inner sep=1.5pt, label={[label distance=-1pt]below:$\bar{6}$}] {};
                     \node(MRR) at (1.5, 0) [circle, fill=black, inner
                       sep=1.5pt, label={below:$7$}] {}; \node(LRL) at
                     (-1.7, 0) [circle, fill=black, inner sep=1.5pt,
                       label={[label distance=-1pt]below:$\bar{5}$}] {}; \node(LRR) at
                     (-1.2, 0) [circle, fill=black, inner sep=1.5pt,
                       label={[label distance=-1pt]below:$\bar{9}$}] {};

                \draw (S) -- (L); \draw [ultra thick] (S) -- (M);
                \draw (S) -- (R); \draw (L) -- (LL); \draw (L) --
                (LR); \draw [ultra thick] (M) -- (ML); \draw (M) --
                (MM1); \draw (M) -- (MM2); \draw (M) -- (MR); \draw
                (MR) -- (MRL); \draw (MR) -- (MRM); \draw (MR) --
                (MRR); \draw (LR) -- (LRL); \draw (LR) -- (LRR);
            \end{tikzpicture}
            \caption{The $\pitwo{B}{\tau}$-factors.}
        \end{subfigure}~%
        \begin{subfigure}[b]{0.45\textwidth}
            \centering
            \begin{tikzpicture}
                \pgfmathsetmacro{\y}{1}
                \node(S) at (0, 3*\y) [circle, fill=black, inner
                  sep=1.5pt] {}; \node(L) at (-1.5, 2*\y)
                     {$C_{\mathsf{A}_2}$}; \node(M) at (0, 2*\y)
                     {$C_{\mathsf{B}_5}$}; \node(R) at (1.5, 2*\y)
                     [circle, fill=black, inner sep=1.5pt,
                       label={below:$4$}] {}; \node(LL) at (-1.75, \y)
                     [circle, fill=black, inner sep=1.5pt,
                       label={below:$2$}] {}; \node(LR) at (-1.25, \y)
                     {$C_{\mathsf{A}_1}$}; \node(ML) at (-0.75, \y)
                     [circle, fill=black, inner sep=1.5pt,
                       label={below:$0$}] {}; \node(MM1) at (-0.25, \y)
                     [circle, fill=black, inner sep=1.5pt,
                       label={below:$1$}] {}; \node(MM2) at (0.25, \y)
                     [circle, fill=black, inner sep=1.5pt,
                       label={below:$8$}] {}; \node(MR) at (1, \y)
                     {$C_{\mathsf{A}_2}'$}; \node(MRL) at (0.5, 0)
                     [circle, fill=black, inner sep=1.5pt,
                       label={below:$3$}] {}; \node(MRM) at (1, 0)
                     [circle, fill=black, inner sep=1.5pt,
                       label={[label distance=-1pt]below:$\bar{6}$}] {}; \node(MRR) at
                     (1.5, 0) [circle, fill=black, inner sep=1.5pt,
                       label={below:$7$}] {}; \node(LRL) at (-1.5, 0)
                     [circle, fill=black, inner sep=1.5pt,
                       label={[label distance=-1pt]below:$\bar{5}$}] {}; \node(LRR) at (-1,
                     0) [circle, fill=black, inner sep=1.5pt,
                       label={[label distance=-1pt]below:$\bar{9}$}] {};

                \draw (S) -- (L); \draw [ultra thick] (S) -- (M);
                \draw (S) -- (R); \draw (L) -- (LL); \draw (L) --
                (LR); \draw [ultra thick] (M) -- (ML); \draw (M) --
                (MM1); \draw (M) -- (MM2); \draw (M) -- (MR); \draw
                (MR) -- (MRL); \draw (MR) -- (MRM); \draw (MR) --
                (MRR); \draw (LR) -- (LRL); \draw (LR) -- (LRR);
            \end{tikzpicture}
            \caption{The $\m_{\mathsf{B},\tau}$-factors.}
        \end{subfigure}
        \caption{The tree $\tau\in\TPnew{\mathsf{B}}{9}$ from
          Figure~\ref{fig:tree-total-partition} revisited.}
        \label{fig:total-partition-data}
    \end{figure}
\end{ex}

\begin{thm}[Theorem~\ref{thm:atom.braid} made precise]\label{thm:total-partition}
  Let $\mathsf{X}\in\{\mathsf{A}, \mathsf{B},\mathsf{D}\}$ and~$n\in
  \N$, with $n\geq 2$ if~$\mathsf{X}=\mathsf{D}$. For all cDVR $\lri$
  with residue field cardinality~$q$, assumed to be odd unless
  $\mathsf{X} = \mathsf{A}$,
    \begin{multline*}
        \zeta_{\mathsf{X}_n(\lri)}^{\atom}\left((s_J)_{J\in
          \mcP_{\mathsf{X}}(n;2)}\right) =
        \\ \GPZ{q^{-n-\sum_{J\in\mcP_{\mathsf{X}}(n;2)}s_J}}
        \sum_{\tau\in\TPnew{\mathsf{X}}{n}}\pitwo{X}{\tau}(-q^{-1})
        \cdot \treeCgp_{\mathsf{X},\tau}\left(q^{-1},(q^{-s_J})_{J\in
          \mcP_{\mathsf{X}}(\lambda(\tau);2)}\right) .
    \end{multline*}
\end{thm}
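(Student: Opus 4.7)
My plan is to prove Theorem~\ref{thm:total-partition} by induction on $n$, iterating the central form of Lemma~\ref{lem:gen-strat.new} for the atom specialization. Setting $s_y=0$ for all non-atom $y\in\toplat(\mathsf{X}_n)$ in that lemma yields
\[
\zeta_{\mathsf{X}_n(\lri)}^{\atom} = \GPZ{q^{-n-\sum_L s_L}} \sum_{x\in\intpos(\mathsf{X}_n)\setminus\{\hat 1\}} q^{-\codim(x)-\sum_{L\in\A_x}s_L}\,\pi_{\mathsf{X}_n^x}(-q^{-1})\,\zeta_{\A_x(\lri)}^{\atom},
\]
where the $x=\hat 0$ term is simply $\pi_{\mathsf{X}_n}(-q^{-1})$. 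The base cases $n=1$ for types $\mathsf{A}$, $\mathsf{B}$ and $n=2$ for type $\mathsf{D}$ follow by matching this identity directly against the (few) elements of $\TPnew{X}{n}$.

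In the inductive step I identify $x\in\intpos(\mathsf{X}_n)\setminus\{\hat 1\}$ with a partition $P\in\Pitwo{X}{n}\setminus\{\hat 1\}$ having blocks $B_0,B_1,\ldots,B_{k-1}$, where $B_0$ is the zero block of size $m$ in types $\mathsf{B}$, $\mathsf{D}$ (and is absent in type $\mathsf{A}$). The subarrangement factorizes as $\A_x\cong \mathsf{X}_{m-1}\times\prod_{i\geq 1}\mathsf{A}_{|B_i|-1}$ (with no zero-block factor in type $\mathsf{A}$), and the restriction $\mathsf{X}_n^x$ is of type $\mathsf{A}_{k-1}$ (in type $\mathsf{A}$), of type $\mathsf{B}_{k-1}$ (in type $\mathsf{B}$), or of type $\mathsf{D}_{k-1}$ or $\mathsf{B}_{k-1}$ (in type $\mathsf{D}$, depending on whether $m=1$ or $m\geq 3$). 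Multiplicativity of the atom zeta function under direct products splits $\zeta_{\A_x(\lri)}^{\atom}$ into a product over blocks, and I apply the inductive hypothesis to each factor to rewrite it as a sum over total-partition subtrees; grafting these onto a common root representing $\hat 1$ yields exactly the trees $\tau\in\TPnew{X}{n}$ whose top partition is $P$, and summing over $P$ recovers all of $\TPnew{X}{n}$.

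It remains to verify that the factors reassemble correctly. The polynomial $\pi_{\mathsf{X}_n^x}(-q^{-1})$ supplies the root's factor in $\pitwo{X}{\tau}(-q^{-1})$, the inductive Poincar\'e-type polynomials $\pitwo{?}{\tau_i}$ fill in the remaining subtree contributions, and the exponential prefactor $q^{-\codim(x)-\sum_L s_L}$ splits as a product over blocks so that each piece combines with the outer $\GPZ{\cdots}$ of the corresponding child's inductive zeta function to form exactly the geometric progression $\GP{Z^{|B_i|-1}\prod_J T_J}$ that $\treeCgp_{\mathsf{X},\tau}$ assigns to the non-root internal vertex representing~$B_i$.

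The main obstacle is the type~$\mathsf{D}$ case, where $\mathsf{X}_n^x$ switches between types $\mathsf{D}_{k-1}$ and $\mathsf{B}_{k-1}$ according to triviality of the zero block. The correction factor $\pb[Y]{2\kind-\kb-2}\,\pb[Y]{\kind-2}!!/\pb[Y]{\kind-1}!!$ supported at $v_0^+$ in the definition of $\pitwo{D}{\tau}$ should be exactly what reconciles (i) this dichotomy of root polynomials with (ii) the mismatch between $\pi_{\mathsf{D}_{m-1}}$ arising from the inductive zero-block factor and the $\pb[Y]{\kind-1}!!$ that $\pitwo{B}{\tau}$ would otherwise assign at $v_0^+$. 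Verifying this requires a case analysis on whether $v_0^+$ is the root of $\tau$ (equivalently $m=1$) or lies strictly below it ($m\geq 3$), together with the $\mathsf{D}$-labeling constraint that excludes zero blocks of size two. The restriction to odd residue field cardinality in types $\mathsf{B}$, $\mathsf{D}$ enters through the characteristic polynomial point count in the proof of Lemma~\ref{lem:gen-strat.new}, which is sensitive to the sign choices encoded by $\mcP_\mathsf{B}$-atoms.
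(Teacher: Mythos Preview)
Your overall strategy matches the paper's: specialize the central recursion of Lemma~\ref{lem:gen-strat.new} to the atom variables, factor $\A_x$ as a product of smaller classical arrangements indexed by the blocks of the corresponding set partition, apply induction to each factor, and graft the resulting subtrees onto a common root. For types $\mathsf{A}$ and $\mathsf{B}$ this goes through exactly as you describe (compare Lemmas~\ref{lem:type-A_n-recursion} and~\ref{lem:type-B_n-recursion}).

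The gap is in type $\mathsf{D}$ when the zero block is trivial ($m=|P_0|=1$): the restriction $\mathsf{D}_n^x$ is then \emph{not} of type $\mathsf{D}_{k-1}$, and in general is not a Coxeter arrangement at all. It is the arrangement $\A_{k-1,\mathsf{Sng}(P)}$ of Definition~\ref{defn:D-restriction}, where $\mathsf{Sng}(P)$ counts the singleton nonzero blocks (Lemma~\ref{lem:D-Coxeter-res}), with Poincar\'e polynomial $\pb[Y]{2k-\mathsf{Sng}(P)-3}\,\pb[Y]{k-2}!!$ (Lemma~\ref{lem:Poincare-D-res}). This extra parameter is exactly what produces the $\kb$ in the correction factor of $\pitwo{D}{\tau}$: in this case the root equals $v_0^+$, its unbranched children are the singleton blocks of $P$ (including $\{0\}$ itself), so $\kb(\tau,v_0^+)=\mathsf{Sng}(P)+1$ and hence $\pb[Y]{2\kind-\kb-2}=\pb[Y]{2k-\mathsf{Sng}(P)-3}$. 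Your claimed $\pi_{\mathsf{D}_{k-1}}=\pb[Y]{k-2}\,\pb[Y]{k-2}!!$ would match this only when every nonzero block is a singleton, i.e.\ $x=\hat 0$. There is also no separate ``mismatch at $v_0^+$ coming from $\pi_{\mathsf{D}_{m-1}}$'' to reconcile: when $m\geq 3$ the root contributes $\pi_{\mathsf{B}_{k-1}}=\pb[Y]{k-1}!!$, which already agrees with the $\pitwo{B}{\tau}$-factor at the root, and the correction at $v_0^+$ is supplied entirely by the inductive hypothesis applied to the zero-block subtree (cf.\ Lemma~\ref{lem:type-D_n-recursion}).
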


\subsubsection{Corollaries of Theorem~\ref{thm:atom.braid}}\label{subsubsec:A.B}

Let $\UTP{n+1}$ be the set of unlabeled rooted trees with $n+1$ leaves
where every parent has at least two children. (As the notation
suggests, $\UTP{n+1}$ is obtained from $\TP{n+1}$ by removing the
labels.) The number of distinct $\mathsf{A}$-labelings for
$\tau\in\UTP{n+1}$ is $(n+1)!/|\Aut(\tau)|$, where $\Aut(\tau)$ is the
subgroup of the graph automorphism group of $\tau$ stabilizing the
root. The sequence $(\#\UTP{n})_{n\in\N}$ is
\cite[\href{http://oeis.org/A000669}{A000669}]{OEIS}. For all
$\tau\in\UTP{n+1}$, if $\tau'$ and $\tau''$ are two
$\mathsf{A}$-labelings of~$\tau$, then $\pitwo{A}{\tau'}(Y) =
\pitwo{A}{\tau''}(Y)$, so we
set~$\pi_\tau(Y):=\pitwo{A}{\tau'}(Y)$. For $\tau\in \UTP{n+1}$ and
$u\in V(\tau)$, let $\nb(\tau, u)$ be the number of descendants of
$u$, including $u$ itself, that are leaves. Recall that
$f_{\mathsf{A}_n}(\bfX)=\prod_{1\leq i < j \leq {n+1}}(X_i-X_j)$.

\begin{cor}\label{cor:A_n-unlab}
  For all cDVR $\lri$ with residue field cardinality~$q$, the Igusa
  local zeta function associated with $f_{\mathsf{A}_n}$ over $\lri$
  is
    \begin{multline*} 
        \mathsf{Z}_{f_{\mathsf{A}_n},\lri}(s)
        =\\ \GPZ{q^{-n-\binom{n+1}{2}s}}\sum_{\tau\in\UTP{n+1}}\dfrac{(n+1)!}{|\Aut(\tau)|}\pi_{\tau}(-q^{-1})\prod_{u\in
          V_\circ(\tau)} \GP{q^{1-\nb(\tau, u) - \binom{\nb(\tau,
              u)}{2}s}} .
    \end{multline*} 
\end{cor}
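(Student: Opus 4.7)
The plan is to deduce Corollary~\ref{cor:A_n-unlab} directly from Theorem~\ref{thm:total-partition} by specializing to type $\mathsf{A}$, collapsing all atom variables to a common variable $s$, and then quotienting by the $S_{n+1}$-action on leaf labels.

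First I would identify $\mathsf{Z}_{f_{\mathsf{A}_n},\lri}(s)$ as a specialization of the atom zeta function. By the defining identity~\eqref{eqn:Igusa} and Definition~\ref{def:atom}, we have
\[
\mathsf{Z}_{f_{\mathsf{A}_n},\lri}(s) = \zeta_{\mathsf{A}_n(\lri)}^{\atom}\bigl((s)_{L\in\mathsf{A}_n}\bigr),
\]
so Theorem~\ref{thm:total-partition} (with $\mathsf{X}=\mathsf{A}$ and $s_J=s$ for all atoms $J$) applies. The number of atoms equals $\binom{n+1}{2}$, giving the prefactor $\GPZ{q^{-n-\binom{n+1}{2}s}}$.

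Next, I would simplify the $\tau$-summand after specialization. For $\mathsf{X}=\mathsf{A}$, the second product in the definition~\eqref{eqn:Cgp} of $\treeCgp_{\mathsf{A},\tau}$ is empty (no leaf carries the label $0$), so
\[
\treeCgp_{\mathsf{A},\tau}\bigl(q^{-1},(q^{-s})_J\bigr) = \prod_{u\in V_\circ(\tau)} \GP{q^{-(|\lambda(\tau,u)|-1)-\binom{|\lambda(\tau,u)|}{2}s}}.
\]
Since $|\lambda(\tau,u)|$ equals the number $\nb(\tau,u)$ of leaf descendants of $u$, each factor coincides with $\GP{q^{1-\nb(\tau,u)-\binom{\nb(\tau,u)}{2}s}}$, matching the formula in the corollary.

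Finally, I would pass from $\TP{n+1}=\TPnew{\mathsf{A}}{n}$ to $\UTP{n+1}$ by quotienting by $S_{n+1}$. Both $\pitwo{A}{\tau}(-q^{-1})$ (which depends only on the numbers $\kind(\tau,u)$) and the product just computed (which depends only on the numbers $\nb(\tau,u)$) are invariant under relabeling of leaves. The group $S_{n+1}$ acts on $\TP{n+1}$ by permuting leaf labels with orbit space $\UTP{n+1}$, and by the orbit--stabilizer theorem the orbit of an $\mathsf{A}$-labeling of $\tau\in\UTP{n+1}$ has size $(n+1)!/|\Aut(\tau)|$, where $\Aut(\tau)$ is the automorphism group of $\tau$ fixing the root. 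Grouping the sum over $\TP{n+1}$ by $S_{n+1}$-orbits thus yields the desired formula.

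The main conceptual content lies in Theorem~\ref{thm:total-partition}, which is used as a black box; the only genuine verification here is that under the specialization $s_J=s$ the summand truly depends only on the isomorphism type of $\tau$ as a rooted tree, so that the orbit-counting argument applies cleanly. Once this is checked, the corollary follows without further computation.
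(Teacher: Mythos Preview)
Your proof is correct and follows essentially the same approach as the paper: specialize Theorem~\ref{thm:total-partition} for $\mathsf{X}=\mathsf{A}$ with all $s_J=s$, observe that both $\pitwo{A}{\tau}$ and the specialized $\treeCgp_{\mathsf{A},\tau}$ depend only on the underlying unlabeled tree (via the counts $\kind(\tau,u)$ and $|\lambda(\tau,u)|=\nb(\tau,u)$), and then group the sum over $\TP{n+1}$ into $S_{n+1}$-orbits of size $(n+1)!/|\Aut(\tau)|$. The paper records exactly these observations in the paragraph preceding the corollary and leaves the deduction implicit.
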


\begin{ex}[Braid arrangement $\mathsf{A}_3$]\label{ex:A3}
  Consider the braid arrangement $\mathsf{A}_3$, with
  $f_{\mathsf{A}_3}(\bfX) = \prod_{1\leq i < j \leq 4}(X_i - X_j)$. All of the
  numerical data can be read off from the five trees comprising the set
  $\UTP{4}$, listed in Figure~\ref{fig:rooted-trees-4}.
\begin{figure}[h]
    \centering
    \begin{subfigure}[b]{0.18\textwidth} 
        \centering
        \begin{tikzpicture}
            \pgfmathsetmacro{\x}{0.75} \pgfmathsetmacro{\y}{0.5}
            \node(1) at (0, \y) [circle, fill=black, inner sep=1.5pt]
                 {}; \node(2) at (-\x, 0) [circle, fill=black, inner
                   sep=1.5pt] {}; \node(3) at (-0.33*\x, 0) [circle,
                   fill=black, inner sep=1.5pt] {}; \node(4) at
                 (0.33*\x, 0) [circle, fill=black, inner sep=1.5pt]
                 {}; \node(5) at (\x, 0) [circle, fill=black, inner
                   sep=1.5pt] {}; \node at (0, -1*\y) {$\tau_1$};
                 \draw[-] (1) -- (2); \draw[-] (1) -- (3); \draw[-]
                 (1) -- (4); \draw[-] (1) -- (5);
        \end{tikzpicture}
    \end{subfigure}~%
    \begin{subfigure}[b]{0.18\textwidth} 
        \centering
        \begin{tikzpicture}
            \pgfmathsetmacro{\x}{0.75} \pgfmathsetmacro{\y}{0.5}
            \node(1) at (0, 2*\y) [circle, fill=black, inner
              sep=1.5pt] {}; \node(2) at (-0.66*\x, \y) [circle,
              fill=black, inner sep=1.5pt] {}; \node(3) at (0, \y)
                 [circle, fill=black, inner sep=1.5pt] {}; \node(4) at
                 (0.66*\x, \y) [circle, fill=black, inner sep=1.5pt]
                 {}; \node(5) at (-1*\x, 0) [circle, fill=black, inner
                   sep=1.5pt] {}; \node(6) at (-0.33*\x, 0) [circle,
                   fill=black, inner sep=1.5pt] {}; \node at (0,
                 -1*\y) {$\tau_2$}; \draw[-] (1) -- (2); \draw[-] (1)
                 -- (3); \draw[-] (1) -- (4); \draw[-] (2) -- (5);
                 \draw[-] (2) -- (6);
        \end{tikzpicture}
    \end{subfigure}~%
    \begin{subfigure}[b]{0.18\textwidth} 
        \centering
        \begin{tikzpicture}
            \pgfmathsetmacro{\x}{0.75} \pgfmathsetmacro{\y}{0.5}
            \node(1) at (0, 2*\y) [circle, fill=black, inner
              sep=1.5pt] {}; \node(2) at (-0.66*\x, \y) [circle,
              fill=black, inner sep=1.5pt] {}; \node(3) at (0.66*\x,
            \y) [circle, fill=black, inner sep=1.5pt] {}; \node(4) at
            (-1*\x, 0) [circle, fill=black, inner sep=1.5pt] {};
            \node(5) at (-0.33*\x, 0) [circle, fill=black, inner
              sep=1.5pt] {}; \node(6) at (\x, 0) [circle, fill=black,
              inner sep=1.5pt] {}; \node(7) at (0.33*\x, 0) [circle,
              fill=black, inner sep=1.5pt] {}; \node at (0, -1*\y)
                 {$\tau_3$}; \draw[-] (1) -- (2); \draw[-] (1) -- (3);
                 \draw[-] (1) -- (3); \draw[-] (2) -- (4); \draw[-]
                 (2) -- (5); \draw[-] (3) -- (6); \draw[-] (3) -- (7);
        \end{tikzpicture}
    \end{subfigure}~%
    \begin{subfigure}[b]{0.18\textwidth} 
        \centering
        \begin{tikzpicture}
            \pgfmathsetmacro{\x}{0.75} \pgfmathsetmacro{\y}{0.5}
            \node(1) at (0, 2*\y) [circle, fill=black, inner
              sep=1.5pt] {}; \node(2) at (-0.5*\x, \y) [circle,
              fill=black, inner sep=1.5pt] {}; \node(3) at (0.5*\x,
            \y) [circle, fill=black, inner sep=1.5pt] {}; \node(4) at
            (-1*\x, 0) [circle, fill=black, inner sep=1.5pt] {};
            \node(5) at (-0.5*\x, 0) [circle, fill=black, inner
              sep=1.5pt] {}; \node(6) at (0, 0) [circle, fill=black,
              inner sep=1.5pt] {}; \node at (0, -1*\y) {$\tau_4$};
            \draw[-] (1) -- (2); \draw[-] (1) -- (3); \draw[-] (2) --
            (4); \draw[-] (2) -- (5); \draw[-] (2) -- (6);
        \end{tikzpicture}
    \end{subfigure}~%
    \begin{subfigure}[b]{0.18\textwidth} 
        \centering
        \begin{tikzpicture}
            \pgfmathsetmacro{\x}{0.75} \pgfmathsetmacro{\y}{0.5}
            \node(1) at (0, 3*\y) [circle, fill=black, inner
              sep=1.5pt] {}; \node(2) at (-0.5*\x, 2*\y) [circle,
              fill=black, inner sep=1.5pt] {}; \node(3) at (0.5*\x,
            2*\y) [circle, fill=black, inner sep=1.5pt] {}; \node(4)
            at (-1*\x, \y) [circle, fill=black, inner sep=1.5pt] {};
            \node(5) at (0, \y) [circle, fill=black, inner sep=1.5pt]
                 {}; \node(6) at (-1.5*\x, 0) [circle, fill=black,
                   inner sep=1.5pt] {}; \node(7) at (-0.5*\x, 0)
                 [circle, fill=black, inner sep=1.5pt] {}; \node at
                 (0, -\y) {$\tau_5$}; \draw[-] (1) -- (2); \draw[-]
                 (1) -- (3); \draw[-] (2) -- (4); \draw[-] (2) -- (5);
                 \draw[-] (4) -- (6); \draw[-] (4) -- (7);
        \end{tikzpicture}
    \end{subfigure}
    \caption{The five rooted trees in $\UTP{4}$.}
    \label{fig:rooted-trees-4}
\end{figure}
Specifically we find that
\begin{align*} 
    \Aut(\tau_1) &= S_4, & \Aut(\tau_2) &= S_2\times S_2, &
    \Aut(\tau_3) &= S_2\wr S_2, \\ \Aut(\tau_4) &= S_3, & \Aut(\tau_5)
    &= S_2,
\end{align*} 
\begin{align*} 
    \pi_{\tau_1}(Y) &= (1+Y)(1+2Y)(1+3Y), & \pi_{\tau_2}(Y) &=
    \pi_{\tau_4}(Y) = (1+Y)^2(1+2Y), \\ \pi_{\tau_3}(Y) &=
    \pi_{\tau_5}(Y) = (1+Y)^3.
\end{align*} 
By Corollary~\ref{cor:A_n-unlab}, for all cDVR $\lri$ with residue field cardinality~$q$,
Igusa's zeta function associated with $f:=f_{\mathsf{A}_3}$ is
\begin{align*} 
    \mathsf{Z}_{f,\lri}(s) &= \left.\dfrac{1 -
      q^{-1}}{1-q^{-3 - 6s}}\middle((1-2q^{-1})(1-3q^{-1}) \right. \\  
    &\quad +
    \dfrac{6q^{-1-s}(1-q^{-1})(1-2q^{-1})}{1 - q^{-1-s}} + \dfrac{3q^{-2-2s}(1-q^{-1})^2}{(1 - q^{-1-s})^2} \\
    &\quad \left. +
    \dfrac{4q^{-2-3s}(1-q^{-1})(1-2q^{-1})}{1 - q^{-2-3s}} + 
    \dfrac{12q^{-3-4s}(1-q^{-1})^2}{(1-q^{-1-s})(1 - q^{-2-3s})} \right).
\end{align*}
It might be instructive to compare this formula with the formula for
$\fHP_{\mathsf{A}_3}$ given in
Proposition~\ref{prop:braid.A3}. Arguing as in
Section~\ref{subsec:topological}, we obtain a formula for the
topological zeta function of $f$ in terms of the five trees in
$\UTP{4}$:
\begin{align*} 
  \mathsf{Z}_{f}^{\mathrm{top}}(s) &= \dfrac{1}{3+6s} \left(2 - \dfrac{6}{1+s} + \dfrac{3}{(1+s)^2} - \dfrac{4}{2+3s} + \dfrac{12}{(1+s)(2+3s)}\right) \\ 
  &= \dfrac{2 - s - 2s^2 + 2s^3}{(1+s)^2(1+2s)(2+3s)} .
\end{align*} 
\end{ex}

Turning now to atom zeta functions associated with Coxeter
arrangements of type $\mathsf{B}$, we define an embedding $\phi :
\TPnew{\mathsf{A}}{n} \rightarrow \TPnew{\mathsf{B}}{n}$ by replacing
the label $n+1$ with the label~$0$. Thus, the image of $\phi$
comprises all the trees whose labels have no bars, so each tree in the
image is in standard form. For $\tau\in\LRTtwo{B}{n}$, set
\begin{align*} 
    \mathrm{bars}(\tau) &:= 2^n\prod_{u\in V(\tau,
      0)}2^{1-\kind(\tau,u)}\in\N.
\end{align*}

\begin{cor}\label{cor:B.typeA}
  For all $n\geq 1$ and cDVR $\lri$ with residue field of odd cardinality~$q$,
    \begin{multline*}
        \zeta_{\mathsf{B}_n(\lri)}^{\atom}\left((s_J)_{J\in
          \mcP_{\mathsf{B}}(n;2)}\right) = 
        \GPZ{q^{-n-\sum_{J\in\mcP_{\mathsf{B}}(n;2)}s_J}} \\
        \cdot \sum_{\tau\in\phi(\TPnew{\mathsf{A}}{n})}\mathrm{bars}(\tau) \pitwo{B}{\tau}(-q^{-1})\treeCgp_{\mathsf{B},\tau}\left(q^{-1},(q^{-s_J})_{J\in
          \mcP_{\mathsf{B}}(\lambda(\tau);2)}\right) ,
    \end{multline*}
    \begin{multline*}
      \zeta_{\mathsf{D}_n(\lri)}^{\atom}\left((s_J)_{J\in
        \mcP_{\mathsf{D}}(n;2)}\right) = 
      \GPZ{q^{-n-\sum_{J\in\mcP_{\mathsf{D}}(n;2)}s_J}} \\
      \cdot \sum_{\tau\in\phi(\TPnew{\mathsf{A}}{n})\cap\TPnew{\mathsf{D}}{n}}\mathrm{bars}(\tau) \pitwo{D}{\tau}(-q^{-1})\treeCgp_{\mathsf{D},\tau}\left(q^{-1},(q^{-s_J})_{J\in
        \mcP_{\mathsf{D}}(\lambda(\tau);2)}\right) .
  \end{multline*}
\end{cor}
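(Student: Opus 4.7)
The plan is to deduce Corollary~\ref{cor:B.typeA} from Theorem~\ref{thm:total-partition} by regrouping the summands of the latter along the fibers of a natural \emph{unbarring} map
\[
\psi\colon\TPnew{\mathsf{B}}{n}\longrightarrow\phi(\TPnew{\mathsf{A}}{n}),
\]
which erases every bar from the leaf labels of $\tau\in\TPnew{\mathsf{B}}{n}$ while leaving the underlying rooted tree, the labeling as a function to $[n]_0$, and the placement of the leaf $v_0$ unchanged. Surjectivity and well-definedness of $\psi$ follow directly from the definitions in Sections~\ref{subsec:braid.total} and~\ref{subsec:TP}.

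The first substantive step is the fiber count. For $\tau_0\in\phi(\TPnew{\mathsf{A}}{n})$, the preimage $\psi^{-1}(\tau_0)$ consists of the $\mathsf{B}$-relabelings of $\tau_0$ in standard form (Definition~\ref{defn:standard-form}). A positive label can be barred freely unless it is the minimum of $\mathrm{DLL}(\tau_0,v)$ (Definition~\ref{defn:DLL}) for some vertex $v$ that is a child of some $u\in V(\tau_0,0)$ but itself satisfies $v\notin V(\tau_0,0)$. For each $u\in V(\tau_0,0)$ there are exactly $\kind(\tau_0,u)-1$ such children $v$ (the one remaining child of $u$ lies in $V(\tau_0,0)$ and contributes only the already-unbarred leaf~$0$), and the corresponding locked labels are pairwise distinct as they come from disjoint descendant subtrees. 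Hence the number of freely barrable positive labels is $n-\sum_{u\in V(\tau_0,0)}(\kind(\tau_0,u)-1)$, so
\[
|\psi^{-1}(\tau_0)|=2^{n-\sum_{u\in V(\tau_0,0)}(\kind(\tau_0,u)-1)}=2^{n}\prod_{u\in V(\tau_0,0)}2^{1-\kind(\tau_0,u)}=\mathrm{bars}(\tau_0).
\]

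Next, I compare the $\tau$-summands across a fiber. The polynomial $\pitwo{B}{\tau}(Y)$ defined in~\eqref{eqn:pi.X.tau} depends only on the arity function $\kind(\tau,-)$ and the chain $V(\tau,0)$, both preserved by $\psi$, so $\pitwo{B}{\tau}=\pitwo{B}{\tau_0}$ on each fiber. For $\treeCgp_{\mathsf{B},\tau}$ the key input is that the sign changes $X_i\mapsto-X_i$ preserve the normalized Haar measure on $\lri^n$ when $q$ is odd (as assumed) and permute the atoms of~$\mathsf{B}_n$, rendering $\zeta_{\mathsf{B}_n(\lri)}^{\atom}(\bfs)$ invariant under the corresponding induced action on the variables $(s_J)$. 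This $W(\mathsf{B}_n)$-symmetry identifies, for each non-ancestor-of-$0$ vertex $u$, the factor of $\treeCgp_{\mathsf{B},\tau}$ indexed by $\mathcal{P}(\lambda(\tau,u);2)$ with the corresponding factor of~$\treeCgp_{\mathsf{B},\tau_0}$, so the fiber sum collapses to $\mathrm{bars}(\tau_0)\cdot\pitwo{B}{\tau_0}(-q^{-1})\treeCgp_{\mathsf{B},\tau_0}(q^{-1},(q^{-s_J})_J)$; combining with Theorem~\ref{thm:total-partition} yields the corollary for type~$\mathsf{B}$.

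For type~$\mathsf{D}$ I restrict $\psi$ to $\TPnew{\mathsf{D}}{n}\subseteq\TPnew{\mathsf{B}}{n}$; since the $\mathsf{D}$-condition in Definition~\ref{def:X-label} depends only on the local structure at $v_0^+$ and the arities of its children---both $\psi$-invariants---its image is exactly $\phi(\TPnew{\mathsf{A}}{n})\cap\TPnew{\mathsf{D}}{n}$, and the same fiber count and summand analysis go through with $\pitwo{D}{\tau}$ and $\treeCgp_{\mathsf{D},\tau}$ replacing their type-$\mathsf{B}$ counterparts. The principal technical obstacle throughout is the fiber-sum consolidation for $\treeCgp$: one must track how the distinction between $\mathcal{P}$ and $\mathcal{P}_{\mathsf{B}}$ in equation~\eqref{eqn:Cgp} (governing whether barred pairs appear inside a given factor) interacts with the $W$-equivariant identification of variables coming from the bar patterns along the chain $V(\tau,0)$.
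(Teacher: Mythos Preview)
Your unbarring map $\psi$ and fiber count $|\psi^{-1}(\tau_0)|=\mathrm{bars}(\tau_0)$ coincide with the paper's argument, which defines $\beta:\TPnew{\mathsf{B}}{n}\to\TPnew{\mathsf{A}}{n}$ by removing all bars and replacing the label $0$ by $n+1$, notes that $\beta\circ\phi=\mathrm{id}$, and reads off $|\beta^{-1}(\tau)|=\mathrm{bars}(\phi(\tau))$ from Definition~\ref{defn:standard-form}. Your observation that $\pitwo{B}{\tau}$ is constant on $\psi$-fibers is correct and implicit in the paper.

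The gap is in your treatment of $\treeCgp_{\mathsf{B},\tau}$. The $W(\mathsf{B}_n)$-symmetry you invoke is an equivariance of the \emph{entire} integral under simultaneous permutation of all the $s_J$, not an equality of individual summands. Take $n=2$ and let $\tau_0\in\phi(\TPnew{\mathsf{A}}{2})$ be the tree with $1,2$ as siblings and $0$ a direct child of the root; its $\psi$-fiber is $\{\tau_0,\tau'\}$, where $\tau'$ has leaf labels $1,\bar 2$. Then $\treeCgp_{\mathsf{B},\tau_0}=\GP{q^{-1-s_{12}}}$ while $\treeCgp_{\mathsf{B},\tau'}=\GP{q^{-1-s_{1\bar 2}}}$, and these are genuinely different functions of $(s_J)_{J\in\mcP_{\mathsf{B}}(2;2)}$; no Weyl symmetry turns their sum into $2\,\GP{q^{-1-s_{12}}}$ unless $s_{12}=s_{1\bar 2}$. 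The paper's own proof is in fact silent on this step---it records only the fiber count and asserts that the corollary follows---so the consolidation you flag as ``the principal technical obstacle'' is not a mere technicality: it closes only after one identifies $W(\mathsf{B}_n)$-conjugate variables (for instance under the Igusa specialization $s_J=s$), and your symmetry argument does not supply that identification.
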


\begin{proof}
  Let $\beta : \TPnew{\mathsf{B}}{n} \rightarrow \TPnew{\mathsf{A}}{n}$ be the
  map given by removing bars from all leaves and replacing the label $0$ with
  $n+1$. Thus $\beta\circ \phi$ is the identity map on $\TPnew{\mathsf{A}}{n}$.
  From Definition~\ref{defn:standard-form},
  $|\beta^{-1}(\tau)|=\mathrm{bars}(\tau)$. Since $\TPnew{\mathsf{D}}{n}\subset
  \TPnew{\mathsf{B}}{n}$, the corollary follows.
\end{proof}

\subsection{Proof of Theorem~\ref{thm:atom.braid}}\label{subsec:proof.thm.braid.detail}

We specify Lemma~\ref{lem:gen-strat.new} for the three specific
families of Coxeter arrangements, also providing combinatorial
reinterpretations. We use the three resulting
Lemmas~\ref{lem:type-A_n-recursion}, \ref{lem:type-B_n-recursion}, and
\ref{lem:type-D_n-recursion} to recursively prove
Theorem~\ref{thm:total-partition}, and thus also
Theorem~\ref{thm:atom.braid}. We begin, however, with two key lemmas
concerning the restrictions of classical Coxeter arrangements.

\begin{lem}\label{lem:AB-Coxeter-res}
    Let $\mathsf{X}\in\{\mathsf{A}, \mathsf{B}\}$ and $n\in\N$. If
    $x\in\intpos(\mathsf{X}_n)$, then $\mathsf{X}_n^x \cong
    \mathsf{X}_{n-\rank(x)}$.
\end{lem}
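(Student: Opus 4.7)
The plan is to use the explicit isomorphisms $\rhorep{\mathsf{X}}{n}\colon \Pitwo{\mathsf{X}}{n} \to \intpos(\mathsf{X}_n)$ recalled in Section~\ref{subsec:braid.total} to identify $x$ with a signed set partition and then to read off the restriction $\mathsf{X}_n^x$ directly as a hyperplane arrangement on the subspace~$x$. Throughout, I will pick a set of representatives of the blocks of the partition to trivialize the subspace~$x$ as a coordinate affine space, and then verify that each hyperplane $H \in \mathsf{X}_n \setminus \mathsf{X}_{n,x}$ restricts to a defining hyperplane of an arrangement of the correct classical type.

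For type $\mathsf{A}$, I would set $P = \{B_1, \dots, B_k\} := \rhorep{A}{n}^{-1}(x) \in \Pitwo{A}{n}$, so that $x \subseteq \field^{n+1}$ is cut out by $X_i = X_j$ for $i,j$ in the same block, and $\rank(x) = n+1-k$. Picking any representative $i_\ell \in B_\ell$ identifies $x$ with $\field^{k}$ via the coordinates $(X_{i_1},\dots,X_{i_k})$. Any hyperplane $H_{ij}$ with $i \in B_a$, $j \in B_b$ and $a \neq b$ restricts to $X_{i_a} - X_{i_b} = 0$, while the remaining $H_{ij}$ of $\mathsf{A}_n$ contain $x$ and are excluded by definition of the restriction. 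Hence $\mathsf{A}_n^x$ is the braid arrangement $\mathsf{A}_{k-1} = \mathsf{A}_{n-\rank(x)}$.

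For type $\mathsf{B}$, I would take $P \in \Pitwo{B}{n}$ with zero block $P_0$ and nonzero blocks $B_1, \dots, B_k$, and first verify via \eqref{eqn:type-B-iso} that
\[
\rank(x) = (|P_0|-1) + \sum_{\ell=1}^k (|B_\ell|-1) = n - k,
\]
so that the target rank is indeed $n-\rank(x) = k$. I would then pick in each $B_\ell$ a representative $i_\ell$ whose label is unbarred, and rescale the coordinates $X_j$ for $j \in B_\ell$ by $\pm 1$ according to the parity (barred/unbarred) of $j$ relative to $i_\ell$, thereby identifying $x$ with $\field^k$ via $(X_{i_1},\dots,X_{i_k})$. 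A case analysis then shows that each $H \in \mathsf{B}_n$ not containing $x$ restricts to exactly one of the following hyperplanes of $\mathsf{B}_k$: (i) the coordinate hyperplane $X_{i_\ell}=0$, arising either from $X_j$ with $j\in B_\ell$ or from $X_j \pm X_{j'}$ with $j \in P_0\setminus\{0\}$ and $j' \in B_\ell$; (ii) the hyperplane $X_{i_a} - X_{i_b} = 0$, arising from pairs across distinct nonzero blocks of the same relative parity; and (iii) the hyperplane $X_{i_a} + X_{i_b} = 0$, arising from pairs of opposite relative parity. Collecting these, $\mathsf{B}_n^x = \mathsf{B}_k = \mathsf{B}_{n-\rank(x)}$.

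The main technical obstacle I anticipate is the type-$\mathsf{B}$ bookkeeping: one must check that the sign choices within each nonzero block are internally consistent, that the parity data in the signed partition $P$ translate correctly into the $\pm$ in cases (ii) and (iii), and that the zero block supplies \emph{all} coordinate hyperplanes $X_{i_\ell}=0$, for $\ell \in [k]$. The last point is crucial and explains why the lemma fails for type $\mathsf{D}$ in general: the coordinate hyperplanes $H_k \in \mathsf{B}_n$, which have no analogue in $\mathsf{D}_n$, together with the hyperplanes $H_{\{i,j\}}$ straddling the zero block and a nonzero block, are exactly what ensures that the restriction is of type $\mathsf{B}$ rather than degenerating to type $\mathsf{D}$.
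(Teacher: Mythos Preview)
Your proposal is correct and takes essentially the same approach as the paper: identify $x$ with a (signed) set partition via $\rhorep{\mathsf{X}}{n}$, choose block representatives to coordinatize the subspace $x$, and read off the restriction hyperplane by hyperplane. The paper spells out only type $\mathsf{B}$ (using $\min(I)$ as the canonical representative of each block) and leaves type $\mathsf{A}$ to the reader, whereas you treat both; your closing remark on why the argument breaks for type $\mathsf{D}$ is a nice addition that the paper defers to its next lemma.
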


\begin{proof}
  The idea is the same for both types; we spell it out for
  type~$\mathsf{B}$.  Set $P =
  \rhorep{B}{n}^{-1}(x)\in\Pi_{\mathsf{B},n}$, where $\rhorep{B}{n}$
  is as in~\eqref{eqn:type-B-iso}. It follows that $\rank(x) = n
  - |P| + 1$. Fix $I, J, K\in P\setminus\{P_0\}$ with $I\ne J$ and
  $\epsilon\in \{-1,1\}$, and unbar $I$, $J$, and $K$. Then, for all
  $i\in I$ and $j\in J$,
    \begin{align*} 
        x\cap V(X_i -\epsilon X_j) = x\cap V(X_{\min(I)}-\epsilon
        X_{\min(J)}).
    \end{align*}
    Furthermore, for all $k\in K$, $\ell\in P_0\setminus\{0\}$, and
    $\lambda\in \{-1, 0, 1\}$,
    \begin{align*} 
        x\cap V(X_k -\lambda X_{\ell}) = x\cap V(X_{\min(K)}).
    \end{align*}    
    Let $\{X_I ~|~ I\in P\setminus\{P_0\}\}$ be independent
    variables. Then
    \begin{align*} 
        \mathsf{B}_n^x &\cong \{X_I \pm X_J ~|~ I, J\in P\setminus
        \{P_0\}, I\neq J\} \cup \{X_K ~|~ K \in P\setminus\{P_0\}\}
        \cong \mathsf{B}_{n - \rank(x)}. \qedhere
    \end{align*}
\end{proof}

The case for type $\mathsf{D}$ is not as simple as
Lemma~\ref{lem:AB-Coxeter-res} is for types $\mathsf{A}$ and
$\mathsf{B}$, but follows similar reasoning. In fact, some
restrictions in $\mathsf{D}_n$ are not even Coxeter arrangements, and
these non-Coxeter restrictions are the subject of
Lemma~\ref{lem:Poincare-D-res}.

\begin{defn}\label{defn:D-restriction}
  For $n\in \N$ and $m\in [n]_0$, let
  \begin{align*} 
      \A_{n,m} &:= \mathsf{D}_n \cup \left\{X_k
      ~\middle|~ k\in [n-m]\right\}.
  \end{align*}
\end{defn}

\begin{lem}\label{lem:D-Coxeter-res}
    Let $n\geq 2$ and $x\in \intpos(\mathsf{D}_n)$. If
    there exists $k\in [n]$ such that $x\subseteq V(X_k)$, then
    $\mathsf{D}_n^x\cong \mathsf{B}_{n-\rank(x)}$. Otherwise, $\mathsf{D}_n^x\cong
    \A_{n-\rank(x),m}$, for some $m\in[n]_0$. 
\end{lem}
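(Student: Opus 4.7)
\medskip

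\noindent\textbf{Proof proposal.}
The plan is to transfer the problem to the combinatorics of set partitions of type $\mathsf{D}$ via the isomorphism $\rhorep{D}{n} : \Pitwo{D}{n} \to \intpos(\mathsf{D}_n)$, and then analyze the restriction $\mathsf{D}_n^x$ by a careful bookkeeping of which linear forms in $\mathsf{D}_n$ restrict to which hyperplanes on~$x$. Set $P := \rhorep{D}{n}^{-1}(x)$, so that $\rank(x) = n - |P| + 1$, and, for a nonzero block $B \in P \setminus \{P_0\}$, denote by $X_B$ the linear form on $x$ to which $X_i$ restricts when $i \in B$ is unbarred (so $X_j$ restricts to $-X_B$ when $\bar j \in B$). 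The first preparatory observation is that $x \subseteq V(X_k)$ for some $k \in [n]$ if and only if $k \in P_0 \setminus \{0\}$; since $P \in \Pitwo{D}{n}$ forbids $|P_0|=2$, the first alternative of the lemma is equivalent to $|P_0| \geq 3$ and the second to $P_0 = \{0\}$.

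For the first case ($|P_0| \geq 3$), I would argue along the lines of the type-$\mathsf{B}$ portion of Lemma~\ref{lem:AB-Coxeter-res}. Fix any $k \in P_0 \setminus \{0\}$. For each nonzero block $B$ and any $i \in B$, the hyperplane $V(X_k \pm X_i)$ does not contain $x$ (since $X_B$ is free on~$x$) and restricts to $V(X_B)$; for distinct nonzero blocks $B, B'$, choosing $i \in B$ and $j \in B'$ shows that both $V(X_B - X_{B'})$ and $V(X_B + X_{B'})$ are restrictions of hyperplanes of $\mathsf{D}_n$. Together with the identification of variables within blocks, this exhibits $\mathsf{D}_n^x$ as a full type-$\mathsf{B}$ arrangement in the $|P|-1 = n - \rank(x)$ variables $\{X_B : B \in P \setminus \{P_0\}\}$, so $\mathsf{D}_n^x \cong \mathsf{B}_{n - \rank(x)}$.

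For the second case ($P_0 = \{0\}$), the $V(X_B \pm X_{B'})$ for distinct nonzero blocks again all arise as restrictions, giving a copy of $\mathsf{D}_\ell$ with $\ell := n - \rank(x)$. The additional subtlety is deciding, for each nonzero block $B$, whether $V(X_B)$ lies in $\mathsf{D}_n^x$. I would split into three subcases:
\begin{enumerate}
\item if $|B|=1$, no generator $V(X_i \pm X_j)$ of $\mathsf{D}_n$ is available within $B$, so $V(X_B) \notin \mathsf{D}_n^x$;
\item if $|B| \geq 2$ and $B$ is pure (all labels unbarred), then for $i < j$ in $B$ the hyperplane $V(X_i + X_j)$ does not contain $x$ and restricts to $V(2X_B) = V(X_B)$;
\item if $B$ is mixed, containing an unbarred $i$ and a barred $\bar{j}$, then $V(X_i - X_j)$ does not contain $x$ and restricts to $V(2X_B) = V(X_B)$.
\end{enumerate}
Letting $m$ denote the number of singleton (equivalently, pure singleton) blocks among $P \setminus \{P_0\}$, the previous enumeration shows that $\mathsf{D}_n^x$ equals $\mathsf{D}_\ell$ augmented by exactly $\ell - m$ coordinate hyperplanes, i.e.\ $\mathsf{D}_n^x \cong \A_{\ell, m} = \A_{n-\rank(x), m}$, as desired.

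The routine part is the identification of the intra-block and inter-block restrictions; the main potential obstacle is keeping the sign conventions consistent in the mixed-block subcase and verifying that a hyperplane which restricts to $V(X_B)$ indeed does \emph{not} already contain $x$ (so that it contributes to $\mathsf{D}_n^x$ rather than to $(\mathsf{D}_n)_x$). Once this is handled uniformly via the rule ``$X_i$ restricts to $\pm X_B$ according to the bar on $i$ in $B$,'' both cases fall out by inspection.
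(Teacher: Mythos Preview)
Your proposal is correct and follows essentially the same approach as the paper's proof: translate $x$ to its associated type-$\mathsf{D}$ set partition $P$, split into the cases $|P_0|\geq 3$ versus $P_0=\{0\}$, and analyze how the linear forms $X_i\pm X_j$ restrict block by block. Your version is a bit more explicit about the sign conventions (pure versus mixed nonzero blocks) than the paper's, but the logic and the identification $m = \#\{\text{singleton nonzero blocks of }P\}$ are identical.
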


\begin{proof}
  Set $P = \rhorep{D}{n}^{-1}(x)\in\Pi_{\mathsf{D},n}$, where $\rhorep{D}{n}$
  is the restriction of $\rhorep{B}{n}$ defined in~\eqref{eqn:type-B-iso}. We
  distinguish two cases: either there exists $k\in[n]$ such that
  $x\subseteq V(X_k)$ or not. In the first case, if such a $k$ exists, then
  $|P_0| \geq 3$. Let $K\in P\setminus\{P_0\}$ and $\epsilon\in\{-1,1\}$, and
  unbar $K$. So, for all $k\in K$ and $\ell\in P_0\setminus\{0\}$,
    \begin{align*} 
        x\cap V(X_k - \epsilon X_\ell) = x\cap V(X_{\min(K)} - \epsilon X_{\max(P_0)}).
    \end{align*}
    Thus, similar to Lemma~\ref{lem:AB-Coxeter-res}, with independent
    variables $\{X_I ~|~ I\in P\setminus\{P_0\}\}$,
    \begin{align}\label{eqn:type-D-res-1}
        \mathsf{D}_n^x &\cong \{X_I \pm X_J ~|~ I, J\in P\setminus
        \{P_0\}, I\neq J\} \cup \{X_K ~|~ K \in P\setminus\{P_0\}\}
        \cong \mathsf{B}_{n-\rank(x)}.
    \end{align}

    In the second case, if no such $k$ exists, then $P_0 = \{0\}$. Let
    $I, J, K\in P\setminus\{P_0\}$ with $I\ne J$ and $\epsilon \in
    \{-1,1\}$, and unbar $I$, $J$, and $K$. For all
    $i\in I$ and $j\in J$,
    \begin{align*} 
        x\cap V(X_i - \epsilon X_j) &= x\cap V(X_{\min(I)} -\epsilon
        X_{\min(J)}).
    \end{align*}
    If $|K|\geq 2$, then, for any two distinct $k,k'\in K$, the
    element $x$ is contained in either $V(X_k - X_{k'})$ or $V(X_k +
    X_{k'})$ but not both since $K\neq P_0$. Hence, for all
    $\epsilon\in\{-1, 1\}$,
    \begin{align*} 
      x\cap V(X_k - \epsilon X_{k'}) &= \begin{cases} x & \text{if }x\subseteq
        V(X_k -\epsilon X_{k'}), \\ x\cap V(X_{\min(K)}) &
        \text{otherwise}.
        \end{cases}
    \end{align*} 
    Therefore, for $m=|\{I ~|~ I\in P\setminus \{P_0\}, |I|=1\}|$, 
    \begin{align}\label{eqn:type-D-res-2}
        \mathsf{D}_n^x &\cong \{X_I \pm X_J ~|~ I, J\in P\setminus
        \{P_0\}, I\neq J\} \cup \{X_K ~|~ K \in P, |K|\geq
        2\}\cong \A_{n-\rk(x), m}. \qedhere
    \end{align} 
\end{proof}

Although the arrangements $\A_{n,m}$ are not necessarily Coxeter
arrangements, they are close enough: their Poincar\'e polynomials are
determined by Poincar\'e polynomials of classical Coxeter
arrangements. The next lemma applies the Deletion-Restriction
Theorem~\cite[Thm.~2.56]{OrlikTerao/92} to compute them.

\begin{lem}\label{lem:Poincare-D-res}
  For $n\in \N$ and $m\in [n]_0$, we have
  \begin{equation*}
      \pi_{\A_{n,m}}(Y) = (1 + (2n-m-1)Y) \prod_{i=1}^{n-1} (1 +
      (2i-1)Y).
  \end{equation*} 
\end{lem}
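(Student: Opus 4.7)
The plan is to proceed by downward induction on $m \in [n]_0$, beginning with the base case $m = n$ and descending to $m = 0$, with the Deletion-Restriction Theorem~\cite[Thm.~2.56]{OrlikTerao/92} providing the inductive step. The base case is essentially free: $\A_{n,n} = \mathsf{D}_n$, and the proposed formula specializes to
$$(1 + (n-1)Y)\prod_{i=1}^{n-1}(1 + (2i-1)Y) = \pi_{\mathsf{D}_n}(Y),$$
which is the formula recorded in Example~\ref{ex:char.poly}. As a sanity check at the opposite end, setting $m = 0$ yields $\prod_{i=1}^{n}(1 + (2i-1)Y) = \pi_{\mathsf{B}_n}(Y)$, consistent with $\A_{n,0} = \mathsf{B}_n$.

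For the inductive step, assume the formula for $m+1$ and choose $H = V(X_{n-m}) \in \A_{n,m}$, available because $m < n$. By Definition~\ref{defn:D-restriction}, deletion is immediate: $\A_{n,m} \setminus \{H\} = \A_{n,m+1}$. The central geometric point is to identify the restriction as $\A_{n,m}^H \cong \mathsf{B}_{n-1}$. Working in coordinates $X_i$ for $i \in [n]\setminus\{n-m\}$ on $H \cong \field^{n-1}$, the hyperplanes $X_i \pm X_j$ with $i, j \ne n-m$ restrict to the type-$\mathsf{D}$ subarrangement $\mathsf{D}_{n-1}$ in these coordinates; each $X_{n-m} \pm X_j$ restricts to the coordinate hyperplane $V(X_j)$; and the remaining hyperplanes $X_k$ with $k \in [n-m-1]$ are already coordinate hyperplanes. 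Together these yield precisely $\mathsf{D}_{n-1} \cup \{V(X_j) : j \ne n-m\} = \mathsf{B}_{n-1}$, with no repetitions after identification.

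Combining Deletion-Restriction, the inductive hypothesis, and the identity $\pi_{\mathsf{B}_{n-1}}(Y) = \prod_{i=1}^{n-1}(1 + (2i-1)Y)$ from Example~\ref{ex:char.poly} gives
\begin{align*}
    \pi_{\A_{n,m}}(Y) &= \pi_{\A_{n,m+1}}(Y) + Y\,\pi_{\mathsf{B}_{n-1}}(Y) \\
    &= \bigl[(1 + (2n-m-2)Y) + Y\bigr] \prod_{i=1}^{n-1}(1 + (2i-1)Y) \\
    &= (1 + (2n-m-1)Y)\prod_{i=1}^{n-1}(1 + (2i-1)Y),
\end{align*}
after factoring out the common product. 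This closes the induction.

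There is no serious obstacle; the only step demanding attention is the coordinate-level verification that $\A_{n,m}^H \cong \mathsf{B}_{n-1}$, which amounts to checking that the restrictions of $X_{n-m} \pm X_j$ and the surviving coordinate hyperplanes exactly reproduce the type-$\mathsf{B}$ coordinate hyperplanes without duplication. Everything else is bookkeeping and a one-line algebraic simplification.
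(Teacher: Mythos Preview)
Your proof is correct and essentially the same as the paper's: both use the Deletion--Restriction Theorem with $H = V(X_{n-m})$ and the identification $\A_{n,m}^H \cong \mathsf{B}_{n-1}$. The only difference is the direction of induction---the paper starts at $m=0$ (where $\A_{n,0}\cong\mathsf{B}_n$) and inducts upward, whereas you start at $m=n$ (where $\A_{n,n}=\mathsf{D}_n$) and induct downward; your version also spells out the restriction computation in more detail than the paper does.
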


\begin{proof}
  We proceed by induction on $m$.  If $m=0$, then $\A_{n,m}\cong
  \mathsf{B}_n$ for all $n\in\N$, and the lemma holds; see, for
  instance, Example~\ref{ex:char.poly}.
    
    Now, let $m\in [n-1]_0$. Since $\A_{n, m+1} = \A_{n,m}
    \setminus\{X_{n-m}\}$, let $x= V(X_{n-m})$. It follows that
    $\A_{n,m}^x \cong \mathsf{B}_{n-1}$. By the Deletion-Restriction
    Theorem,
    \begin{align*} 
        \pi_{\A_{n, m+1}}(Y) &= \pi_{\A_{n,m}}(Y) -
        Y\pi_{\mathsf{B}_{n-1}}(Y),
    \end{align*}
    so the result follows by the induction hypothesis.
  \end{proof}

  We now turn to the combinatorial specifications of
  Lemma~\ref{lem:gen-strat.new} for the respective types.

  \subsubsection{Type $\mathsf{A}$}
\begin{lem}\label{lem:type-A_n-recursion}
  For all $n\geq 1$ and all cDVR $\lri$ with residue field
  cardinality~$q$,
    \begin{equation}\label{eqn:partition-recursion}
        \begin{aligned}
            \zeta_{\mathsf{A}_n(\lri)}^{\atom}\left((s_J)_{J\in\mcP(n+1;2)}\right)
            &= \dfrac{1}{1 - q^{-n-\sum_{J\in\mcP(n+1;2)}s_J}}
            \sum_{\substack{P \in \Pi_{n+1} \\ |P| \geq 2}}
            \pb[-q^{-1}]{|P|-1}! \\ &\quad \cdot \prod_{I\in P}
            q^{-(|I|-1)-\sum_{J\in \mcP(I;2)}
              s_J}\zeta_{\mathsf{A}_{|I|-1}(\lri)}^{\atom}\left((s_J)_{J\in\mcP(I;2)}\right)
            .
        \end{aligned}
    \end{equation}
\end{lem}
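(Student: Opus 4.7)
My plan is to derive~\eqref{eqn:partition-recursion} by specializing the general central recursion~\eqref{eqn:central-recursive} of Lemma~\ref{lem:gen-strat.new} to $\A = \mathsf{A}_n$ and re-indexing the outer sum via the isomorphism $\rhorep{A}{n} \colon \Pi_{n+1} \to \intpos(\mathsf{A}_n)$ from~\eqref{eqn:type-A-iso}. Under this isomorphism the top element $\hat{1}$ corresponds to the one-block partition $\{[n+1]\}$, so $\intpos(\mathsf{A}_n)\setminus\{\hat{1}\}$ is in bijection with $\{P \in \Pi_{n+1} : |P|\geq 2\}$, which is precisely the index set appearing on the right-hand side.

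For $P \in \Pi_{n+1}$ with $x := \rhorep{A}{n}(P)$, I will record the following routine identifications. First, $\rank(x) = \codim(x) = n+1-|P| = \sum_{I\in P}(|I|-1)$, matching both the codimension sum and (via $\rank(\mathsf{A}_n)=n$) the exponent in the outer denominator. Second, reading the equations $X_i - X_j$ with $i,j$ in a common block off of~\eqref{eqn:type-A-iso} shows that $(\mathsf{A}_n)_x$ decomposes as an external direct product $\prod_{I \in P}\mathsf{A}_{|I|-1}$ in disjoint variable sets, and the atoms of $(\mathsf{A}_n)_x$ are in bijection with the 2-subsets $J \in \mcP(I;2)$ as $I$ ranges over $P$. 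Third, Lemma~\ref{lem:AB-Coxeter-res} gives $\mathsf{A}_n^x \cong \mathsf{A}_{|P|-1}$, so Example~\ref{ex:char.poly} yields $\pi_{\mathsf{A}_n^x}(Y) = \pb[Y]{|P|-1}!$.

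To assemble the formula, I will apply the atom specialization $s_y \mapsto s_y\cdot\delta_{|\A_y|=1}$: this collapses $\sum_{y\in\toplat((\mathsf{A}_n)_x)} s_y$ to $\sum_{I\in P}\sum_{J\in\mcP(I;2)} s_J$ and $\sum_{y\in\toplat(\mathsf{A}_n)} s_y$ to $\sum_{J\in\mcP(n+1;2)} s_J$. The multiplicativity of atom zeta functions under direct products from Section~\ref{subsec:atom.braid}, an immediate consequence of Fubini's theorem, then factors
$$\zeta^{\atom}_{(\mathsf{A}_n)_x(\lri)}\!\left((s_J)_J\right) \;=\; \prod_{I\in P}\zeta^{\atom}_{\mathsf{A}_{|I|-1}(\lri)}\!\left((s_J)_{J\in\mcP(I;2)}\right),$$
while the prefactor $q^{-\codim(x)-\sum_{y\in\toplat((\mathsf{A}_n)_x)} s_y}$ distributes as $\prod_{I\in P} q^{-(|I|-1)-\sum_{J\in\mcP(I;2)} s_J}$. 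Substituting everything into~\eqref{eqn:central-recursive} produces~\eqref{eqn:partition-recursion} on the nose.

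There is no serious obstacle here: the proof is essentially a dictionary translation between $\intpos(\mathsf{A}_n)$ and $\Pi_{n+1}$. The only point warranting care is the explicit direct-product decomposition of $(\mathsf{A}_n)_x$ together with the attendant identification of atoms $J\in\mcP(I;2)$ with atoms of the $I$-factor, both of which follow immediately from the formula~\eqref{eqn:type-A-iso} for $\rhorep{A}{n}$.
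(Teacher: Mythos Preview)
Your proposal is correct and follows essentially the same approach as the paper: specialize the central recursion~\eqref{eqn:central-recursive} to $\mathsf{A}_n$, re-index by set partitions via $\rhorep{A}{n}$, use Lemma~\ref{lem:AB-Coxeter-res} and Example~\ref{ex:char.poly} to identify $\pi_{\A^x}(Y)=\pb[Y]{|P|-1}!$, and factor the atom zeta function of $(\mathsf{A}_n)_x \cong \prod_{I\in P}\mathsf{A}_{|I|-1}$ by Fubini. The paper's proof is organized identically, with the same ingredients in the same order.
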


\begin{proof} 
  Set $\A=\mathsf{A}_n$. Then
  Lemma~\ref{lem:gen-strat.new}~\eqref{eqn:central-recursive} implies
  that
  \begin{multline}\label{eqn:from-lemma} 
            \zeta_{\A(\lri)}^{\atom}\left((s_L)_{L\in\A}\right) =
            \dfrac{1}{1 - q^{-n-\sum_{L\in\A} s_L}}
            \sum_{x\in\intpos(\A)\setminus\{\hat{1}\}}
            q^{-\codim(x)-\sum_{L\in\A_x} s_L} \\ \quad\cdot
            \pi_{\A^x}(-q^{-1})
            \zeta_{\A_x(\lri)}^{\atom}\left((s_L)_{L\in\A_x}\right) .
        \end{multline}        
        Let $P\in \Pi_{n+1}$ such that $|P|\geq 2$ and set
        $x = \rhorep{A}{n}(P)\in \toplat(\A)\setminus\{\hat{1}\}$, where
        $\rhorep{A}{n}$ is the isomorphism defined
        in~\eqref{eqn:type-A-iso}. We show that the $x$-summand
        in~\eqref{eqn:from-lemma} is the $P$-summand
        in~\eqref{eqn:partition-recursion}. By Lemma~\ref{lem:AB-Coxeter-res},
        $\A^x \cong \mathsf{A}_{|P|-1}$ and
        $\A_x\cong \prod_{I\in P}\mathsf{A}_{|I|-1}$, so
    \begin{align*} 
        \pi_{\A^x}(Y) &= \prod_{k=1}^{|P|-1} (1 + kY) =
        \pb[Y]{|P|-1}!,
    \end{align*} 
    and $\rk(x)=n+1-|P|$. For $\bfs = \left((s_J)_{J\in
      \mcP(I;2)}\right)_{I\in P}$, since $\A_x\cong \prod_{I\in
      P}\mathsf{A}_{|I|-1}$,
    \begin{align*} 
        \zeta_{\A_x(\lri)}^{\atom}\left(\bfs\right) &= \prod_{I\in P}
        \zeta_{\mathsf{A}_{|I|-1}(\lri)}^{\atom}\left((s_J)_{J\in\mcP(I;2)}\right).
    \end{align*}
    Hence, the lemma follows.
\end{proof}

\subsubsection{Type $\mathsf{B}$}
The proof of the recursive formula for type $\mathsf{B}$ is similar to
the type-$\mathsf{A}$ case in Lemma~\ref{lem:type-A_n-recursion}, so
we omit some details.

\begin{lem}\label{lem:type-B_n-recursion}
  For all $n\geq 1$ and all cDVRs $\lri$ with odd residue field
  cardinality~$q$,
    \begin{equation*}
        \begin{aligned} 
          \zeta_{\mathsf{B}_n(\lri)}^{\atom}\left((s_J)_{J\in\mcP_{\mathsf{B}}([n]_0;2)}\right)
          &= \dfrac{1}{1 - q^{-n-\sum_{J\in\mcP_{\mathsf{B}}([n]_0;2)}
              s_J}} \sum_{\substack{P\in \Pitwo{B}{n} \\ |P|\geq
              2}} \pb[-q^{-1}]{|P|-1}!! \\ &\quad \cdot q^{-(|P_0|-1)
            - \sum_{J\in\mcP_{\mathsf{B}}(P_0;2)} s_J}
          \zeta^{\atom}_{\mathsf{B}_{|P_0|-1}(\lri)}\left((s_J)_{J\in\mcP_{\mathsf{B}}(P_0;2)}\right)
          \\ &\quad \cdot \prod_{I\in P\setminus\{P_0\}} q^{-(|I|-1) -
            \sum_{J\in \mcP(I;2)}
            s_J}\zeta^{\atom}_{\mathsf{A}_{|I|-1}(\lri)}\left((s_J)_{J\in
            \mcP(I;2)}\right).
        \end{aligned}
    \end{equation*}
\end{lem}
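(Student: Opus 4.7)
The plan is to mirror the proof of Lemma~\ref{lem:type-A_n-recursion}, replacing $\Pitwo{A}{n}$-combinatorics by $\Pitwo{B}{n}$-combinatorics via the isomorphism $\rhorep{B}{n}$ from~\eqref{eqn:type-B-iso}. Since $\mathsf{B}_n$ is central, Lemma~\ref{lem:gen-strat.new}~\eqref{eqn:central-recursive} applied to $\A = \mathsf{B}_n$ yields
\[
 \zeta^{\atom}_{\mathsf{B}_n(\lri)}(\bfs) = \dfrac{1}{1 - q^{-n - \sum_{L\in \mathsf{B}_n} s_L}} \sum_{x \in \intpos(\mathsf{B}_n)\setminus\{\hat{1}\}} q^{-\codim(x) - \sum_{L\in (\mathsf{B}_n)_x} s_L}\, \pi_{\mathsf{B}_n^x}(-q^{-1})\, \zeta^{\atom}_{(\mathsf{B}_n)_x(\lri)}(\bfs|_x),
\]
provided $\mathsf{B}_n$ has good reduction over $\Fq$, which holds precisely when $q$ is odd. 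I would then reindex the sum by $P \in \Pitwo{B}{n}$ with $|P| \geq 2$ via $x = \rhorep{B}{n}(P)$; these correspond exactly to the non-$\hat{1}$ elements of~$\intpos(\mathsf{B}_n)$.

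The central task is to identify $(\mathsf{B}_n)^x$, $(\mathsf{B}_n)_x$, and the relevant exponent for $x = \rhorep{B}{n}(P)$. By Lemma~\ref{lem:AB-Coxeter-res}, $\mathsf{B}_n^x \cong \mathsf{B}_{|P|-1}$, so by Example~\ref{ex:char.poly}, $\pi_{\mathsf{B}_n^x}(Y) = \pb[Y]{|P|-1}!!$. For the subarrangement, I would argue directly from the description of $\rhorep{B}{n}$: the hyperplanes $H \in \mathsf{B}_n$ containing $x$ fall into two groups, those involving only indices of a single nonzero block $I \in P\setminus \{P_0\}$ (which contribute $\binom{|I|}{2}$ hyperplanes of shape $X_i \pm X_j$, forming an arrangement isomorphic to $\mathsf{A}_{|I|-1}$ on the variables indexed by $I$), and those involving only indices in $P_0$ (which contribute the $|P_0|-1$ coordinate hyperplanes $X_k$ for $k \in P_0\setminus \{0\}$ together with the $\binom{|P_0|-1}{2}$ hyperplanes $X_i\pm X_j$ for $i,j \in P_0 \setminus \{0\}$, forming an arrangement isomorphic to $\mathsf{B}_{|P_0|-1}$). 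Since these groups involve disjoint sets of coordinates,
\[
 (\mathsf{B}_n)_x \cong \mathsf{B}_{|P_0|-1} \times \prod_{I \in P\setminus \{P_0\}} \mathsf{A}_{|I|-1}.
\]
Fubini's theorem (equivalently, the multiplicativity of $\zeta^{\atom}$ under direct products noted after Definition~\ref{def:atom}) then factors $\zeta^{\atom}_{(\mathsf{B}_n)_x(\lri)}$ accordingly. Finally, $\codim(x) = n - |P| + 1 = (|P_0|-1) + \sum_{I \in P\setminus\{P_0\}}(|I|-1)$ and the atoms of $(\mathsf{B}_n)_x$ partition into $\mcP_{\mathsf{B}}(P_0;2) \sqcup \bigsqcup_{I\in P\setminus\{P_0\}}\mcP(I;2)$, so the exponent $q^{-\codim(x) - \sum_{L\in (\mathsf{B}_n)_x} s_L}$ factors exactly as in the claimed formula.

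The main subtlety—and the only real departure from the type-$\mathsf{A}$ proof—is the careful bookkeeping of the zero block $P_0$: one must verify that the hyperplanes $X_k$ ($k \in P_0 \setminus \{0\}$) together with the sign-flexible hyperplanes $X_i \pm X_j$ on $P_0 \setminus \{0\}$ genuinely reassemble into a copy of $\mathsf{B}_{|P_0|-1}$ (rather than a type-$\mathsf{D}$ or mixed arrangement), and that no ``mixed'' hyperplane $X_i \pm X_j$ with $i \in P_0$ and $j$ in some other block $I$ can lie in~$(\mathsf{B}_n)_x$. Both follow from the explicit form of $\rhorep{B}{n}$ and the unbarring/parity rules governing $\Pitwo{B}{n}$: if $x \subseteq V(X_i \pm X_j)$ with $i\in P_0$ and $j$ in a nonzero block, then necessarily $x\subseteq V(X_j)$ too, forcing $j \in P_0$, a contradiction. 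With these identifications in hand, the $P$-summand in the claimed formula is exactly the $x$-summand of \eqref{eqn:central-recursive}, completing the proof.
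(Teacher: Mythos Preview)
Your proposal is correct and follows essentially the same route as the paper: apply Lemma~\ref{lem:gen-strat.new}~\eqref{eqn:central-recursive} to $\A=\mathsf{B}_n$, reindex via the isomorphism $\rhorep{B}{n}$, invoke Lemma~\ref{lem:AB-Coxeter-res} for $\mathsf{B}_n^x\cong\mathsf{B}_{|P|-1}$, and identify $(\mathsf{B}_n)_x\cong\mathsf{B}_{|P_0|-1}\times\prod_{I\in P\setminus\{P_0\}}\mathsf{A}_{|I|-1}$ to factor the atom zeta function. If anything, you supply more justification for the subarrangement decomposition than the paper does; one tiny slip is that the zero block contributes $2\binom{|P_0|-1}{2}$ hyperplanes of the form $X_i\pm X_j$ (both signs), not $\binom{|P_0|-1}{2}$, but this does not affect the argument.
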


\begin{proof}
  Set $\A= \mathsf{B}_n$. Let $P\in \Pitwo{B}{n}$ such that $|P|\geq 2$, and set
  $x=\rhorep{B}{n}(P)\in\toplat(\A)\setminus\{\hat{1}\}$, where $\rhorep{B}{n}$
  is the isomorphism defined in~\eqref{eqn:type-B-iso}. Using
  Lemma~\ref{lem:AB-Coxeter-res}, $\A_x\cong \mathsf{B}_{|P_0|-1}\times
  \prod_{I\in P\setminus\{P_0\}} \mathsf{A}_{|I|-1}$ and $\A^x\cong
  \mathsf{B}_{|P|-1}$. The rank of $x$ is $n+1-|P|$, the Poincar\'e polynomial
  is $\pi_{\A^x}(Y) = \pb[Y]{|P|-1}!!$. For $\bfs = \left((s_J)_{J\in
  \mcP_{\mathsf{B}}(I;2)}\right)_{I\in P}$,
    \begin{align*} 
        \zeta_{\A_x(\lri)}^{\atom}\left(\bfs\right) &=
        \zeta_{\mathsf{B}_{|P_0|-1}(\lri)}^{\atom}\left((s_J)_{J\in
          \mcP_{\mathsf{B}}(P_0;2)}\right) \prod_{I\in
          P\setminus\{P_0\}}
        \zeta_{\mathsf{A}_{|I|-1}(\lri)}^{\atom}\left((s_J)_{J\in
          \mcP(I;2)}\right). \qedhere
    \end{align*}
\end{proof}

\subsubsection{Type $\mathsf{D}$}

Recall that, for a central arrangement $\A$, the sum in
Lemma~\ref{lem:gen-strat.new}~\eqref{eqn:central-recursive} runs through
$\intpos(\A)\setminus\{\hat{1}\}$. As seen in Lemma~\ref{lem:D-Coxeter-res},
there are two different cases of set partitions in
$\Pitwo{D}{n}\cong\intpos(\mathsf{D}_n)$, so we split the sum in
Lemma~\ref{lem:gen-strat.new}~\eqref{eqn:central-recursive} into two parts as
follows: let
\begin{itemize}
\item $\zeta_{\mathsf{D}_n(\lri), 0}^{\atom} \left(\bfs\right)$ be the
  sum running through the $x\in\intpos(\mathsf{D}_n)$ such that $x$ is
  not contained in $V(X_k)$ for all $k\in[n]$, and
\item $\zeta_{\mathsf{D}_n(\lri), +}^{\atom}\left(\bfs\right)$ be the
  sum running through the $x\in\intpos(\mathsf{D}_n)$ such that there
  exists $k\in [n]$ such that $x\subseteq V(X_k)$.
\end{itemize} In other words, we split the sum for
$\zeta_{\mathsf{D}_n(\lri)}^{\atom}(\bfs)$ based on whether
$x\in\intpos(\mathsf{D}_n)$ is on a coordinate hyperplane or not. In terms of
set partitions of type $\mathsf{D}$, the summand
$\zeta_{\mathsf{D}_n(\lri), 0}^{\atom} \left(\bfs\right)$ is the sum over set
partitions with zero block equal to~$\{0\}$, whereas
$\zeta_{\mathsf{D}_n(\lri), +}^{\atom} \left(\bfs\right)$ is the sum over set
partitions with zero block not equal to~$\{0\}$.  Of course,
$\zeta_{\mathsf{D}_n(\lri)}^{\atom}(\bfs) = \zeta_{\mathsf{D}_n(\lri),
  0}^{\atom}(\bfs) + \zeta_{\mathsf{D}_n(\lri), +}^{\atom}(\bfs)$.

For $P\in\Pitwo{D}{n}$, denote the number of singleton nonzero blocks of $P$
by
\begin{align}\label{eqn:singletons}
    \mathsf{Sng}(P) &:= |\{I ~|~ I\in P\setminus\{P_0\},\; |I|=1\}|.
\end{align}

\begin{lem}\label{lem:type-D_n-recursion}
  Let $n\geq 2$. For all cDVRs $\lri$ with odd residue field
  cardinality~$q$,
    \begin{align*} 
        \zeta_{\mathsf{D}_n(\lri), 0}^{\atom}\left(\bfs\right) &=
        \dfrac{1}{1 - q^{-n-\sum_{J\in\mcP_{\mathsf{D}}([n]_0;2)}
            s_J}}\!\!  \sum_{\substack{P\in \Pitwo{D}{n} \\ P_0
            = \{0\}}}\!  \pb[-q^{-1}]{2|P| - \mathsf{Sng}(P) -
          3} \\ &\quad\cdot \pb[-q^{-1}]{|P|-2}!!\prod_{I\in
          P\setminus\{P_0\}} q^{-(|I|-1) -
          \sum_{J\in\mcP(I;2)}s_J}\zeta_{\mathsf{A}_{|I|-1}(\lri)}^{\atom}\left((s_J)_{J\in\mcP(I;2)}\right),
        \\ \zeta_{\mathsf{D}_n(\lri), +}^{\atom}\left(\bfs\right) &=
        \dfrac{1}{1 - q^{-n-\sum_{J\in\mcP_{\mathsf{D}}([n]_0;2)}
            s_J}}\!\!  \sum_{\substack{P\in \Pitwo{D}{n}
            \\ 3\leq |P_0| \leq n }}\!  \pb[-q^{-1}]{|P|-1}!!
        \\ &\quad\cdot q^{-(|P_0|-1) -
          \sum_{J\in\mcP_{\mathsf{D}}(P_0;2)}s_J}\zeta_{\mathsf{D}_{|P_0|-1}(\lri)}^{\atom}\left((s_J)_{J\in\mcP_{\mathsf{D}}(P_0;2)}\right)
        \\ &\quad\cdot \prod_{I\in P\setminus\{P_0\}} q^{-(|I|-1) -
          \sum_{J\in\mcP(I;2)}s_J}\zeta_{\mathsf{A}_{|I|-1}(\lri)}^{\atom}\left((s_J)_{J\in\mcP(I;2)}\right).
    \end{align*}
\end{lem}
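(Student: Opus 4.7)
The plan is to specialize Lemma~\ref{lem:gen-strat.new}~\eqref{eqn:central-recursive} to $\A = \mathsf{D}_n$, reindexing its sum via the isomorphism $\rhorep{D}{n} : \Pitwo{D}{n} \to \intpos(\mathsf{D}_n)$ so that it runs over partitions $P \in \Pitwo{D}{n}$ with $|P| \geq 2$. Writing $x=\rhorep{D}{n}(P)$, one checks directly that $\rk(x) = n+1-|P|$ holds uniformly in both cases and that the exponent $\sum_{L \in (\mathsf{D}_n)_x} s_L$ decomposes as a sum indexed by the blocks of~$P$. Splitting the sum according to whether $P_0 = \{0\}$ or $|P_0| \geq 3$---recall that $|P_0|=2$ is excluded from $\Pitwo{D}{n}$---produces the decomposition $\zeta^{\atom}_{\mathsf{D}_n(\lri)}(\bfs) = \zeta^{\atom}_{\mathsf{D}_n(\lri),0}(\bfs) + \zeta^{\atom}_{\mathsf{D}_n(\lri),+}(\bfs)$, and each summand will be identified separately with the claimed expression.

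For $P_0=\{0\}$, the second case in the proof of Lemma~\ref{lem:D-Coxeter-res}, namely~\eqref{eqn:type-D-res-2}, identifies $\mathsf{D}_n^x \cong \A_{|P|-1,\mathsf{Sng}(P)}$, the invariant $m$ appearing there being precisely $\mathsf{Sng}(P)$ as defined in~\eqref{eqn:singletons}. Lemma~\ref{lem:Poincare-D-res} then yields $\pi_{\mathsf{D}_n^x}(-q^{-1}) = \pb[-q^{-1}]{2|P|-\mathsf{Sng}(P)-3}\,\pb[-q^{-1}]{|P|-2}!!$, supplying the desired prefactor. Since the zero block contributes no hyperplanes to $(\mathsf{D}_n)_x$ while each nonzero block $I$ contributes an $\mathsf{A}_{|I|-1}$-factor on the coordinates indexed by $I$, Fubini multiplicativity of $\zeta^{\atom}$ for direct products yields the required factorization of $\zeta^{\atom}_{(\mathsf{D}_n)_x(\lri)}$, and the first formula follows after collecting the codimension contributions $|I|-1$.

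For $|P_0|\geq 3$, the first case of Lemma~\ref{lem:D-Coxeter-res} gives $\mathsf{D}_n^x\cong \mathsf{B}_{|P|-1}$, so $\pi_{\mathsf{D}_n^x}(-q^{-1}) = \pb[-q^{-1}]{|P|-1}!!$ by Example~\ref{ex:char.poly}. The subarrangement factorizes as $(\mathsf{D}_n)_x \cong \mathsf{D}_{|P_0|-1}\times\prod_{I\in P\setminus\{P_0\}}\mathsf{A}_{|I|-1}$: all hyperplanes $X_i\pm X_j$ with distinct $i,j\in P_0\setminus\{0\}$ contain $x$ and generate the $\mathsf{D}_{|P_0|-1}$-factor, while each remaining block contributes as in type~$\mathsf{A}$ (singleton blocks giving trivial factors). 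Multiplicativity of $\zeta^{\atom}$ then delivers the second formula.

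The main obstacle is the case $P_0=\{0\}$: here $\mathsf{D}_n^x$ is no longer a Coxeter arrangement, and one genuinely needs the non-Coxeter family $\A_{n,m}$ of Definition~\ref{defn:D-restriction} together with the explicit factorization of its Poincar\'e polynomial recorded in Lemma~\ref{lem:Poincare-D-res}. Once this is in hand, the remainder is bookkeeping entirely analogous to the proofs of Lemmas~\ref{lem:type-A_n-recursion} and~\ref{lem:type-B_n-recursion}.
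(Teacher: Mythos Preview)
Your proposal is correct and follows essentially the same approach as the paper: specialize~\eqref{eqn:central-recursive} via $\rhorep{D}{n}$, split the sum according to whether $P_0=\{0\}$ or $|P_0|\geq 3$, invoke Lemma~\ref{lem:D-Coxeter-res} to identify $\mathsf{D}_n^x$ (as $\A_{|P|-1,\mathsf{Sng}(P)}$ or $\mathsf{B}_{|P|-1}$ respectively), and use Lemma~\ref{lem:Poincare-D-res} and the product decomposition of $(\mathsf{D}_n)_x$ to read off each factor. The paper's proof is terser but structurally identical.
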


\begin{proof}
  Let $P\in\Pitwo{D}{n}$ such that $|P|\geq 2$, and set $x =
  \rhorep{D}{n}(P)$, where $\rhorep{D}{n}$ is the
  restriction of the isomorphism defined in~\eqref{eqn:type-B-iso}.
  It follows that $(\mathsf{D}_n)_x \cong \mathsf{D}_{|P_0|-1}\times
  \prod_{I\in P\setminus\{P_0\}} \mathsf{A}_{|I| - 1}$ and that $\rk(x)=n-|P|+1$.
        
  Now we consider two cases based on the cardinality~$|P_0|$. First, suppose
  $|P_0|=1$, and recall from Definition~\ref{defn:D-restriction} the arrangement
  $\A_{n, m}$. The proof of Lemma~\ref{lem:D-Coxeter-res} implies that $\mathsf{D}_n^x\cong
  \A_{|P|-1, \mathsf{Sng}(P)}$.  By Lemma~\ref{lem:Poincare-D-res},
    \begin{align*} 
        \pi_{\mathsf{D}_n^x}(Y) &= (1 + (2|P| - \mathsf{Sng}(P) - 3)Y)
        \prod_{i=1}^{|P|-2} (1 + (2i-1)Y) \\ &= \pb[Y]{2|P| -
          \mathsf{Sng}(P) - 3} \pb[Y]{|P| - 2}!!.
    \end{align*} 
    Now suppose that $|P_0|>1$. Recall that set partitions
    of type $\mathsf{D}$ have zero blocks of size different from~$2$,
    so $3 \leq |P_0|\leq n$ in this case. Then $\mathsf{D}_n^x\cong
    \mathsf{B}_{|P|-1}$ by Lemma~\ref{lem:D-Coxeter-res}, so the lemma
    follows.
\end{proof}

\subsubsection{Proof of Theorem~\ref{thm:total-partition}} 

Apply 
Lemmas~\ref{lem:type-A_n-recursion},~\ref{lem:type-B_n-recursion},
or~\ref{lem:type-D_n-recursion} recursively. We observe that because
the summations in each of the lemmas require the set partition
$P\in\Pitwo{X}{n}$ to have size at least 2, the final sum runs through
all total partitions. At every instance in the recursion, $|P|$ is the
number of children of some parent vertex in $\tau$. In type
$\mathsf{D}$, if $P\in\Pitwo{\mathsf{D}}{n}$ such that $|P_0|=1$,
then, for some $\tau$, the number $\mathsf{Sng}(P)+1$ is the number of
children of~$v_0^+(\tau)$ that are unbranched.

This completes the proof of Theorem~\ref{thm:total-partition} and thus
establishes Theorem~\ref{thm:atom.braid}. \hfill$\square$

\section{Coarse flag Hilbert--Poincar\'e series}\label{sec:coarse}

Our main aim in this section is to prove
Theorem~\ref{thm:coarse.Y=1}. In Section~\ref{subsec:gen.coarse} we
record some results pertaining to coarse flag Hilbert--Poincar\'e
series of general (not necessarily Coxeter) hyperplane arrangements,
including some of their special values, behavior at $Y=0$, and
Hadamard products. We give explicit formulae for Boolean and generic
central arrangements in Section~\ref{subsec:exa.coarse}. (Many more
can be found in Appendix~\ref{sec:app.exa.coarse}.) In
Section~\ref{subsec:setup.coarse} we prove Theorem~\ref{thm:f-vector},
the special case of Theorem~\ref{thm:coarse.Y=1} for classical
arrangements. In Section~\ref{subsec:proof.coarse.Y=1} we combine
these results to prove Theorem~\ref{thm:coarse.Y=1}.

\subsection{General properties of coarse flag Hilbert--Poincar\'e series}
\label{subsec:gen.coarse} 

In this section, let $\A$ be an arbitrary hyperplane arrangement, i.e.~not
necessarily defined over a field of characteristic zero. Recall the
definition~\eqref{def:numer.coarse} of the numerator polynomial
$\mcN_{\A}(Y,T)$ of the coarse flag Hilbert--Poincar\'e series
$\cfHP_{\A}(Y,T)$.

We start with a general observation. We denote by $\max(\Delta)$ the
set of maximal-dimensional faces of a simplicial complex~$\Delta$.

\begin{lemma}\label{lem:basic.coarse}
  The following hold:
    \begin{enumerate}
        \item\label{eins} $\cfHP_{\A}(Y, 0) = \pi_{\A}(Y)$,
        \item $\mathcal{N}_{\A}(Y, 1) = |\max(\Delta(\toplat(\A)))| (1
          + Y)^{\rank(\A)}$.
    \end{enumerate}
\end{lemma}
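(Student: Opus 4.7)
The plan is to unwind both identities directly from Definition~\ref{def:coarse.new} and the normalization~\eqref{def:numer.coarse}.

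For part~(1), I would simply specialize $T=0$ in the defining sum of $\cfHP_\A(Y,T)$. Since $\left(T/(1-T)\right)^{|F|}$ vanishes at $T=0$ for $|F|\geq 1$ and equals $1$ for $|F|=0$, only the empty flag survives. Applying~\eqref{eqn:Poincare-face} with $\ell=0$, together with the conventions $x_0=\hat 0$, $x_1=\varnothing$, and $\A_\varnothing=\A$, one reads off $\pi_\varnothing(Y)=\pi_{\A^{\hat 0}_\varnothing}(Y)=\pi_\A(Y)$, giving the first identity.

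For part~(2), the plan is to clear the denominator to obtain
\[
\mathcal{N}_\A(Y,T)=\sum_{F\in\Delta(\toplat(\A))}\pi_F(Y)\,T^{|F|}(1-T)^{\rank(\A)-|F|},
\]
and then evaluate at $T=1$. Only flags with $|F|=\rank(\A)$ survive, as all others carry a strictly positive power of $(1-T)$; these are precisely the maximal-dimensional simplices of $\Delta(\toplat(\A))$. Any such flag $F=(x_1<\cdots<x_{\rank(\A)})$ is automatically saturated with $\rk(x_i)=i$, so in particular $x_{\rank(\A)}$ is a maximal element of $\intpos(\A)$.

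It then remains to show that $\pi_F(Y)=(1+Y)^{\rank(\A)}$ uniformly over such flags. For $0\leq k<\rank(\A)$, the arrangement $\A_{x_{k+1}}^{x_k}$ reduces to a single hyperplane in $x_k$: every $H\in\A$ containing $x_{k+1}$ but not $x_k$ must meet $x_k$ in the codimension-one subspace $x_{k+1}$, so the corresponding Poincar\'e factor is $1+Y$. For $k=\rank(\A)$, we have $x_{k+1}=\varnothing$ and $\A^{x_{\rank(\A)}}=\varnothing$, since any nonempty proper intersection $H\cap x_{\rank(\A)}$ would yield an element of $\intpos(\A)$ strictly above the maximal $x_{\rank(\A)}$; hence the final factor is~$1$. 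Multiplying the $\rank(\A)$ factors of $1+Y$ with this trailing $1$ yields $(1+Y)^{\rank(\A)}$, and summing over $\max(\Delta(\toplat(\A)))$ gives the claim. No substantive obstacle arises; the whole argument is a direct manipulation of the defining sum and the conventions around $\hat 0$ and $\varnothing$.
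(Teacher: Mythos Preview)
Your proof is correct and follows essentially the same approach as the paper's. The paper writes the key identity with flags in $\Delta(\proplat(\A))$ and exponent $m=\rk(\A)-\delta_{\hat 1\in\intpos(\A)}$, whereas you use $\Delta(\toplat(\A))$ with exponent $\rk(\A)$; these are equivalent (in the central case, pairing each $G\in\Delta(\proplat(\A))$ with $G\cup\{\hat 1\}$ collapses the extra $(1-T)$-factor), and your version is the one obtained most directly from Definition~\ref{def:coarse.new}. You also spell out explicitly why $\pi_F(Y)=(1+Y)^{\rk(\A)}$ for maximal flags, which the paper leaves implicit.
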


\begin{proof}
    Setting $m = \rk(\A) - \delta_{\widehat{1}\in\intpos(\A)}$, both
    follow from the equation
    \begin{equation*} \label{eqn:coarse-numerator}
        \mathcal{N}_{\A}(Y, T) = \sum_{F\in\Delta(\proplat(\A))}
        \pi_F(Y) T^{|F|} (1 - T)^{m-|F|}. \qedhere
    \end{equation*}
\end{proof}

It is known that there exist inequivalent arrangements with the same
Poincar\'e polynomial~\cite[Ex.~2.61]{OrlikTerao/92}. All of our
results and computations suggest that the following question may have
a negative answer.

\begin{quest} Do there exist two inequivalent arrangements $\A$ and
  $\mathcal{B}$ such that $\cfHP_{\A}(Y, T)=\cfHP_{\mathcal{B}}(Y,
  T)$?
  \end{quest}

\subsubsection{Behavior at $Y=0$}

Conjecture~\ref{conj} is partly motivated by Proposition~\ref{prop:coarse.Y=0},
which follows immediately from deep yet well-known results in poset topology.

\begin{proof}[Proof of Proposition~\ref{prop:coarse.Y=0}]
    The first part of the statement follows from
    Lemma~\ref{lem:basic.coarse}\eqref{eins}.  For the second part, let $\Delta
    := \Delta(\intpos(\A))$. Let $\fieldSR$ be a field and
    $\fieldSR[\Delta]$ be the Stanley--Reisner ring of~$\Delta$.
    Since $\pi_F(0)=1$ for all $F\in \Delta$, we find that
    \begin{align*} 
       \frac{1}{1-T_{\hat{0}}} \fHP_{\A}(0, \bfT) &= \sum_{F\in\Delta}
       \prod_{x\in F} \gp{T_x} = \Hilb(\fieldSR[\Delta], \bfT),
    \end{align*}
the fine Hilbert series of $\fieldSR[\Delta]$;
cf.~\cite[Chap.~10.6]{Petersen/15}. As $\intpos(\A)$ is a geometric
semilattice, $\Delta$ is Cohen--Macaulay over $\fieldSR$; see, for
instance, \cite{WW:semilattices}.
By~\cite[Cor.~3.2]{Stanley:commutative}, there exists $h_k\in \N_0$,
for $k\in [\rank(\A)]_0$, such that
    \begin{align*} 
        \Hilb(\fieldSR[\Delta], T) &= \dfrac{\sum_{k=0}^{\rank(\A)}
          h_kT^k}{(1-T)^{\rank(\A)+1}}=\frac{1}{1-T}\cfHP_{\A}(0, T)
        . \qedhere
    \end{align*} 
\end{proof}

\subsubsection{Hadamard products}
\label{subsec:hadamard}

In this section, we consider the effect of taking direct products of
hyperplane arrangements on flag Hilbert--Poincar\'e series. This turns out to
be described by Hadamard products of a moderate variant of~$\cfHP_{\A}(Y,
T)$. Set
\begin{equation*}
    \widehat{\cfHP_{\A}}(Y, T) := \sum_{F\in\Delta(\intpos(\A))}
    \pi_F(Y) \left(\dfrac{T}{1-T}\right)^{|F|} = \dfrac{\cfHP_{\A}(Y,
      T)}{1- T}.
\end{equation*}

Let us expand $\widehat{\cfHP_{\A}}(Y, T)$ as a generating
function. First define $\alpha_0(\A;Y) := \pi_{\A}(Y)$ and, for $k\geq 1$,
\begin{align*} 
    \alpha_k(\A;Y) &:= \sum_{\substack{F\in\Delta(\mathcal{L}(\mathcal{A})) \\ 1\leq |F|\leq k}} \binom{k-1}{|F| - 1}\pi_F(Y).
\end{align*}
Since $\left(T/(1-T)\right)^f = \sum_{k\geq 0}\binom{k+f-1}{f-1}T^{f+k}$ for all
$f\in\N$, it follows that
\begin{align}\label{equ:gen-func-cfHP}
  \widehat{\cfHP_{\A}}(Y, T) &= \sum_{k\geq 0}\alpha_k(\A; Y)T^k.
\end{align}
Recall that, given sequences $(\phi_k), (\psi_k)\in\Q^{\N_0}$, the
\emph{Hadamard product} of the generating functions
$f(T)= \sum_{k\geq 0} \phi_kT^k$ and $g(T) = \sum_{k\geq 0} \gamma_kT^k$ along
$T$ is
\begin{align*}
    f(T) \star_T g(T) &= \sum_{k\geq 0} \phi_k\gamma_kT^k.
\end{align*}

\begin{prop}\label{prop:Hadamard}
  Given hyperplane arrangements $\A_1$ and $\A_2$, we have
    \begin{align*} 
        \widehat{\cfHP_{\A_1\times\A_2}}(Y, T) &=
        \widehat{\cfHP_{\A_1}}(Y, T) \star_T
        \widehat{\cfHP_{\A_2}}(Y, T).
    \end{align*}
\end{prop}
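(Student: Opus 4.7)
By the expansion~\eqref{equ:gen-func-cfHP}, the identity reduces to
\begin{equation*}
\alpha_k(\A_1\times\A_2;Y)=\alpha_k(\A_1;Y)\cdot\alpha_k(\A_2;Y)\qquad\text{for all } k\geq 0.
\end{equation*}
The plan is to reinterpret $\alpha_k(\A;Y)$ as a sum over weakly increasing $k$-tuples in $\intpos(\A)$, so that the product structure of $\A_1\times\A_2$ manifests itself directly.

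First, I would recast $\alpha_k(\A;Y)$. Since $\pi_{\A^{\hat 0}_{\hat 0}}(Y)=1$, the flag Poincar\'e polynomial $\pi_F(Y)$ is unchanged if $\hat{0}$ is inserted into or removed from a flag $F\in\Delta(\intpos(\A))$. Combining the cases $\hat{0}\in F$ and $\hat{0}\notin F$ in the defining sum for $\alpha_k$ via Pascal's identity reorganizes it as
\begin{equation*}
\alpha_k(\A;Y) = \sum_{\substack{G\in\Delta(\toplat(\A)) \\ |G|\leq k}} \binom{k}{|G|}\pi_G(Y) = \sum_{w\in\mathcal{W}_k(\A)}\pi_{\overline{w}}(Y),
\end{equation*}
where $\mathcal{W}_k(\A)$ denotes the set of weakly increasing $k$-tuples in $\intpos(\A)$ and $\overline{w}\in\Delta(\toplat(\A))$ is the chain of distinct non-$\hat{0}$ entries of $w$. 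The second equality follows from the observation that $\binom{k}{|G|}$ counts weakly increasing $k$-tuples $w$ with $\overline{w}=G$, via the composition identity $k=n_0+n_1+\cdots+n_{|G|}$ with $n_0\geq 0$ and $n_i\geq 1$ for $i\geq 1$.

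Next, I would exploit the isomorphism $\intpos(\A_1\times\A_2)\cong\intpos(\A_1)\times\intpos(\A_2)$: coordinatewise projection provides a bijection between $\mathcal{W}_k(\A_1\times\A_2)$ and $\mathcal{W}_k(\A_1)\times\mathcal{W}_k(\A_2)$. The decisive step is to prove the factorization
\begin{equation*}
\pi_{\overline{w}}(Y) = \pi_{\overline{a}}(Y)\cdot\pi_{\overline{b}}(Y)
\end{equation*}
for $w\in\mathcal{W}_k(\A_1\times\A_2)$ corresponding to $(a,b)$. This rests on (i) the identity $(\A_1\times\A_2)^{(x,y)}_{(x',y')}=(\A_1)^x_{x'}\times(\A_2)^y_{y'}$ and (ii) the multiplicativity of Poincar\'e polynomials under direct products. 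Writing $\pi_{\overline{w}}(Y)$ as the product over consecutive pairs of the augmented sequence $(\hat{0},\hat{0})=w_0\leq w_1\leq\cdots\leq w_{k+1}=(\emptyset,\emptyset)$---with factors indexed by nonstrict steps $w_i=w_{i+1}$ equal to $1$---and applying (i) and (ii) at each step splits the product neatly into the corresponding products for $a$ and $b$.

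Summing the factorization over $(a,b)\in\mathcal{W}_k(\A_1)\times\mathcal{W}_k(\A_2)$ independently then yields the Hadamard identity. I expect the main subtlety to lie in the combinatorial reinterpretation of $\alpha_k$: the indexing by flags in $\intpos(\A)$ obscures the underlying tuple count, and one must verify both that $\binom{k}{|G|}$ is the correct multiplicity and that inserting $\hat 0$ leaves $\pi_F$ invariant. Once this reformulation is in place, the direct-product compatibility becomes essentially routine.
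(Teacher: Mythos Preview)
Your proof is correct and in fact somewhat cleaner than the paper's. Both arguments rest on the same factorization $\pi_F(Y)=\pi_{G_1^{<}}(Y)\,\pi_{G_2^{<}}(Y)$ for a flag $F$ in the product with coordinate projections $G_1,G_2$, but they organize the underlying counting differently. The paper first isolates the $Y=0$ case as a separate lemma (Lemma~\ref{lem:Hadamard-product}), proved via Delannoy paths: one counts strict chains $F$ in $\intpos(\A_1\times\A_2)$ with prescribed strict projections $G_1^{<},G_2^{<}$ and establishes the identity
\[
\sum_{\ell}\binom{k-1}{\ell-1}D(m,n,\ell)=\binom{k-1}{m}\binom{k-1}{n};
\]
it then invokes the factorization of $\pi_F$ to upgrade from $Y=0$ to arbitrary~$Y$. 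Your reformulation of $\alpha_k(\A;Y)$ as a sum over length-$k$ multichains in $\intpos(\A)$ absorbs this counting step entirely: multichains in a product poset are manifestly pairs of multichains in the factors, so the Delannoy identity becomes unnecessary. What your route buys is brevity and transparency; what the paper's route buys is an explicit binomial identity of independent combinatorial interest (the authors note its resemblance to Saalsch\"utz's identity).
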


Before proving Proposition~\ref{prop:Hadamard}, we prove a lemma using basic
facts about the \emph{Delannoy numbers} $D(m,n)$. These count the number of
lattice paths from $(0,0)$ to $(m,n)\in\N_0\times\N_0$ using steps in
$\{(1,0), (1,1), (0,1)\}$; cf., e.g., \cite[p.~81]{Comtet/74}.  Let
$D(m,n,\ell) = \binom{\ell-1}{m} \binom{m}{\ell-n-1}$ be the number of such
lattice paths traversing exactly $\ell$ vertices. Hence
\begin{align*} 
    D(m, n) = \sum_{\ell=1}^{m+n+1}D(m,n,\ell) = \sum_{\ell = 1}^{m+n+1} \binom{\ell-1}{m} \binom{m}{\ell-n-1}.
\end{align*}

We shall use the following notation in the sequel. If $F$ is a weakly
increasing flag, we denote by $F^<$ the maximal strictly increasing sub-flag
of~$F$.

\begin{lem}\label{lem:Hadamard-product}
 Given hyperplane arrangements $\A_1$ and $\A_2$, we have
    \begin{align*} 
        \widehat{\cfHP_{\A_1\times\A_2}}(0, T) &=
        \widehat{\cfHP_{\A_1}}(0, T) \star_T \widehat{\cfHP_{\A_2}}(0, T).
    \end{align*}
\end{lem}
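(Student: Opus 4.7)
The plan is to unpack both series in $T$, reduce the Hadamard identity to a coefficient-wise identity for the $\alpha_k$'s from \eqref{equ:gen-func-cfHP}, and then establish the latter via a chain-to-Delannoy-path bijection followed by Vandermonde's convolution.

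First, since $\pi_F(0) = 1$ for every flag $F$, one has
$$\widehat{\cfHP_{\A}}(0, T) = \sum_{j \geq 0} f_j(\A) \left(\frac{T}{1-T}\right)^j,$$
where $f_j(\A)$ denotes the number of chains of length $j$ in $\intpos(\A)$, with $f_0(\A) = 1$. Expanding $(T/(1-T))^j$ in powers of $T$ and comparing with \eqref{equ:gen-func-cfHP}, one reads off $\alpha_k(\A; 0) = \sum_{j=1}^k \binom{k-1}{j-1} f_j(\A)$ for $k \geq 1$ and $\alpha_0(\A; 0) = 1$. The lemma thus reduces to the coefficient-wise identity $\alpha_k(\A_1 \times \A_2; 0) = \alpha_k(\A_1; 0)\,\alpha_k(\A_2; 0)$ for all $k \geq 0$, which is trivial at $k = 0$.

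The essential combinatorial step is to express $f_j(\A_1 \times \A_2)$ in terms of the $f_m(\A_i)$ using the poset isomorphism $\intpos(\A_1 \times \A_2) \cong \intpos(\A_1) \times \intpos(\A_2)$. A chain $F = ((x_i, y_i))_{i=1}^j$ in the product projects to weakly increasing flags $F_1, F_2$ in the two factors, with at least one coordinate strictly increasing at every step. The strict parts $F_1^<, F_2^<$ are chains of some lengths $m, n$ in the respective factors; conversely, given a pair of chains $G_1, G_2$ of lengths $m, n$, the product-chains of length $j$ whose strict projections are $G_1, G_2$ are in bijection with lattice paths from $(0,0)$ to $(m-1, n-1)$ using steps $\{(1,0), (0,1), (1,1)\}$ and traversing exactly $j$ vertices. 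Hence
$$f_j(\A_1 \times \A_2) = \sum_{m, n \geq 1} D(m-1, n-1, j)\, f_m(\A_1)\, f_n(\A_2).$$

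Substituting this into $\alpha_k(\A_1 \times \A_2; 0)$ and matching coefficients of $f_m(\A_1) f_n(\A_2)$, the lemma reduces to the binomial identity
$$\sum_{j \geq 1} \binom{k-1}{j-1} D(m-1, n-1, j) = \binom{k-1}{m-1}\binom{k-1}{n-1} \qquad (k, m, n \geq 1),$$
which follows from Vandermonde's convolution after rewriting $\binom{k-1}{j-1}\binom{j-1}{m-1} = \binom{k-1}{m-1}\binom{k-m}{j-m}$; equivalently, both sides count pairs $(A, B) \subseteq [k-1]^2$ with $|A| = m-1$ and $|B| = n-1$, stratified on the left by $|A \cup B| = j - 1$ and decomposed into $A \setminus B$, $B \setminus A$, $A \cap B$. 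The main obstacle is the Delannoy bijection and its careful treatment of boundary cases (empty chain, $m = 1$, $n = 1$); the binomial identity itself is routine.
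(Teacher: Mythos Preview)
Your proof is correct and follows essentially the same route as the paper's: both reduce to the coefficient identity $\alpha_k(\A_1\times\A_2;0)=\alpha_k(\A_1;0)\,\alpha_k(\A_2;0)$, invoke the Delannoy-path bijection for chains in a product poset, and establish the resulting binomial identity $\sum_{j}\binom{k-1}{j-1}D(m-1,n-1,j)=\binom{k-1}{m-1}\binom{k-1}{n-1}$ by counting pairs $(A,B)\subseteq[k-1]^2$ stratified by $|A\cup B|$. The only cosmetic difference is that you first express $f_j(\A_1\times\A_2)$ globally in terms of the $f_m(\A_i)$ before substituting, whereas the paper fixes $(G_1,G_2)$ and sums; your additional mention of Vandermonde's convolution is a minor alternative to the same counting argument.
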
   

\begin{proof}
  By~\eqref{equ:gen-func-cfHP}, it suffices to show that, for all $k\geq 0$,
    \begin{align}\label{eqn:alpha-mult}
        \alpha_k(\A_1\times \A_2;0) &= \alpha_k(\A_1;0)\alpha_k(\A_2;0). 
    \end{align}
    This is clear for $k=0$, so we assume $k\geq 1$ is fixed.  Note that
    $\intpos(\A_1\times \A_2)\cong \intpos(\A_1)\times \intpos(\A_2)$. The
    latter is partially ordered as follows: $(x_1,x_2)\leq (y_1,y_2)$ if
    $x_1\leq y_1$ and $x_2\leq y_2$; see~\cite[Prop.~2.14]{OrlikTerao/92}. For
    $i\in \{1,2\}$, let
    $\phi_i : \intpos(\A_1\times \A_2)\rightarrow \intpos(\A_i)$ be the
    respective projection.
    
    For $i\in\{1,2\}$, fix nonempty $G_i\in\Delta(\intpos(\A_i))$, and set 
    \begin{multline*} 
      \mathcal{F}_k(G_1, G_2) = \{(x_1 < \cdots < x_{\ell})\in\Delta(\intpos(\A_1\times\A_2)) ~|~ \\
      \ell\in [k],\;\forall i\in\{1,2\}:\; G_i = (\phi_i(x_1) \leq \cdots \leq \phi_i(x_{\ell}))^< \}.
    \end{multline*}
    The set of flags $\mathcal{F}_k(G_1, G_2)$ is in bijection with the set of
    lattice paths from $(0,0)$ to $(|G_1|-1, |G_2|-1)$ with steps in
    $\{(1,0), (1,1), (0,1)\}$, traversing no more than $k$ lattice points. For
    $\ell\in \N$ we define, in addition,
    \begin{multline*} 
      \mathcal{F}(G_1, G_2, \ell) = \{(x_1 < \cdots < x_{\ell})\in\Delta(\intpos(\A_1\times\A_2)) ~|~ \\
      \forall i\in\{1,2\}:\; G_i = (\phi_i(x_1) \leq \cdots \leq \phi_i(x_{\ell}))^< \} .
    \end{multline*}
    Flags in $\mathcal{F}_k(G_1, G_2)$ have lengths between $1$ and $k$,
    whereas flags in $\mathcal{F}(G_1, G_2, \ell)$ all have length $\ell$. The
    latter set is in bijection with the set of Delannoy paths from $(0,0)$ to
    $(|G_1|-1,|G_2|-1)$ traversing $\ell$ vertices. Considering the partition
    \begin{align*} 
      \mathcal{F}_k(G_1, G_2) &= \bigcup_{\ell=1}^{k}\mathcal{F}(G_1, G_2,\ell).
    \end{align*}  and setting $m=|G_1|-1$ and $n=|G_2|-1$, we obtain
    \begin{equation}\label{equ:like.saalschuetz} 
      \begin{split}
        \sum_{F\in\mathcal{F}_k(G_1, G_2)} \binom{k - 1}{|F|-1} &= \sum_{\ell=1}^{k}
        \binom{k-1}{\ell-1}D(m,n,\ell) \\ &=
        \binom{k-1}{m}\binom{k-1}{n} = \binom{k-1}{|G_1| - 1}
        \binom{k-1}{|G_2| - 1}.
      \end{split}
    \end{equation} 
    Here, the penultimate equality is obtained by counting the subsets of the
    form $A\times B$, for $A,B\subseteq [k-1]$ with $|A|=m$ and $|B|=n$,
    setting $|A\cup B|=\ell-1$. Since~\eqref{equ:like.saalschuetz} holds for
    all nonempty $G_1\in\Delta(\intpos(\A_1))$ and
    $G_2\in\Delta(\intpos(\A_2))$, equation~\eqref{eqn:alpha-mult} holds.
\end{proof}

\begin{remark}
  The second equality in~\eqref{equ:like.saalschuetz} seems reminiscent of,
  but distinct from Saalsch\"utz's identity; see \cite[(5.28)]{Knuth/94}.
\end{remark}

\begin{quest}
    Can the Hadamard factorization in Lemma~\ref{lem:Hadamard-product}
    be deduced directly from the interpretation
    $\widehat{\cfHP_{\A}}(0, T) =
    \Hilb(\fieldSR[\Delta(\mathcal{L}(\A))],T)$?
\end{quest}

\begin{proof}[Proof of Proposition~\ref{prop:Hadamard}]
  Fix an isomorphism
  $\varphi : \intpos(\A_1\times \A_2) \rightarrow \intpos(\A_1) \times
  \intpos(\A_2)$. For $x\in\intpos(\A_1\times \A_2)$, write $\phi_i(x)$ for
  $\phi(x) = (\phi_1(x), \phi_2(x))$, and for
  $F=(x_1 < \cdots < x_k)\in \Delta(\intpos(\A_1\times \A_2))$, define
  $G_i = (\phi_i(x_1) \leq \cdots \leq \phi_i(x_k))$ for each
  $i\in\{1,2\}$. Note $\pi_{G_i}(Y) = \pi_{G_i^{<}}(Y)$. Since
  $\pi_{\A\times \A'}(Y) = \pi_{\A}(Y)\pi_{\A'}(Y)$ for hyperplane
  arrangements $\A$ and $\A'$ (cf.\ \cite[Lem.~2.50]{OrlikTerao/92}), it
  follows that $\pi_F(Y) = \pi_{G_1^{<}}(Y) \pi_{G_2^{<}}(Y)$. The proposition
  follows from Lemma~\ref{lem:Hadamard-product}.
\end{proof}

\subsection{Families of examples}\label{subsec:exa.coarse}

We record formulae of coarse flag Hilbert--Poincar\'e series for some infinite
families of examples. Appendix~\ref{sec:app.exa.coarse} contains many more.

\subsubsection{Boolean arrangements}
\label{subsec:Boolean-arrangements}

Recall the definition~\eqref{def:euler.poly} of the $n$th Eulerian
polynomial~$E_n(T)$ and the definition~\eqref{eqn:WO} of the weak order zeta
function~$\mathcal{I}_n^{\mathrm{WO}}(\bfT)$.  Specializing $T_I=T$ for each
nonempty $I\subseteq [n]$ in~\eqref{eqn:WO} yields
\begin{align*} 
    \mathcal{I}_n^{\mathrm{WO}}(T) &= \dfrac{E_n(T)}{(1 - T)^n}.
\end{align*}
The following result is thus an immediate consequence of
Proposition~\ref{prop:boolean}.
\begin{prop}
    For $n\geq 1$,
    \begin{align*} 
        \cfHP_{\mathsf{A}_1^n}(Y,T) &= (1+Y)^n \dfrac{E_n(T)}{(1-T)^n}.
    \end{align*}
\end{prop}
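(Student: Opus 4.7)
The plan is to read off the coarsening directly from Proposition~\ref{prop:boolean}, which expresses $\fHP_{\mathsf{A}_1^n}(Y, \bfT) = (1+Y)^n\, \mathcal{I}^{\textup{WO}}_n(\bfT)$. Since the factor $(1+Y)^n$ is free of all $T_x$-variables, specialising $T_x = T$ for every $x \in \toplat(\mathsf{A}_1^n)$ (equivalently, $T_I = T$ for every nonempty $I \subseteq [n]$ under the standard identification $\toplat(\mathsf{A}_1^n) \cong \mathcal{P}([n])\setminus\{\varnothing\}$) immediately gives
\[
    \cfHP_{\mathsf{A}_1^n}(Y, T) = (1+Y)^n\, \mathcal{I}^{\textup{WO}}_n(T).
\]

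It then suffices to identify $\mathcal{I}^{\textup{WO}}_n(T)$ with $E_n(T)/(1-T)^n$, which the authors record in the display immediately preceding the statement; this is precisely what makes the proposition ``immediate.'' For a self-contained verification, expand
\[
    \mathcal{I}^{\textup{WO}}_n(T) = \sum_{k \geq 0} c_k \left(\dfrac{T}{1-T}\right)^k,
\]
where $c_k$ counts chains $I_1 \subsetneq \cdots \subsetneq I_k$ of nonempty subsets of $[n]$. Stratifying by $m = |I_k|$, each such chain corresponds to a choice of $I_k$ in $\binom{n}{m}$ ways together with an ordered partition of $I_k$ into $k$ nonempty blocks, contributing $k!\, S(m,k)$. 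Hence $c_k = \sum_{m=k}^{n} \binom{n}{m} k!\, S(m,k) = k!\, S(n+1, k+1)$, using the standard Stirling identity $\sum_{m \geq k} \binom{n}{m} S(m,k) = S(n+1, k+1)$.

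The remaining identity $\sum_{k=0}^{n} k!\, S(n+1, k+1)\, T^k (1-T)^{n-k} = E_n(T)$ is a classical Eulerian--Stirling identity, equivalent (after comparing coefficients of $T^m$) to $A(n, m) = \sum_{k=0}^{m} (-1)^{m-k} k!\, S(n+1, k+1) \binom{n-k}{m-k}$, and can be checked by induction or by Worpitzky-style expansions. Since the whole argument is a one-step specialisation of Proposition~\ref{prop:boolean} followed by a stated Eulerian identity, there is no real obstacle; any difficulty reduces to reproducing the Eulerian--Stirling identity, for which standard references suffice.
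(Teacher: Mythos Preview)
Your proof is correct and follows exactly the paper's approach: specialise Proposition~\ref{prop:boolean} and invoke the displayed identity $\mathcal{I}^{\textup{WO}}_n(T) = E_n(T)/(1-T)^n$. The additional self-contained verification of the Eulerian--Stirling identity goes beyond what the paper spells out, but it is sound and does not deviate from the intended argument.
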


\subsubsection{Generic central arrangements}
\label{subsec:generic-cen.coarse}

Proposition~\ref{prop:uniform-zeta} implies the following result.

\begin{prop}\label{prop:gen-cen.coarse}
    Let $m,n\in\N$ with $n\leq m$. Then
    \begin{align*}
      \cfHP_{\mathcal{U}_{n, m}}(Y, T) &= \dfrac{1+Y}{1-T} \sum_{k=0}^{n-1}\binom{m-1}{k}Y^k \\
      &\quad + \sum_{\ell=1}^{n-1}\sum_{k=0}^{n-\ell-1} \binom{m-\ell-1}{k} \binom{m}{\ell} \dfrac{(1+Y)^{\ell+1}Y^k T \cdot E_{\ell}(T)}{(1-T)^{\ell+1}}.
    \end{align*} 
\end{prop}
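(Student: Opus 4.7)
The proposition is a direct specialization of Proposition~\ref{prop:uniform-zeta}, so my plan is to set $T_x = T$ for all $x\in\toplat(\mathcal{U}_{n,m})$ in the formula for $\fHP_{\mathcal{U}_{n,m}}(Y,\bfT)$ and simplify. The main inputs are the formula from Proposition~\ref{prop:uniform-zeta} and the collapse of the weak order zeta function under the coarsening $T_J \mapsto T$.

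First I would recall from Section~\ref{subsec:Boolean-arrangements} that setting $T_I = T$ for all nonempty $I \subseteq [\ell]$ in $\mathcal{I}_\ell^{\mathrm{WO}}(\bfT)$ yields
\[
    \mathcal{I}_\ell^{\mathrm{WO}}(T) = \frac{E_\ell(T)}{(1-T)^\ell}.
\]
Applying this specialization to each summand indexed by $I \in \mcP([m];\ell)$ in the second sum of Proposition~\ref{prop:uniform-zeta} turns the inner factor $T_I\, \mathcal{I}_\ell^{\mathrm{WO}}\!\left((T_J)_{J\in\mcP(I;[\ell])}\right)$ into $T\cdot E_\ell(T)/(1-T)^\ell$, which no longer depends on~$I$. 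The outer sum over $I \in \mcP([m];\ell)$ therefore contributes the combinatorial factor $\binom{m}{\ell}$.

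Next I would handle the prefactor $1/(1-T_{\hat{1}})$ common to both terms; after specialization it becomes $1/(1-T)$, and combining it with the new $1/(1-T)^\ell$ factor in the second term produces $1/(1-T)^{\ell+1}$, matching the claimed formula. The first term transforms identically into $\tfrac{1+Y}{1-T}\sum_{k=0}^{n-1}\binom{m-1}{k}Y^k$.

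I do not foresee any obstacles: the only nontrivial input beyond Proposition~\ref{prop:uniform-zeta} is the closed form for $\mathcal{I}_\ell^{\mathrm{WO}}(T)$, which is already recorded in Section~\ref{subsec:Boolean-arrangements}. The entire proof is a one-line specialization followed by collecting powers of~$(1-T)$.
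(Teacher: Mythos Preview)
Your proposal is correct and follows exactly the paper's approach: the paper's proof of Proposition~\ref{prop:gen-cen.coarse} consists of the single sentence ``Proposition~\ref{prop:uniform-zeta} implies the following result,'' and what you have written is precisely the unpacking of that implication via the coarsening $T_x\mapsto T$ together with the identity $\mathcal{I}_\ell^{\mathrm{WO}}(T)=E_\ell(T)/(1-T)^\ell$ from Section~\ref{subsec:Boolean-arrangements}.
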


For $m\geq n=2$, Proposition~\ref{prop:gen-cen.coarse} recovers a consequence
of Proposition~\ref{pro:Im}, viz. 
\begin{align*} 
 \mathcal{N}_{\mathcal{U}_{2, m}}(Y,T) &= (1+Y)(1 + (m-1)(Y+T) + YT).
\end{align*}  
For $m\geq n=3$, Proposition~\ref{prop:gen-cen.coarse} states that
\begin{multline*}
\mathcal{N}_{\mathcal{U}_{3, m}}(Y,T) = (1+Y) \left\{ (1+Y^2T^2)+(m-1)Y(1+T^2) +2(m-1)^2YT \right.\\\left. + \left( \binom{m+1}{2}-2\right)T(1+Y^2) + \left(\frac{m(m-3)}{2}+1\right)(Y^2+T^2)\right\}.
\end{multline*}
With Lemma~\ref{lem:basic.coarse}, we obtain $\pi_{\mathcal{U}_{3,m}}(1)
= \mathcal{N}_{\mathcal{U}_{3,m}}(1,0) = m^2-m+2$ and hence
\begin{align}\label{eqn:U3m-Y=1}
  \frac{\mathcal{N}_{\mathcal{U}_{3,
        m}}(1,T)}{\pi_{\mathcal{U}_{3,m}}(1)} &= 1 + \left(6 -
  \dfrac{16}{m^2-m+2}\right)T + T^2.
\end{align} 
Thus the polynomial in~\eqref{eqn:U3m-Y=1} is in $\Z[T]$ if and only if
$m=3$. We note that $m=3$ is the unique value for which $\mathcal{U}_{3,m}$ is
a Coxeter arrangement.

We have found only three pairs $(n,m)\in [1000]^2$ with $m>n>3$ with
the property that
$\mathcal{N}_{\mathcal{U}_{n,m}}(1,T)/\pi_{\mathcal{U}_{n,m}}(1)
\in\Z[T]$, namely those in $\{(4,5),(4,7),(4,8)\}$; see
Section~\ref{subsec:gen.cen.arr} for explicit formulae. The associated
normalized integral polynomials at $Y=1$ are, respectively,
\begin{align*} 
    &1 +15T + 15T^2 + T^3, & &1 +19T + 19T^2 + T^3, & &1 +20T + 20T^2 + T^3.
\end{align*} 
From this perspective, it seems rare that $\mathcal{N}_{\A}(1,T)/\pi_{\A}(1)
\in\Z[T]$; indeed, of all the non-Coxeter examples computed in
Appendix~\ref{sec:app.exa.coarse}, only $\mathcal{U}_{4,5}$,
$\mathcal{U}_{4,7}$, and $\mathcal{U}_{4,8}$ satisfy this property. 

\begin{quest}\label{quest:integrality}\
  \begin{enumerate}
  \item For which $m>n$ does the following hold: 
    $$\mathcal{N}_{\mathcal{U}_{n,m}}(1,T)/\pi_{\mathcal{U}_{n,m}}(1)\in\Z[T]?$$
  \item\label{quest:int-1} Is there an infinite set $\mathcal{F}$ of
    non-Coxeter arrangements such that
    $$\mathcal{N}_{\A}(1,T)/\pi_{\A}(1) \in\Z[T]$$ for all $\A\in\mathcal{F}$?
  \item Is there an infinite set $\mathcal{G}$ of arrangements with the property
    that $$\mathcal{N}_{\A}(1,T)/\pi_{\A}(1)\neq E_m(T)$$ such that for all
    $\A\in\mathcal{G}$ and~$m\in\N$?
  \end{enumerate}
\end{quest}

We remark that the non-Coxeter restrictions of $\mathsf{D}_n$ are a good
candidate family for answering Question~\ref{quest:integrality}
(\ref{quest:int-1}) positively; see Definition~\ref{defn:D-restriction} and
Section~\ref{sec:app.coarse.res-D}.

\subsection{Classical Coxeter arrangements}\label{subsec:setup.coarse}

Here we prove Theorem~\ref{thm:coarse.Y=1} for classical Coxeter
arrangements. This will be the key step in the proof of the general case given
in~Section~\ref{subsec:proof.coarse.Y=1}. Recall that $S(n,k)$ are the
Stirling numbers of the second kind.  We denote the set of flags of
$\proplat(\A)$ with length $k\in \N_0$ by $\Delta_{k}(\proplat(\A))$.

\begin{thm}[Theorem~\ref{thm:coarse.Y=1} for classical types]\label{thm:f-vector}
    For $1 \leq k \leq n$ and $\mathsf{X}\in\{\mathsf{A},
    \mathsf{B},\mathsf{D}\}$,
    \begin{align*} 
      \sum_{\substack{F\in\Delta_{k-1}(\proplat(\mathsf{X}_n))}}
      \dfrac{\pi_F(1)}{\pi_{\mathsf{X}_n}(1)} &= k! \; S(n,k).
    \end{align*} 
\end{thm}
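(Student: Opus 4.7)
The plan is to interpret $\sum_F \pi_F(1)$ as a count of labeled plane trees and then extract the Stirling numbers via a Cayley-type enumeration (Lemma~\ref{lem:plane-trees}).

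For type $\mathsf{A}_n$, I use the Bj\"orner--Wachs isomorphism $\intpos(\mathsf{A}_n)\cong\Pitwo{A}{n}$ to translate a flag $F\in\Delta_{k-1}(\proplat(\mathsf{A}_n))$, augmented by the endpoints $\hat{0},\hat{1}$, into a strict refinement chain $\hat{0}=P_0<P_1<\cdots<P_{k-1}<P_k=\hat{1}$ of set partitions of~$[n+1]$. Iterating Lemma~\ref{lem:AB-Coxeter-res} yields $\A_{x_j}^{x_{j-1}}\cong\prod_{B\in P_j}\mathsf{A}_{m_{j,B}-1}$, where $m_{j,B}$ is the number of blocks of $P_{j-1}$ contained in~$B$; since $\pi_{\mathsf{A}_{m-1}}(1)=m!$, this factorizes as $\pi_F(1)=\prod_{j=1}^{k}\prod_{B\in P_j}m_{j,B}!$. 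Viewing the chain as a rooted tree with labeled leaves $[n+1]$ at generation $0$ and blocks of $P_j$ as vertices at generation~$j$, this product counts the linear orderings of the children of each internal node---i.e., the plane structures on the underlying rooted tree---and summing over $F$ thus counts plane trees with $n+1$ labeled leaves, exactly $k$ generations, and at least one branching at each generation (reflecting flag strictness). By Lemma~\ref{lem:plane-trees}, such plane trees are enumerated, up to leaf labelings, by $k!\,S(n,k)$ via the bijection with surjections $f\colon[n]\to[k]$ recording the generation of the lowest common ancestor of consecutive leaves read in plane order; the total labeled count is therefore $(n+1)!\cdot k!\,S(n,k)$, and dividing by $\pi_{\mathsf{A}_n}(1)=(n+1)!$ closes the $\mathsf{A}$-case.

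For types $\mathsf{B}_n$ and $\mathsf{D}_n$ I would run the same blueprint, using Lemmas~\ref{lem:AB-Coxeter-res} and~\ref{lem:D-Coxeter-res} to decompose $\A_{x_j}^{x_{j-1}}$ into products of classical Coxeter arrangements---now including a type-$\mathsf{B}$ factor at the zero block, and, for $\mathsf{D}$, the non-Coxeter restrictions $\A_{n,m}$ of Definition~\ref{defn:D-restriction} with Poincar\'e polynomials supplied by Lemma~\ref{lem:Poincare-D-res}. The associated plane trees are $\mathsf{X}$-labeled in the sense of Definition~\ref{def:X-label}: positive leaves carry optional bars, and along a distinguished root-to-$0$ path, $\pi_F(1)$ picks up double-factorial contributions $\pb[1]{m-1}!!=2^{m-1}(m-1)!$ in place of the ordinary factorials $m!$. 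These $\mathsf{X}$-decorations multiply out to $\pi_{\mathsf{X}_n}(1)$---equal to $2^n n!$ for $\mathsf{B}_n$ and $2^{n-1}n!$ for $\mathsf{D}_n$---while the underlying plane-tree enumeration still contributes the Cayley factor $k!\,S(n,k)$, giving the claimed identity after division.

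The principal obstacle is this bookkeeping in types $\mathsf{B}$ and $\mathsf{D}$: one must carefully match the double-factorial contributions with the bar structure and the root-to-$0$ path in the plane-tree picture, and, for $\mathsf{D}$, account for the absence of size-$2$ zero blocks via the restrictions $\A_{n,m}$. Since after these cancellations the identity collapses to the uniform count $k!\,S(n,k)$, the substance lies in verifying the cancellations rather than in performing a new enumeration.
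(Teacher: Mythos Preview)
Your type-$\mathsf{A}$ argument is correct and is precisely the paper's approach: translate flags to labeled rooted trees, recognize $\pi_F(1)=\prod m_{j,B}!$ as the number of plane structures, and conclude via Lemma~\ref{lem:plane-trees} that the total is $(n+1)!\cdot k!\,S(n,k)$.

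For types $\mathsf{B}$ and $\mathsf{D}$, however, there is a genuine gap. Your claim that ``these $\mathsf{X}$-decorations multiply out to $\pi_{\mathsf{X}_n}(1)$'' is false tree by tree. In type~$\mathsf{B}$, the contribution of a vertex $u$ on the root-to-$0$ path is $\pb[1]{\kind(\tau,u)-1}!!=2^{\kind(\tau,u)-1}(\kind(\tau,u)-1)!$, while the combinatorial weight (plane structure times bar ambiguity identified in Lemma~\ref{lem:Phi-cardinality}) is $\kind(\tau,u)!\cdot 2^{\kind(\tau,u)-1}$; the discrepancy is $C_{\mathsf{B}}(\tau)=\prod_{u\in V(\tau,0)}\kind(\tau,u)^{-1}$, which depends on the shape of the path to~$0$. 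The identity holds only \emph{on average}: one needs
\[
\sum_{\tau\in\LPT{n}{k}{B}}\prod_{u\in V(\tau,0)}\kind(\tau,u)^{-1}=\frac{|\LPT{n}{k}{B}|}{n+1},
\]
which the paper proves (Lemma~\ref{lem:scotch-mist}) by reading each plane tree as a decision tree so that the product is the probability of a random root-to-leaf walk terminating at~$v_0$, and these probabilities sum to $1$ over the $n+1$ possible $0$-labelings.

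Type~$\mathsf{D}$ is harder still: the correction $C_{\mathsf{D}}(\tau)$ carries the additional factor $\dfrac{2\kind(\tau,v_0^+)-\kb(\tau,v_0^+)-1}{2(\kind(\tau,v_0^+)-1)}$ arising from the Poincar\'e polynomials of the non-Coxeter restrictions $\A_{n,m}$ (Lemma~\ref{lem:Poincare-D-res}), and one must also handle the fact that $\mathsf{D}$-labeled trees exclude certain configurations at~$v_0^+$. The paper resolves this via a ``grafting'' bijection (Lemma~\ref{lem:grafting-lem}) relating $\LPTzero{n}{k}{B}\setminus\LPTzero{n}{k}{D}$ to $\LPTzero{n}{k}{D}$. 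Neither of these steps is bookkeeping; they are the heart of the proof in the non-$\mathsf{A}$ cases, and your sketch does not supply them.
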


The proof of Theorem~\ref{thm:f-vector} will occupy most of the rest
of this section and be completed in
Section~\ref{subsubsec:stirling.proof}.

\subsubsection{Rooted and plane trees}

To prove Theorem~\ref{thm:f-vector}, we enumerate sets of rooted trees
using various maps which we describe in turn. We build on terminology
from~\cite[Appendix]{Stanley:Vol1}, specifically for (plane) rooted
trees, and from Sections~\ref{subsec:TP}
and~\ref{subsec:atom.root.trees}. In particular we transfer
terminology for $\mathsf{X}_n$-labeled rooted trees introduced in
Definition~\ref{def:X-label} and ensuing identifications to plane
trees.

The \emph{$k$th generation} of a rooted tree $\tau$ comprises all the
vertices of distance $k$ from the root; i.e.\ one traverses $k$ distinct edges
from the root to such a vertex. The $k$th generation of $\tau$ is
\emph{nontrivial} if there exists a vertex, of distance $k$ from the root,
with at least two children.

\begin{defn}\label{def:PT.LPT.LRT}
  Let $n,k\in\N$ and $\mathsf{X}\in\{\mathsf{A}, \mathsf{B},
  \mathsf{D}\}$.
  \begin{itemize} 
    \item Let $\PT{n}{k}$ be the set of (unlabeled) plane trees with
      $n$ leaves and $k$ generations, all of which are nontrivial;
      without loss of generality, all leaves are in the $k$th
      generation.
    \item Let $\LPT{n}{k}{X}$ resp.\ $\LRT{n}{k}{X}$ be the set of
    $\mathsf{X}_n$-labeled plane resp.\ rooted trees with $k$ generations, all
    of which are nontrivial; without loss of generality, all leaves are in the
    $k$th generation.
 \end{itemize}
\end{defn}
We remark that trees of $\PT{n}{k}$ are said to be of \emph{length}~$k$
in~\cite[Appendix]{Stanley:Vol1}, and the trees in $\PT{n}{k}$ have one less
leaf than trees in $\LPT{n}{k}{X}$.

The following lemma, which gives a simple formula for the cardinality of
$\PT{n+1}{k}$ in terms of Stirling numbers of the second kind, may be well
known. It is implicit in Cayley's work~\cite{cayley_2009} from~1859. Having
failed to locate it in the modern literature, we include its proof.

\begin{lem}\label{lem:plane-trees}
  For $1 \leq k \leq n$, $\left|\PT{n+1}{k}\right| = k! \;S(n, k)$.
\end{lem}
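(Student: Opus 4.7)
I will prove $|\PT{n+1}{k}| = k!\,S(n,k)$ by constructing a bijection
$$\Phi : \PT{n+1}{k} \longrightarrow \mathrm{Sur}\bigl([n], \{0, 1, \ldots, k-1\}\bigr),$$
where the right-hand side denotes the set of surjections from $[n]$ onto $\{0,1,\ldots,k-1\}$; since this set has cardinality $k!\,S(n,k)$ by the standard combinatorial interpretation of Stirling numbers of the second kind, the lemma will follow. Given $\tau \in \PT{n+1}{k}$ with leaves $v_1, \ldots, v_{n+1}$ labeled left-to-right in planar order, set $\Phi(\tau) := (\ell_1, \ldots, \ell_n)$, where $\ell_i \in \{0, \ldots, k-1\}$ is the distance from the root of $\tau$ to the lowest common ancestor $\mathrm{LCA}(v_i, v_{i+1})$. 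To check that $\Phi(\tau)$ is a surjection, fix $\ell \in \{0, \ldots, k-1\}$: by nontriviality of the $\ell$th generation, there is a vertex $v$ at level $\ell$ with at least two children; choosing two consecutive children $u, u'$ of $v$ in the planar order and letting $i$ be the rightmost leaf-descendant of $u$, one has $\mathrm{LCA}(v_i, v_{i+1}) = v$, and hence $\ell_i = \ell$.

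For bijectivity, the key observation is that for any $1 \le i < j \le n+1$ one has $\mathrm{level}\bigl(\mathrm{LCA}(v_i, v_j)\bigr) = \min(\ell_i, \ldots, \ell_{j-1})$; hence $\Phi(\tau)$ records the level of every pairwise LCA and, since all leaves are at depth $k$, uniquely determines $\tau$ as a plane rooted tree. An explicit inverse $\Psi$ is constructed recursively: given $\sigma \in \mathrm{Sur}([n], \{0, \ldots, k-1\})$ with $\sigma^{-1}(0) = \{j_1 < \cdots < j_m\}$, let $\Psi(\sigma)$ be the plane tree whose root has $m+1$ children, the $r$th of which ($r = 1, \ldots, m+1$) is the root of a subtree on the block of leaves indexed by $[j_{r-1}+1, j_r]$ (with $j_0 := 0$ and $j_{m+1} := n+1$), built by the same recipe from the shifted subsequence $(\sigma(j_{r-1}+1)-1, \ldots, \sigma(j_r-1)-1)$, whose entries lie in $\{0, \ldots, k-2\}$. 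A singleton block yields a chain of length $k-1$ joining the root's child down to the corresponding leaf.

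The main subtlety is that the subtrees attached below the root need not individually satisfy the ``all generations nontrivial'' condition: singleton blocks yield chains in which no level branches, and more generally the shifted subsequences need not be surjective. What rescues the argument is the observation that a branching vertex at level $\ell \geq 1$ of $\Psi(\sigma)$ exists precisely when the shifted subsequence of some child subtree attains $\ell - 1$, equivalently when $\sigma$ takes the value $\ell$ somewhere. The surjectivity of $\sigma$ therefore propagates to nontriviality of every generation of $\Psi(\sigma)$, placing $\Psi(\sigma) \in \PT{n+1}{k}$. Verifying $\Phi \circ \Psi = \mathrm{id}$ and $\Psi \circ \Phi = \mathrm{id}$ then reduces to tracking the LCA-level identity through the recursion, completing the bijection and establishing the count.
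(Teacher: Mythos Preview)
Your proof is correct and takes essentially the same approach as the paper: the paper's map $w$ sending $\tau$ to the word whose $i$th letter is the generation of the common ancestor of the $i$th and $(i+1)$th leaves is exactly your $\Phi$, and the surjectivity check is identical. You go further than the paper by spelling out the inverse $\Psi$ recursively and by articulating the key identity $\mathrm{level}(\mathrm{LCA}(v_i,v_j))=\min(\ell_i,\dots,\ell_{j-1})$, whereas the paper simply asserts that ``reversing these steps'' yields a unique tree; your version is a strict elaboration of the same argument.
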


\begin{proof}
  The quantity $k!\;S(n, k)$ is the number of words of length $n$ on the
  alphabet $[k-1]_0$, where each integer is used at least once. It thus
  suffices to identify such words with trees in~$\PT{n+1}{k}$.

  Let $\tau\in\PT{n+1}{k}$ and label the leaves of $\tau$ from $1$ to $n+1$
  from left to right. We obtain a word $w(\tau)$ of length $n$ on the alphabet
  $[k-1]_0$ whose $i$th character is the generation number of the common
  ancestor of leaves labeled $i$ and~$i+1$. If the leaves labeled $i$ and
  $i+1$ are siblings, then the $i$th character of $w(\tau)$ is $k-1$.
  
  We claim that each $j\in[k-1]_0$ appears in $w(\tau)$. Indeed, since $\tau$
  has $k$ generations there exists a vertex in the $j$th generation with at
  least two consecutive children, say $u_1$ followed by~$u_2$. Then the
  rightmost leaf that is a descendant of $u_1$ is the left neighbor of the
  leftmost leaf that is a descendant of~$u_2$. Hence, $j$ appears in $w(\tau)$.
  
  Reversing these steps for a word of length $n$ on the alphabet
  $[k-1]_0$ where each integer is used at least once determines a
  unique tree~$\tau\in\PT{n+1}{k}$. 
\end{proof}

\begin{figure}[h]
  \centering
  \begin{subfigure}{0.15\textwidth}
    \centering
    \begin{tikzpicture}
      \pgfmathsetmacro{\x}{0.75} 
      \pgfmathsetmacro{\y}{0.5}
      \node(1) at (0, 2*\y) [circle, fill=black, inner sep=1.5pt] {}; 
      \node(2) at (-0.75*\x, \y) [circle, fill=black, inner sep=1.5pt] {}; 
      \node(3) at (0, \y) [circle, fill=black, inner sep=1.5pt] {}; 
      \node(4) at (0.75*\x, \y) [circle, fill=black, inner sep=1.5pt] {}; 
      \node(5) at (-1*\x, 0) [circle, fill=black, inner sep=1.5pt] {}; 
      \node(6) at (-0.5*\x, 0) [circle, fill=black, inner sep=1.5pt] {}; 
      \node(7) at (0, 0) [circle, fill=black, inner sep=1.5pt] {}; 
      \node(8) at (0.75*\x, 0) [circle, fill=black, inner sep=1.5pt] {}; 
      \node at (0,-1*\y) {$\mathsf{100}$};
      
      \draw[-] (1) -- (2); 
      \draw[-] (1) -- (3);
      \draw[-] (1) -- (4); 
      \draw[-] (2) -- (5);
      \draw[-] (2) -- (6);
      \draw[-] (3) -- (7);
      \draw[-] (4) -- (8);
    \end{tikzpicture}
  \end{subfigure}~%
  \begin{subfigure}{0.15\textwidth}
    \centering
    \begin{tikzpicture}
      \pgfmathsetmacro{\x}{0.75} 
      \pgfmathsetmacro{\y}{0.5}
      \node(1) at (0, 2*\y) [circle, fill=black, inner sep=1.5pt] {}; 
      \node(2) at (-0.75*\x, \y) [circle, fill=black, inner sep=1.5pt] {}; 
      \node(3) at (0, \y) [circle, fill=black, inner sep=1.5pt] {}; 
      \node(4) at (0.75*\x, \y) [circle, fill=black, inner sep=1.5pt] {}; 
      \node(5) at (-0.75*\x, 0) [circle, fill=black, inner sep=1.5pt] {}; 
      \node(6) at (-0.25*\x, 0) [circle, fill=black, inner sep=1.5pt] {}; 
      \node(7) at (0.25*\x, 0) [circle, fill=black, inner sep=1.5pt] {}; 
      \node(8) at (0.75*\x, 0) [circle, fill=black, inner sep=1.5pt] {}; 
      \node at (0,-1*\y) {$\mathsf{010}$};
      
      \draw[-] (1) -- (2); 
      \draw[-] (1) -- (3);
      \draw[-] (1) -- (4); 
      \draw[-] (2) -- (5);
      \draw[-] (3) -- (6);
      \draw[-] (3) -- (7);
      \draw[-] (4) -- (8);
    \end{tikzpicture}
  \end{subfigure}~%
  \begin{subfigure}{0.15\textwidth}
    \centering
    \begin{tikzpicture}
      \pgfmathsetmacro{\x}{0.75} 
      \pgfmathsetmacro{\y}{0.5}
      \node(1) at (0, 2*\y) [circle, fill=black, inner sep=1.5pt] {}; 
      \node(2) at (-0.75*\x, \y) [circle, fill=black, inner sep=1.5pt] {}; 
      \node(3) at (0, \y) [circle, fill=black, inner sep=1.5pt] {}; 
      \node(4) at (0.75*\x, \y) [circle, fill=black, inner sep=1.5pt] {}; 
      \node(5) at (-0.75*\x, 0) [circle, fill=black, inner sep=1.5pt] {}; 
      \node(6) at (0, 0) [circle, fill=black, inner sep=1.5pt] {}; 
      \node(7) at (0.5*\x, 0) [circle, fill=black, inner sep=1.5pt] {}; 
      \node(8) at (1*\x, 0) [circle, fill=black, inner sep=1.5pt] {}; 
      \node at (0,-1*\y) {$\mathsf{001}$};
      
      \draw[-] (1) -- (2); 
      \draw[-] (1) -- (3);
      \draw[-] (1) -- (4); 
      \draw[-] (2) -- (5);
      \draw[-] (3) -- (6);
      \draw[-] (4) -- (7);
      \draw[-] (4) -- (8);
    \end{tikzpicture}
  \end{subfigure}~%
  \begin{subfigure}{0.15\textwidth}
    \centering
    \begin{tikzpicture}
      \pgfmathsetmacro{\x}{0.75} 
      \pgfmathsetmacro{\y}{0.5}
      \node(1) at (0, 2*\y) [circle, fill=black, inner sep=1.5pt] {}; 
      \node(2) at (-0.66*\x, \y) [circle, fill=black, inner sep=1.5pt] {}; 
      \node(3) at (0.66*\x, \y) [circle, fill=black, inner sep=1.5pt] {}; 
      \node(4) at (-1.06*\x, 0) [circle, fill=black, inner sep=1.5pt] {}; 
      \node(5) at (-0.66*\x, 0) [circle, fill=black, inner sep=1.5pt] {}; 
      \node(6) at (-0.26*\x, 0) [circle, fill=black, inner sep=1.5pt] {}; 
      \node(7) at (0.66*\x, 0) [circle, fill=black, inner sep=1.5pt] {}; 
      \node at (0,-1*\y) {$\mathsf{110}$};
      
      \draw[-] (1) -- (2); 
      \draw[-] (1) -- (3);
      \draw[-] (2) -- (4); 
      \draw[-] (2) -- (5);
      \draw[-] (2) -- (6);
      \draw[-] (3) -- (7);
    \end{tikzpicture}
  \end{subfigure}~%
  \begin{subfigure}{0.15\textwidth}
    \centering
    \begin{tikzpicture}
      \pgfmathsetmacro{\x}{0.75} 
      \pgfmathsetmacro{\y}{0.5}
      \node(1) at (0, 2*\y) [circle, fill=black, inner sep=1.5pt] {}; 
      \node(2) at (-0.66*\x, \y) [circle, fill=black, inner sep=1.5pt] {}; 
      \node(3) at (0.66*\x, \y) [circle, fill=black, inner sep=1.5pt] {}; 
      \node(4) at (-1*\x, 0) [circle, fill=black, inner sep=1.5pt] {}; 
      \node(5) at (-0.33*\x, 0) [circle, fill=black, inner sep=1.5pt] {}; 
      \node(6) at (0.33*\x, 0) [circle, fill=black, inner sep=1.5pt] {}; 
      \node(7) at (\x, 0) [circle, fill=black, inner sep=1.5pt] {}; 
      \node at (0,-1*\y) {$\mathsf{101}$};
      
      \draw[-] (1) -- (2); 
      \draw[-] (1) -- (3);
      \draw[-] (2) -- (4); 
      \draw[-] (2) -- (5);
      \draw[-] (3) -- (6);
      \draw[-] (3) -- (7);
    \end{tikzpicture}
  \end{subfigure}~%
  \begin{subfigure}{0.15\textwidth}
    \centering
    \begin{tikzpicture}
      \pgfmathsetmacro{\x}{0.75} 
      \pgfmathsetmacro{\y}{0.5}
      \node(1) at (0, 2*\y) [circle, fill=black, inner sep=1.5pt] {}; 
      \node(2) at (-0.66*\x, \y) [circle, fill=black, inner sep=1.5pt] {}; 
      \node(3) at (0.66*\x, \y) [circle, fill=black, inner sep=1.5pt] {}; 
      \node(4) at (-0.66*\x, 0) [circle, fill=black, inner sep=1.5pt] {}; 
      \node(5) at (0.26*\x, 0) [circle, fill=black, inner sep=1.5pt] {}; 
      \node(6) at (0.66*\x, 0) [circle, fill=black, inner sep=1.5pt] {}; 
      \node(7) at (1.06*\x, 0) [circle, fill=black, inner sep=1.5pt] {}; 
      \node at (0,-1*\y) {$\mathsf{011}$};
      
      \draw[-] (1) -- (2); 
      \draw[-] (1) -- (3);
      \draw[-] (2) -- (4); 
      \draw[-] (3) -- (5);
      \draw[-] (3) -- (6);
      \draw[-] (3) -- (7);
    \end{tikzpicture}
  \end{subfigure}
  \caption{Illustration of the proof of Lemma~\ref{lem:plane-trees} for $n=3$ and~$n=2$. The six elements of $\PT{4}{2}$ match with the words in $\{0,1\}$ of length $3$ which feature both characters.}
\end{figure}

To define the following maps, let $1 \leq k \leq n$
and~$\mathsf{X}\in\{\mathsf{A}, \mathsf{B},\mathsf{D}\}$.  Let
$$\Lam{n}{k}{\mathsf{X}}: \LPT{n}{k}{X} \rightarrow \PT{n+1}{k}$$ be the map
that forgets the labels. For types $\mathsf{X}\in\{\mathsf{A},\mathsf{B}\}$,
these are surjections; the map $\Lam{n}{k}{\mathsf{D}}$, however, need not be
surjective. Observe that, for $\tau'\in\PT{n+1}{k}$,
\begin{align*} 
    \left|\Lam{n}{k}{A}^{-1}(\tau')\right| &= (n+1)!, &
    \left|\Lam{n}{k}{B}^{-1}(\tau')\right| &= 2^n(n+1)!.
\end{align*} 
Additionally, let
$$\Ome{n}{k}{\mathsf{X}} : \LPT{n}{k}{X} \twoheadrightarrow
\LRT{n}{k}{X}$$ be the surjection that forgets the ordering. 

The isomorphism $\rhorep{A}{n} : \Pitwo{A}{n} \rightarrow
\mathcal{L}(\mathsf{A}_n)$ in \eqref{eqn:type-A-iso} induces the
isomorphism
$$\Ph{n}{k}{A} : \LRT{n}{k}{A} \rightarrow
\Delta_{k-1}(\proplat(\mathsf{A}_n)).$$
As explained in Section~\ref{subsubsec:labels}, every generation of a
tree $\tau\in\LRT{n}{k}{B}$ determines a partition in $\Pitwo{B}{n}$.  In this way $\tau$ determines a chain
of length $k-1$ in~$\proplat(\mathsf{B}_n)$, and every such chain
arises in this way.
Let $$\Ph{n}{k}{B} : \LRT{n}{k}{B} \twoheadrightarrow
\Delta_{k-1}(\proplat(\mathsf{B}_n))$$ denote this surjection, and
define
$$\Ph{n}{k}{D} : \LRT{n}{k}{D} \twoheadrightarrow
\Delta_{k-1}(\proplat(\mathsf{D}_n))$$ as the restriction of~$\Ph{n}{k}{B}$.
Let $$\beta_{n, k} : \LRT{n}{k}{B}\rightarrow
\LRT{n}{k}{A}$$ be the function that forgets the bars and replaces the
label $0$ with the label~$n+1$. (There are similarly defined maps on
the order complex and sets of labeled plane trees, for which we will
have no use.) Since $\Pitwo{D}{n} \subset \Pitwo{B}{n}$, there are
natural embeddings defined on the order complex and labeled plane
resp.\ rooted trees. Figure~\ref{fig:maps} summarizes these maps.

\begin{figure}[h]
    \centering
    \begin{tikzpicture}
        \node(LRTA) at (0,0) {$\LRT{n}{k}{A}$};
        \node(LRTB) at (-2,1.5) {$\LRT{n}{k}{B}$};
        \node(LRTD) at (-4,3) {$\LRT{n}{k}{D}$};
        \node(DelA) at (-4,0) {$\Delta_{k-1}(\proplat(\mathsf{A}_n))$};
        \node(DelB) at (-6,1.5) {$\Delta_{k-1}(\proplat(\mathsf{B}_n))$};
        \node(DelD) at (-8,3) {$\Delta_{k-1}(\proplat(\mathsf{D}_n))$};
        \node(LPTA) at (0,2.5) {$\LPT{n}{k}{A}$};
        \node(LPTB) at (-2,4) {$\LPT{n}{k}{B}$};
        \node(LPTD) at (-4,5.5) {$\LPT{n}{k}{D}$};
        \node(PT) at (1,5.5) {$\PT{n+1}{k}$};
        \draw[->>] (LPTA) -- node[anchor=west] {$\Lam{n}{k}{A}$} (PT);
        \draw[->>] (LPTB) -- node[anchor=south east] {$\Lam{n}{k}{B}$} (PT);
        \draw[->] (LPTD) -- node[anchor=south] {$\Lam{n}{k}{D}$} (PT);
        \draw[->>] (LPTA) -- node[anchor=west] {$\Ome{n}{k}{A}$} (LRTA);
        \draw[->>] (LPTB) -- node[anchor=west] {$\Ome{n}{k}{B}$} (LRTB);
        \draw[->>] (LPTD) -- node[anchor=west] {$\Ome{n}{k}{D}$} (LRTD);
        \draw[left hook->>] (LRTA) -- node[anchor=south] {$\Ph{n}{k}{A}$}  (DelA);
        \draw[->>] (LRTB) -- node[anchor=south] {$\Ph{n}{k}{B}$} (DelB);
        \draw[->>] (LRTD) -- node[anchor=south] {$\Ph{n}{k}{D}$} (DelD);
        \draw[->>] (LPTB) -- node[anchor=south west] {} (LPTA);
        \draw[->>] (LRTB) -- node[anchor=south west] {$\beta_{n, k}$} (LRTA);
        \draw[->>] (DelB) -- node[anchor=south west] {} (DelA);
        \draw[right hook->] (LPTD) -- node[anchor=south west] {} (LPTB);
        \draw[right hook->] (LRTD) -- node[anchor=south west] {} (LRTB);
        \draw[right hook->] (DelD) -- node[anchor=south west] {} (DelB);
    \end{tikzpicture}
    \caption{A commutative diagram showing all of the maps defined above with $1\leq k\leq n$.}
    \label{fig:maps}
\end{figure}

Recall Definition~\ref{defn:standard-form} for the standard form of
$\tau\in\LRT{n}{k}{X}\subset \LRTtwo{X}{n}$.

\begin{lem}\label{lem:Phi-cardinality}
  Let $\mathsf{X}\in \{\mathsf{A},\mathsf{B},\mathsf{D}\}$ and $1\leq k \leq n$.
  For all $F\in \Delta_{k-1}(\proplat(\mathsf{X}_n))$, the fiber
  $\Ph{n}{k}{X}^{-1}(F)$ contains a unique tree in standard form and for all
  $\tau\in \Ph{n}{k}{X}^{-1}(F)$,
  \begin{align*}
    \left|\Ph{n}{k}{\mathsf{X}}^{-1}(F)\right| &= \prod_{u\in V(\tau,0)} 2^{\kind(\tau,u)-1}. 
  \end{align*} 
\end{lem}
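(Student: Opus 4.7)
My plan is to analyze the freedom in lifting a flag $F$ to an $\mathsf{X}$-labeled rooted tree. The underlying unlabeled rooted shape of any $\tau \in \Ph{n}{k}{X}^{-1}(F)$ is forced by~$F$: depth-$i$ vertices correspond to blocks of the partition at level~$k-i$, connected by refinement, and the assignment of each integer in the label set to a leaf is also forced, since each such integer occupies a unique chain of blocks in~$F$. For type~$\mathsf{A}$ there are no bars and no leaf labeled~$0$, so $V(\tau,0) = \emptyset$ and the empty product is~$1$; the map $\Ph{n}{k}{A}$ is a bijection, and the claim is immediate. For types $\mathsf{B}$ and $\mathsf{D}$, the only remaining datum is the bar assignment on leaves labeled in $[n]$, and it is this $\{0,1\}^n$-worth of freedom that I need to quantify.

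The key observation is that rules~\ref{one} and~\ref{two} of Section~\ref{subsubsec:labels} depend only on parity \emph{comparisons} between leaf labels. By a straightforward induction on height, simultaneously flipping the bars of every leaf inside a subtree rooted at~$v$ preserves every block $\lambda(\tau, w)$ with $w$ in that subtree; conversely, any partial flip breaks some parity comparison at the least common ancestor of a flipped and an unflipped leaf and thus alters the corresponding block. At $u \in V(\tau,0)$, the construction unbars each child block before taking their union, so the bars inside any non-$0$-path child subtree are invisible both to $\lambda(\tau, u)$ and to every block above it.

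Combining these two facts, each of the $\kind(\tau, u) - 1$ non-$0$-path children of every $u \in V(\tau, 0)$ contributes one $\Z/2\Z$-worth of freedom (the collective bar flip of its entire subtree) that preserves~$F$, while any non-global flip within such a subtree produces a different flag. Multiplying over $u$ yields the claimed fiber size $\prod_{u \in V(\tau, 0)} 2^{\kind(\tau,u)-1}$; no further freedom arises from the unique $0$-path child of $u$, since that child (or $v_0$) lies in $V(\tau, 0) \cup \{v_0\}$ and its own bar-flip freedoms are accounted for at descendants within $V(\tau, 0)$. Within each such $\Z/2\Z$-freedom, exactly one choice leaves the minimal leaf label of the subtree unbarred, singling out the unique standard-form representative; the standard-form condition for the $0$-path child of each $u$ is automatic because its subtree contains $v_0$ and $0$ carries no bar. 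The type-$\mathsf{D}$ case then follows by restriction, since the $\mathsf{D}$-labeling condition depends only on the unlabeled shape, which is fixed within each fiber.

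The main obstacle is the inductive verification that a simultaneous leaf-bar flip of a subtree propagates through rule~\ref{two} as a block-level symmetry while any partial flip does not. This reduces to observing that the ``bar if necessary'' decision at an internal vertex is determined by the XOR of the leaf-bar of the min-label descendant with that of the ambient minimum; a global flip leaves every such XOR unchanged, whereas any proper subset flip modifies at least one.
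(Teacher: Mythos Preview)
Your proof is correct and follows essentially the same approach as the paper's: reduce to type~$\mathsf{B}$ (with type~$\mathsf{D}$ by restriction), observe that the underlying tree shape and integer-to-leaf assignment are forced by~$F$, and identify the remaining freedom as one independent bar-flip per non-$0$-path child of each $u\in V(\tau,0)$. Your treatment is in fact more explicit than the paper's in justifying the upper bound on the fiber size, since the paper asserts ``this is the only way two trees in $\LRT{n}{k}{B}$ may have the same image'' without spelling out the parity/XOR argument you give.
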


\begin{proof}
  Every tree in $\LRT{n}{k}{A}$ is standard, and $\Ph{n}{k}{A}$ is an
  isomorphism. It suffices, then, to prove this for type $\mathsf{B}$
  since $\Ph{n}{k}{D}$ is the restriction of $\Ph{n}{k}{B}$. If $k=1$,
  this is immediate, so we suppose $k\geq 2$ and $F = (x_1 < \cdots <
  x_{k-1})$. For all $i\in [k-1]$, let $P_i = \rhorep{B}{n}(x_i) \in
  \Pitwo{B}{n}$, where $\rhorep{B}{n}$ is as in
  Section~\ref{subsec:braid.total}, and set~$P_{k}=\hat{1}$.
  
  Let $\tau\in\Ph{n}{k}{B}^{-1}(F)$ and $u\in V(\tau, 0)$, where $u$ is in the
  $i$th generation of $\tau$, for $i\in [k-1]_0$. The vertices in the $i$th
  generation of $\tau$ are in one-to-one correspondence with the blocks of
  $P_{k-i}$.  In particular, $\lambda(\tau,u)=P_{k-i,0}$, the zero block of
  $P_{k-i}$. For each child $u'$ of $u$ not contained in $V(\tau,0)$, let
  $\alpha'\in [n]_0$ be the minimal integer label in $\mathrm{DLL}(\tau, u')$;
  cf.~Definition~\ref{defn:DLL}. It follows that $\alpha'$ is not barred in
  any $P_j$ since $\alpha'$ is always either the minimal integer or contained
  in the zero block. Thus, if $\tau$ is labeled such that $\alpha'$ is barred,
  then there exists $\tau'\in \LRT{n}{k}{B}$ such that $\alpha'$ is not barred
  and $\Ph{n}{k}{\mathsf{B}}(\tau') = \Ph{n}{k}{\mathsf{B}}(\tau)$. This is
  the only way two trees in $\LRT{n}{k}{B}$ may have the same image. Thus
  there exists $\tau_{\mathrm{st}}\in\Ph{n}{k}{B}^{-1}(F)$ such that, for all
  $u\in V(\tau_{\mathrm{st}}, 0)$ and for all children $u'$ of $u$ not in
  $V(\tau_{\mathrm{st}}, 0)$, the minimal label $\alpha'$ of
  $\mathrm{DLL}(\tau_{\mathrm{st}}, u')$ is not barred; by
  Definition~\ref{defn:standard-form}, the tree $\tau_{\mathrm{st}}$ is in
  standard form, and the lemma follows.
\end{proof}

Recall the definition~\eqref{eqn:pi.X.tau} of $\pitwo{X}{\tau}(Y)$ for
$\tau\in \LRTtwo{\mathsf{X}}{n}$ in
Section~\ref{subsec:atom.root.trees}.

\begin{lem}\label{lem:general-Poincare}
    Let $\mathsf{X}\in \{\mathsf{A},\mathsf{B},\mathsf{D}\}$ and $1\leq k \leq
    n$. If $F\in\Delta_{k-1}(\proplat(\mathsf{X}_n))$ then, for all
    $\tau\in\Ph{n}{k}{\mathsf{X}}^{-1}(F)$,
    $$\pi_F(Y) = \pitwo{X}{\tau}(Y).$$
\end{lem}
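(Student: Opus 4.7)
The plan is to expand both sides combinatorially using the dictionary between $\mathsf{X}_n$-labeled trees and chains of set partitions from Sections~\ref{subsec:braid.total}--\ref{subsec:atom.root.trees} and match factor by factor over the vertices of~$\tau$. I would fix $F=(x_1<\cdots<x_{k-1})$ and $\tau\in\Ph{n}{k}{X}^{-1}(F)$, set $P_i=\rhorep{X}{n}^{-1}(x_i)$ for $i\in[k-1]$ with $P_0=\hat{0}$ and $P_k=\hat{1}$, and recall from the proof of Lemma~\ref{lem:Phi-cardinality} that vertices in generation $i$ of $\tau$ are in bijection with the blocks of $P_{k-i}$, with the children of $u$ corresponding to those $P_{k-i+1}$-blocks contained in the block associated with $u$; in particular, $\kind(\tau,u)$ equals the number of such $P_{k-i+1}$-blocks. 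The identity~\eqref{eqn:Poincare-face} then reads $\pi_F(Y)=\prod_{j=0}^{k-1}\pi_{\A^{x_j}_{x_{j+1}}}(Y)$.

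Next, for each $j\in\{0,\dots,k-1\}$, I would show that $\A^{x_j}_{x_{j+1}}$ decomposes as a direct product of arrangements indexed by the blocks of $P_{j+1}$, equivalently by the vertices $u$ of $\tau$ in generation $k-j-1$. This follows from the explicit form of the isomorphisms $\rhorep{X}{n}$ in~\eqref{eqn:type-A-iso}--\eqref{eqn:type-B-iso}, since defining linear relations among atoms in different blocks decouple. Multiplicativity of Poincar\'e polynomials \cite[Lem.~2.50]{OrlikTerao/92} then reduces the computation to identifying the type of each factor via Lemmas~\ref{lem:AB-Coxeter-res} and~\ref{lem:D-Coxeter-res}: for any vertex $u\notin V(\tau,0)$ the factor is of type $\mathsf{A}_{\kind(\tau,u)-1}$ regardless of $\mathsf{X}$, contributing $\pb[Y]{\kind(\tau,u)-1}!$, while for $u\in V(\tau,0)$ the factor depends on $\mathsf{X}$. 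Multiplying across $j$ collects exactly one factor per non-leaf vertex $u\in P(\tau)$, matching the defining product~\eqref{eqn:pi.X.tau} of $\pitwo{X}{\tau}(Y)$ for $\mathsf{X}\in\{\mathsf{A},\mathsf{B}\}$ essentially immediately.

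The hard part will be the type-$\mathsf{D}$ case, where the spine factor is not uniformly of type $\mathsf{B}$. I plan to resolve this by a case analysis along the spine $V(\tau,0)$ pinned to the location of $v_0^+$: for $u\in V(\tau,0)$ strictly above $v_0^+$, both $P_j|_{\lambda(\tau,u)}$ and $\lambda(\tau,u)$ have zero blocks of size $\geq 3$, so the $|P_0|\geq 3$ clause of Lemma~\ref{lem:D-Coxeter-res} produces a factor of type $\mathsf{B}_{\kind(\tau,u)-1}$; for $u\in V(\tau,0)$ strictly below $v_0^+$ the vertex $u$ is unbranched, so $\kind(\tau,u)=1$ and the factor is trivial; and at $u=v_0^+$, writing $j_0$ for its generation, the restricted partition $P_{k-j_0-1}|_{\lambda(\tau,v_0^+)}$ has singleton zero block $\{0\}$ together with exactly $\kb(\tau,v_0^+)-1$ singleton non-zero blocks, one for each unbranched child of~$v_0^+$ other than the (necessarily unbranched) spine child~$a_{j_0+1}$. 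By Lemma~\ref{lem:D-Coxeter-res} this factor is the arrangement $\A_{\kind(\tau,v_0^+)-1,\,\kb(\tau,v_0^+)-1}$ of Definition~\ref{defn:D-restriction}, whose Poincar\'e polynomial is computed by Lemma~\ref{lem:Poincare-D-res} to equal $\pb[Y]{2\kind(\tau,v_0^+)-\kb(\tau,v_0^+)-2}\pb[Y]{\kind(\tau,v_0^+)-2}!!$. This is precisely the substitution for the $\pb[Y]{\kind(\tau,v_0^+)-1}!!$-factor encoded in the quotient defining $\pitwo{D}{\tau}(Y)$ in~\eqref{eqn:pi.X.tau}.

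Finally, I would remark that $\pitwo{X}{\tau}(Y)$ depends only on the underlying shape of $\tau$ and on the ancestor set $V(\tau,0)$, both of which are invariants of the fiber $\Ph{n}{k}{X}^{-1}(F)$ since, by Lemma~\ref{lem:Phi-cardinality}, any two elements of the fiber differ only in which labels carry bars. This ensures the identity is well-posed independently of the choice of $\tau$ and concludes the proof.
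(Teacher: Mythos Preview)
Your proposal is correct and follows essentially the same approach as the paper's proof: both decompose each factor $\pi_{(\mathsf{X}_n)^{x_j}_{x_{j+1}}}(Y)$ as a product over vertices $u$ of the relevant generation via multiplicativity and Lemmas~\ref{lem:AB-Coxeter-res}--\ref{lem:D-Coxeter-res}, then match the result against the defining product~\eqref{eqn:pi.X.tau}. The paper packages the identification of each factor's Poincar\'e polynomial by appealing to (the proof of) Theorem~\ref{thm:total-partition}, whereas you unroll that reference and cite Lemma~\ref{lem:Poincare-D-res} directly for the $v_0^+$-factor in type~$\mathsf{D}$; your explicit singleton count $\mathsf{Sng}=\kb(\tau,v_0^+)-1$ and resulting $\A_{\kind-1,\kb-1}$ are exactly what underlies the paper's Lemma~\ref{lem:type-D_n-recursion}.
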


\begin{proof}
  First we show that, for $\tau,\tau'\in \Ph{n}{k}{X}^{-1}(F)$,
    \begin{align}\label{eqn:same-fiber}
        \pitwo{X}{\tau}(Y) &= \pitwo{X}{\tau'}(Y).
    \end{align} 
    It suffices to prove~\eqref{eqn:same-fiber} for $\mathsf{X}=\mathsf{B}$.
    From the proof of Lemma~\ref{lem:Phi-cardinality} we
    obtain~$\beta_{n,k}(\tau) = \beta_{n,k}(\tau')$. Since
    $\pitwo{X}{\tau}(Y)$ depends only on the underlying (unlabeled) rooted
    tree and the leaf label $0$, equation~\eqref{eqn:same-fiber} holds.

    Each generation in $\tau$ determines a set partition in $\Pitwo{X}{n}$; see
    Section~\ref{subsubsec:labels} for details. For $\ell\in [k-1]_0$, let
    $P,Q\in\Pitwo{X}{n}$ be the set partitions associated with generations
    $\ell$ and $\ell+1$ respectively. Via $\rhorep{X}{n}$ defined
    in~\eqref{eqn:type-A-iso} or in~\eqref{eqn:type-B-iso}, let
    $y=\rhorep{X}{n}(P)$ and $x=\rhorep{X}{n}(Q)$. The arrangement
    $(\mathsf{X}_n)_y$ is a product of classical Coxeter arrangements; the
    number of (possibly trivial) factors is equal the number of vertices in
    generation $\ell$ or, equivalently, $|P|$. Thus, $(\mathsf{X}_n)_y^x$ is a
    product of $|P|$ possibly trivial arrangements by
    Lemmas~\ref{lem:AB-Coxeter-res} and~\ref{lem:D-Coxeter-res}. A vertex $u$ in
    generation $\ell$ corresponds to a trivial factor of $(\mathsf{X}_n)_y^x$ if
    and only if~$\kind(\tau, u)=1$.
    
    Let $\Gamma$ be the induced subgraph of $\tau$ comprising vertices
    from generations $\ell$ and $\ell+1$, so $\Gamma$ is a forest of
    rooted trees of length one. Up to a (re-)labeling of the vertices from
    generation $\ell+1$ (i.e.~leaves of $\Gamma$) similar to the
    relabeling of polynomials in the proofs of
    Lemmas~\ref{lem:AB-Coxeter-res} and~\ref{lem:D-Coxeter-res}, it
    follows that $\Gamma$ is a disjoint union of total partitions and
    rooted trees with exactly two vertices, the latter corresponding
    to the trivial factors. Since Poincar\'e polynomials are
    multiplicative over factors~\cite[Lemma~2.50]{OrlikTerao/92},
    $\pi_{(\mathsf{X}_n)_y^x}(Y)$ is the product of the Poincar\'e
    polynomials of the disjoint total partitions of $\Gamma$. As a
    consequence of Theorem~\ref{thm:total-partition}, the Poincar\'e
    polynomial of $\tau'\in\TPnew{X}{n}$ is~$\pitwo{X}{\tau'}(Y)$,
    which proves the lemma.
\end{proof}

Recall the counting functions $\kind$ and $\kb$ defined in
\eqref{def:count.c} and \eqref{def:count.u}, respectively.

\begin{lem}\label{lem:fiber-sizes} 
    Let $\mathsf{X}\in \{\mathsf{A},\mathsf{B},\mathsf{D}\}$ and
    $1\leq k \leq n$; if $\mathsf{X}=\mathsf{D}$, then we also assume
    that~$n\geq 2$. If $F\in\Delta_{k-1}(\proplat(\mathsf{X}_n))$ and
    $\tau\in \Ph{n}{k}{\mathsf{X}}^{-1}(F)$, then
    \begin{align*} 
        \pi_F(1) &= \left|\Ph{n}{k}{\mathsf{X}}^{-1}(F)\right| \left|\Ome{n}{k}{\mathsf{X}}^{-1}(\tau)\right| C_{\mathsf{X}}(\tau),
    \end{align*}
    where $C_{\mathsf{A}}(\tau)=1$ and 
    \begin{align*} 
        C_{\mathsf{X}}(\tau) &= \begin{cases}
   \phantom{\frac{2\kind(\tau, v_0^+(\tau)) - \kb(\tau, v_0^+(\tau)) - 1}{2(\kind(\tau, v_0^+(\tau))-1)}}         \prod_{u\in V(\tau,0)} \kind(\tau, u)^{-1} & \textup{ if }\mathsf{X} = \mathsf{B}, \\
            \frac{2\kind(\tau, v_0^+(\tau)) - \kb(\tau, v_0^+(\tau)) - 1}{2(\kind(\tau, v_0^+(\tau))-1)}\prod_{u\in V(\tau,0)} \kind(\tau, u)^{-1} & \textup{ if }\mathsf{X} = \mathsf{D}. 
        \end{cases}
    \end{align*} 
\end{lem}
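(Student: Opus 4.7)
The plan is to compute both sides of the claimed identity in closed form by combining Lemma~\ref{lem:general-Poincare} with the explicit formulas~\eqref{eqn:pi.X.tau}, and then matching them via Lemma~\ref{lem:Phi-cardinality}. By Lemma~\ref{lem:general-Poincare}, $\pi_F(1)=\pitwo{X}{\tau}(1)$, and the latter becomes an explicit product of factorials and powers of~$2$ indexed by parents of~$\tau$ once one observes the elementary identities $\pb[1]{m}! = (m+1)!$ and $\pb[1]{m}!! = 2^m\, m!$, derived from $\pb[1]{j}=1+j$.

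Next I would establish that $|\Ome{n}{k}{X}^{-1}(\tau)| = \prod_{u\in P(\tau)}\kind(\tau,u)!$ for every $\tau\in\LRT{n}{k}{X}$. Indeed, recovering a labeled plane tree from its underlying labeled rooted tree amounts to choosing, at each parent vertex~$u$, an ordering of its children; since the descendant leaf-label sets of distinct children of~$u$ are pairwise disjoint, each of the $\kind(\tau,u)!$ orderings produces a distinct labeled plane tree (a parent with a single child contributes the trivial factor $1!=1$). Combined with the formula $|\Ph{n}{k}{X}^{-1}(F)| = \prod_{u\in V(\tau,0)}2^{\kind(\tau,u)-1}$ from Lemma~\ref{lem:Phi-cardinality}, the right-hand side of the claimed identity becomes a fully explicit product over vertices of~$\tau$.

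The remainder is type-by-type verification. In type~$\mathsf{A}$, the fiber $\Ph{n}{k}{A}^{-1}(F)$ is a singleton, $V(\tau,0)=\varnothing$ and $C_{\mathsf{A}}(\tau)=1$, so the identity collapses to $\pitwo{A}{\tau}(1)=\prod_{u\in P(\tau)}\kind(\tau,u)!$, which is immediate from $\pb[1]{m}!=(m+1)!$. In type~$\mathsf{B}$, noting that $V(\tau,0)\subseteq P(\tau)$ (each ancestor of $v_0$ has at least one child), I would separate the product for $|\Ome{n}{k}{B}^{-1}(\tau)|$ along $V(\tau,0)$ and its complement, and check that the $V(\tau,0)$-part contributes
\[
\prod_{u\in V(\tau,0)}2^{\kind(\tau,u)-1}\cdot \kind(\tau,u)!\cdot \kind(\tau,u)^{-1} = \prod_{u\in V(\tau,0)}2^{\kind(\tau,u)-1}(\kind(\tau,u)-1)!,
\]
which matches the zero-block factor $\prod_{u\in V(\tau,0)}\pb[1]{\kind(\tau,u)-1}!!$ appearing in $\pitwo{B}{\tau}(1)$.

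The main obstacle---really the only genuine computation---is type~$\mathsf{D}$, where the $\mathsf{D}$-specific correction factor in \eqref{eqn:pi.X.tau} needs to be matched with the corresponding factor in $C_{\mathsf{D}}(\tau)$. Setting $v:=v_0^+(\tau)$, the ratio $\pitwo{D}{\tau}(1)/\pitwo{B}{\tau}(1)$ reduces via $\pb[1]{m}!!=2^m m!$ to
\[
\frac{(2\kind(\tau,v)-\kb(\tau,v)-1)\cdot 2^{\kind(\tau,v)-2}(\kind(\tau,v)-2)!}{2^{\kind(\tau,v)-1}(\kind(\tau,v)-1)!} = \frac{2\kind(\tau,v)-\kb(\tau,v)-1}{2(\kind(\tau,v)-1)},
\]
which is precisely the extra factor in $C_{\mathsf{D}}(\tau)$ beyond the type-$\mathsf{B}$ contribution. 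Since $\Ph{n}{k}{D}$ and $\Ome{n}{k}{D}$ are restrictions of their type-$\mathsf{B}$ analogues, their fiber cardinalities are given by the same formulas; multiplying the already-verified type-$\mathsf{B}$ identity by the above ratio then yields the type-$\mathsf{D}$ formula and completes the proof.
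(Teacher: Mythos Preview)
Your proposal is correct and follows essentially the same approach as the paper's proof: both invoke Lemma~\ref{lem:general-Poincare} to reduce to computing $\pitwo{X}{\tau}(1)$, evaluate the formulas~\eqref{eqn:pi.X.tau} at $Y=1$ via $\pb[1]{m}!=(m+1)!$ and $\pb[1]{m}!!=2^m m!$, and match these against the fiber sizes from Lemma~\ref{lem:Phi-cardinality} together with the count $|\Ome{n}{k}{X}^{-1}(\tau)|=\prod_{u\in P(\tau)}\kind(\tau,u)!$. The only cosmetic difference is that you organize the type-$\mathsf{D}$ case as a ratio against the type-$\mathsf{B}$ identity, whereas the paper records the three evaluations $\pitwo{X}{\tau}(1)$ side by side and leaves the final matching implicit.
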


\begin{proof}
  By Lemma~\ref{lem:general-Poincare} it suffices to compute
  $\pitwo{X}{\tau}(1)$. We obtain these numbers by setting $Y=1$
  in~\eqref{eqn:pi.X.tau}, noting that
  $$\pb[1]{n}!=(n+1)!,\quad\textup{ whereas } \quad\pb[1]{n}!! = 2^n
  n!=:(2n)!!,$$ whence
    \begin{align*} 
        \pitwo{A}{\tau}(1) &= \prod_{u\in P(\tau)} \kind(\tau, u)!, \\
        \pitwo{B}{\tau}(1) &= \left(\prod_{u\in V(\tau,0)} (2\kind(\tau, u)-2)!!\right)\left( \prod_{u\in P(\tau)\setminus V(\tau,0)} \kind(\tau, u)!\right), \\
        \pitwo{D}{\tau}(1) &= \dfrac{\left(2\kind(\tau,v_0^+(\tau))-\kb(\tau,v_0^+(\tau))-1\right)}{2\left(\kind(\tau,v_0^+(\tau))-1\right)} \pitwo{B}{\tau}(1).
    \end{align*}
    For all types $\mathsf{X}$, we have
    \begin{align*} 
        \left|\Ome{n}{k}{\mathsf{X}}^{-1}(\tau)\right| &= \prod_{u\in V(\tau)} \kind(\tau, u)!,
    \end{align*} 
    and by Lemma~\ref{lem:Phi-cardinality},
    \begin{align*}
        \left|\Ph{n}{k}{\mathsf{X}}^{-1}(F)\right| &= \prod_{u\in V(\tau,0)} 2^{\kind(\tau,u)-1}. \qedhere
    \end{align*} 
\end{proof}

Before embarking on a proof of Theorem~\ref{thm:f-vector}, we need two more
lemmas to round off the computations. Both are variations of the same counting
arguments concerning the sums of probabilities modeled by decision trees.

It will be useful to discard most of the labels and work only with the set of
``$0$-labeled plane trees,'' viz.\ trees with one leaf labeled by $0$ and all
other leaves unlabeled. All of our counting functions are unaffected
by this simplification.

\begin{defn}\label{def:LPTzero}
  Let $\mathsf{X}\in\{\mathsf{B},\mathsf{D}\}$ and $1\leq k \leq n$.
  \begin{enumerate}
    \item Let $\LPTzero{n}{k}{X}$ be the set obtained from
      $\LPT{n}{k}{X}$ by removing all labels except for the label $0$. 
    \item For $\tau\in \LPTzero{n}{k}{X}$, let $\mathrm{Prob}(\tau) := \prod_{u\in V(\tau, 0)} \kind(\tau, u)^{-1}$.
\end{enumerate}
\end{defn}

\begin{lem}\label{lem:scotch-mist}
  Let $1 \leq k\leq n$. Then
    \begin{align*} 
        \sum_{\tau\in \LPT{n}{k}{B}} {\prod_{u\in V(\tau,0)}\kind(\tau, u)^{-1}} &= \dfrac{\left|\LPT{n}{k}{B}\right|}{n+1}.
    \end{align*}
\end{lem}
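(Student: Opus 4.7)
The plan is to interpret $\mathrm{Prob}(\tau)=\prod_{u\in V(\tau,0)}\kind(\tau,u)^{-1}$ probabilistically: for a plane tree $\tau$ and a leaf $v$, the quantity $\prod_{u\in V(\tau,v)}\kind(\tau,u)^{-1}$ is precisely the probability of reaching $v$ from the root by the random descent that, at every parent, selects one of its children uniformly at random. In particular, for any fixed plane tree these numbers sum to $1$ over all its leaves.

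To exploit this, I would first unpack the definition of the set $\LPT{n}{k}{B}$: a $\mathsf{B}_n$-labeled plane tree in $k$ generations is specified by the data of an underlying unlabeled plane tree in $\PT{n+1}{k}$, a bijection from the $n+1$ leaves to the label set $\{0,1,\dots,n\}$, and an independent choice of bar/no-bar on each of the $n$ positive-label leaves. Consequently
$$\left|\LPT{n}{k}{B}\right|=\left|\PT{n+1}{k}\right|\cdot (n+1)!\cdot 2^n.$$

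Next, I would rewrite the sum by grouping $\tau\in\LPT{n}{k}{B}$ according to its underlying unlabeled plane tree $\tau'\in\PT{n+1}{k}$ and the choice of which leaf $v$ of $\tau'$ carries the label $0$. For each fixed pair $(\tau',v)$, the value $\mathrm{Prob}(\tau)$ depends only on $\tau'$ and $v$ (neither on the bijective labeling of the other leaves nor on the barring), and the number of $\tau\in\LPT{n}{k}{B}$ realizing $(\tau',v)$ is $n!\cdot 2^n$. Therefore
$$\sum_{\tau\in\LPT{n}{k}{B}}\mathrm{Prob}(\tau)=n!\cdot 2^n\sum_{\tau'\in\PT{n+1}{k}}\sum_{v\text{ leaf of }\tau'}\prod_{u\in V(\tau',v)}\kind(\tau',u)^{-1}.$$

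The inner sum over leaves of $\tau'$ is the total mass of the random-descent probability distribution on the leaves of $\tau'$, hence equals $1$. This yields
$$\sum_{\tau\in\LPT{n}{k}{B}}\mathrm{Prob}(\tau)=n!\cdot 2^n\cdot\left|\PT{n+1}{k}\right|=\frac{\left|\LPT{n}{k}{B}\right|}{n+1},$$
which is the claim. There is no real obstacle here; the only subtlety is the book-keeping to verify that, under the grouping by $(\tau',v)$, the fibre size is exactly $n!\cdot 2^n$ independently of $v$—which follows because once the position of the $0$-label is fixed, the remaining positive labels range over all bijections with independent barring.
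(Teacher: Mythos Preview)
Your proof is correct and follows essentially the same approach as the paper: both use the decision-tree interpretation that, for a fixed plane tree, the quantities $\prod_{u\in V(\tau',v)}\kind(\tau',u)^{-1}$ sum to $1$ over the leaves $v$. The paper phrases the reduction via the intermediate set $\LPTzero{n}{k}{B}$ of ``$0$-labeled'' plane trees, whereas you carry out the equivalent grouping by $(\tau',v)$ directly and track the fibre size $n!\cdot 2^n$; the arguments are the same in substance.
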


\begin{proof}
  It suffices to prove
    \begin{align} \label{eqn:simple-sm} 
        \sum_{\tau\in \LPTzero{n}{k}{B}}
        \mathrm{Prob}(\tau) &= \dfrac{\left|\LPTzero{n}{k}{B}\right|}{n+1} =
        \left|\PT{n+1}{k}\right|.
    \end{align}
    Let $\tau\in \PT{n+1}{k}$, and fix any ordering of its leaves. For $i\in
    [n+1]$, let $\tau_i \in \LPTzero{n}{k}{B}$ be the plane tree obtained from
    labeling the $i$th leaf of $\tau$ by~$0$.  Viewing $\tau$ as a decision
    tree, where every parent vertex assigns uniform probability to its children,
    we see that $\sum_{i=1}^{n+1} \mathrm{Prob}(\tau_i) = 1$. This
    implies~\eqref{eqn:simple-sm}, which proves the lemma.
\end{proof} 

\subsubsection{Grafting plane trees}
Lemma~\ref{lem:grafting-lem} below is a type-$\mathsf{D}$ analog of
Lemma~\ref{lem:scotch-mist}. In order to prove it, we define a relation
$\mathcal{G}_{n,k}$ (cf.\ Definition~\ref{def:graft}) on the set
$$\Trel = \LPTzero{n}{k}{B}\times \LPTzero{n}{k}{D}.$$ This subset may
be thought of as the set of pairs $(\sigma,\tau)\in\Trel$ with the property
that $\tau$ is obtained from $\sigma$ via an operation we call
\emph{grafting}, which we now explain.

Assume that $n\geq 2$, and let $\sigma\in \LPTzero{n}{k}{B}$. We call the
subtree induced by a vertex $u\in V(\sigma)$ and all of its
descendants the \emph{$u$-branch} of $\sigma$. A \emph{branch} of
$\sigma$ is a $u$-branch for some $u\in V(\sigma)$. We write
$v_0^+:=v_0^+(\sigma)$ for the first ancestor of $v_0$ with at least two
children. Suppose that $v_0^+$ is in generation~$g$ and write
$v_0^{++}$ for the first ancestor of $v_0^+$ with at least two
children; see Figure~\ref{fig:grafting} for an example.

Suppose that $\sigma\in \LPTzero{n}{k}{B}\setminus\LPTzero{n}{k}{D}$. In this
case it follows from the definitions that $v_0^+$ has exactly two children, both
of which are unbranched; see Definitions~\ref{def:X-label}~(\ref{def:D-label})
and~\ref{def:PT.LPT.LRT}. Let $u$ be the unique child of $v_0^+$ not contained
in $V(\sigma, 0)$, and let $B$ be the $u$-branch of~$\sigma$. Note that $B$ may
be ``on the right'' or ``on the left'' of $v_0^+$. We now remove the edge
connecting $v_0^+$ and $B$ and connect (or ``graft'') $B$ as a new branch to
exactly one of $\kind(\sigma,v_0^{++})-1$ judiciously chosen ``contemporaries''
of $v_0^+$ (viz.\ vertices of $\sigma$ of the same generation~$g$).  More
precisely, let $C$ be one of the $\kind(\sigma,v_0^{++})-1$ branches of $\sigma$
whose root is a child of $v_0^{++}$ and does not contain~$v_0^+$. We connect $B$
onto the rightmost (resp.\ leftmost) branch contained in $C$ at generation $g$
if $B$ is on the right (resp.\ left) of $v_0$. We observe that each of the
resulting $\kind(\sigma,v_0^{++})-1$ trees are elements of $\LPTzero{n}{k}{D}$
and call them the trees obtained from $\sigma$ via \emph{grafting}.
Figure~\ref{fig:grafting} illustrates an example with $B$ on the right of $v_0$.

If $\sigma\in\LPTzero{n}{k}{D}$, then, by definition, $\tau=\sigma$ is
the only tree obtained from $\sigma$ via grafting.

\begin{figure}[h]
    \centering 
    \begin{subfigure}{0.3\textwidth} 
        \centering 
        \begin{tikzpicture}
            \pgfmathsetmacro{\x}{0.75}
            \pgfmathsetmacro{\y}{0.5}
            \node(1) at (0,4*\y) [circle, fill=black, inner sep=1.5pt, label={above:{\small $v_0^{++}$}}] {};
            \node(2) at (-1.7*\x,3*\y) [circle, fill=black, inner sep=1.5pt] {};
            \node(3) at (-0.5*\x,3*\y) [circle, fill=black, inner sep=1.5pt] {};
            \node(4) at (0.5*\x,3*\y) [circle, fill=black, inner sep=1.5pt] {};
            \node(5) at (1.5*\x,3*\y) [circle, fill=black, inner sep=1.5pt] {};
            \node(6) at (-1.7*\x,2*\y) [circle, fill=black, inner sep=1.5pt, label={[label distance=-1mm]west:{\small $v_0^{+}$}}] {};
            \node(7) at (-0.5*\x,2*\y) [circle, fill=black, inner sep=1.5pt] {};
            \node(8) at (0.25*\x,2*\y) [circle, fill=black, inner sep=1.5pt] {};
            \node(9) at (0.75*\x,2*\y) [circle, fill=black, inner sep=1.5pt] {};
            \node(10) at (1.5*\x,2*\y) [circle, fill=black, inner sep=1.5pt] {};
            \node(11) at (-1.95*\x,\y) [circle, fill=black, inner sep=1.5pt] {};
            \node(12) at (-1.45*\x,\y) [circle, fill=black, inner sep=1.5pt, label={[label distance=-0.5mm]east:{\small $u$}}] {};
            \node(13) at (-0.5*\x,\y) [circle, fill=black, inner sep=1.5pt] {};
            \node(14) at (0.25*\x,\y) [circle, fill=black, inner sep=1.5pt] {};
            \node(15) at (0.75*\x,\y) [circle, fill=black, inner sep=1.5pt] {};
            \node(16) at (1.5*\x,\y) [circle, fill=black, inner sep=1.5pt] {};
            \node(17) at (-1.95*\x,0) [circle, fill=black, inner sep=1.5pt, label={below:$0$}] {};
            \node(18) at (-1.45*\x,0) [circle, fill=black, inner sep=1.5pt] {};
            \node(19) at (-0.5*\x,0) [circle, fill=black, inner sep=1.5pt] {};
            \node(20) at (0.25*\x,0) [circle, fill=black, inner sep=1.5pt] {};
            \node(21) at (0.75*\x,0) [circle, fill=black, inner sep=1.5pt] {};
            \node(22) at (1.2*\x,0) [circle, fill=black, inner sep=1.5pt] {};
            \node(23) at (1.5*\x,0) [circle, fill=black, inner sep=1.5pt] {};
            \node(24) at (1.8*\x,0) [circle, fill=black, inner sep=1.5pt] {};
            \draw (1) -- (2);
            \draw (1) -- (3);
            \draw (1) -- (4);
            \draw (1) -- (5);
            \draw (2) -- (6);
            \draw (3) -- (7);
            \draw (4) -- (8);
            \draw (4) -- (9);
            \draw (5) -- (10);
            \draw (6) -- (11);
            \draw[dashed] (6) -- (12);
            \draw (7) -- (13);
            \draw (8) -- (14);
            \draw (9) -- (15);
            \draw (10) -- (16);
            \draw (11) -- (17);
            \draw[dashed] (12) -- (18);
            \draw (13) -- (19);
            \draw (14) -- (20);
            \draw (15) -- (21);
            \draw (16) -- (22);
            \draw (16) -- (23);
            \draw (16) -- (24);
        \end{tikzpicture}
    \end{subfigure}

    \begin{subfigure}{0.3\textwidth} 
        \centering 
        \begin{tikzpicture}
            \pgfmathsetmacro{\x}{0.75}
            \pgfmathsetmacro{\y}{0.5}
            \node(1) at (0,4*\y) [circle, fill=black, inner sep=1.5pt] {};
            \node(2) at (-1.5*\x,3*\y) [circle, fill=black, inner sep=1.5pt] {};
            \node(3) at (-0.5*\x,3*\y) [circle, fill=black, inner sep=1.5pt] {};
            \node(4) at (0.5*\x,3*\y) [circle, fill=black, inner sep=1.5pt] {};
            \node(5) at (1.5*\x,3*\y) [circle, fill=black, inner sep=1.5pt] {};
            \node(6) at (-1.5*\x,2*\y) [circle, fill=black, inner sep=1.5pt] {};
            \node(7) at (-0.5*\x,2*\y) [circle, fill=black, inner sep=1.5pt] {};
            \node(8) at (0.25*\x,2*\y) [circle, fill=black, inner sep=1.5pt] {};
            \node(9) at (0.75*\x,2*\y) [circle, fill=black, inner sep=1.5pt] {};
            \node(10) at (1.5*\x,2*\y) [circle, fill=black, inner sep=1.5pt] {};
            \node(11) at (-1.5*\x,\y) [circle, fill=black, inner sep=1.5pt] {};
            \node(12) at (-0.25*\x,\y) [circle, fill=black, inner sep=1.5pt] {};
            \node(13) at (-0.75*\x,\y) [circle, fill=black, inner sep=1.5pt] {};
            \node(14) at (0.25*\x,\y) [circle, fill=black, inner sep=1.5pt] {};
            \node(15) at (0.75*\x,\y) [circle, fill=black, inner sep=1.5pt] {};
            \node(16) at (1.5*\x,\y) [circle, fill=black, inner sep=1.5pt] {};
            \node(17) at (-1.5*\x,0) [circle, fill=black, inner sep=1.5pt, label={below:$0$}] {};
            \node(18) at (-0.25*\x,0) [circle, fill=black, inner sep=1.5pt] {};
            \node(19) at (-0.75*\x,0) [circle, fill=black, inner sep=1.5pt] {};
            \node(20) at (0.25*\x,0) [circle, fill=black, inner sep=1.5pt] {};
            \node(21) at (0.75*\x,0) [circle, fill=black, inner sep=1.5pt] {};
            \node(22) at (1.2*\x,0) [circle, fill=black, inner sep=1.5pt] {};
            \node(23) at (1.5*\x,0) [circle, fill=black, inner sep=1.5pt] {};
            \node(24) at (1.8*\x,0) [circle, fill=black, inner sep=1.5pt] {};
            \draw (1) -- (2);
            \draw (1) -- (3);
            \draw (1) -- (4);
            \draw (1) -- (5);
            \draw (2) -- (6);
            \draw (3) -- (7);
            \draw (4) -- (8);
            \draw (4) -- (9);
            \draw (5) -- (10);
            \draw (6) -- (11);
            \draw[ultra thick] (7) -- (12);
            \draw (7) -- (13);
            \draw (8) -- (14);
            \draw (9) -- (15);
            \draw (10) -- (16);
            \draw (11) -- (17);
            \draw[ultra thick] (12) -- (18);
            \draw (13) -- (19);
            \draw (14) -- (20);
            \draw (15) -- (21);
            \draw (16) -- (22);
            \draw (16) -- (23);
            \draw (16) -- (24);
        \end{tikzpicture}
    \end{subfigure}~%
    \begin{subfigure}{0.3\textwidth} 
        \centering 
        \begin{tikzpicture}
            \pgfmathsetmacro{\x}{0.75}
            \pgfmathsetmacro{\y}{0.5}
            \node(1) at (0,4*\y) [circle, fill=black, inner sep=1.5pt] {};
            \node(2) at (-1.5*\x,3*\y) [circle, fill=black, inner sep=1.5pt] {};
            \node(3) at (-0.5*\x,3*\y) [circle, fill=black, inner sep=1.5pt] {};
            \node(4) at (0.5*\x,3*\y) [circle, fill=black, inner sep=1.5pt] {};
            \node(5) at (1.5*\x,3*\y) [circle, fill=black, inner sep=1.5pt] {};
            \node(6) at (-1.5*\x,2*\y) [circle, fill=black, inner sep=1.5pt] {};
            \node(7) at (-0.5*\x,2*\y) [circle, fill=black, inner sep=1.5pt] {};
            \node(8) at (0.25*\x,2*\y) [circle, fill=black, inner sep=1.5pt] {};
            \node(9) at (0.75*\x,2*\y) [circle, fill=black, inner sep=1.5pt] {};
            \node(10) at (1.5*\x,2*\y) [circle, fill=black, inner sep=1.5pt] {};
            \node(11) at (-1.5*\x,\y) [circle, fill=black, inner sep=1.5pt] {};
            \node(12) at (0.9*\x,\y) [circle, fill=black, inner sep=1.5pt] {};
            \node(13) at (-0.5*\x,\y) [circle, fill=black, inner sep=1.5pt] {};
            \node(14) at (0.25*\x,\y) [circle, fill=black, inner sep=1.5pt] {};
            \node(15) at (0.6*\x,\y) [circle, fill=black, inner sep=1.5pt] {};
            \node(16) at (1.5*\x,\y) [circle, fill=black, inner sep=1.5pt] {};
            \node(17) at (-1.5*\x,0) [circle, fill=black, inner sep=1.5pt, label={below:$0$}] {};
            \node(18) at (0.9*\x,0) [circle, fill=black, inner sep=1.5pt] {};
            \node(19) at (-0.5*\x,0) [circle, fill=black, inner sep=1.5pt] {};
            \node(20) at (0.25*\x,0) [circle, fill=black, inner sep=1.5pt] {};
            \node(21) at (0.6*\x,0) [circle, fill=black, inner sep=1.5pt] {};
            \node(22) at (1.2*\x,0) [circle, fill=black, inner sep=1.5pt] {};
            \node(23) at (1.5*\x,0) [circle, fill=black, inner sep=1.5pt] {};
            \node(24) at (1.8*\x,0) [circle, fill=black, inner sep=1.5pt] {};
            \draw (1) -- (2);
            \draw (1) -- (3);
            \draw (1) -- (4);
            \draw (1) -- (5);
            \draw (2) -- (6);
            \draw (3) -- (7);
            \draw (4) -- (8);
            \draw (4) -- (9);
            \draw (5) -- (10);
            \draw (6) -- (11);
            \draw[ultra thick] (9) -- (12);
            \draw (7) -- (13);
            \draw (8) -- (14);
            \draw (9) -- (15);
            \draw (10) -- (16);
            \draw (11) -- (17);
            \draw[ultra thick] (12) -- (18);
            \draw (13) -- (19);
            \draw (14) -- (20);
            \draw (15) -- (21);
            \draw (16) -- (22);
            \draw (16) -- (23);
            \draw (16) -- (24);
        \end{tikzpicture}
    \end{subfigure}~%
    \begin{subfigure}{0.3\textwidth} 
        \centering 
        \begin{tikzpicture}
            \pgfmathsetmacro{\x}{0.75}
            \pgfmathsetmacro{\y}{0.5}
            \node(1) at (0,4*\y) [circle, fill=black, inner sep=1.5pt] {};
            \node(2) at (-1.5*\x,3*\y) [circle, fill=black, inner sep=1.5pt] {};
            \node(3) at (-0.5*\x,3*\y) [circle, fill=black, inner sep=1.5pt] {};
            \node(4) at (0.5*\x,3*\y) [circle, fill=black, inner sep=1.5pt] {};
            \node(5) at (1.5*\x,3*\y) [circle, fill=black, inner sep=1.5pt] {};
            \node(6) at (-1.5*\x,2*\y) [circle, fill=black, inner sep=1.5pt] {};
            \node(7) at (-0.5*\x,2*\y) [circle, fill=black, inner sep=1.5pt] {};
            \node(8) at (0.25*\x,2*\y) [circle, fill=black, inner sep=1.5pt] {};
            \node(9) at (0.75*\x,2*\y) [circle, fill=black, inner sep=1.5pt] {};
            \node(10) at (1.5*\x,2*\y) [circle, fill=black, inner sep=1.5pt] {};
            \node(11) at (-1.5*\x,\y) [circle, fill=black, inner sep=1.5pt] {};
            \node(12) at (1.75*\x,\y) [circle, fill=black, inner sep=1.5pt] {};
            \node(13) at (-0.5*\x,\y) [circle, fill=black, inner sep=1.5pt] {};
            \node(14) at (0.25*\x,\y) [circle, fill=black, inner sep=1.5pt] {};
            \node(15) at (0.75*\x,\y) [circle, fill=black, inner sep=1.5pt] {};
            \node(16) at (1.25*\x,\y) [circle, fill=black, inner sep=1.5pt] {};
            \node(17) at (-1.5*\x,0) [circle, fill=black, inner sep=1.5pt, label={below:$0$}] {};
            \node(18) at (1.75*\x,0) [circle, fill=black, inner sep=1.5pt] {};
            \node(19) at (-0.5*\x,0) [circle, fill=black, inner sep=1.5pt] {};
            \node(20) at (0.25*\x,0) [circle, fill=black, inner sep=1.5pt] {};
            \node(21) at (0.75*\x,0) [circle, fill=black, inner sep=1.5pt] {};
            \node(22) at (1.0*\x,0) [circle, fill=black, inner sep=1.5pt] {};
            \node(23) at (1.25*\x,0) [circle, fill=black, inner sep=1.5pt] {};
            \node(24) at (1.5*\x,0) [circle, fill=black, inner sep=1.5pt] {};
            \draw (1) -- (2);
            \draw (1) -- (3);
            \draw (1) -- (4);
            \draw (1) -- (5);
            \draw (2) -- (6);
            \draw (3) -- (7);
            \draw (4) -- (8);
            \draw (4) -- (9);
            \draw (5) -- (10);
            \draw (6) -- (11);
            \draw[ultra thick] (10) -- (12);
            \draw (7) -- (13);
            \draw (8) -- (14);
            \draw (9) -- (15);
            \draw (10) -- (16);
            \draw (11) -- (17);
            \draw[ultra thick] (12) -- (18);
            \draw (13) -- (19);
            \draw (14) -- (20);
            \draw (15) -- (21);
            \draw (16) -- (22);
            \draw (16) -- (23);
            \draw (16) -- (24);
        \end{tikzpicture}
    \end{subfigure}
    \caption{Via grafting, the top tree $\sigma \in \LPTzero{7}{4}{B}
      \setminus \LPTzero{7}{4}{B}$ gives rise to the
      $\kind(\sigma,v_0^{++})-1=3$ bottom trees $\tau$ with
      $(\sigma,\tau)\in\Graft{7}{4}$. One removes the edge connecting $v_0^+$
      and $u$ and grafts the $u$-branch onto each of the three other
      possible branches. We thickened the grafted branch to
      distinguish it from the others.}
    \label{fig:grafting}
\end{figure}

\begin{defn}\label{def:graft}
  For $1\leq k \leq n$ and $2 \leq n$, we set
  $$\Graft{n}{k} = \left\{(\sigma,\tau)\in \Trel \mid \tau \textup{ may be obtained from $\sigma$
    by grafting} \right\}$$ and write $\sigma \sim \tau$ if
  $(\sigma,\tau)\in\Graft{n}{k}$.
\end{defn}

\begin{lem}\label{lem:grafting-lem}
    Let  $1\leq k\leq n$ and $2\leq n$, then 
    \begin{equation*} 
        \sum_{\tau\in\LPT{n}{k}{D}} \dfrac{2\kind(\tau, v_0^+(\tau)) - \kb(\tau, v_0^+(\tau)) - 1}{2(\kind(\tau, v_0^+(\tau)) - 1)}\prod_{u\in V(\tau, 0)} \kind(\tau,u)^{-1}  
        = 2^{n-1}n!k! S(n,k).
    \end{equation*}
\end{lem}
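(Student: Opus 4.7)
I would first reduce the sum over $\LPT{n}{k}{D}$ to one over $\LPTzero{n}{k}{D}$ and then exploit the grafting relation $\Graft{n}{k}$ together with Lemma~\ref{lem:scotch-mist}. Since the summand depends only on the underlying tree and the position of the leaf labeled $0$, each element of $\LPTzero{n}{k}{D}$ accounts for $2^n n!$ elements of $\LPT{n}{k}{D}$. Writing $c(\tau) := \kind(\tau, v_0^+(\tau))$ and $b(\tau) := \kb(\tau, v_0^+(\tau))$, and applying Lemma~\ref{lem:scotch-mist} to $\LPTzero{n}{k}{B}$, it suffices to prove
$$\sum_{\tau \in \LPTzero{n}{k}{D}} \frac{2c(\tau) - b(\tau) - 1}{2(c(\tau) - 1)}\,\mathrm{Prob}(\tau) \;=\; \tfrac{1}{2} \sum_{\sigma \in \LPTzero{n}{k}{B}} \mathrm{Prob}(\sigma).$$

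Next I would split the right-hand sum according to the partition $\LPTzero{n}{k}{B} = \LPTzero{n}{k}{D} \sqcup (\LPTzero{n}{k}{B} \setminus \LPTzero{n}{k}{D})$ and re-index the ``outside'' part via $\Graft{n}{k}$. By Definition~\ref{def:graft}, each $\sigma \in \LPTzero{n}{k}{B} \setminus \LPTzero{n}{k}{D}$ is $\sim$-related to exactly $\kind(\sigma, v_0^{++}(\sigma)) - 1$ trees $\tau$; using that grafting preserves the number of children at $v_0^{++}(\sigma)$, which coincides with $v_0^+(\tau)$ for any such $\tau$, this count equals $c(\tau) - 1$. Setting $U(\tau) := \sum_{\sigma \in \LPTzero{n}{k}{B} \setminus \LPTzero{n}{k}{D},\, \sigma \sim \tau} \mathrm{Prob}(\sigma)$, the right-hand side becomes
$$\sum_{\sigma \in \LPTzero{n}{k}{B}} \mathrm{Prob}(\sigma) \;=\; \sum_{\tau \in \LPTzero{n}{k}{D}} \left(\mathrm{Prob}(\tau) + \frac{U(\tau)}{c(\tau) - 1}\right),$$
so the whole problem reduces to the pointwise claim $U(\tau) = (c(\tau) - b(\tau))\,\mathrm{Prob}(\tau)$ for every $\tau\in\LPTzero{n}{k}{D}$, since then the bracketed factor equals $\tfrac{2c - b - 1}{c - 1}$, i.e.~twice the summand of the reduced target.

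To establish the pointwise claim I would exhibit an explicit bijection between $\{\sigma \in \LPTzero{n}{k}{B} \setminus \LPTzero{n}{k}{D} : \sigma \sim \tau\}$ and the $2(c-b)$ pairs $(X,s)$, where $X$ is one of the $c(\tau) - b(\tau)$ branched children of $v_0^+(\tau)$ and $s \in \{L, R\}$. Given $(X, s)$, one descends from the root of $X$ by repeatedly following the $s$-most child until reaching the first vertex $R$ whose $s$-most child is unbranched; detaching this child from $R$ and reattaching it as a new sibling of $v_0$ at the (unique) ancestor of $v_0$ at the same generation as $R$ produces the desired $\sigma$. This ancestor exists and has exactly one child in $\tau$, since every vertex strictly between $v_0$ and $v_0^+(\tau)$ on the $v_0$-path does. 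Because $\sigma$ and $\tau$ differ within $V(\sigma, 0)$ only at the vertex $v_0^+(\sigma)$, whose $\kind$ increases from $1$ in $\tau$ to $2$ in $\sigma$, we get $\mathrm{Prob}(\sigma) = \mathrm{Prob}(\tau)/2$, and hence $U(\tau) = 2(c-b) \cdot \mathrm{Prob}(\tau)/2 = (c-b)\,\mathrm{Prob}(\tau)$. The main obstacle will be verifying that this descending procedure terminates at a unique valid $R$ for each branched $X$, that the resulting $\sigma$ genuinely lies in $\LPTzero{n}{k}{B}\setminus\LPTzero{n}{k}{D}$, and that the left/right convention in the grafting definition matches the side label $s$ so that distinct $(X,s)$ yield distinct $\sigma$'s without omission.
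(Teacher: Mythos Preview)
Your approach is correct and essentially the same as the paper's. Both proofs reduce to the identity $\sum_{\tau\in\LPTzero{n}{k}{D}}\bigl(1+\tfrac{c-b}{c-1}\bigr)\mathrm{Prob}(\tau)=k!\,S(n,k)$, split the $\LPTzero{n}{k}{B}$-sum along the partition $\LPTzero{n}{k}{D}\sqcup(\LPTzero{n}{k}{B}\setminus\LPTzero{n}{k}{D})$, re-index the complement via the grafting relation, and use the two key facts $|\{\sigma:\sigma\sim\tau,\,\sigma\neq\tau\}|=2(c-b)$ and $\mathrm{Prob}(\sigma)=\tfrac12\mathrm{Prob}(\tau)$; the paper packages this as a double-counting of a weight function $f$ on $\Trel$, while you do it directly. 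Your description of the inverse grafting (descend $s$-most from a branched child $X$ until the first vertex whose $s$-most child is unbranched) is in fact more explicit than the paper's one-line ``reverse the grafting operation,'' and it is correct: along the $s$-most path in $\tau$ every vertex above generation $g$ has its $s$-most child rendered branched by the attached $B$, so the procedure halts precisely at $R$ in generation $g$ with $s$-most child $B$; moreover $R$ necessarily has at least two children (else $R$ would itself be unbranched, contradicting minimality or the hypothesis that $X$ is branched), so detaching produces a valid tree in $\LPTzero{n}{k}{B}\setminus\LPTzero{n}{k}{D}$.
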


\begin{proof}
  It suffices to show that
    \begin{align}\label{ets} 
        \sum_{\tau\in\LPTzero{n}{k}{D}} \left(1 + \dfrac{\kind(\tau, v_0^+(\tau)) - \kb(\tau, v_0^+(\tau))}{\kind(\tau, v_0^+(\tau)) - 1}\right)\mathrm{Prob}(\tau) &= k! S(n,k).
    \end{align}

    Define $f: \Trel\rightarrow \mathbb{Q}$ by setting
    \begin{align*} 
      f(\sigma, \tau) &= \begin{cases}
        \mathrm{Prob}(\sigma) & \textup{ if }\sigma = \tau, \\
        \frac{\mathrm{Prob}(\sigma)}{\kind(\sigma, v_0^{++}(\sigma)) - 1} & \textup{ if }\sigma \sim \tau, \;\sigma \neq \tau, \\
        0 & \textup{ if } \sigma \not\sim \tau.
        \end{cases}
    \end{align*}
    We already observed that, for each
    $\sigma\in\LPTzero{n}{k}{B}\setminus\LPTzero{n}{k}{D}$, there are
    $\kind(\sigma, v_0^{++}(\sigma)) - 1$ trees
    $\tau\in\LPTzero{n}{k}{D}$ with
    $(\sigma,\tau)\in\Graft{n}{k}$. Thus Lemma~\ref{lem:plane-trees}
    and~\eqref{eqn:simple-sm} imply that
    \begin{equation} \label{eqn:LHS}
      \begin{split}
        \sum_{(\sigma,\tau)\in \Trel} f(\sigma,\tau) &= \sum_{\substack{(\sigma,\tau)\in \Trel \\ \sigma\in\LPTzero{n}{k}{D}}} f(\sigma,\tau) + \sum_{\substack{(\sigma,\tau)\in \Trel \\ \sigma\not\in\LPTzero{n}{k}{D}}} f(\sigma,\tau) \\
        &= \sum_{\sigma\in \LPTzero{n}{k}{B}} \mathrm{Prob}(\sigma) = k!S(n,k). 
      \end{split}
    \end{equation}

    For $\tau\in \LPTzero{n}{k}{D}$, set
    $$ 
    \mathsf{g}(\tau) := \kind(\tau, v_0^+(\tau)) - \kb(\tau,
    v_0^+(\tau)) \in\N_0.
    $$ 
    There are $2\mathsf{g}(\tau)$ different trees $\sigma \in
    \LPTzero{n}{k}{B}\setminus \LPTzero{n}{k}{D}$ such that $(\sigma,\tau)\in
    \Graft{n}{k}$. To see this, we reverse the grafting operation: there are
    $\mathsf{g}(\tau)$ branches that can be cut and grafted onto the branch
    containing $v_0$, on either the left or right side. Therefore, if
    $\mathsf{g}(\tau)=0$, then $\sigma\sim\tau$ implies $\sigma=\tau$. Hence,
    \begin{align} \label{eqn:g-zero}
        \sum_{\substack{(\sigma,\tau)\in \Trel \\ \mathsf{g}(\tau)=0}} f(\sigma,\tau) &= \sum_{\substack{\tau\in \LPTzero{n}{k}{D} \\ \mathsf{g}(\tau)=0}} \mathrm{Prob}(\tau).
    \end{align} 
    If $\mathsf{g}(\tau)>0$, then $\sigma\sim \tau$ and $\sigma\neq
    \tau$ imply both $v_0^{++}(\sigma) = v_0^+(\tau)$ and
    $2\mathrm{Prob}(\sigma) = \mathrm{Prob}(\tau)$. Therefore,
    \begin{equation} \label{eqn:g-positive}
      \begin{split}
      \sum_{\substack{(\sigma,\tau)\in T \\ \mathsf{g}(\tau)>0}} f(\sigma,\tau) 
      &= \sum_{\substack{(\sigma,\tau)\in T \\ \mathsf{g}(\tau)>0 \\ \sigma=\tau}} f(\sigma,\tau) + \sum_{\substack{(\sigma,\tau)\in T \\ \mathsf{g}(\tau)>0 \\ \sigma\neq\tau}} f(\sigma,\tau) \\
      &= \sum_{\substack{\tau\in \LPTzero{n}{k}{D} \\ \mathsf{g}(\tau)>0}} \mathrm{Prob}(\tau) + \sum_{\substack{\tau\in \LPTzero{n}{k}{D}}} \dfrac{\mathsf{g}(\tau)\mathrm{Prob}(\tau)}{\kind(\tau, v_0^+(\tau)) - 1} .
      \end{split}
    \end{equation} 
    
    We deduce~\eqref{ets} (and thus the lemma) by
    combining~\eqref{eqn:LHS},~\eqref{eqn:g-zero},
    and~\eqref{eqn:g-positive}:
    \begin{align*} 
        k!S(n,k) = \sum_{(\sigma,\tau)\in T} f(\sigma,\tau) 
        &= \sum_{\tau\in\LPTzero{n}{k}{D}} \left(1 + \dfrac{\kind(\tau, v_0^+(\tau)) - \kb(\tau, v_0^+(\tau))}{\kind(\tau, v_0^+(\tau)) - 1}\right)\mathrm{Prob}(\tau). \qedhere
    \end{align*} 
\end{proof}

\subsubsection{Proof of Theorem~\ref{thm:f-vector}}\label{subsubsec:stirling.proof}
    By Lemma~\ref{lem:plane-trees}, 
    \begin{align*} 
        |\LPT{n}{k}{X}| &= \begin{cases} \phantom{2^n}(n+1)!
          k! S(n,k) &\textup{ if } \mathsf{X}= \mathsf{A},
          \\ 2^n (n+1)! k! S(n,k) &\textup{ if } \mathsf{X} = \mathsf{B}.
        \end{cases}
    \end{align*}
    For each $F\in\Delta_{k-1}(\proplat(\mathsf{X}_n))$, choose $\tau_F\in
    \Ph{n}{k}{X}^{-1}(F)$. By Lemma~\ref{lem:fiber-sizes},
    \begin{align*} 
        \sum_{F\in\Delta_{k-1}(\proplat(\mathsf{X}_n))} \pi_F(1) &= \sum_{F\in\Delta_{k-1}(\proplat(\mathsf{X}_n))} \left|\Ph{n}{k}{X}^{-1}(F)\right| \left|\Ome{n}{k}{X}^{-1}(\tau_F)\right| C_\mathsf{X}(\tau_F) \\
        &= \sum_{\tau\in\LPT{n}{k}{X}} C_\mathsf{X}(\Ome{n}{k}{X}(\tau)).
    \end{align*} 
    If $\mathsf{X}=\mathsf{A}$, then
    \begin{align*} 
        \sum_{\tau\in\LPT{n}{k}{A}} C_\mathsf{A}(\Ome{n}{k}{A}(\tau)) &= |\LPT{n}{k}{A}| = \pi_{\mathsf{A}_n}(1) k! S(n,k). 
    \end{align*}
    If $\mathsf{X}=\mathsf{B}$, then, by Lemma~\ref{lem:scotch-mist},
    \begin{align*} 
        \sum_{\tau\in\LPT{n}{k}{B}} C_\mathsf{B}(\Ome{n}{k}{B}(\tau)) &= \dfrac{|\LPT{n}{k}{B}|}{n+1} = \pi_{\mathsf{B}_n}(1) k! S(n,k). 
    \end{align*}
    Lastly, if $\mathsf{X}=\mathsf{D}$, then by
    Lemma~\ref{lem:grafting-lem},
    \begin{align*} 
        \sum_{\tau\in \LPT{n}{k}{D}} C_{\mathsf{D}}(\Ome{n}{k}{D}(\tau)) &= \pi_{\mathsf{D}_n}(1)k! S(n,k).
    \end{align*}
This completes the proof of Theorem~\ref{thm:f-vector}.\hfill$\square$

\subsection{Proof of Theorem~\ref{thm:coarse.Y=1}}\label{subsec:proof.coarse.Y=1}
Theorem~\ref{thm:f-vector} establishes Theorem~\ref{thm:coarse.Y=1} in
the case of classical Coxeter arrangements.
Formulae for the exceptional irreducible Coxeter arrangements not
equal to $\mathsf{E}_8$ are given in
Appendix~\ref{subsubsec:coarse.rank.leq.7}. The proof of
Theorem~\ref{thm:coarse.Y=1} for these arrangements follows by
inspection of these formulae.

The case of general Coxeter arrangements with no irreducible factors
isomorphic to~$\mathsf{E}_8$ follows now from
Proposition~\ref{prop:Hadamard}. Indeed using, for instance, the
\emph{Carlitz identity} (cf. \cite[Cor.~1.1]{Petersen/15})
\begin{align*} 
    \dfrac{E_n(T)}{(1-T)^{n+1}} &= \sum_{k\geq 0}(k+1)^nT^k
\end{align*}
for $n\geq 0$, one sees that for $n,m\geq 0$,
\begin{align*}
    \dfrac{E_n(T)}{(1-T)^{n+1}} \star_T \dfrac{E_m(T)}{(1-T)^{m+1}} &= \dfrac{E_{n+m}(T)}{(1-T)^{n+m+1}}.
\end{align*}
This completes the proof of Theorem~\ref{thm:coarse.Y=1}.\hfill$\square$

\begin{acknowledgements}
  We are grateful to our DFG-project partners Anne Fr\"uhbis-Kr\"uger and
  Bernd Schober for valuable conversations that helped to get us started. We
  also thank Nero Budur, Michael Cuntz, Tobias Rossmann, Mima Stanojkovski,
  Christian Stump and Wim Veys for helpful correspondence.
\end{acknowledgements}

\bibliography{HyperplaneArrangements}
\bibliographystyle{abbrv}

\begin{appendices}

\section{Further examples of coarse flag Hilbert--Poincar\'e series}
\label{sec:app.exa.coarse}

We collect explicit formulae for coarse flag Hilbert--Poincar\'e series of
some hyperplane arrangements, including the irreducible Coxeter arrangements
of rank at most seven in Section~\ref{subsec:irr.cox}. Most were computed
using our package \textsf{HypIgu}~\cite{hypigu} for
SageMath~\cite{sagemath}. Recall the notation~\eqref{def:numer.coarse} for the
numerator of the coarse flag Hilbert--Poincar\'e series of a hyperplane
arrangement~$\A$. Throughout, let
$\mathsf{X}\in\{\mathsf{A},\mathsf{B},\mathsf{D}\}$.

\subsection{Irreducible Coxeter arrangements of rank at most
  seven}\label{subsec:irr.cox}

\subsubsection{Type $\mathsf{A}$}\label{subsubsec:type-A}
\begin{align*} 
    \mathcal{N}_{\mathsf{A}_1}(Y, T) &= 1 + Y, \\
    \mathcal{N}_{\mathsf{A}_2}(Y, T) &= 1 + 3Y + 2Y^2 + (2 + 3Y + Y^2)T, \\
    \mathcal{N}_{\mathsf{A}_3}(Y, T) &= 1 + 6Y + 11Y^2 + 6Y^3 + (11 + 37Y + 37Y^2 + 11Y^3)T \\
    &\quad + (6 + 11Y + 6Y^2 + Y^3)T^2, \\
    \mathcal{N}_{\mathsf{A}_4}(Y, T) &= 1 + 10Y + 35Y^2 + 50Y^3 + 24Y^4 \\
    &\quad + (47 + 260Y + 505Y^2 + 400Y^3 + 108Y^4)T \\ 
    &\quad + (108 + 400Y + 505Y^2 + 260Y^3 + 47Y^4)T^2 \\
    &\quad + (24 + 50Y + 35Y^2 + 10Y^3 + Y^4)T^3, \\
    \mathcal{N}_{\mathsf{A}_5}(Y, T) &= 1 + 15Y + 85Y^2 + 225Y^3 + 274Y^4 + 120Y^5 \\
    &\quad + (197 + 1546Y + 4670Y^2 + 6700Y^3 + 4493Y^4 + 1114Y^5)T \\
    &\quad + (1268 + 7172Y + 15320Y^2 + 15320Y^3 + 7172Y^4 + 1268Y^5)T^2 \\
    &\quad + (1114 + 4493Y + 6700Y^2 + 4670Y^3 + 1546Y^4 + 197Y^5)T^3 \\
    &\quad + (120 + 274Y + 225Y^2 + 85Y^3 + 15Y^4 + Y^5)T^4, \\
    \mathcal{N}_{\mathsf{A}_6}(Y, T) &= 1 + 21Y + 175Y^2 + 735Y^3 + 1624Y^4 + 1764Y^5 + 720Y^6 \\
    &\quad + (870 + 8918Y + 37163Y^2 + 80045Y^3 + 93065Y^4 + 54677Y^5 \\
    &\quad + 12542Y^6)T + (13184 + 100786Y + 309687Y^2 + 486220Y^3 \\
    &\quad + 408786Y^4 + 174034Y^5 + 29383Y^6)T^2 + (29383 + 174034Y \\
    &\quad + 408786Y^2 + 486220Y^3 + 309687Y^4 + 100786Y^5 + 13184Y^6)T^3 \\
    &\quad + (12542 + 54677Y + 93065Y^2 + 80045Y^3 + 37163Y^4 + 8918Y^5 \\
    &\quad + 870Y^6)T^4 + (720 + 1764Y + 1624Y^2 + 735Y^3 + 175Y^4 \\
    &\quad + 21Y^5 + Y^6)T^5, \\
    \mathcal{N}_{\mathsf{A}_7}(Y, T) &= 1 + 28Y + 322Y^2 + 1960Y^3 + 6769Y^4 + 13132Y^5 + 13068Y^6 \\
    &\quad + 5040Y^7 + (4132 + 52669Y + 282471Y^2 + 823704Y^3 \\
    &\quad + 1403598Y^4 + 1387281Y^5 + 728999Y^6 + 155546Y^7)T \\
    &\quad + (134802 + 1301092Y + 5254088Y^2 + 11457173Y^3 \\
    &\quad + 14497658Y^4 + 10592498Y^5 + 4124012Y^6 + 659797Y^7)T^2 \\
    &\quad + (628282 + 4893332Y + 15809808Y^2 + 27375138Y^3 \\
    &\quad + 27375138Y^4 + 15809808Y^5 + 4893332Y^6 + 628282Y^7)T^3 \\
    &\quad + (659797 + 4124012Y + 10592498Y^2 + 14497658Y^3 \\
    &\quad + 11457173Y^4 + 5254088Y^5 + 1301092Y^6 + 134802Y^7)T^4 \\ 
    &\quad + (155546 + 728999Y + 1387281Y^2 + 1403598Y^3 + 823704Y^4 \\
    &\quad + 282471Y^5 + 52669Y^6 + 4132Y^7)T^5 + (5040 + 13068Y \\
    &\quad + 13132Y^2 + 6769Y^3 + 1960Y^4 + 322Y^5 + 28Y^6 + Y^7)T^6 .
\end{align*} 

\subsubsection{Type $\mathsf{B}$}
\begin{align*} 
    \mathcal{N}_{\mathsf{B}_2}(Y, T) &= 1 + 4Y + 3Y^2 + (3 + 4Y + Y^2)T, \\
    \mathcal{N}_{\mathsf{B}_3}(Y, T) &= 1 + 9Y + 23Y^2 + 15Y^3 + (20 + 76Y + 76Y^2 + 20Y^3)T  \\
    &\quad + (15 + 23Y + 9Y^2 + Y^3)T^2, \\
    \mathcal{N}_{\mathsf{B}_4}(Y, T) &= 1 + 16Y + 86Y^2 + 176Y^3 + 105Y^4 \\
    &\quad + (111 + 736Y + 1642Y^2 + 1376Y^3 + 359Y^4)T \\
    &\quad + (359 + 1376Y + 1642Y^2 + 736Y^3 + 111Y^4)T^2 \\
    &\quad + (105 + 176Y + 86Y^2 + 16Y^3 + Y^4)T^3 \\
    \mathcal{N}_{\mathsf{B}_5}(Y, T) &= 1 + 25Y + 230Y^2 + 950Y^3 + 1689Y^4 + 945Y^5 \\
    &\quad + (642 + 6146Y + 22220Y^2 + 36940Y^3 + 27058Y^4 + 6834Y^5)T \\ 
    &\quad + (5978 + 37082Y + 83660Y^2 + 83660Y^3 + 37082Y^4 + 5978Y^5)T^2 \\
    &\quad + (6834 + 27058Y + 36940Y^2 + 22220Y^3 + 6146Y^4 + 642Y^5)T^3 \\ 
    &\quad + (945 + 1689Y + 950Y^2 + 230Y^3 + 25Y^4 + Y^5)T^4, \\
    \mathcal{N}_{\mathsf{B}_6}(Y, T) &= 1 + 36Y + 505Y^2 + 3480Y^3 + 12139Y^4 + 19524Y^5 + 10395Y^6 \\
    &\quad + (4081 + 51460Y + 260329Y^2 + 669400Y^3 + 905659Y^4 \\
    &\quad + 592420Y^5 + 143211Y^6)T + (92476 + 793400Y + 2682964Y^2 \\
    &\quad + 4511120Y^3 + 3914404Y^4 + 1653560Y^5 + 268236Y^6)T^2 \\
    &\quad + (268236 + 1653560Y + 3914404Y^2 + 4511120Y^3 + 2682964Y^4 \\
    &\quad + 793400Y^5 + 92476Y^6)T^3 + (143211 + 592420Y + 905659Y^2 \\
    &\quad + 669400Y^3 + 260329Y^4 + 51460Y^5 + 4081Y^6)T^4 + (10395 \\
    &\quad + 19524Y + 12139Y^2 + 3480Y^3 + 505Y^4 + 36Y^5 + Y^6)T^5, \\
    \mathcal{N}_{\mathsf{B}_7}(Y, T) &= 1 + 49Y + 973Y^2 + 10045Y^3 + 57379Y^4 + 177331Y^5 \\
    &\quad + 264207Y^6 + 135135Y^7 + (28632 + 452376Y + 2970744Y^2 \\
    &\quad + 10468920Y^3 + 21232008Y^4 + 24456264Y^5 + 14475816Y^6 \\
    &\quad + 3329640Y^7)T + (1456493 + 15959421Y + 72251753Y^2 \\
    &\quad + 173850425Y^3 + 237761447Y^4 + 182794199Y^5 + 72699267Y^6 \\
    &\quad + 11564915Y^7)T^2 + (8886784 + 73960064Y + 250782336Y^2 \\
    &\quad + 445675776Y^3 + 445675776Y^4 + 250782336Y^5 + 73960064Y^6 \\
    &\quad + 8886784Y^7)T^3 + (11564915 + 72699267Y + 182794199Y^2 \\
    &\quad + 237761447Y^3 + 173850425Y^4 + 72251753Y^5 + 15959421Y^6 \\
    &\quad + 1456493Y^7)T^4 + (3329640 + 14475816Y + 24456264Y^2 \\
    &\quad + 21232008Y^3 + 10468920Y^4 + 2970744Y^5 + 452376Y^6 \\
    &\quad + 28632Y^7)T^5 + (135135 + 264207Y + 177331Y^2 + 57379Y^3 \\
    &\quad + 10045Y^4 + 973Y^5 + 49Y^6 + Y^7)T^6.
\end{align*}

\subsubsection{Type $\mathsf{D}$}
\begin{align*}
    \mathcal{N}_{\mathsf{D}_4}(Y, T) &= 1 + 12Y + 50Y^2 + 84Y^3 + 45Y^4 \\
    &\quad + (67 + 396Y + 814Y^2 + 660Y^3 + 175Y^4)T \\
    &\quad +(175 + 660Y + 814Y^2 + 396Y^3 + 67Y^4)T^2 \\
    &\quad + (45 + 84Y + 50Y^2 + 12Y^3 + Y^4)T^3, \\
    \mathcal{N}_{\mathsf{D}_5}(Y, T) &= 1 + 20Y + 150Y^2 + 520Y^3 + 809Y^4 + 420Y^5 \\
    &\quad + (397 + 3471Y + 11630Y^2 + 18250Y^3 + 12933Y^4 + 3239Y^5)T \\ 
    &\quad + (3143 + 18767Y + 41450Y^2 + 41450Y^3 + 18767Y^4 + 3143Y^5)T^2 \\ 
    &\quad + (3239 + 12933Y + 18250Y^2 + 11630Y^3 + 3471Y^4 + 397Y^5)T^3 \\ 
    &\quad + (420 + 809Y + 520Y^2 + 150Y^3 + 20Y^4 + Y^5)T^4, \\
    \mathcal{N}_{\mathsf{D}_6}(Y, T) &= 1 + 30Y + 355Y^2 + 2100Y^3 + 6439Y^4 + 9390Y^5 + 4725Y^6 \\
    &\quad + (2539 + 29746Y + 141097Y^2 + 343660Y^3 + 445501Y^4 \\
    &\quad + 283234Y^5 + 67503Y^6)T + (50272 + 415232Y + 1363120Y^2 \\
    &\quad + 2246240Y^3 + 1931488Y^4 + 817568Y^5 + 134160Y^6)T^2 \\
    &\quad + (134160 + 817568Y + 1931488Y^2 + 2246240Y^3 + 1363120Y^4 \\
    &\quad + 415232Y^5 + 50272Y^6)T^3 + (67503 + 283234Y + 445501Y^2 \\
    &\quad + 343660Y^3 + 141097Y^4 + 29746Y^5 + 2539Y^6)T^4 + (4725 \\
    &\quad + 9390Y + 6439Y^2 + 2100Y^3 + 355Y^4 + 30Y^5 + Y^6)T^5, \\
    \mathcal{N}_{\mathsf{D}_7}(Y, T) &= 1 + 42Y + 721Y^2 + 6510Y^3 + 33019Y^4 + 92358Y^5 + 127539Y^6 \\
    &\quad + 62370Y^7 + (17859 + 264979Y + 1645791Y^2 + 5526031Y^3 \\ 
    &\quad + 10759161Y^4 + 11991721Y^5 + 6930789Y^6 + 1570869Y^7)T \\ 
    &\quad + (804618 + 8519625Y + 37458806Y^2 + 88014129Y^3 \\
    &\quad + 118241942Y^4 + 89884515Y^5 + 35579114Y^6 + 5666211Y^7)T^2 \\
    &\quad + (4578872 + 37483512Y + 125642888Y^2 + 221947208Y^3 \\
    &\quad + 221947208Y^4 + 125642888Y^5 + 37483512Y^6 + 4578872Y^7)T^3 \\
    &\quad + (5666211 + 35579114Y + 89884515Y^2 + 118241942Y^3 \\
    &\quad + 88014129Y^4 + 37458806Y^5 + 8519625Y^6 + 804618Y^7)T^4 \\
    &\quad + (1570869 + 6930789Y + 11991721Y^2 + 10759161Y^3 \\
    &\quad + 5526031Y^4 + 1645791Y^5 + 264979Y^6 + 17859Y^7)T^5 \\
    &\quad + (62370 + 127539Y + 92358Y^2 + 33019Y^3 + 6510Y^4 \\
    &\quad + 721Y^5 + 42Y^6 + Y^7)T^6.
\end{align*}

\subsubsection{Exceptional types}
\label{subsubsec:coarse.rank.leq.7}
\begin{align*}
    \mathcal{N}_{\mathsf{E}_6}(Y, T) &= 1 + 36Y + 510Y^2 + 3600Y^3 + 13089Y^4 + 22284Y^5 + 12320Y^6 \\
    &\quad + (4591 + 57420Y + 289824Y^2 + 748080Y^3 + 1020819Y^4 \\
    &\quad + 671940Y^5 + 162206Y^6)T + (103681 + 888840Y + 3011919Y^2 \\
    &\quad + 5080320Y^3 + 4411839Y^4 + 1858680Y^5 + 300401Y^6)T^2 \\
    &\quad + (300401 + 1858680Y + 4411839Y^2 + 5080320Y^3 + 3011919Y^4 \\
    &\quad + 888840Y^5 + 103681Y^6)T^3 + (162206 + 671940Y + 1020819Y^2 \\
    &\quad + 748080Y^3 + 289824Y^4 + 57420Y^5 + 4591Y^6)T^4 + (12320 \\
    &\quad + 22284Y + 13089Y^2 + 3600Y^3 + 510Y^4 + 36Y^5 + Y^6)T^5, \\
    \mathcal{N}_{\mathsf{E}_7}(Y, T) &= 1 + 63Y + 1617Y^2 + 21735Y^3 + 162939Y^4 + 663957Y^5 \\
    &\quad + 1286963Y^6 + 765765Y^7 + (90400 + 1553980Y + 11064984Y^2 \\
    &\quad + 42142884Y^3 + 92109360Y^4 + 113759940Y^5 + 70917656Y^6 \\
    &\quad + 16725596Y^7)T + (5577043 + 64210477Y + 304475955Y^2 \\
    &\quad + 763724661Y^3 + 1080226497Y^4 + 847444143Y^5 + 338480825Y^6 \\
    &\quad + 53381039Y^7)T^2 + (37767356 + 323700436Y + 1123040604Y^2 \\
    &\quad + 2022363924Y^3 + 2022363924Y^4 + 1123040604Y^5 \\
    &\quad + 323700436Y^6 + 37767356Y^7)T^3 + (53381039 + 338480825Y \\
    &\quad + 847444143Y^2 + 1080226497Y^3 + 763724661Y^4 + 304475955Y^5 \\
    &\quad + 64210477Y^6 + 5577043Y^7)T^4 + (16725596 + 70917656Y \\
    &\quad + 113759940Y^2 + 92109360Y^3 + 42142884Y^4 + 11064984Y^5 \\
    &\quad + 1553980Y^6 + 90400Y^7)T^5 + (765765 + 1286963Y + 663957Y^2 \\
    &\quad + 162939Y^3 + 21735Y^4 + 1617Y^5 + 63Y^6 + Y^7)T^6, \\
    \mathcal{N}_{\mathsf{F}_4}(Y, T) &= 1 + 24Y + 190Y^2 + 552Y^3 + 385Y^4 \\
    &\quad + (263 + 1992Y + 4994Y^2 + 4344Y^3 + 1079Y^4)T \\
    &\quad + (1079 + 4344Y + 4994Y^2 + 1992Y^3 + 263Y^4)T^2 \\
    &\quad + (385 + 552Y + 190Y^2 + 24Y^3 + Y^4)T^3, \\
    \mathcal{N}_{\mathsf{G}_2}(Y, T) &= 1 + 6Y + 5Y^2 + (5 + 6Y + Y^2)T, \\
    \mathcal{N}_{\mathsf{H}_2}(Y, T) &= 1 + 5Y + 4Y^2 + (4 + 5Y + Y^2)T, \\
    \mathcal{N}_{\mathsf{H}_3}(Y, T) &= 1 + 15Y + 59Y^2 + 45Y^3 \\
    &\quad + (44 + 196Y + 196Y^2 + 44Y^3)T \\
    &\quad + (45 + 59Y + 15Y^2 + Y^3)T^2, \\
    \mathcal{N}_{\mathsf{H}_4}(Y, T) &= 1 + 60Y + 1138Y^2 + 7140Y^3 + 6061Y^4 \\
    &\quad + (2099 + 20700Y + 63662Y^2 + 58500Y^3 + 13439Y^4)T \\
    &\quad + (13439  + 58500Y + 63662Y^2 + 20700Y^3 + 2099Y^4)T^2 \\
    &\quad + (6061  + 7140Y + 1138Y^2 + 60Y^3 + Y^4)T^3, \\
    \mathcal{N}_{\mathsf{I}_2(m)}(Y, T) &= 1 + mY + (m-1)Y^2 + (m-1 + mY + Y^2)T.
\end{align*}
(For type $\mathsf{I}_2(m)$, see also Proposition~\ref{pro:Im}.)

\subsection{Restrictions of type-$\mathsf{D}$ arrangements}
\label{sec:app.coarse.res-D}

Recall Definition~\ref{defn:D-restriction} of the restrictions of type-$\mathsf{D}$ arrangements: for $n\geq 1$ and $m\in [n]_0$,
\begin{align*} 
    \mathcal{A}_{n,m} := \mathsf{D}_n\cup \{X_k ~|~ k\in [n-m]\}. 
\end{align*} 
We record the non-Coxeter restrictions up to rank four.
\begin{align*} 
    \mathcal{N}_{\mathcal{A}_{3, 2}}(Y, T) &= 1 + 7Y + 15Y^2 + 9Y^3 + (14 + 50Y + 50Y^2 + 14Y^3)T \\
    &\quad + (9 + 15Y + 7Y^2 + Y^3)T^2, \\
    \mathcal{N}_{\mathcal{A}_{3, 1}}(Y, T) &= 1 + 8Y + 19Y^2 + 12Y^3 + (17 + 63Y + 63Y^2 + 17Y^3)T \\
    &\quad + (12 + 19Y + 8Y^2 + Y^3)T^2, \\
    \mathcal{N}_{\mathcal{A}_{4, 3}}(Y, T) &= 1 + 13Y + 59Y^2 + 107Y^3 + 60Y^4 \\
    &\quad + (78 + 481Y + 1021Y^2 + 839Y^3 + 221Y^4)T \\
    &\quad + (221 + 839Y + 1021Y^2 + 481Y^3 + 78Y^4)T^2 \\
    &\quad + (60 + 107Y + 59Y^2 + 13Y^3 + Y^4)T^3, \\
    \mathcal{N}_{\mathcal{A}_{4, 2}}(Y, T) &= 1 + 14Y + 68Y^2 + 130Y^3 + 75Y^4 \\
    &\quad + (89 + 566Y + 1228Y^2 + 1018Y^3 + 267Y^4)T \\
    &\quad + (267 + 1018Y + 1228Y^2 + 566Y^3 + 89Y^4)T^2 \\
    &\quad + (75 + 130Y + 68Y^2 + 14Y^3 + Y^4)T^3, \\ 
    \mathcal{N}_{\mathcal{A}_{4, 1}}(Y, T) &= 1 + 15Y + 77Y^2 + 153Y^3 + 90Y^4 \\
    &\quad + (100 + 651Y + 1435Y^2 + 1197Y^3 + 313Y^4)T \\
    &\quad + (313 + 1197Y + 1435Y^2 + 651Y^3 + 100Y^4)T^2 \\
    &\quad + (90 + 153Y + 77Y^2 + 15Y^3 + Y^4)T^3.
\end{align*} 

\subsection{Shi arrangements}\label{subsec:shi.coarse}
The \emph{Shi arrangement} of type $\mathsf{X}_n$ is the (noncentral)
hyperplane arrangement
\begin{align*} 
    \mathcal{S}\mathsf{X}_n &= \mathsf{X}_n \cup \{L - 1 ~|~ L\in \mathsf{X}_n \}.
\end{align*} 

\subsubsection{Type $\mathsf{A}$}
\begin{align*} 
    \mathcal{N}_{\mathcal{S}\mathsf{A}_1}(Y, T) &= 1 + 2Y + T, \\
    \mathcal{N}_{\mathcal{S}\mathsf{A}_2}(Y, T) &= 1 + 6Y + 9Y^2 + (10 + 24Y + 6Y^2)T + 4T^2 \\
    \mathcal{N}_{\mathcal{S}\mathsf{A}_3}(Y, T) &= 1 + 12Y + 48Y^2 + 64Y^3 + (69 + 352Y + 524Y^2 + 160Y^3)T \\
    &\quad + (151 + 380Y + 172Y^2 + 24Y^3)T^2 + 27T^3 \\
    \mathcal{N}_{\mathcal{S}\mathsf{A}_4}(Y, T) &= 1 + 20Y + 150Y^2 + 500Y^3 + 625Y^4 \\
    &\quad + (496 + 3905Y + 11045Y^2 + 12615Y^3 + 3955Y^4)T \\
    &\quad + (3526 + 17220Y + 27140Y^2 + 14420Y^3 + 2510Y^4)T^2 \\
    &\quad + (2931 + 7695Y + 4925Y^2 + 1305Y^3 + 120Y^4)T^3  + 256T^4.
\end{align*} 
(For $\mathcal{S}\mathsf{A}_2$, see also Proposition~\ref{pro:shi.A3}.)

\subsubsection{Type $\mathsf{B}$}
\begin{align*} 
    \mathcal{N}_{\mathcal{S}\mathsf{B}_2}(Y, T) &= 1 + 8Y + 16Y^2 + (16 + 44Y + 10Y^2)T + 9T^2 \\
    \mathcal{N}_{\mathcal{S}\mathsf{B}_3}(Y, T) &= 1 + 18Y + 108Y^2 + 216Y^3 + (165 + 982Y + 1694Y^2 + 502Y^3)T \\
    &\quad + (499 + 1370Y + 568Y^2 + 72Y^3)T^2 + 125T^3 \\
    \mathcal{N}_{\mathcal{S}\mathsf{B}_4}(Y, T) &= 1 + 32Y + 384Y^2 + 2048Y^3 + 4096Y^4 \\
    &\quad + (1912 + 17532Y + 57528Y^2 + 75596Y^3 + 24084Y^4)T \\ 
    &\quad + (18806 + 100520Y + 169440Y^2 + 87848Y^3 + 14528Y^4)T^2 \\
    &\quad + (20260 + 55436Y + 32928Y^2 + 8028Y^3 + 672Y^4)T^3 + 2401T^4 .
\end{align*} 

\subsubsection{Type $\mathsf{D}$}
\begin{align*} 
    \mathcal{N}_{\mathcal{S}\mathsf{D}_4}(Y, T) &= 1 + 24Y + 216Y^2 + 864Y^3 + 1296Y^4 \\
    &\quad + (788 + 6612Y + 19960Y^2 + 24236Y^3 + 7600Y^4)T \\
    &\quad + (6350 + 32320Y + 52456Y^2 + 27352Y^3 + 4616Y^4)T^2 \\
    &\quad + (5964 + 15956Y + 9736Y^2 + 2460Y^3 + 216Y^4)T^3 + 625T^4. 
\end{align*}

\subsection{Catalan arrangements}

The \emph{Catalan arrangement} of type $\mathsf{X}_n$ is the (noncentral)
hyperplane arrangement
\begin{align*} 
    \mathcal{C}\mathsf{X}_n &= \{L + \epsilon ~|~ L\in \mathsf{X}_n,\; \epsilon\in\{-1,0,1\} \}.
\end{align*}

\subsubsection{Type $\mathsf{A}$}
\begin{align*} 
    \mathcal{N}_{\mathcal{C}\mathsf{A}_1}(Y, T) &= 1 + 3Y + 2T, \\
    \mathcal{N}_{\mathcal{C}\mathsf{A}_2}(Y, T) &= 1 + 9Y + 20Y^2 + (20 + 57Y + 13Y^2)T + 12T^2, \\
    \mathcal{N}_{\mathcal{C}\mathsf{A}_3}(Y, T) &= 1 + 18Y + 107Y^2 + 210Y^3 \\ 
    &\quad + (169 + 999Y + 1703Y^2 + 513Y^3)T \\
    &\quad + (508 + 1377Y + 584Y^2 + 75Y^3)T^2 + 120T^3, \\
    \mathcal{N}_{\mathcal{C}\mathsf{A}_4}(Y, T) &= 1 + 30Y + 335Y^2 + 1650Y^3 + 3024Y^4 \\
    &\quad + (1597 + 14300Y + 45705Y^2 + 58420Y^3 + 18698Y^4)T \\
    &\quad + (15138 + 79380Y + 131865Y^2 + 69180Y^3 + 11637Y^4)T^2 \\
    &\quad + (15484 + 41890Y + 25495Y^2 + 6350Y^3 + 541Y^4)T^3 + 1680T^4.
\end{align*} 

\subsubsection{Type $\mathsf{B}$}
\begin{align*} 
    \mathcal{N}_{\mathcal{C}\mathsf{B}_2}(Y, T) &= 1 + 12Y + 35Y^2 + (31 + 100Y + 21Y^2)T + 24T^2, \\ 
    \mathcal{N}_{\mathcal{C}\mathsf{B}_3}(Y, T) &= 1 + 27Y + 239Y^2 + 693Y^3 \\
    &\quad + (408 + 2756Y + 5352Y^2 + 1564Y^3)T \\
    &\quad + (1583 + 4633Y + 1825Y^2 + 215Y^3)T^2 + 480T^3, \\
    \mathcal{N}_{\mathcal{C}\mathsf{B}_4}(Y, T) &= 1 + 48Y + 854Y^2 + 6672Y^3 + 19305Y^4 \\
    &\quad + (6399 + 65600Y + 237770Y^2 + 342784Y^3 + 110455Y^4)T \\
    &\quad + (78975 + 447344Y + 787946Y^2 + 403792Y^3 + 64855Y^4)T^2 \\
    &\quad + (98657 + 276896Y + 158262Y^2 + 36640Y^3 + 2857Y^4)T^3 \\
    &\quad + 13440T^4.
\end{align*} 

\subsubsection{Type $\mathsf{D}$}
\begin{align*} 
    \mathcal{N}_{\mathcal{C}\mathsf{D}_4}(Y, T) &= 1 + 36Y + 482Y^2 + 2844Y^3 + 6237Y^4 \\
    &\quad + (2611 + 24636Y + 83158Y^2 + 112260Y^3 + 35767Y^4)T \\
    &\quad + (27247 + 148140Y + 253006Y^2 + 130452Y^3 + 21379Y^4)T^2 \\
    &\quad + (30669 + 84660Y + 49562Y^2 + 11916Y^3 + 985Y^4)T^3 + 3840T^4.
\end{align*} 

\subsection{Generic central arrangements}\label{subsec:gen.cen.arr}
Recall the generic central arrangements $\mcU_{n,m}$ for $n\leq m$ introduced
in Section~\ref{subsec:gen.central.arr}. We exemplify the general formula
given in Proposition~\ref{prop:gen-cen.coarse} with those for $m=n+1$
and~$n\in\{3,4,5,6\}$. Note that $\mcU_{n,n+1}$ may be obtained by adding to
the Boolean arrangement $\mathsf{A}_1^n$ (cf.\ Section~\ref{subsec:Boolean})
one additional hyperplane in general position; the one defined by the sum of
coordinates will do. We also record formulae for $(n,m)\in\{(4,7),(4,8)\}$;
see Section~\ref{subsec:generic-cen.coarse}.
\begin{align*}
    \mcN_{\mcU_{3,4}}(Y, T) &= 1 + 4Y + 6Y^2 + 3Y^3 + (8 + 26Y + 26Y^2 + 8Y^3)T \\
    &\quad + (3 + 6Y + 4Y^2 + Y^3)T^2, \\
    \mcN_{\mcU_{4,5}}(Y, T) &= 1 + 5Y + 10Y^2 + 10Y^3 + 4Y^4 \\
    &\quad + (22 + 100Y + 170Y^2 + 125Y^3 + 33Y^4)T \\ 
    &\quad + (33 + 125Y + 170Y^2 + 100Y^3 + 22Y^4)T^2 \\
    &\quad + (4 + 10Y + 10Y^2 + 5Y^3 + Y^4)T^3, \\
    \mathcal{N}_{\mathcal{U}_{4,7}}(Y,T) &= 1 + 7Y + 21Y^2 + 35Y^3 + 20Y^4 \\
    &\quad + (60 + 315Y + 609Y^2 + 483Y^3 + 129Y^4)T \\
    &\quad + (129 + 483Y + 609Y^2 + 315Y^3 + 60Y^4)T^2 \\
    &\quad + (20 + 35Y + 21Y^2 + 7Y^3 + Y^4)T^3, \\
    \mathcal{N}_{\mathcal{U}_{4,8}}(Y,T) &= 1 + 8Y + 28Y^2 + 56Y^3 + 35Y^4 \\
    &\quad + (89 + 488Y + 980Y^2 + 792Y^3 + 211Y^4)T \\
    &\quad + (211 + 792Y + 980Y^2 + 488Y^3 + 89Y^4)T^2 \\
    &\quad + (35 + 56Y + 28Y^2 + 8Y^3 + Y^4)T^3, \\
    \mcN_{\mcU_{5,6}}(Y, T) &= 1 + 6Y + 15Y^2 + 20Y^3 + 15Y^4 + 5Y^5 \\
    &\quad + (52 + 297Y + 685Y^2 + 795Y^3 + 459Y^4 + 104Y^5)T \\ 
    &\quad + (198 + 1023Y + 2085Y^2 + 2085Y^3 + 1023Y^4 + 198Y^5)T^2 \\
    &\quad + (104 + 459Y + 795Y^2 + 685Y^3 + 297Y^4 + 52Y^5)T^3 \\
    &\quad + (5 + 15Y + 20Y^2 + 15Y^3 + 6Y^4 + Y^5)T^4, \\
    \mcN_{\mcU_{6,7}}(Y, T) &= 1 + 7Y + 21Y^2 + 35Y^3 + 35Y^4 + 21Y^5 + 6Y^6 + (114 + 777Y \\
    &\quad + 2233Y^2 + 3465Y^3 + 3059Y^4 + 1449Y^5 + 285Y^6)T + (906 \\
    &\quad + 5796Y + 15400Y^2 + 21700Y^3 + 17052Y^4 + 7070Y^5 + 1208Y^6)T^2 \\
    &\quad + (1208 + 7070Y + 17052Y^2 + 21700Y^3 + 15400Y^4 + 5796Y^5 \\
    &\quad + 906Y^6)T^3 + (285 + 1449Y + 3059Y^2 + 3465Y^3 + 2233Y^4 \\
    &\quad + 777Y^5 + 114Y^6)T^4 + (6 + 21Y + 35Y^2 + 35Y^3 + 21Y^4 \\
    &\quad + 7Y^5 + Y^6)T^5.
\end{align*}

\subsection{Resonance arrangements}

For $n\in\N$, the \emph{resonance arrangement} is
\begin{align*} 
  \mathcal{R}_n &= \left\{\sum_{i\in I}X_i ~\middle|~ \emptyset\neq I\subseteq [n] \right\}. 
\end{align*} 
For $n\leq 2$, $\mathcal{R}_n\cong \mathsf{A}_n$, given in
Section~\ref{subsubsec:type-A}, and $\mathcal{R}_3\cong \mathcal{A}_{3,2}$,
given in Section~\ref{sec:app.coarse.res-D}.
\begin{align*} 
  \mcN_{\mathcal{R}_4}(Y, T) &= 1 + 15Y + 80Y^2 + 170Y^3 + 104Y^4 \\
  &\quad + (112 + 730Y + 1630Y^2 + 1365Y^3 + 353Y^4)T \\
  &\quad + (353 + 1365Y + 1630Y^2 + 730Y^3 + 112Y^4)T^2 \\
  &\quad + (104 + 170Y + 80Y^2 + 15Y^3 + Y^4)T^3, \\
  \mcN_{\mathcal{R}_5}(Y, T) &= 1 + 31Y + 375Y^2 + 2130Y^3 + 5270Y^4 + 3485Y^5 + (1782 + 17817Y \\
  &\quad + 68375Y^2 + 121745Y^3 + 92659Y^4 + 23254Y^5)T + (18818 \\
  &\quad + 120923Y + 280775Y^2 + 280775Y^3 + 120923Y^4 + 18818Y^5)T^2 \\
  &\quad + (23254 + 92659Y + 121745Y^2 + 68375Y^3 + 17817Y^4 + 1782Y^5)T^3 \\
  &\quad + (3485 + 5270Y + 2130Y^2 + 375Y^3 + 31Y^4 + Y^5)T^4, \\
  \mcN_{\mathcal{R}_6}(Y, T) &= 1 + 63Y + 1652Y^2 + 22435Y^3 + 159460Y^4 + 510524Y^5 + 371909Y^6 \\
  &\quad + (77254 + 1088220Y + 6136361Y^2 + 17666495Y^3 + 26863403Y^4 \\
  &\quad + 19032139Y^5 + 4709836Y^6)T + (2362293 + 21610148Y \\
  &\quad + 77625345Y^2 + 137120970Y^3 + 121428629Y^4 + 50644846Y^5 \\
  &\quad + 7959697Y^6)T^2 + (7959697 + 50644846Y + 121428629Y^2 \\
  &\quad + 137120970Y^3 + 77625345Y^4 + 21610148Y^5 + 2362293Y^6)T^3 \\
  &\quad + (4709836 + 19032139Y + 26863403Y^2 + 17666495Y^3 + 6136361Y^4 \\
  &\quad + 1088220Y^5 + 77254Y^6)T^4 + (371909 + 510524Y + 159460Y^2 \\
  &\quad + 22435Y^3 + 1652Y^4 + 63Y^5 + Y^6)T^5.
\end{align*}

\subsection{$2$-sum arrangements}

For $n\geq 3$, the $2$-\emph{sum arrangement} is the central hyperplane
arrangement
\begin{align*}
    \mathcal{S}_{n} &= \mathsf{A}_n \cup \{X_i + X_j - X_k - X_\ell ~|~ \text{distinct } i,j,k,\ell\in[n+1]\}.
\end{align*} 
\begin{align*}
    \mathcal{N}_{\mathcal{S}_{3}}(Y, T) &= 1 + 9Y + 23Y^2 + 15Y^3
    \\ &\quad + (20 + 76Y + 76Y^2 + 20Y^3)T \\ &\quad + (15 + 23Y +
    9Y^2 + Y^3)T^2, \\ 
    \mathcal{N}_{\mathcal{S}_{4}}(Y, T) &= 1 + 25Y
    + 215Y^2 + 695Y^3 + 504Y^4 \\ &\quad + (342 + 2605Y + 6625Y^2 +
    5795Y^3 + 1433Y^4)T \\ &\quad + (1433 + 5795Y + 6625Y^2 + 2605Y^3
    + 342Y^4)T^2 \\ &\quad + (504 + 695Y + 215Y^2 + 25Y^3 + Y^4)T^3, \\
    \mcN_{\mathcal{S}_5}(Y, T) &= 1 + 60Y + 1360Y^2 + 14010Y^3 + 59119Y^4 + 46410Y^5 + (12332 \\
    &\quad + 147046Y + 656060Y^2 + 1328440Y^3 + 1075448Y^4 + 268354Y^5)T \\
    &\quad + (179963 + 1253627Y + 3070730Y^2 + 3070730Y^3 + 1253627Y^4 \\
    &\quad + 179963Y^5)T^2 + (268354 + 1075448Y + 1328440Y^2 + 656060Y^3 \\
    &\quad + 147046Y^4 + 12332Y^5)T^3 + (46410 + 59119Y + 14010Y^2 \\
    &\quad + 1360Y^3 + 60Y^4 + Y^5)T^4.
\end{align*}

\end{appendices}

\end{document}